\begin{document}

\title{Adiabatic Approximation of Abelian Higgs models}

\author{Amirmasoud Geevechi}
\address{Beijing Institute of Mathematical Sciences and Applications, Beijing, China}
\email{amirmasoud@bimsa.cn}

\author{Robert L. Jerrard}
\address{Department of Mathematics, University of Toronto, 
			Toronto, Ontario M5S 2E4, Canada.}
\email{rjerrard@math.toronto.edu}

\date{\today}

\newcommand{\tU}{\tilde U}
\newcommand{\tPhi}{\tilde\Phi}
\newcommand{\tA}{\tilde A}
\newcommand{\tF}{\tilde F}
\newcommand{\tu}{\tilde u}

\newcommand{\tV}{\tilde V}
\newcommand{\tW}{\tilde W}

\newcommand{\modu}{\mathfrak u}
\newcommand{\mphi}{\varphi}
\newcommand{\mA}{\mathcal A}
\newcommand{\mF}{\mathcal F}

\newcommand{\aU}{U_\ep^{ap}}
\newcommand{\aPhi}{\Phi_\ep^{ap}}
\newcommand{\aA}{{A_\ep^{ap}}}
\newcommand{\aAa}{{A_{\ep a}^{ap}}}
\newcommand{\aAzero}{{A_{\ep0}^{ap}}}
\newcommand{\aAalpha}{{A_{\ep \alpha}^{ap}}}
\newcommand{\aAbeta}{{A_{\ep \beta}^{ap}}}
\newcommand{\aAbara}{{A_{\ep\bar a}^{ap}}}
\newcommand{\au}{u_\ep^{ap}}
\newcommand{\aF}{{F^{ap}_\ep}}

\newcommand{\teta}{\tilde \eta}
\newcommand{\txi}{\tilde \xi}
\newcommand{\tq}{\tilde q}
\newcommand{\tta}{\tilde{\tilde  a}}

\newcommand{\norm}[1]{\left\Vert#1\right\Vert}
\newcommand{\ip}[1]{({#1}\rangle}
\newcommand{\p}[1]{\partial_{#1}}
\newcommand{\abs}[1]{\left\vert#1\right\vert}

\newcommand{\kz}{\kappa_0}
\newcommand{\ko}{\kappa_1}
\newcommand{\kt}{\iota}

\newcommand{\barint}{\overline{\hspace{.65em}}\!\!\!\!\!\!\int}
\newcommand{\e}{\epsilon}
\newcommand{\ve}{v_\epsilon}
\newcommand{\jep}{j_\epsilon}
\newcommand{\loc}{ {\mbox{\scriptsize{loc}}} }
\newcommand{\R}{{\mathbb R}}
\newcommand{\C}{{\mathbb C}}
\newcommand{\J}{{\mathbb J}}
\newcommand{\T}{{\mathbb T}}
\newcommand{\Z}{{\mathbb Z}}
\newcommand{\N}{{\mathbb N}}
\newcommand{\Q}{{\mathbb Q}}
\newcommand{\Hdf}{{\mathcal{H}}}
\newcommand{\calD}{{\mathcal{D}}}
\newcommand{\calL}{{\mathcal{L}}}
\newcommand{\calS}{{\mathcal{S}}}
\newcommand{\calB}{{\mathcal{B}}}
\newcommand{\calE}{{\mathcal{E}}}
\newcommand{\calF}{{\mathcal{F}}}
\newcommand{\calQ}{{\mathfrak{Q}}}
\newcommand{\calG}{{\mathcal{G}}}
\newcommand{\calH}{{\mathcal{H}}}
\newcommand{\calJ}{{\mathcal{J}}}
\newcommand{\calN}{{\mathcal{N}}}
\newcommand{\calT}{{\mathcal{T}}}
\newcommand{\calM}{{\mathcal{M}}}
\newcommand{\calZ}{{\mathcal{Z}}}
\newcommand{\De}{D^\e}
\newcommand{\dist}{\operatorname{dist}}
\newcommand{\spt}{\operatorname{spt}}
\def\rest{\hskip 1pt{\hbox to 10.8pt{\hfill
\vrule height 7pt width 0.4pt depth 0pt\hbox{\vrule height 0.4pt
width 7.6pt depth 0pt}\hfill}}}
\newcommand{\bd}{\partial}
\newcommand{\PD}{P\hspace{-.2em}D}

\long\def\green#1{{\color{green!45!black}#1}}

\long\def\red#1{{\color{red}#1}}
\long\def\violet#1{{\color{violet}#1}}
\long\def\blue#1{{\color{blue}#1}}
\long\def\yellow#1{{\color{black}#1}}
\long\def\comment#1{\marginpar{\raggedright\small$\bullet$\ #1}}

\newcommand{\vol}{\,\mbox{vol}}

\newcommand{\bn}[1]{ ({\texttt{#1}})}

\newcommand{\beq}{\begin{equation}}
\newcommand{\eeq}{\end{equation}}
\newcommand{\pp}{\partial}
\newcommand{\ep}{\varepsilon}
\newcommand{\vp}{\varphi}

\def\logeps{{|\!\log\epsilon|}}

\theoremstyle{plain}
\newtheorem{theorem}{Theorem}
\newtheorem{proposition}{Proposition}
\newtheorem{lemma}{Lemma}
\newtheorem{corollary}{Corollary}
\newtheorem{example}{Example}
\newtheorem*{thm1full}{Theorem 1, full version}

\theoremstyle{definition}
\newtheorem{definition}{Definition}
\newtheorem{notation}{Notation}

\theoremstyle{remark}
\newtheorem{remark}{Remark}
\newtheorem{warning}{Warning}

\numberwithin{equation}{section}
\setcounter{tocdepth}{3}

 
\newcommand{\wpp}{\textsf{\reflectbox{6}}}
\newcommand{\wD}{{\rm D}}
\newcommand{\wS}{{\rm S}}
\newcommand{\wZ}{{\rm Z}}
\newcommand{\wSe}{{\rm S}^\ep}
\newcommand{\wU}{{\rm U}}
\newcommand{\wV}{{\rm V}}
\newcommand{\wu}{{\rm u}}
\newcommand{\wtu}{\tilde {\rm u}}
\newcommand{\wtU}{\tilde{\rm U}}
\newcommand{\wtZ}{\tilde{\rm Z}}
\newcommand{\wF}{{\rm F}}
\newcommand{\wtF}{\tilde {\rm F}}
\newcommand{\wA}{{\rm A}}
\newcommand{\wtA}{\tilde{\rm A}}

\begin{abstract}
    We construct novel solutions in $d\ge 3$ space dimensions of a family of nonlinear evolutions equations that includes the critical hyperbolic Abelian Higgs model (AHM). For the AHM, these solutions exhibit an ensemble of  $N\ge 1$ slowly-moving, nearly parallel vortex filaments, whose leading-order dynamics are described by a wave map from $\R^{d-2}$ into the Abelian Higgs moduli space, a manifold carrying a natural Riemannian structure that parametrizes stationary 2d solutions of the AHM. We also prove extremely similar results that relate the critical Abelian Higgs heat flow, modeling certain superconductors, to the harmonic map heat flow into the Moduli space, as well as some parallel results for near-critical equations.  When $d=3$, these results allow for the study of the poorly-understood phenomenon of vortex reconnection in this setting.
\end{abstract}

\maketitle

\tableofcontents

\section{Introduction}

In this paper we construct novel solutions of a family of evolution equations arising in gauge theory in $\R^{d+1}$ for $d\ge 3$. In all the equations we consider, the unknowns are a complex-valued function $\Phi$ and a (real valued) $1$-form $A=A_1 dx_1+\ldots + A_d dx_d + A_0 dt$ on $\R^{d+1}$ which is used to form a covariant derivative
\[
D_\alpha = \pp_\alpha - i A_\alpha, \qquad\mbox{ where $\pp_\alpha$ denotes } \begin{cases}
\pp_t &\mbox{ if }\alpha = 0\\
\pp_{x_\alpha}&\mbox{if }\alpha\in \{1,\ldots, d\}.
\end{cases}
\] 
We will often tacitly identify $A$ with a map taking values in $\R^{d+1}$.

We are chiefly interested in two related equations. 
The first of these is the Abelian Higgs model:
\begin{equation}\label{AHM}
\left. \begin{aligned}
D_0 D_0 \Phi -  D_kD_k \Phi
+\frac \lambda 2(|\Phi|^2-1)\Phi  &=0\\
\pp_0 F_{0\alpha} - \pp_k F_{k\alpha} - (i\Phi, D_\alpha\Phi)  &=0, \qquad \alpha=0,\ldots, d. 
\end{aligned}\ \ \right\}
\end{equation}
where $\lambda>0$ is a real parameter, the repeated index $k$ is summed from $1$ to $d$, and we use the notation
\[
F_{\alpha\beta} = \pp_\alpha A_\beta - \pp_\beta A_\alpha,
\]
and 
\[
\mbox{ for $v,w\in \C$,} \qquad (v,w) := \frac 12 (v\bar w + w\bar v).
\]
This is a nonlinear hyperbolic equation arising in high energy physics, as perhaps the simplest instance of a Yang-Mills-Higgs theory. 
We will also study its parabolic counterpart, the Abelian Higgs heat flow:
\begin{equation}\label{PAH}
\left. \begin{aligned}
D_0 \Phi - D_kD_k \Phi +\frac \lambda 2(|\Phi|^2-1)\Phi  &=0\\
F_{0j} - \pp_kF_{kj} - (i\Phi, D_j\Phi)  &=0, \qquad j=1,\ldots, d. 
\end{aligned}\ \ \right\}
\end{equation}
This arises as a description of the evolution of superconductors in Ginzburg-Landau theory.

Both  \eqref{AHM} and \eqref{PAH} are invariant with respect to gauge transformations
\begin{equation}\label{gauge}
\Phi\mapsto e^{i\chi}\Phi,\qquad A \mapsto A + d\chi
\end{equation}
where $\chi$ is a sufficiently smooth function with the same domain as $(\Phi, A)$, in the sense that $(\Phi, A)$ is a solution of \eqref{dAHM} if and only if $(e^{i\chi}\Phi, A+d\chi)$ is a solution. As with all gauge theories, this has important consequence for the analysis of the equations. For example, it is not possible to prove uniqueness or to estimate any quantities that are not gauge-invariant without fixing the degree of freedom represented by \eqref{gauge}.

For both \eqref{AHM} and \eqref{PAH} it is known that for suitable initial data, solutions possess vortex submanifolds --- codimension $2$ concentration sets of the vorticity and energy -- whose dynamics is governed in a suitable limit by a geometric evolution equation: the timelike Minkowskian minimal surface equation\footnote{this is only proved when $d=3$ but is expected to be valid in all higher dimensions as well.} for \eqref{AHM}, see \cite{CzubakJerrard}, and the mean curvature flow for \eqref{PAH}, see \cite{ParisePigatiStern}. In this paper we study
the dynamics of vortex filaments in a regime that differs from those considered in \cite{CzubakJerrard}, \cite{ParisePigatiStern}, in which we are able to resolve the fine structure of multiple interacting filaments.


Our starting point is a classification of solutions $(\Phi, A_a): \R^2\to \C\times \R^2$ of the critical 2d system
\begin{equation}\label{2dAHM}
\left. \begin{aligned}
-  D_aD_a \Phi
+\frac 1 2(|\Phi|^2-1)\Phi  &=0\\
\- \pp_a F_{ab} - (i\Phi, D_b\Phi)  &=0
\end{aligned}\ \ \right\}
\end{equation}
where indices $a,b,...$ run from 1 to 2 and repeated indices are implicity summed.  In Section \ref{sec:2dahm} we recall in detail this classification and other relevant background. 
Briefly, one can associate an integer $N$, called the {\em vortex number}, to every solution of \eqref{2dAHM}, indeed to every finite-energy pair $(\Phi, A_a)$ on $\R^2$. The space of finite-energy solutions\footnote{More precisely, gauge equivalnce classes of finite-energy solutions, see \eqref{gauge.equivclass}.} with vortex number $N$ is parametrized by a Riemannian manifold called the {$N$-vortex moduli space}, denoted $(M_N, g)$, that we describe in Section \ref{subsec:mods}. Given a point $q\in M_N$, we will write
$\modu(x;q) = \binom{\mphi(x;q)}{\mA_a(x;q)}$ to denote the corresponding solution. The {\em vortex centers}  associated to $\modu(\cdot;q)$
are exactly the zeroes of $\mphi(\cdot;q)$. For $q\in M_N$, there are exactly $N$ vortex centers, counting multiplicity. 

The construction and classification of solutions of \eqref{2dAHM} was completed by Jaffe and Taubes \cite{JaffeTaubes} in 1980. Shortly thereafter, Manton \cite{Manton} proposed that when $\lambda =1$, low-energy solutions of the Abelian Higgs model \eqref{AHM} in $2$ space dimensions could be of the form
\beq\label{Manton}
\binom{\Phi}{A_a}_{a=1,2} (x_1,x_2,t) \approx  \modu(x_1, x_2 ; q_\ep(t))
\eeq
where $q_\ep(t) = q(\ep t)$ and $t\mapsto q(t)\in M_N$ is a geodesic in $(M_N, g)$, that is, a solution of
\[
\nabla^g_t \pp_t q = 0, \qquad\mbox{where $\nabla^g$ is the Levi-Civita connection on $(M,g)$}.
\]
Stuart \cite{Stuart} gave a rigorous proof of this scenario for vortex number $N=2$. In fact his results also cover the near-critical case  and $\lambda = 1+\kt\ep^2$ for $\kt\in \R$, in which case the geodesic flow is modified by adding a potential reflecting inter-vortex forces, not present when $\lambda=1$, resulting in
\beq\label{potential}
\nabla^g_t \pp_t q + \kt \nabla_g V_0(q)= 0, \qquad V_0(q) := \frac 1 8 \int_{\R^2}(|\mphi(q)|^2-1)^2,
\eeq
where $\nabla_g$ denotes the gradient with respect to the Riemannian structure on $(M,g)$.
Later work of Palvelev \cite{Palvelev1}, see also \cite{Palvelev2, PalvelevSergeev}, extended Stuart's work in the critical case to cover  arbitrary vortex number $N$.

For the Abelian Higgs model \eqref{AHM} with $\lambda=1$, in our main theorem we fix a vortex number $N\ge 1$, and  we construct solutions in $d\ge 3$ space dimensions such that for every $(x_3,\ldots, x_d , t) = (x_{\bar a}, t)$
\beq\label{pre-thm}
\binom{\Phi}{A_a} (\cdot, x_{\bar a} ,t) \approx \modu( \cdot , q_\ep(x_{\bar a},t)) ,   
\eeq
where\footnote{Here and below, $A_a$ denotes $(A_a)_{a=1,2}$ and $x_{\bar a}$ denotes $(x_{\bar a})_{\bar a=3}^d$. In addition, we implicitly sum over repeated indices.} $q_\ep(x_{\bar a}, t) = q(\ep x_{\bar a}, \ep t)$ and $(x_{\bar a} ,t)\mapsto q(x_{\bar a},t)\in M_N$ is a wave map into $(M_N, g)$, that is, a solution of
\[
\nabla^g_t \pp_t q - \nabla^g_{\bar a}\pp_{\bar a} q = 0.
\]
Our results also apply to the parabolic equation \eqref{PAH}, with $q$ solving the harmonic map heat flow rather than the wave map equation. 
 
Thus, in both \eqref{AHM} and \eqref{PAH}, when $d=3$,  at every time $t$ and height $x_3$ the
restriction to the horizontal slice  $\R^2 \times \{(x_3, t)\}$ of the $(\Phi, A_1, A_2)$ components of the solution that we construct is close to a stationary $2d$ solution, and the moduli space parameters that characterize these 2d slices are slowly-varying functions of $(x_3, t)$. Fixing $t$, the $N$ (not necessarily distinct) vortex centers on the $2d$ slices at height $x_3$, determined by the parameters $q(x_3,t)$, trace out $N$ curves as $x_3$ varies. As $t$ varies, this ensemble of $N$ curves evolves in a way governed by the appropriate geometric evolution equation into $(M_N,g)$.

The statement and proof of our main theorem for the hyperbolic \eqref{AHM} and parabolic \eqref{PAH} equations  are extremely similar. In order to give a unified treatment, and because the added generality may be of interest, we embed both of these equations\footnote{The Abelian Higgs heat flow as normally written does not include the equation $-\pp_k F_{k0}- (i\Phi, D_0\Phi) =0$, see \eqref{PAH}. In fact, when $\ko>\kz=0$, this equation is a consequence of the other equations, as follows from Lemma \ref{lem:identity} below. Thus, in the pure parabolic case it is redundant but not incorrect to include it in the system \eqref{dAHM}.
}  in a larger family:
\[ 
\label{dAHM}\tag{$dAHM$}
\left. \begin{aligned}
(\kz D_0+ \ep\ko)D_0 \Phi - D_j D_j \Phi+\frac{\lambda_\ep}2(|\Phi|^2-1)\Phi  &=0\\
(\kz\pp_0+\ep\ko)F_{0j} - \pp_k F_{kj} - (i\Phi, D_j\Phi)  &=0, \qquad j=1,\ldots, d\\
-\pp_k F_{k0}- (i\Phi, D_0\Phi) &=0
\end{aligned}\ \ \right\}
\] 
where  
\[
0\le \kz, \ko \le 1, \qquad \ko>0\mbox{ if }\kz=0, \qquad\lambda_\ep = 1+\kt\ep^2 \mbox{ for some }\kt\in \R.
\]
The most interesting cases
are $\kz = 1, \ko=0$ and $\kz=0, \ko=1$. The upper bound $\kz,\ko\le 1$ can always be achieved by rescaling in the $t$ variable. 

The scaling in \eqref{dAHM}, in particular the factor of $\ep$ multiplying $\ko D_0\Phi$, is dictated by the ansatz \eqref{pre-thm}. 
If $\kz=0$ then one can rescale in the $t$ variable to bring the equation into the form \eqref{PAH}, corresponding to a parabolic rescaling  $q_\ep( x_{\bar a}, t) = q(\ep x_{\bar a}, \ep^2 t)$.

In our first main result, assuming $\lambda = 1$, we construct a solution of \eqref{dAHM}  satisfying \eqref{pre-thm}, where $q_\ep(x_{\bar a},t) = q(\ep x_{\bar a}, \ep t)$ for $q:\R^{d-2}\times (0,T)\to M_N$ solving the geometric evolution equation
\begin{equation}\label{dwm}
\kz \nabla^g_{t}\pp_{t}q + \ko \pp_{t}q - \nabla^g_{y_{\bar a}}\pp_{\bar a}q  = 0.
\end{equation}
This is the wave map equation when $0 = \ko<\kz$ and the harmonic map heat flow when $0= \kz<\ko$. We start from a solution of \eqref{dwm} that we will always assume satisfies
\begin{equation}\label{q.compact0}
\mbox{ $\exists$ compact }K\subset M_N\mbox{ such that }q(y_{\bar a}, y_0)\in K\mbox{ for all }(y_{\bar a}, y_0)\in \R^{d-2}\times (0,T).
\end{equation}
As we discuss in Section \ref{sec:2dahm}, $M_N$ is canonically isomorphic to $\C^N\cong \R^{2N}$, and in the canonical coordinates, which we always use, the metric is uniformly comparable to the Eucildean metric on every compact $K\subset M_N$. For purposes of defining Sobolev spaces of maps into $M_N$, once $K$ in \eqref{q.compact0} is fixed, it is thus consistent with the geometry of $(M_N,g)$ to treat maps into $K\subset M_N$ as maps into $\R^{2N}$, with the Euclidean connection. We will always do this.
We assume that the solution $q$ of \eqref{dwm} satisfies
\beq\label{q.decay}
\pp_{\bar a}q  
\in C(0,T; H^{L}(\R^{d-2})),
\qquad
\kz \pp_0 q \in C(0,T; H^{L}(\R^{d-2})).
\eeq
for some $L$  to be specified below. The assumption about $\kz\pp_0q$ is of course
vacuous if $\kz=0$. 
We will write
\beq\label{Theta.def0}
\Theta :=  \| \pp_{\bar a}q\|_{L^\infty_T H^L(\R^{d-2})} +  \kz \| \pp_0 q\|_{L^\infty_T H^L(\R^{d-2})} 
\eeq
where, if $\tau>0$ and $\|\cdot \|_X$ is a norm on a space of functions,
\beq\label{eq:LinfX}
\| v\|_{L^\infty_\tau X} = \| v\|_{L^\infty(0,\tau; X)} := \operatorname{ess\,sup}_{0<t<\tau} \| v(\cdot, t)\|_{X}.
\eeq
We will always assume $T$ is chosen so that $\Theta<\infty$.

Our main result may be summarized as follows:

\begin{theorem}\label{thm:1}
Let $q:\R^{d-2}\times (0,T)\to M_N$ be a solution of \eqref{dwm} satisfying \eqref{q.compact0} and \eqref{q.decay} for $L=2d+11$,
and assume that $\Theta<\infty$.
Then there exists $T_0\in (0,T)$ and $\ep_0>0$ and, for $0<\ep<\ep_0$, a solution $U_\ep  = (\Phi_\ep, A_{\ep a}, A_{\ep \bar a}, A_{\ep 0})$  of \eqref{dAHM} for $\lambda=1$ such that
\beq\label{t1.estimate}
\begin{aligned}
\| \Phi_\ep - \mphi(q_\ep)\|_{L^\infty_{T_0/\ep}W^{2,\infty}(\R^d)} 
+\kz \|\pp_t( \Phi_\ep - \mphi(q_\ep))\|_{L^\infty_{T_0/\ep}W^{1,\infty}(\R^d)} 
&\le C \ep^2, \\
\| A_{\ep a} - \mA_a(q_\ep)\|_{L^\infty_{T_0/\ep}W^{2,\infty}(\R^d)} 
+ \kz \| \pp_t(A_{\ep a} - \mA_a(q_\ep))\|_{L^\infty_{T_0/\ep}W^{1,\infty}(\R^d)} 
&\le C \ep^2 \\
\| A_{\ep \alpha} \|_{L^\infty_{T_0/\ep}W^{2,\infty}(\R^d)}
+\kz \| \pp_t A_{\ep \alpha} \|_{L^\infty_{T_0/\ep}W^{1,\infty}(\R^d)}
&\le C \ep\\
\end{aligned}
\eeq
for $a=1,2$ and $\alpha = 0,3,\ldots, d$, where $q_\ep(x_{\bar a},t) = q(\ep x_{\bar a}, \ep t)$ and
\beq\label{mphi.mA}
\mphi(q_\ep)(x,t) := \mphi(x_a ; q_\ep(x_{\bar a}, t)), \qquad
\mA_a(q_\ep)(x,t) := \mA_a(x_a ; q_\ep(x_{\bar a}, t)).
\eeq
A lower bound for $T_0$ can be estimated from $K$ and $\Theta$, and upper bounds for the other constants depend only on $K,\Theta$ and $T_0$. 
\end{theorem}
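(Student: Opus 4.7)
The plan is a classical multi-scale (adiabatic) construction: build an approximate solution accurate to high order in $\ep$ using the hypothesis that $q$ satisfies \eqref{dwm}, then close by energy estimates on the time interval $[0, T_0/\ep]$. Because the target accuracy in \eqref{t1.estimate} is $O(\ep^2)$ over a time of length $O(\ep^{-1})$, the approximate solution will need to be carried out to several orders beyond leading order.

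\textbf{Approximate solution.} I would take $\aU = \modu(q_\ep) + \ep U^{(1)}_\ep + \ep^2 U^{(2)}_\ep + \dots + \ep^{k_0} U^{(k_0)}_\ep$, where at leading order $(\aPhi, \aAa) = (\mphi(q_\ep), \mA_a(q_\ep))$ and $\aAbara = \aAzero = 0$; the components $A_{\bar a}, A_0$ appear only starting at order $\ep$, consistent with the scaling in the third line of \eqref{t1.estimate}. Inserting $\aU$ into \eqref{dAHM} and matching powers of $\ep$, the $O(1)$ residual vanishes because $\modu(q)$ solves \eqref{2dAHM}. At each subsequent order one obtains a linear equation $\mathcal{L}_q U^{(k)} = R^{(k)}$ on the 2d slice $\R^2$, where $\mathcal{L}_q$ is the (2d gauge-fixed) linearization of the critical Abelian Higgs system at $\modu(q)$. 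Since $\mathcal{L}_q$ is Fredholm with a $2N$-dimensional kernel spanned by the infinitesimal moduli motions $\pp_{q^i} \modu$, the solvability condition at each order becomes a linear PDE in the slow variables for (at order $\ep$) the evolution of $q$ itself and (at higher orders) for successive corrections. The key observation is that the $O(\ep)$ solvability condition is precisely \eqref{dwm}, which holds by hypothesis; the higher-order conditions are linear perturbations of it and are solvable under \eqref{q.decay}. After $k_0$ orders the residual $\mathrm{Res}(\aU)$ is $O(\ep^{k_0})$ in an appropriate Sobolev norm; the regularity exponent $L = 2d+11$ is dictated by choosing $k_0$ large enough to absorb the Sobolev losses that arise downstream.

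\textbf{Remainder equation, gauge, and energy estimates.} Writing $U_\ep = \aU + V_\ep$, substituting into \eqref{dAHM}, and imposing a $(d+1)$-dimensional gauge adapted to the character of the system (Lorenz when $\kz > 0$, Coulomb-type when $\kz = 0$) produces a well-posed problem
\[
\mathcal{L}_\ep V_\ep = \mathrm{Res}(\aU) + \mathcal{N}_\ep(V_\ep),
\]
in which $\mathcal{L}_\ep$ is a linear hyperbolic (resp.\ parabolic) operator on $\R^{d+1}$ with $\aU$-dependent coefficients, and $\mathcal{N}_\ep$ is quadratic-or-higher in $V_\ep$. Compatibility between the gauge-fixed problem and the original gauge-invariant \eqref{dAHM} is handled by propagation of constraints. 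For each fixed $(x_{\bar a}, t)$ I would split $V_\ep = V_\ep^\parallel + V_\ep^\perp$ using the $L^2(\R^2)$ projection onto $\ker \mathcal{L}_{q(\ep x_{\bar a}, \ep t)}$ and its orthogonal complement. On the orthogonal complement, $\mathcal{L}_q$ has a uniform spectral gap for $q$ in the compact set $K \subset M_N$ --- this is the form of coercivity exploited by \cite{Stuart, Palvelev1}, resting on the Jaffe--Taubes \cite{JaffeTaubes} classification --- which feeds into a weighted $(d+1)$-dimensional energy identity for $V_\ep^\perp$ with only tame $\ep$-dependent losses that can be absorbed by Gronwall on time $O(\ep^{-1})$. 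The parallel component is handled by modulation: replace $q_\ep$ in $\aU$ by $q_\ep + \ep^2 \delta q$ and choose $\delta q$ so as to enforce $V_\ep^\parallel \equiv 0$; the resulting equation for $\delta q$ is a small perturbation of the linearization of \eqref{dwm} and is controlled by the hypotheses. A contraction/bootstrap on a ball of radius $C\ep^2$ in the energy norm then closes the nonlinear problem on $[0, T_0/\ep]$ for $T_0$ depending only on $K$ and $\Theta$, and Sobolev embedding converts the resulting energy bounds into the $W^{2,\infty}$ estimates \eqref{t1.estimate}.

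The main obstacle, I expect, is arranging all the slice-wise structures --- the kernel decomposition, the 2d gauge choice, the modulation, and the uniform spectral gap --- so that they are uniform in $q \in K$ and mesh cleanly with the $(d+1)$-dimensional gauge and the energy identities on the long time scale $T_0/\ep$. A secondary but nontrivial issue is the asymmetric scaling of the components in \eqref{t1.estimate}: $\Phi$ and $A_a$ deviate by $O(\ep^2)$ from their leading values while $A_{\bar a}, A_0$ are themselves $O(\ep)$, and this must be respected both in the design of the $U^{(k)}_\ep$ at each order and in the choice of the energy norm used to control $V_\ep$.
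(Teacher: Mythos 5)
Your overall architecture is the right one, and several of the ingredients you name (high-order asymptotic expansion, slice-wise Fredholm alternative for the gauge-fixed $2$d linearization, uniform coercivity on the gauge-orthogonal complement, long-time energy estimates) are exactly what the paper uses. But one essential step of your plan---handling the kernel component by modulation---is precisely the approach the paper explicitly declines to take, and for a reason that would block your scheme.

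You propose to kill $V_\ep^\parallel$ by replacing $q_\ep$ with $q_\ep + \ep^2\delta q$ and deriving a PDE for $\delta q$. This is Stuart's $d=2$ strategy, where $\delta q$ depends only on $t$ and the modulation equation is an ODE. In $d\geq 3$, $\delta q$ is a map $\R^{d-2}\times(0,T)\to\R^{2N}$, and the paper states in the introduction that it ``would be very hard to carry out in $3$ or more space dimensions due to loss of derivatives in the modulation equation.'' Concretely: imposing $(\tu,n_\mu(q_\ep+\ep^2\delta q))_{L^2(\R^2)}\equiv 0$ and differentiating in $(x_{\bar a},t)$ yields a slow-variable wave/heat equation for $\delta q$ whose source involves derivatives of $V_\ep^\perp$, so that bounding $\delta q$ in $H^k(\R^{d-2})$ forces you to control $V_\ep^\perp$ in a strictly higher Sobolev norm; that estimate in turn needs $\delta q$ at the higher level, and the scheme does not close. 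Your contraction step in a ball of radius $C\ep^2$ is where you would discover this.

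What the paper does instead is not modulate at all: it keeps the kernel projection coefficients $c_\mu=(\tu,n_\mu)_{L^2(\R^2)}$ in the unknown and estimates them directly. The subtlety (described in the introduction and executed in Section 5.2, Lemmas~\ref{lem:tA_split}--\ref{lem:cmupde2}) is that naively $c_\mu$ solves $(\kz\pp_{tt}+\ep\ko\pp_t-\Delta_{\bar a})c_\mu=H$ with $H=O(\ep)$, which over times $T_0/\ep$ gives only $O(1)$ bounds for $c_\mu$---not enough. The paper gains a power of $\ep$ by exploiting the explicit structure of the operator $P_1$: splitting $\tA_\alpha = f_\alpha + \pp_0\chi_\alpha$ (Lemma~\ref{lem:tA_split}) extracts exact time derivatives, after which the source can be rewritten as $\pp_0 h_0 + \nabla_{\bar a}\cdot h_{\bar a} + h$ with $h=O(\ep^2)$ and the flux terms $O(\ep)$. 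This divergence structure feeds into the tailored linear estimates of Lemmas~\ref{lem:dwest} and \ref{lem:heatest} and is what makes the bootstrap close over $T_0/\ep$. You should not expect to obtain this for free from a generic energy identity.

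A smaller mismatch: in the paper the leading-order ansatz already carries nonzero $A^\ep_{\bar a}=\chi_{\pp_{\bar a}q_\ep}$, $A^\ep_0=\chi_{\pp_0 q_\ep}$ (each $O(\ep)$), chosen so that $\binom{D_\alpha\Phi^\ep}{\pp_\alpha A^\ep_a - \pp_a A^\ep_\alpha}$ is gauge-orthogonal (see \eqref{Aabar.choice}). This is what makes the first nontrivial solvability condition literally the geometric evolution equation \eqref{dwm} (Lemma~\ref{lem:J1}), rather than \eqref{dwm} plus extraneous lower-order corrections. Starting with $\aAbara=\aAzero=0$ and postponing these terms to $U^{(1)}$ is not fatal, but it makes the expansion messier for no gain, and obscures the role of the moduli-space connection. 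Also note the expansion proceeds in powers of $\ep^2$, not $\ep$, and that the gauge in the parabolic case $\kz=0$ is caloric rather than Coulomb.
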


For $\alpha = 0, 3,\ldots, d$, the $1$-form coefficients $A_{\ep \alpha}$ have the form $A_{\ep\alpha} = \mA_\alpha[q_\ep] + O(\ep^3)$, where the leading-order term $\mA_\alpha[q_\ep]$ encodes aspects of the geometry of $(M_N,g)$ and the map $q$, see \eqref{U0.form} and \eqref{Aabar.choice}. 

\smallskip

For $d=3$ and $\ko=0$, a version of Theorem \ref{thm:1} was first proved in the PhD thesis \cite{Masoud-thesis} of the first author, and our proof follows the ideas developed there very closely.

Our proof also yields, in addition to \eqref{t1.estimate}, estimates in Sobolev norms such as $L^\infty_{T_0/\ep}H^{\frac d2+2}$, and $L^\infty_{T_0/\ep}H^{\frac d2+1}$ for the time derivatives. These can easily be extracted from \eqref{addlabel}, \eqref{good3}, and \eqref{final.tU}.

\begin{remark}\label{rmk:0}
In Section \ref{sec:mainproofs} we restate the theorem with more details. Although here we have only asserted the existence of a single solution satisfying \eqref{t1.estimate}, the full version of the theorem shows that the initial data implicit in Theorem \ref{thm:1} as formulated above can be perturbed somewhat while preserving the conclusions of the theorem.
That is, if we write $(U_0, U_1) := (U_\ep(\cdot, 0), \pp_t U_\ep(\cdot, o))$ for the solution $U_\ep$ constructed in Theorem \ref{thm:1}, and if we seek a solution $V_\ep$ of \eqref{dAHM} with initial data satisfying
\[
\| V_\ep(\cdot, 0) -  U_1\|_{H^{2\ell+1}(\R^d)} + \kz \| \pp_t V_\ep(\cdot, 0) - U_0\|_{H^{2\ell}(\R^d)}\le c_0\ep^2
\]
for a suitably small $c_0$, and if an additional orthogonality condition is satisfied (see \eqref{cmu.initialdata}), then \eqref{t1.estimate} still holds with the same $T_0$, and the constants $C$ all multiplied by $2$.

The orthogonality condition \eqref{cmu.initialdata} says, heuristically, that the initial data for $q$ gives the initial vortex curve location and velocity that best approximates the vortex structure in the initial data for \eqref{dAHM}. 
\end{remark}

\smallskip

\smallskip

\begin{remark}\label{rmk:1}
The proof is carried out by first constructing an approximate solution $\aU:  \R^{d}\times (0,T_0/\ep)\to \C\times \R^{d+1}$, then finding the actual solution as a perturbation of $\aU$. The approximate solution is constructed iteratively, by a rigorous asymptotic expansion. One step in the expansion requires solving a nonlinear evolution equation. The time $T_0\le T$ arises because the solution of this equation, and hence the approximate solution, may lose regularity or cease to exist before time $T/\ep$. If the approximate solution exists and is smooth enough {\em only} up to time $T_1/\e$, then any $T_0\in (0,T_1)$ is an acceptable choice for the theorem, with the other constants diverging as $T_0\nearrow T_1$.

In the physical case $d=3$, global smooth solutions of \eqref{dwm} from $\R^{d-2}=\R$ into any smooth manifold exist for smooth data, and hence $T$ may be taken arbitrarily large. 
\end{remark}

\begin{remark}
In our iterative construction of an approximate solution, the leading terms inherit $H^L_{loc}$ regularity from $q$, and at each stage of the construction, some regularity is lost. At the final stage we use a fixed-point argument to prove existence of a perturbation $\tilde U$ 
such that $U^\ep +\tilde U$ solves \eqref{dAHM}. This fixed-point argument needs to be carried out in a rather smooth space -- we use the space of maps $\tU$ such that $(\tU, \pp_t\tU) \in L^\infty_{T_0}H^{2\ell+1}\times L^\infty_{T_0}H^{2\ell}$ for $\ell$ such that $2\ell >\frac d2+1$ -- so we need to begin with a large number $L$ of derivatives.

In the statement of the theorem, for simplicity we have chosen a value of $L$ that works in all dimensions but is sometimes larger than necessary. We also have not made a great effort to keep $L$ as small as possible, and some of our arguments could be sharpened to allow for a smaller $L$. 
\end{remark}

In near-critical regime $\lambda = 1+\kt\ep^2$, the equation governing $q$ is expected to be modified by the addition of a potential:
\begin{equation}\label{dwmp}
\kz \nabla^g_{t}\pp_{t}q + \ko \pp_{t}q - \nabla^g_{y_{\bar a}}\pp_{\bar a}q  +\kt \nabla_g V_0(q)= 0
\end{equation}
for $V_0$ as in \eqref{potential}.  Theorem \ref{thm:1} as stated does not extend to this case, as the assumption $\Theta <\infty$, used throughout our proof, is
generally incompatible with equation \eqref{dwmp}, at least when $\kz>0$. For example, if the initial data for \eqref{dwmp} is independent
of $y_{\bar a}\in \R^{d-2}$, the solution will have the form $q(y_{\bar a}, y_0) = q_0(y_0)$, where $q_0$ solves
the ODE  obtained by setting $\pp_{\bar a}q=0$ in \eqref{dwmp}.  In this case $\Theta = + \infty$ whenever it is nonzero, if $\kz>0$.

It would be interesting for this and other reasons to relax the assumption in Theorem \ref{thm:1} that $\| \pp_0 q\|_{L^\infty_T H^L(\R^{d-2})}<\infty$.

However, the following variant of Theorem \ref{thm:1} holds in the near-critical case. In the statement we use the notation
$
\T^{d-2}_R :=\R^{d-2}/R\Z^{d-2}$.


\begin{theorem}\label{thm:2}
Fix $N\in \N$, $\kt\in \R$ and $R>0$, and let $q: \T^{d-2}_R\times (0,T)\to M_N$ be a solution of \eqref{dwmp} such that
\begin{equation}\label{q.compact2}
\mbox{ $\exists$ compact }K\subset M_N\mbox{ such that }q(y_{\bar a}, y_0)\in K\mbox{ for all }(y_{\bar a}, y_0)\in  \T^{d-2}_R\times (0,T)
\end{equation}
and
\beq\label{Theta.def2}
\Theta :=  \| \pp_{\bar a}q(\cdot, t)\|_{L^\infty_T H^L(\T^{d-2}_R)} +  \kz \| \pp_0 q(\cdot, t)\|_{L^\infty_T H^L(\T^{d-2}_R)}  <\infty
\eeq
for $L=2d+11$.
Then there exists $T_0\in (0,T)$ and $\ep_0>0$ and, for $0<\ep<\ep_0$, a solution $U_\ep  = (\Phi_\ep, A_{\ep a}, A_{\ep \bar a}, A_{\ep 0})$  of \eqref{dAHM} for $\lambda=1+\kt\ep^2$ such that
\begin{align*}
\| \Phi_\ep - \mphi(q_\ep)\|_{L^\infty_{T_0/\ep}W^{2,\infty}(\R^2\times \T^{d-2}_{R/\ep})} &\le C \ep^2, \\
\| A_{\ep a} - \mA_a(q_\ep)\|_{L^\infty_{T_0/\ep}W^{2,\infty}(\R^2\times \T^{d-2}_{R/\ep})} &\le C \ep^2, \qquad a=1,2 \\
\| A_{\ep \alpha} \|_{L^\infty_{T_0/\ep}W^{2,\infty}(\R^2\times \T^{d-2}_{R/\ep})} &\le C \ep , \qquad \alpha = 0,3,\ldots, d.
\end{align*}
A lower bound for $T_0$ can be estimated from $K$ and $\Theta$, and upper bounds for the other constants depend only on $K,\Theta$ and $T_0$. 
\end{theorem}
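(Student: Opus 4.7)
The plan is to re-use the proof of Theorem~\ref{thm:1} with two modifications: the horizontal variables $x_{\bar a}$ now take values in $\T^{d-2}_{R/\ep}$ rather than $\R^{d-2}$, and $\lambda_\ep=1+\kt\ep^2$ replaces $\lambda=1$. The incompatibility explained after Theorem~\ref{thm:1} -- that $\kz\|\pp_0 q\|_{L^\infty_T H^L(\R^{d-2})}<\infty$ is forbidden for generic solutions of \eqref{dwmp} -- disappears on the torus, where constants have finite $H^L(\T^{d-2}_R)$-norm, so the hypothesis \eqref{Theta.def2} is consistent with \eqref{dwmp}.

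First I would construct an approximate solution $\aU$ by the same iterative asymptotic expansion used in Theorem~\ref{thm:1}. The leading ansatz is $\binom{\Phi_\ep}{A_{\ep a}}(\cdot,x_{\bar a},t)\approx\modu(\cdot;q_\ep(x_{\bar a},t))$ with $q_\ep(x_{\bar a},t)=q(\ep x_{\bar a},\ep t)$, which exactly solves the critical 2d system \eqref{2dAHM} regardless of the value of $\lambda_\ep$ at leading order. Higher-order corrections are determined inductively by decomposing the residual into components tangent and normal to $M_N$ at $q$; the tangential projection yields a solvability condition at each stage.

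The new potential enters only at the stage where the geometric evolution equation is forced. Substituting $\lambda_\ep=1+\kt\ep^2$ into the Higgs equation of \eqref{dAHM} contributes an extra residual term
\[
\tfrac{\kt\ep^2}{2}(|\mphi(q_\ep)|^2-1)\mphi(q_\ep).
\]
Pairing this in $L^2(\R^2)$ with a tangent vector $\pp_q\mphi\cdot\delta q$ and using $V_0(q)=\tfrac18\int(|\mphi(q)|^2-1)^2$ gives, by definition of $\nabla_g$, precisely $\kt\ep^2\langle\nabla_g V_0(q),\delta q\rangle_g$. Since the wave map/heat flow term in the solvability condition also sits at order $\ep^2$ (as $q_\ep$ depends on the slow variables), the new contribution upgrades \eqref{dwm} to \eqref{dwmp}, which holds by hypothesis. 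No other stage of the expansion is altered at leading order, since the $O(\ep^2)$ correction is strictly smaller than the terms resolved at those stages.

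Finally, the true solution is realized as $U_\ep=\aU+\tU$ through the same contraction-mapping argument as in Theorem~\ref{thm:1}. All the tools used there -- hyperbolic energy estimates, parabolic semigroup bounds, Moser-type product estimates in $H^{2\ell+1}$, Sobolev embeddings -- are Fourier-based and transfer to $\R^2\times\T^{d-2}_{R/\ep}$ with constants independent of the torus radius, since the estimates are local in space and the $\aU$-dependent coefficients arising in the linearization inherit their bounds from $K$ and $\Theta$, not from the torus volume. The main obstacle is the algebraic verification that the $L^2$-projection of the $\kt\ep^2$-residual onto $T_q M_N$ matches the normalization of $\nabla_g V_0$ in \eqref{potential}, a time-dependent and $x_{\bar a}$-dependent variant of the Stuart--Palvelev calculation; once this pairing is confirmed, the fixed-point argument runs verbatim with the same $L=2d+11$ and the same function space as in Theorem~\ref{thm:1}.
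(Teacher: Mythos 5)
Your proposal follows essentially the same route as the paper: keep the critical leading ansatz $\modu(q_\ep)$, observe that the residual from $\lambda_\ep = 1+\kt\ep^2$ enters at order $\ep^2$, and check that its $L^2(\R^2)$-projection onto the span of $\{\tilde n_\mu\}$ produces exactly $\kt\nabla_g V_0(q)$, so the solvability condition at the first correction step becomes \eqref{dwmp} rather than \eqref{dwm}; the contraction-mapping closure is then unchanged. This matches the paper's proof.

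One point you gloss over is the \emph{other} reason the torus is required, beyond compatibility of the hypothesis $\Theta<\infty$ with \eqref{dwmp}. The new residual $\tfrac{\kt}{2}(|\mphi(q_\ep)|^2-1)\mphi(q_\ep)$ decays exponentially in $y_a$ but not at all in $y_{\bar a}$: it is an $O(1)$ function of the slow variables. On $\R^{d-2}$ it therefore fails to lie in $L^\infty_T H^{L-1}(\R^d)$, which breaks the estimate of Lemma~\ref{lem:S1U0} on which the whole inductive construction of $\aU$ rests. On $\T^{d-2}_R$ this function is trivially in $H^{L-1}(\R^2\times\T^{d-2}_R)$, and the lemma goes through verbatim. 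So periodicity is not merely a convenience making the hypotheses non-vacuous; it is needed for the approximate-solution construction to close at all when $\kt\ne 0$. Your claim that the constants in the linear machinery are ``independent of the torus radius'' is fine for the fixed-point step, but the preceding step genuinely uses the finite measure of $\T^{d-2}_R$.
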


\subsection{Physical consequences}

As noted above, earlier works \cite{CzubakJerrard, ParisePigatiStern} characterize the evolution of vortex filaments in solutions of \eqref{AHM}, \eqref{PAH}. In both cases these evolutions can lead to collisions of vortex filaments in 3 space dimensions, and it is natural to ask what the outcome of such a collision might be. 
%
%
One possibility is that the filaments might simply pass through each other with minimal interaction. Another is that each filament might split into two halves at the moment of collision, with the connectivity of the two halves of the filaments switching.

\begin{figure}[ht!]\label{fig:one}
\centering
\includegraphics[scale=0.5]{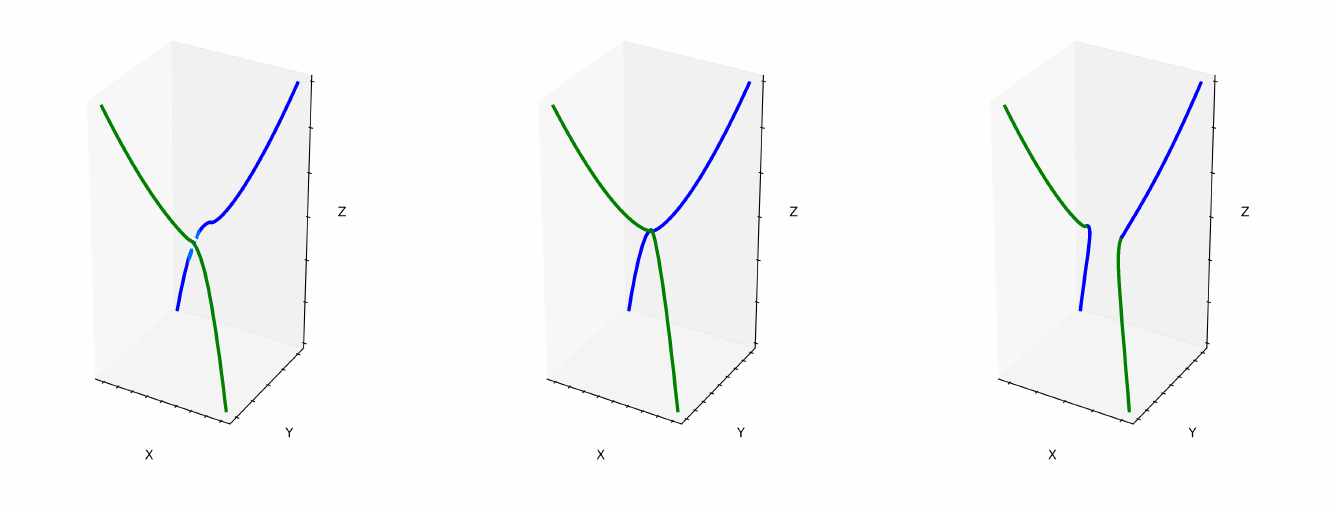}
\caption{filament collision and reconnection}
\label{outcomes}
\end{figure}

This second scenario, pictured in Figure \ref{fig:one},  is known as {\em reconnection}\footnote{also sometimes called intercommutation, recombination, or vortex cutting.}, and it is believed, based on physical arguments and extensive experimental and numerical evidence, to occur very widely in both quantum 
and classical fluids, see for example \cite{KoplikLevine},\cite{lathropetal} and \cite{oshima1977interaction}, \cite{KidaTakaoka}, \cite{saffman1990model}, \cite{kleckner2013creation} respectively. In particular, empirical evidence for the ubiquity of reconnection in fluids such as liquid helium, air, and water is abundant and convincing, though rigorous proof is completely lacking. Collisions of vortex filaments in the Abelian Higgs model \eqref{AHM} have been intensively studied in the physics community, motivated by models of cosmic strings and their possible cosmological signatures, and here too the evidence points to the inevitability of reconnection, at least for collisions at subrelativistic velocities, see for example \cite{shellard1987cosmic}, \cite{Matzner}, \cite{HananyHashimoto}. Similarly, numerical studies, see for example \cite{bou2001vortex}, \cite{Superconductors},  suggest that reconnection should be the generic outcome of vortex filament collisions in superconductors, as modelled by \eqref{PAH} and related equations. 

Prior mathematical work on \eqref{AHM} and \eqref{PAH} does not yield any information  as to whether vortex filament collisions result in reconnection. The only rigorous work of which we are aware on vortex dynamics in  \eqref{AHM}  (see \cite{CzubakJerrard}) ceases to hold before the first vortex collision, and the strongest results we know of on vortices in \eqref{PAH} (see \cite{ParisePigatiStern})  yield a description of vortex filament dynamics that holds globally in time and allows for collisions, but loses uniqueness when collisions occur and is compatible with either reconnection or its nonoccurence.

Our main result about reconnection is a straightforward corollary of Theorems \ref{thm:1} and \ref{thm:2}, together with standard
facts about the geometry of the moduli space $M_N$. The natural setting is $\R^3$, and for simplicity we consider solutions of \eqref{dAHM} with $2$ vortex filaments, as illustrated in Figure \ref{outcomes}, corresponding to maps into the $N=2$ Moduli space $M_2$.  In stating the theorem below we use the fact, reviewed in Section \ref{sec:2.1} below, that $M_2\cong \C^2$, and we use a particular global choice of coordinates for $M_2$ described in Theorem \ref{thm1.new}.

\begin{theorem}\label{thm:3}
Assume that $d = 3$. Let $q = (q_1, q_2): \R\times (0,T) \to M_2\cong \C^2$ be a solution of \eqref{dwm} satisfying \eqref{q.compact0} and \eqref{q.decay} with $\Theta<\infty$, and let $U_\ep:\R^3\times (0,T_0/\ep)\to \C\times \R^{3+1}$ be the solution of \eqref{dAHM} with $\lambda=1$ constructed in Theorem \ref{thm:1}.
Further assume that there exists $(\bar x_3, \bar t) \in \R\times (0,T_0)$ such that  $D = q_1^2-4q_2$ satisfies
\beq\label{suff.reconn}
D(\bar x_3, \bar t) = 0 , \qquad \{ \pp_t D(\bar x_3, \bar t) , \pp_3 D(\bar x_3, \bar t)\}\mbox{ are linearly independent over $\R$} .
\eeq

Then for all $\delta>0$, there exists $\ep_0 > 0$ such that the vortex filaments of $U_\ep$  reconnect in
\[
\{ (x_3,t) : |x_3 - \bar x_3|< \frac \delta\ep, \ |t - \bar t|< \frac \delta \ep\} \qquad\mbox{ if }0<\ep<\ep_0.
\]

The same result holds in the setting of Theorem 2.
\end{theorem}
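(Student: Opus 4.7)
The vortex filaments of $U_\ep$ are, by definition, the zero set of $\Phi_\ep$, and by Theorem \ref{thm:1} we have $\Phi_\ep = \mphi(q_\ep) + O(\ep^2)$ in $W^{2,\infty}$. In the Theorem \ref{thm1.new} coordinates on $M_2\cong\C^2$, the two vortex centers of $\mphi(\cdot\,;q)$ are exactly the roots $z_\pm(q) = \tfrac12(-q_1\pm\sqrt D)$ of $z^2+q_1z+q_2=0$, so the zero set of the approximation $\mphi(q_\ep)$ in $(x_a,x_3,t)$-space is
\[
\wS \;:=\; \bigl\{(x_a,x_3,t)\in\C\times\R\times\R : x_a^2+q_1(\ep x_3,\ep t)\,x_a+q_2(\ep x_3,\ep t)=0\bigr\},
\]
a double branched cover of the $(x_3,t)$-plane, branched over $\{D\circ q_\ep=0\}$.

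I would next put $\wS$ into a local normal form near the reconnection point. By \eqref{suff.reconn} the map $D\circ q:\R^2\to\C$ is an $\R$-diffeomorphism from a neighborhood of $(\bar x_3,\bar t)$ onto a neighborhood of $0\in\C$. Taking $\zeta := D\circ q$ as a complex chart on the base and $w := x_a + \tfrac12(q_1\circ q)$ as a holomorphic fiber coordinate, the identity $x_a^2+q_1x_a+q_2 = w^2 - \zeta/4$ displays $\wS$ locally as the smooth complex curve $\{4w^2=\zeta\}\subset\C^2$. Its projection $(w,\zeta)\mapsto\zeta$ is the standard degree-$2$ branched cover, and the time slices $\{t=\bar t\pm\tau\}$ pull back (through the diffeomorphism) to a pair of parallel lines in the $\zeta$-plane; lifting through $w=\tfrac12\sqrt\zeta$ yields two disjoint strands on each side of $\zeta=0$ whose endpoints are exchanged by the nontrivial monodromy of $\sqrt\zeta$. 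This is exactly the reconnected pattern of Figure \ref{outcomes}.

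The third step transfers this topological picture to the zero set of $\Phi_\ep$. Fix a small macroscopic box $B=\{|y_3-\bar x_3|<\delta\}\times\{|y_0-\bar t|<\delta\}$ on which the normal form is valid, and let $\tilde B$ denote its microscopic preimage (the reconnection region of the theorem). On the side walls $\partial\tilde B\times\C$ the roots $z_\pm$ are separated by at least $c\sqrt\delta$ and $|\nabla_{x_a}\mphi|$ at each root is bounded below by $c\sqrt\delta$, so once $\ep^2\ll\sqrt\delta$ the implicit function theorem applied to $\Phi_\ep-\mphi(q_\ep)=O(\ep^2)$ realizes the zero set of $\Phi_\ep$ on the side walls as a $C^1$ graph $O(\ep^2)$-close to the corresponding sheets of $\wS$, preserving connectivity along $\partial\tilde B$. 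In the interior of $\tilde B$ degree theory takes over: on a $2$-sphere $\Sigma\subset\C\times\tilde B$ enclosing the reconnection point, of microscopic radius $r_0$ with $\ep\ll r_0\ll 1/\ep$, one computes $|\mphi(q_\ep)|\ge c\min(r_0^2,\ep r_0)\gg\ep^2$, so the linear homotopy $s\Phi_\ep+(1-s)\mphi(q_\ep)$ is non-vanishing on $\Sigma$. The two maps therefore agree as homotopy classes $\Sigma\to\C\setminus\{0\}$, and combining the side-wall matching, this degree equality, and the local normal form $\{4w^2=\zeta\}$ forces the zero set of $\Phi_\ep$ in $\tilde B$ to be isotopic rel boundary to $\wS\cap(\C\times\tilde B)$, yielding reconnection of the filaments of $U_\ep$.

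\smallskip

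\noindent\textbf{Main obstacle.} The delicate point is the interior analysis, where $\mphi(q_\ep)$ has a degenerate (double) zero at the reconnection point and the IFT is unavailable. Worse, the two simple zeros of $\wS$ at microscopic distance $r$ from the reconnection point are separated only by $\sqrt{|D|}\sim\sqrt{\ep r}$, so no direct pointwise resolution of the zeros of $\Phi_\ep$ is possible from the $O(\ep^2)$ approximation of Theorem \ref{thm:1}. The remedy is the degree/homotopy substitute described above, for which one must select the macroscopic scale $\delta$ (and the enclosing radius $r_0$) depending only on $K,\Theta$ and the $C^2$-norm of $q$, so that $|\mphi(q_\ep)|$ dominates the $O(\ep^2)$ error on the enclosing sphere uniformly in $\ep$. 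The periodic case of Theorem \ref{thm:2} is identical, as the argument is purely local near $(\bar x_3,\bar t)$.
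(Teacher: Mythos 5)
Your first two paragraphs — the discriminant $D\circ q$ as a local diffeomorphism, the normal form $4w^2=\zeta$, the monodromy of $\sqrt\zeta$ exchanging the two sheets, and the side-wall IFT argument that zeros of $\Phi_\ep$ track those of $\mphi(q_\ep)$ to within $O(\ep^2)$ wherever the latter are simple — are exactly the ingredients the paper uses. In fact, once you have the side-wall matching along a loop $\gamma$ around $(\bar x_3,\bar t)$ at fixed slow radius $\delta'$, you are already done: the lifted endpoint $\zeta_\ep(\tau_1)$ stays within $r$ of $\zeta(\tau_1)$ and $\zeta_\ep(\tau_0)$ within $r$ of $\zeta(\tau_0)$, but $\zeta(\tau_0)$ and $\zeta(\tau_1)$ are the two distinct roots of $p(\cdot;q(\gamma(\tau_0)))$ at distance $\geq 3r$, so $\zeta_\ep(\tau_0)\ne\zeta_\ep(\tau_1)$. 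That nontrivial monodromy of the $\Phi_\ep$-zeros along the loop is what the paper \emph{means} by reconnection, and nothing more is required.

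Your third step, and the entire ``main obstacle'' paragraph, are where the proposal goes wrong. There is no interior analysis to do, because one never needs to resolve zeros of $\Phi_\ep$ near the degenerate point: the loop $\gamma$ is chosen at a fixed macroscopic distance from $(\bar x_3,\bar t)$, precisely so that the two vortex centers of $\mphi(q_\ep)$ are uniformly separated and $\mphi$ is uniformly nondegenerate there. The scaling worry you raise (separation $\sim\sqrt{\ep r}$ vs.\ error $O(\ep^2)$) is not an obstruction to the proof; it is an obstruction only to a stronger statement — pointwise resolution of the filament near the branch point — that the theorem never claims. Moreover the degree/homotopy substitute you propose does not actually deliver what you want: equality of degree on an enclosing sphere $\Sigma$ controls only the algebraic count of zeros inside, not the isotopy class rel boundary of the zero set, which could a priori contain extra cancelling sheets, closed loops, or other features consistent with the same degree. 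So that paragraph is both unnecessary and not rigorous as stated. Cutting everything after the side-wall matching and simply comparing lifted endpoints, as above, gives the paper's proof.
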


The proof consists in large part of a discussion intended to clarify what we mean by reconnection.

\begin{remark}
Although we require  $(q_{x_3},\kz q_t)|_{t=0}\in H^{12}\times H^{12}$  when $d=3$  (see \eqref{fix.parameters}), condition \eqref{suff.reconn} is open with respect to  topologies such as $(q, \kz \pp_t q)|_{t=0}\in H^2\times H^1$, in the sense that a small $H^2\times H^1$ perturbation of the initial data will lead to the existence of a perturbed point $(\tilde x_3, \tilde t)$ such that $|(\tilde x_3, \tilde t) - (\bar x_3, \bar t)|$ is small and \eqref{suff.reconn} holds at $(\tilde x_3, \tilde t)$.

Moreover, as noted in Remark \ref{rmk:0}, the estimates of Theorem \ref{thm:1} are stable with respect to perturbations of order $\ep^2$ in  $H^5\times H^4$, as long as they satisfy compatibility and gauge conditions appearing in \eqref{modified.data} and an orthogonality condition appearing in 
\eqref{cmu.initialdata}. This last condition says in effect that the perturbations leave $(q,\kz \pp_tq)|_{t=0}$ invariant. 

Theorems \ref{thm:3} and \ref{thm:1} thus provide some stability for the phenomenon of reconnection with respect to perturbations, albeit rather smooth ones.
\end{remark}

We know of only a few other works that give rigorous treatments of issues connected to reconnection. A reduced model for a single vortex filament reconnecting with a domain boundary is derived formally in \cite{CJO} and rigorously analyzed in \cite{MerleZaag}. Solutions of the Navier-Stokes  and Gross-Pitaevskii equations with reconnecting vortex filaments are constructed in \cite{EncisoPeraltaSalas1}, \cite{EncisoPeraltaSalas2}.

\subsection{Notation}

If it is unclear which connection $1$-form is used to form a covariant derivatives, we will explicitly write the $1$-form as a subscript, rather than simply indicating the index. For example, when considering a perturbation $A+\ep^2\tA$ of a $1$-form $A$, we may write
\[
D_{A_a +\ep^2 \tA_a}\Phi = D_{A_a} - i\ep^2 \tA_a \Phi
\]

We implicitly sum over repeated indices. 
We do not raise and lower indices with a metric tensor, and we do not insist that 
lower indices be paired with upper indices in implicit sums.
We impose the conventions that
\begin{itemize}
    \item indices $i,j,k$ run from $1$ to $d$. These correspond to all spatial directions. They will appear infrequently.
    \item once we fix a vortex number $N$ and consider maps into $M_N$ or perturbations thereof (see Section \ref{subsec:mods} below), indices $\mu,\nu, \ldots$ are understood to run from $1$ to $2N$, the dimension of $M_N$.
    \item indices $a,b, \ldots$ run from $1$ to $2$. These may be thought of as corresponding to ``horizontal" directions, roughly orthogonal to the vortex filaments.
    \item indices $\bar a, \bar b, \ldots$ run from $3$ to $d$.
\end{itemize}
We may sometimes explicitly indicate sums, as a reminder of the range over which the sum runs.

We will often write $U$ to denote an element of $\C\times \R^{d+1}$ or a function taking values in $\C\times \R^{d+1}$,
with components typically written
\[
U = \begin{pmatrix}
    \Phi\\A
\end{pmatrix} =
    \begin{pmatrix}
    \Phi\\
    A_a\\
    A_{\bar a}\\
    A_0
\end{pmatrix} ,\qquad\mbox{ also written as }\quad U = \begin{pmatrix}
    u\\
    A_{\bar a}\\
    A_0
\end{pmatrix}, \qquad\mbox{ where }u = \binom{\Phi}{A_a}.
\]
Here $A_a$ denotes $(A_a)_{a=1,2}$ and $A_{\bar a}$ denotes $(A_{\bar a})_{\bar a=3,\ldots L, d}$.

Given a map $U:\R^d\times (0,T)\to \C\times \R^{d+1}$, we will write
\begin{equation}\label{S.def}
\begin{aligned}
S_\phi[U]& := 
(\kz D_0+ \ep\ko)D_0 \Phi - D_j D_j \Phi+\frac12(|\Phi|^2-1)\Phi  \\
S_j[U] &:= 
(\kz\pp_0+\ep\ko)F_{0j} - \pp_k F_{kj} - (i\Phi, D_j\Phi) , \qquad j=1,\ldots, d\\
S_0[U] &:= -\pp_k F_{k0}- (i\Phi, D_0\Phi)
\end{aligned}\ 
\end{equation}
and
\beq\label{S.components}
S[U] 
=  \begin{pmatrix}
    S_\phi[U]\\
    S_a[U]\\
    S_{\bar a}[U]\\
    S_0[U]
\end{pmatrix}
\ = \  \begin{pmatrix}
    S_u[U]\\
    S_{\bar a}[U]\\
    S_0[U]
\end{pmatrix}, \quad \mbox{ with } S_u[U] = \binom {S_\phi[U]}{S_a[U]} .
\eeq
With this notation,  \eqref{dAHM} can be written concisely as
\[
S[U]=0.
\]

We will often encounter functions with exponential decay in the $x_a\in \R^2$ directions. For these we will use the weighted norm
\beq\label{weightednorm}
\| f \|_{W^{k,\infty}_\gamma(\R^d)}
:= \sum_{|r|\le k}\sup_{y\in \R^d} |\pp^r f(y)| e^{\gamma|x_a|}
\eeq
where the sum is over multiindices $r = (r_1,\ldots, r_d)$ on $\R^d$.
It is clear that if $\gamma_1+\gamma_2\ge \gamma$, then
\[
\| f g \|_{W^{k,\infty}_\gamma(\R^d)} 
\le C_k \| f \|_{W^{k,\infty}_{\gamma_1}(\R^d)}\| g \|_{W^{k,\infty}_{\gamma_2}(\R^d)}
\]
In particular $W^{k,\infty}_\gamma(\R^d)$ is an algebra for $\gamma\ge 0$.

For $k\ge j\ge 0$ we will write $X^{k,j}_{\gamma}$ to denote $H^k\cap W^{k-j,\infty}_\gamma$, which has the natural norm
\beq\label{Xkj}
\| \psi\|_{X^{k,j}_{\gamma}} := \| \psi\|_{H^k} +\| \psi\|_{W^{k-j,\infty}_\gamma}.
\eeq

\subsection{outline of the paper, and elements of the proof}

Our overall strategy is similar to the one used in \cite{delPino-J-Musso} for a semilinear scalar wave equation.
We seek a solution of the form
$U_\ep = \aU +\tU$, 
where $\aU$ is an approximate solution that we construct by a rigorous asymptotic expansion, and $\tU$ solves a perturbation equation.
Thus we aim to solve $S[\aU+\tU]=0$. By expanding the differential operator $S[\, \cdot\,]$ around $\aU$ and rearranging, we rewrite the equation in the form
\beq\label{lin1}
S'[\aU](\tU) =  -S[\aU] - \calN_0[\aU](\tU)
\eeq
where $S'[\aU]$ is a linear differential operator, displayed in \eqref{Sprime}, with coefficients that depend on $\aU$, and $\calN_0[\aU](\tU)$ is a remainder in which we collect term that are nonlinear in $\tU$.

To carry this out, we start in Section \ref{sec:2dahm} by reviewing background about the 2d Abelian Higgs model.
In Section \ref{sec:construction} we construct the approximate solution $\aU$, starting from a solution $q$ of \eqref{dwm}. We require it to have several properties: 
\begin{itemize}
\item The $(\aPhi, \aAa)$ components must be close to $(\mphi(q_\ep), \mA_a(q_\ep))$ as defined in \eqref{mphi.mA}.
\item The error term $S[\aU]$, as defined in \eqref{S.def}, must be small -- this is what it means for $\aU$ to be an approximate solution.
\item The differential operator $S'[\aU]$ needs certain good properties. Some of these follow from the closeness of
$(\aPhi, \aAa)$ to $(\mphi(q_\ep), \mA_a(q_\ep))$, but we also require that $\aU$ be sufficiently smooth.
\end{itemize}

In Section \ref{sec:auxsys} we rewrite \eqref{lin1} in a form more suitable for estimates, which end up being
\beq\label{lin2}
(\kz \pp_0^2+ \ep\ko\pp_0)\tU + \calM\tU + P\tU = -\calN(\tU) - S[\aU]    
\eeq
where $\calM = \calM[\aU]$ is a second-order elliptic operator and $P$ is a first-order operator with small coefficients.

In Section \ref{sec:linests} we establish estimates for a linear model of \eqref{lin1}:
\[
(\kz \pp_0^2+ \ep\ko\pp_0)\tU + \calM\tU + P\tU = \eta
\]
where we regard the right-hand side as given. 

In Section \ref{sec:mainproofs} we use the linear estimates of Section \ref{sec:linests} in a contraction mapping argument to prove existence and estimates of $\tU$ solving \eqref{lin1}, thereby completing the proofs of Theorems \ref{thm:1} and \ref{thm:2}.
This argument determines the degree of smoothness needed in the linear estimates of Section \ref{sec:linests}, and hence the smoothness required for the approximate solution $\aU$, which appears in the coefficients of the modified system \eqref{lin2}. It also imposes constraints on the smallness we require of $S[\aU]$.

Section \ref{sec:reconnection} contains the proof of Theorem \ref{thm:3}, and the proofs of various auxiliary facts, such as linear estimates used in the construction of $\aU$, are presented in an appendix.

Gauge symmetry \eqref{gauge}  gives rise to analytical issues at several stages in our argument. For example, \eqref{dAHM} as rewritten in \eqref{lin1} retains its gauge symmetry. This has well-known unpleasant analytic consequences, including non-uniqueness of solutions and the impossibility of estimating any quantity that is not gauge-invariant. In passing from \eqref{lin1} to \eqref{lin2}, 
we modify the equation in a way that enforces a choice of gauge, guarantees uniqueness of solutions and facilitates estimates. The gauge we use, see \eqref{dynamic.gauge}, may be described as  a perturbative Lorenz gauge in the hyperbolic case and a perturbative caloric gauge in the parabolic case. In the general cases it contains elements of both. We have not seen it in the prior literature. 

The necessity of fixing a gauge also arises implicitly in the construction of $\aU$; it can be seen in Lemmas \ref{lem:J2} and \ref{lem:Sk0.go}, as well as conclusion \eqref{system.gauge2} of Proposition \ref{system.pde}. The gauge we use in this construction, which may be described as a perturbative horizontal Coulomb gauge, differs from that built into the modified system \eqref{lin2}.

A key subtlety arises in the linear estimates of Section \ref{sec:linests}, due to the failure of the quadratic form associated to the operator $\calM$ in \eqref{lin2} to fully control the $L^2$ norm of mappings on which it operates. This necessitates estimates of a projection of $\tU$ onto the poorly-controlled part, which we write 
as 
\[
\sum_\mu c_\mu n_\mu = \sum_\mu c_\mu(y_{\bar a}, y_0) n_\mu(y_a; q_\ep(y_{\bar a }, y_0))\ ,
\]
where $n_\mu:\R^2\times M_N\to \C\times \R^{2}$ for $\mu=1,\ldots, 2N$ are mappings introduced in Section \ref{sec:2dahm} and $(c_\mu):\R^{d-2}\to \R^{2N}$ are coefficients arising from the projection of $\tU$.
The most delicate part of the linear estimates involves controlling norms of $c_\mu$. These functions satisfy a wave equation 
of the form
\[
        (\kz \pp_{tt} +\ep \ko \pp_t - \Delta_{\bar a}) c_\mu = H
\]
where, roughly speaking, $H = O(\ep)$. It follows from this that
\[
\| c_\mu(\cdot, t)\|_{L^2}  \approx t^3\int_0^t  \| H (\cdot, t)\|_{L^2}^2 \, dt \approx t^4\ep^2
\]
which is too weak to control solutions for times of order $1/\ep$, as we require. The same issue appears when trying to control higher derivatives. We address this problem by carefully exploiting the structure of the operator $P$ in \eqref{lin2} to rewrite the equation satisfied by $c_\mu$ in the form
\[
        (\kz \pp_{tt} +\ep \ko \pp_t - \Delta_{\bar a}) c_\mu =
        \pp_0 h_0 + \nabla_{\bar a} \cdot h_{\bar a} + h
\]
where now $h = O(\ep^2)$. This and the divergence form of the other terms allow us to establish estimates showing that $c_\mu$ remains small for times of order $1/\ep$, as needed. These arguments appear in Section \ref{sec:Qtop}.

Our strategy differs from that employed by Stuart \cite{Stuart}, who employs a modulation argument to obtain an exact splitting of a solution of \eqref{AHM} into a part in the moduli space and an orthogonal part. He then estimates the growth of the orthogonal part. We believe that this would be very hard to carry out in $3$ or more space dimensions due to loss of derivatives in the modulation equation. Nonetheless, our work relies heavily on the foundational ideas developed in \cite{Stuart}.

\subsection{A useful lemma}

We conclude this introduction with a lemma that is used in enforcing the choice of gauge built into the modified system \eqref{lin2}.
It also shows that, as noted above,
the equation $S_0[U] =0$ is redundant but not inconsistent in the parabolic case.

\begin{lemma}\label{lem:identity}
For every sufficiently smooth $U = \binom \Phi A$,
\begin{equation}\label{identity}
( S_\phi[U], i\Phi) = -(\kz\pp_0 +\ep\ko)S_0[U] + \pp_j S_j[U]
\end{equation}
where $j$ is summed from $1$ to $d$.
In particular this holds for $U$ of the form
$U = \aU +\tU$, where
\beq\label{suff.smooth1}
\aU\in L^\infty_T W^{2\ell+1,\infty}\mbox{ with }
\pp_t\aU\in L^\infty_T W^{2\ell,\infty}\mbox{ and }
\pp_{tt}\aU\in L^\infty_T W^{2\ell-1,\infty}
\eeq
and
\beq\label{suff.smooth2}
\tU\in L^\infty_T H^{2\ell+1}\mbox{ with }
\pp_t\tU\in L^\infty_T H^{2\ell}\mbox{ and }
\pp_{tt}\tU\in L^\infty_T H^{2\ell-1}.
\eeq
with $2\ell > d/2$.
\end{lemma}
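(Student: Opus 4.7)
\medskip
\noindent\textbf{Proof proposal.} The identity is purely algebraic/differential at each point, so the plan is to establish it first for smooth $U$ by direct computation, and then recover the stated regularity class by a density argument controlled via the Sobolev assumptions \eqref{suff.smooth1}--\eqref{suff.smooth2}.

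\smallskip

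For smooth $U$ the computation rests on three elementary facts: (a) the covariant Leibniz rule
\[
(D_\alpha \Psi, \Phi) + (\Psi, D_\alpha \Phi) = \pp_\alpha(\Psi, \Phi),
\]
valid because the imaginary parts of $A_\alpha$ cancel when one expands the inner product and uses $(iv,w)=-(v,iw)$; (b) $D_\alpha(i\Phi)=iD_\alpha\Phi$, which follows from the linearity of $D_\alpha$; (c) the antisymmetry $F_{\alpha\beta}=-F_{\beta\alpha}$. Consequences of (a)--(b) are $(D_j D_j\Phi, i\Phi)=\pp_j(D_j\Phi,i\Phi)$ and $(D_0 D_0\Phi,i\Phi)=\pp_0(D_0\Phi,i\Phi)$, since the ``boundary'' term $(D_\alpha\Phi, D_\alpha(i\Phi))=(D_\alpha\Phi,iD_\alpha\Phi)$ vanishes by $(v,iv)=0$. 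The potential term $\tfrac12(|\Phi|^2-1)(\Phi,i\Phi)$ also vanishes for the same reason.

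\smallskip

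Pairing $S_\phi[U]$ with $i\Phi$ then yields
\[
(S_\phi[U],i\Phi) = (\kz\pp_0+\ep\ko)(D_0\Phi,i\Phi) - \pp_j(D_j\Phi,i\Phi).
\]
Using symmetry of the pairing to write $(D_\alpha\Phi,i\Phi)=(i\Phi,D_\alpha\Phi)$, and reading off from \eqref{S.def},
\[
(i\Phi,D_0\Phi) = -S_0[U] - \pp_k F_{k0}, \qquad (i\Phi,D_j\Phi) = -S_j[U] + (\kz\pp_0+\ep\ko)F_{0j} - \pp_k F_{kj}.
\]
Substituting produces the desired right-hand side $-(\kz\pp_0+\ep\ko)S_0[U]+\pp_j S_j[U]$ together with remainder terms
\[
-(\kz\pp_0+\ep\ko)\bigl[\pp_k F_{k0} + \pp_j F_{0j}\bigr] + \pp_j\pp_k F_{kj}.
\]
The first bracket vanishes because $F_{0j}=-F_{j0}$ (so $\pp_j F_{0j}=-\pp_k F_{k0}$), and the last term vanishes by contracting the symmetric operator $\pp_j\pp_k$ against the antisymmetric tensor $F_{kj}$.

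\smallskip

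For the regularity statement, the point is that every term appearing in the identity is at most second order in $U$ and at most quadratic in $U$. Under \eqref{suff.smooth1}--\eqref{suff.smooth2} with $2\ell>d/2$, the Sobolev embedding $H^{2\ell-1}\hookrightarrow L^\infty$ combined with the algebra property of $H^{2\ell+1}$ ensures that all products such as $A_\alpha A_\beta \Phi$, $A_\alpha \pp_\beta \Phi$, $|\Phi|^2\Phi$, $\pp_\alpha(A_\beta\Phi)$, etc.\ lie in $L^\infty_T L^2_{\mathrm{loc}}$, and all the second derivatives lie in $L^\infty_T H^{2\ell-1}$. Both sides of \eqref{identity} are therefore well-defined elements of $L^\infty_T L^2_{\mathrm{loc}}$, and the smooth computation passes to the limit under mollification of $U$ in the spatial variables followed by regularization in $t$. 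The only mild obstacle is accounting carefully for the signs and the definition of $(\cdot,\cdot)$ in the expansion above; once those are fixed there is no hidden difficulty.
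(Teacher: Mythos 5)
Your proof is correct and follows essentially the same route as the paper: both rest on the covariant Leibniz rule $\pp_\alpha(D_\alpha\Phi,i\Phi)=(D_\alpha D_\alpha\Phi,i\Phi)$ (using $(v,iv)=0$ to kill the cross term and the potential term) and the antisymmetry of $F_{\alpha\beta}$. The only difference is bookkeeping — you substitute the definitions of $S_0$, $S_j$ back into the $S_\phi$ pairing to show the remainder vanishes, whereas the paper computes $\pp_j S_j[U]$ and $-(\kz\pp_0+\ep\ko)S_0[U]$ separately and adds them — which is the same calculation arranged differently.
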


We emphasize that \eqref{identity} is an algebraic identity that does not rely on $U$ solving any equation.

\begin{proof}
We compute
\begin{align*}
( S_\phi[U], i\Phi) &= ((\kz D_0+ \ep\ko)D_0 \Phi,i\Phi)  - (D_jD_j \Phi , i\Phi)\\
&= (\kz\pp_0+\ep\ko)(D_0\Phi, i\Phi) - \pp_j(D_j\Phi, i\Phi).
\end{align*}
On the other hand, the antisymmetry of $F_{jk}$ implies that
\[
\pp_j S_j[U] = - \pp_j (i\Phi, D_j\Phi) +(\kz\pp_0+\ep\ko) \pp_j  F_{0j},
\]
and it is clear that
\[
-(\kz \pp_0 +\ep\ko) S_0[U] = (\kz \pp_0 +\ep\ko)(i\Phi, D_0\Phi)  + (\kz\pp_0+\ep\ko)  \pp_j F_{j0}.
\]
The lemma follows directly from these three identities, since $F_{j0}=-F_{0j}$.

It is straightforward to check that \eqref{suff.smooth1}, \eqref{suff.smooth2} sufficient to justify the computations above.
\end{proof}

The above lemma has the following consequence, which we will use later to fix the gauge in the perturbation equation \eqref{lin1}.  

\begin{lemma}\label{lem:Z}
Assume that $U = \binom \Phi A : \R^{d+1}\to \C\times \R^{d+1}$ and $Z: \R^{d+1}\to \R$ are sufficiently smooth solutions of the equations
\begin{equation}\label{Z.system1}
\left.\begin{aligned}
S_\phi[U] &= -i\Phi Z\\
S_j[U] &= - \pp_j Z\\
S_0[U] &= -\pp_0 Z
\end{aligned} \ \ \right\}
\end{equation}
Then $Z$ solves
\beq\label{Z.eqn}
(\kz\pp_0+\ep\ko) \pp_0Z - \Delta Z + |\Phi|^2 Z =0.
\eeq
In particular this holds if $U$ has the form
$U=\aU+\tU$ for $\aU$ and $\tU$ satisfying \eqref{suff.smooth1}, \eqref{suff.smooth2}, and if 
\beq\label{suff.smooth3}
Z\in L^\infty_T H^{2\ell}\quad\mbox{ with } \ \ Z_t\in L^\infty_T H^{2\ell-1}
\eeq
for $2\ell>d/2$.

Finally, assume in addition $Z  = \kz Z_t = 0$ at $t=0$, and that there exists some 
$c>0$ such that for every $t\in (0,T)$
\beq\label{unif.gap}
\int_{\R^d} |\nabla f(x)|^2 +| \Phi(x,t)|^2 f(x)^2\,dx\  \ge \  c \int_{\R^d} f^2(x)\, dx \qquad\mbox{ for all }f\in H^1(\R^d)
\eeq
then $Z$ vanishes identically.
\end{lemma}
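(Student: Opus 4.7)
The first claim is a direct algebraic consequence of Lemma \ref{lem:identity}. Plugging the three equations of \eqref{Z.system1} into the identity \eqref{identity}, the right-hand side becomes
\[
-(\kz\pp_0+\ep\ko)(-\pp_0 Z) + \pp_j(-\pp_j Z) \;=\; (\kz\pp_0+\ep\ko)\pp_0 Z - \Delta Z,
\]
while the left-hand side is $(-i\Phi Z,\, i\Phi)$. Since $Z$ is real, the definition $(v,w)=\tfrac12(v\bar w+w\bar v)$ gives
\[
(-i\Phi Z,\, i\Phi) \;=\; Z\cdot\tfrac12\bigl((-i\Phi)(-i\bar\Phi)+(i\Phi)(i\bar\Phi)\bigr) \;=\; -|\Phi|^2 Z,
\]
and rearranging yields \eqref{Z.eqn}. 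The hypotheses \eqref{suff.smooth1}--\eqref{suff.smooth3} are exactly what is required to apply Lemma \ref{lem:identity} pointwise.

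For the vanishing claim I run an energy estimate that handles the hyperbolic ($\kz>0$) and parabolic ($\kz=0$) cases uniformly. Define
\[
E(t) := \int_{\R^d}\!\left(\frac{\kz}{2}\,Z_t^2 + \frac12|\nabla Z|^2 + \frac12|\Phi|^2 Z^2\right) dx .
\]
The smoothness \eqref{suff.smooth3}, together with Sobolev embedding (using $2\ell>d/2$) applied to \eqref{suff.smooth1}--\eqref{suff.smooth2}, gives $\Phi,\pp_t\Phi\in L^\infty_T L^\infty_x$, so $E\in C^1([0,T])$. Multiplying \eqref{Z.eqn} by $Z_t$, integrating, and integrating by parts (boundary terms at infinity vanish since $Z(\cdot,t)\in H^2(\R^d)$) produces
\[
E'(t) \;=\; -\ep\ko\int_{\R^d} Z_t^2\,dx + \int_{\R^d}(\Phi,\pp_t\Phi)\,Z^2\,dx .
\]
The initial condition $Z(\cdot,0)=0$ forces $\nabla Z(\cdot,0)=|\Phi|Z(\cdot,0)=0$, and $\kz Z_t(\cdot,0)=0$ kills the remaining contribution whether $\kz$ is zero or positive; hence $E(0)=0$.

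To close the estimate I combine the gap condition \eqref{unif.gap} with $C_1 := \|\Phi\,\pp_t\Phi\|_{L^\infty_T L^\infty_x}$:
\[
\left|\int(\Phi,\pp_t\Phi)\,Z^2\,dx\right| \;\le\; C_1 \!\int Z^2\,dx \;\le\; \frac{C_1}{c}\!\int(|\nabla Z|^2+|\Phi|^2 Z^2)\,dx \;\le\; \frac{2C_1}{c}\, E(t) .
\]
Since $-\ep\ko\int Z_t^2 \le 0$, this yields $E'(t)\le (2C_1/c)\, E(t)$, and Gronwall with $E(0)=0$ forces $E\equiv 0$. A final application of \eqref{unif.gap} to the identity $\int(|\nabla Z|^2+|\Phi|^2 Z^2)\,dx = 0$ gives $Z\equiv 0$ on $\R^d\times(0,T)$.

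The only substantive difficulty is recognizing that the gap condition \eqref{unif.gap} is the correct replacement for a classical mass term in the energy: without it, $E$ controls only $\nabla Z$ and $|\Phi|Z$ but not $Z$ itself, and the Gronwall step fails. This is most visible in the parabolic case $\kz=0$, where $E$ carries no time-derivative contribution at all, so propagation of zero initial data relies entirely on absorbing $\int Z^2$ into $\int(|\nabla Z|^2+|\Phi|^2 Z^2)$. The integration by parts and the differentiation of $E$ under the integral sign are otherwise routine given the regularity \eqref{suff.smooth1}--\eqref{suff.smooth3}.
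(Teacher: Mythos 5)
Your argument is correct and follows the same route as the paper's proof: derive \eqref{Z.eqn} by substituting \eqref{Z.system1} into the identity \eqref{identity}, then run the standard energy estimate $E'(t)\le CE(t)$ with $E$ built from $\kz Z_t^2$, $|\nabla Z|^2$, and $|\Phi|^2Z^2$, using the Sobolev embeddings to bound $\pp_t|\Phi|^2$ in $L^\infty$ and the gap condition \eqref{unif.gap} to close the Gronwall loop and to recover $Z\equiv 0$ from $E\equiv 0$. The only (purely cosmetic) difference is that you apply Gronwall in its differential form, whereas the paper first integrates the energy identity in time and then applies the integral form.
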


\begin{proof}
Equation \eqref{Z.eqn} follows directly from combining \eqref{identity} and \eqref{Z.system1}. It is straightforward to see that \eqref{suff.smooth1}, \eqref{suff.smooth2}, \eqref{suff.smooth3} are sufficient to justify these computations.

Next, we define
\[
e(t) := \int_{\R^d} \kz (\pp_0 Z)^2 +|\nabla Z|^2 +|\Phi|^2 Z^2\, dx + \kz \int_{\R^d} (\pp_0 Z)^2\, dx\Big|_{t}
\]
We multiply \eqref{Z.eqn} by $\pp_{0}Z$ and integrate by parts to find that
\beq\label{Z.energy1}
\frac 12
\frac d{dt} e(t)
= \int_{\R^d} \frac 12 (\pp_0|\Phi|^2)Z^2\, dx 
\eeq
In view of a Sobolev embedding and our assumption that $2\ell>\frac d2$,
\begin{align*}
\| \pp_t U \|_{L^\infty_TL^\infty} &\le
\| \pp_t \aU \|_{L^\infty_TL^\infty} +
\| \pp_t \tU \|_{L^\infty_TL^\infty} \\
&\le\| \pp_t \aU \|_{L^\infty_TW^{2\ell, \infty}} +
\| \pp_t \tU \|_{L^\infty_TH^{2\ell}} .
\end{align*}
It easily follows that $\| \pp_t|\Phi|^2 \|_{L^\infty(\R^d\times (0,T))} \le C$.
Thus if \eqref{unif.gap} holds, we can integrate \eqref{Z.energy1} from $0$ to $t$ to find that
\[
e(t) \le  e(0) + C \int_0^t\int_{\R^d} Z^2\, dx\, dt \le e(0)+ C\int_0^t e(s)\,ds.
\]
If in addition $Z = \kz \pp_t Z=0$ at $t=0$, then $e(0)=0$, and Gr\"onwall's ineqiuality implies that 
$e(t)=0$ for all $t$.
\end{proof}

\section{the 2d Abelian Higgs model}\label{sec:2dahm}

The 2d Euclidean Abelian Higgs model is a system of equations for a pair $u = (\phi, A):\R^2\to \C\times \R^2$
obtained as the Euler-Lagrange equations of the functional
\begin{equation}\label{2denergy}
E(u) := \int_{\R^2} e(u)\, dx, \qquad e(u) := \frac 12 \sum_{a=1}^2 |D_a \phi|^2 + \frac 12 (F_{12})^2 + \frac 18(|\phi|^2-1)^2,
\end{equation}
where $F_{12} = \pp_1A_2-\pp_2A_1$ and $D_a = \pp_a - i A_a$. We will sometimes tacitly identify $A$ with the $1$-form $A_1 dx_1+A_2 dx_2$.
We define the {\em energy space} 
\[
\calE :=  \left\{ u\in H^1_{loc}(\R^2; \C\times \R^2) : \int_{\R^2} e(u) <\infty \right\}.
\]
For $u\in\calE$. we define the current
\[
j(u) := ( (i\phi, D_1\phi), (i\phi, D_2\phi))
\]
and the vorticity
\[
\omega(u) : = \ \frac 12 \big[ \partial_1 j_2(u) - \partial_2j_1(u) + F_{12}\big].
\]
The following identity is due to Bogomolnyi:
\beq\label{Btrick}
e(u)=\pm \omega(u) + \frac 12 \abs{(D_{1}\pm iD_{2})\phi}^{2}+\frac 12 \big ( F_{12}\pm\frac 1{2}(\abs{\phi}^{2}-1)\big)^{2}. 
\eeq
In particular, if $u\in \calE$, then $\omega(u)\in L^1(\R^2)$. Moreover, every $u\in \calE$ belongs to
exactly one of the sets
\[
\calE_N := \left\{ u\in \calE : \int_{\R^2} \omega(u) \ dy = \pi N\right\}
\]
where $N$ is an integer called the {\em vortex number}\footnote{If $|\phi|^2- 1$  and $D_A\phi\to 0$ sufficiently rapidly at $\infty$, then one can integrate by parts to find that
$\int_{\R^2} \omega = \frac 12\int_{\R^2} F_{12}$. It is traditional to define the vortex number only for $u$ satisfying these decay conditions, and to define it as $\frac 1{2\pi} \int F_{12}$. The definition we employ, following \cite{CzubakJerrard},  is valid for all finite-energy $u$ and agrees with the traditional definition whenever the latter makes sense.}, see for example Lemma 2.4 in \cite{CzubakJerrard}.
Noting that $\omega(\bar \phi, -A) = - \omega(\phi ,A)$, and because the phenomena we wish to study do not appear in $\calE_0$ we will always restrict our attention to $u\in \calE_N$ for $N\ge 1$. Then it follows from \eqref{Btrick} that $u\in \calE_N$ minimizes the energy in $\calE_N$ exactly when
\begin{equation}\label{eq:Bogsys}
    \begin{aligned}
        (D_1 + i D_2)\phi &= 0\\
        F_{12} + \frac 12 (|\phi|^2-1) &=0.
    \end{aligned}
\end{equation}

\subsection{the Moduli space}\label{sec:2.1}

The first three conclusions of the following theorem constitute a mild reformulation of classical results due to Taubes, see \cite{JaffeTaubes}, and the final conclusion is taken from Theorem 2 in \cite{Palvelev2}.

\begin{theorem}
\label{thm1.new}\quad

{\bf 1}. For every $N\ge 1$ and $q = (q_1,\ldots, q_N)\in \C^N$, there exists a unique solution $\modu = (\mphi, \mA) \in \calE_N$ of \eqref{eq:Bogsys}
such that
\begin{equation}\label{eq:TaubesSol1}
\mphi(z) = f(z) \frac{p(z;q)}{\sqrt{1+|p(z;q)|^2}} \quad \mbox { with } f(z)>0 \mbox{ everywhere, }
\end{equation}
where
\begin{equation}\label{pzq.def}
p(z;q) :=  z^N+ q_1z^{N-1} + \cdots + q_N.
\end{equation}
Moreover, $f$ is smooth and satisfies
\beq\label{eq:TaubesSol2}
c\le f(z) \le c^{-1} \qquad\mbox{ for some }c = c(q)\in (0,1).
\eeq
This solution will be denoted $\modu(\cdot; q) = (\mphi, \mA)(\cdot; q)$ when we wish to indicate its dependence on $q$, and for every $q$ it satisfies
\beq\label{eq:coulomb}
\nabla \cdot \mA(\cdot; q) = 0.
\eeq

{\bf 2}. Every solution of \eqref{eq:Bogsys} in $\calE_N$ is gauge-equivalent to $\modu(\cdot; q)$ for some $q\in \C^N$, and every critical point in $\calE_N$ of the $2d$ energy \eqref{2denergy} solves the first-order system \eqref{eq:Bogsys}.

{\bf 3}. For every $\delta>0$ and $k>0$, if $|q|\le K$ then
$\modu(\cdot; q)$ satisfies
\begin{equation}
\label{eq10}
|D_1\mphi|+|D_2\mphi| +\big(1-|\mphi|\big) \le C \exp\big(-(1-\delta)|z|\big).
\end{equation}
for $C= C(K,\delta)$.

{\bf 4}. For every $z\in \C$, $\modu(z;q)$ depends smoothly on $q\in \C^N$.

\end{theorem}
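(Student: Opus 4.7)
The plan is to follow the classical Jaffe--Taubes strategy in which the Bogomolnyi system is reduced to a scalar semilinear elliptic equation with prescribed logarithmic singularities, together with a subsequent implicit-function-theorem argument for smooth dependence in the spirit of Palvelev \cite{Palvelev2}.

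First, for part 1, I would reduce the system \eqref{eq:Bogsys} to the Taubes equation. Given $q\in\C^N$, let $z_1,\dots,z_N\in\C$ (with multiplicities $n_i$) be the roots of $p(\cdot;q)$. The first Bogomolnyi equation $(D_1+iD_2)\mphi=0$ is a Cauchy--Riemann-type equation, so $\mphi$ must have zeros exactly at the $z_i$ with the prescribed multiplicities. Writing $\mphi=|\mphi|e^{i\theta}$ and using $(D_1+iD_2)\mphi=0$ to solve for $\mA$ in terms of the modulus and phase, one finds
\[
\mA_1=\tfrac12\pp_2\log|\mphi|^2+\pp_1\theta,\qquad \mA_2=\pp_2\theta-\tfrac12\pp_1\log|\mphi|^2.
\]
Taking $\theta=\arg p(\cdot;q)$ (which is multi-valued but has a single-valued gradient) and setting $u=\log|\mphi|^2$, the second Bogomolnyi equation becomes the Taubes equation
\[
\Delta u = e^u-1+4\pi\sum_i n_i\,\delta_{z_i}\quad\text{on }\R^2,\qquad u\to 0\text{ as }|z|\to\infty.
\]
The classical theorem of Jaffe--Taubes guarantees a unique solution with the prescribed singular behavior $u-2n_i\log|z-z_i|\in C^\infty$ near each $z_i$ and $u<0$ everywhere by the maximum principle. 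Reconstructing $f=e^{u/2}\sqrt{1+|p|^2}/|p|$, smoothness and positivity follow from the regular behavior of $u+2\log|p|$ near the $z_i$; the upper and lower bounds \eqref{eq:TaubesSol2} come from $u\le 0$ together with the a priori lower bound on $u+2\log|p|$ derived by comparing to an explicit supersolution. The Coulomb condition \eqref{eq:coulomb} follows because $\pp_1\mA_1+\pp_2\mA_2=\Delta\theta$, and $\arg p$ is locally harmonic (with single-valued gradient), giving $\Delta\theta=0$.

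For part 2, if $u\in\calE_N$ is any finite-energy solution of \eqref{eq:Bogsys}, one first shows that $\phi$ has exactly $N$ zeros counted with multiplicity; this is the standard identification of vortex number with local degree. These zeros determine $q\in\C^N$ via the polynomial whose roots they are, and the uniqueness statement from part 1 (after choosing a gauge that puts $\theta=\arg p$ and $\nabla\cdot\mA=0$) then delivers gauge-equivalence to $\modu(\cdot;q)$. For the claim that every critical point in $\calE_N$ satisfies \eqref{eq:Bogsys}, I would invoke the Bogomolnyi identity \eqref{Btrick}, which expresses the energy as $\pi N$ plus a sum of nonnegative squares; in the critical case $\lambda=1$ this lower bound is attained exactly on Bogomolnyi solutions, and the classical argument combining the Euler--Lagrange equations with a Pohozaev-type identity forces all finite-energy critical points to be BPS. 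I would cite this fact at the precise point where it is invoked.

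For part 3, one expands the Taubes equation about the vacuum: linearizing $\Delta u=e^u-1$ at $u=0$ gives $\Delta u=u$, which has mass $1$, whence $u$ and its derivatives decay at rate $e^{-(1-\delta)|z|}$ for any $\delta>0$, by a standard barrier argument using comparison with Bessel-type subsolutions outside a large ball containing all the $z_i$. The bounds on $D_a\mphi$ and $1-|\mphi|$ follow from $1-|\mphi|^2=1-e^u\sim -u$, together with elliptic regularity estimates on $u$, and the uniformity in $q$ when $|q|\le K$ comes from the fact that the vortex centers then all lie in a fixed compact set, so the barriers can be chosen uniformly. Finally, for part 4, I would apply the implicit function theorem to the map
\[
(u,q)\ \mapsto\ \Delta u-e^u+1-4\pi\sum_i n_i\,\delta_{z_i(q)}
\]
formulated in a suitable weighted Sobolev setting that subtracts off an explicit $q$-dependent singular part. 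The linearization at a solution is $-\Delta+e^u$, which is a strictly positive, self-adjoint, uniformly elliptic operator and hence an isomorphism; smoothness of $\modu$ in $q$ follows. The main obstacle I expect is bookkeeping the singular part of $u$ near the $z_i$ so that the implicit-function-theorem setup is a smooth map between Banach spaces; this is what Palvelev works out in detail in \cite{Palvelev2}, whose Theorem 2 I would cite for this step.
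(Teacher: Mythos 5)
The paper does not prove Theorem~\ref{thm1.new} at all: the sentence preceding it explicitly states that parts~1--3 are "a mild reformulation of classical results due to Taubes" from \cite{JaffeTaubes}, and part~4 is quoted from Theorem~2 of \cite{Palvelev2}. Your proposal is a correct outline of the standard Jaffe--Taubes/Palvelev argument behind those citations: the reduction of \eqref{eq:Bogsys} to the scalar Taubes equation via $u=\log|\mphi|^2$ and $\theta=\arg p$, the harmonicity of $\theta$ giving \eqref{eq:coulomb}, the positivity $u<0$ giving $|\mphi|<1$, barrier estimates for \eqref{eq10}, and an implicit-function-theorem argument for smooth $q$-dependence. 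So in substance your sketch is the proof the paper is pointing to.

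One small inaccuracy worth flagging: for the claim in part~2 that every finite-energy critical point of \eqref{2denergy} in $\calE_N$ satisfies the first-order system \eqref{eq:Bogsys}, you attribute the argument to a Pohozaev-type identity. That is not how the classical result is obtained. In Jaffe--Taubes (Theorem~III.10.1) one introduces the quantities $v=(D_1+iD_2)\phi$ and $w=F_{12}+\tfrac12(|\phi|^2-1)$, derives from the second-order Euler--Lagrange equations a coupled first-order elliptic system for $(v,w)$, and then combines it with the decay estimates at infinity to force $v\equiv w\equiv 0$; there is no scaling/Pohozaev identity involved. Since you are explicit that you would simply cite this step, the issue is one of attribution rather than a gap in the logic, but the citation should go to the relevant theorem of \cite{JaffeTaubes} rather than to a Pohozaev-style argument.
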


\bigskip

For any $q\in \C^N$, if $z_j$ is a root of $p(\cdot; q)$ of multiplicity $n_j$, then
\eqref{eq:TaubesSol1}, \eqref{eq:TaubesSol2} imply that
\begin{equation}
    \phi(z;q)\sim c_j\big(z-z_j\big)^{n_j}\qquad\mbox{ near }z_j, \ \mbox{ for some $c_j\neq 0$. }
\end{equation}
The roots $\{z_1,z_2,\cdots,z_N\}$, repeated according to multiplicity, are called the vortex centers and $\modu(\cdot;s)$ is called an $N$-vortex solution. (Note that $q_j$ is exactly the $j$th elementary symmetric polynomial of the roots $z_1,\ldots, z_N$.)
The $N$-vortex {\em moduli space} $M_N$ is defined to be the space of equivalence classes of $N$-vortex solutions
modulo the gauge symmetry:
\beq\label{gauge.equivclass}
(\mphi(q), \mA(q)) \sim (e^{i\eta}\mphi(q), \mA(q)+d\eta).
\eeq

We will often identify $\C^n$ with $\R^{2N}$, and we will write  $q_\mu$, $\mu = 1,\ldots, 2N$, to denote (real) coordinates on $M_N$.

It is also convenient to write $\modu = (\mphi, \mA)$ as functions of $x = (x_1, x_2)\in \R^2$ rather than $z = x_1+ix_2\in \C$. From now on, we will thus write for example $\modu(x;q)$
in place of $\modu(z; q)$.

Theorem \ref{thm1.new} implies that $M_N$ is naturally parametrized by $\R^{2N}$, via the  bijections 
\begin{align*}
q\in \R^{2N} \cong\  C^N
&\ \longleftrightarrow \ \mbox{roots (with multiplicity) of }p(\cdot;q)  \\
&\ \longleftrightarrow \ \mbox{the equivalence class of solutions represented by }\modu(\cdot ;q).    
\end{align*}

\subsection{geometry of the moduli space}\label{subsec:mods}

We next describe a Riemannian structure on $M_N$ that arises from its character as a quotient space of a submanifold\footnote{A detailed development of this geometric perspective can be found in Section 5.2 of  \cite{DemouliniStuart}.} of $\calE_N$. 
A precise statement is given in Theorem \ref{thm:MNmetric} below, but the idea may be loosely described as follows: given $v = (v_1,\ldots, v_{2N})\in T_q M_N$, one may find a smooth curve $q(t)$ in $M_N$, defined for $t$ in an interval $I$ containing the origin,  such that $q(0) = q$ and $q'(0)=v$. For any sufficiently smooth $\eta: I\times \R^2\to \R$ such that $\eta(0)=0$, this can be lifted to a curve in $\calE_N$ of the form 
\[
t\mapsto  w_{v,\eta}(t) =(e^{i \eta(t) }\mphi(q(t)), \mA(q(t))+ d\eta(t)).
\]
This lifted curve has tangent vector 
\[
w'_{v,\eta}(0) = (v_\mu \pp_\mu \mphi(q) + i \tilde \chi \mphi(q), v_\mu \pp_\mu \mA(q) + d\tilde \chi) \quad \mbox{ for }\tilde \chi = \pp_t\eta|_{t=0}.
\]
The Riemannian metric $g$ is determined by requiring that $\eta$ be chosen to minimize the $L^2$ norm of $w_{v,\eta}'(0)$. That is, we define
\[
|v|_g^2 = \min \{ \int_{\R^2} |w_{v,\eta}'(0)|^2 \, dx : w\mbox{ as above} \}
\]
It is straightforward to check that this amounts to requiring that $\eta$ be chosen so that the lifted tangent vector $w'_{v,\eta}(0)=: (\tPhi, \tA)$ satisfies
\beq\label{eq:go}
\nabla\cdot \tA - (i\mphi(q), \tPhi) =0
\eeq
This amounts to a PDE for $\tilde \chi  = \pp_t\eta|_{t=0}$, see \eqref{chimu.def} below.
We will refer to this as the {\em gauge orthogonality condition}, as it
states exactly that the lifted tangent vector is orthogonal to all 
{\em infinitessimal gauge transformations} at $\modu(q)$, that is, all vectors of the form
\beq\label{infinis.gt}
(i\tilde \eta \mphi(q) , d \tilde \eta), \qquad \tilde \eta:\R^2\to \R \mbox{ sufficiently smooth}.
\eeq
Such vectors are tangent at $\modu(q)$ to the gauge equivalence class \eqref{gauge.equivclass}.
See also \cite{Stuart}, Section 2 for a more complete discussion.

A precise statement of most of the facts we will need is given in the following

\begin{theorem}\label{thm:MNmetric}
For every $q\in M_N$ and  $\mu = 1,\ldots, 2N$, there exists a unique solution $\chi_\mu(\cdot; q):\R^2\to \R$ of the equation
    \beq\label{chimu.def}
    -\Delta \chi_\mu + |\mphi(q)|^2 \chi_\mu - (i \mphi(q), \pp_\mu \mphi(q)) = 0
    \eeq
    that decays exponentially to $(i \mphi(q), \pp_\mu \mphi(q))$ as $|z|\to \infty$. For this function, the vectors
    \beq\label{nmu.def}
    \tilde n_\mu = \tilde n_\mu(q) = \binom{\tilde n_{\mu,\phi}}{\tilde n_{\mu,a}}(q) :=
 \binom{\pp_\mu \mphi(q) - i\mphi(q)\chi_\mu}{\pp_\mu \mA_a(q) - \pp_a\chi_\mu} 
    \eeq
    have the following properties:
    \begin{enumerate}
        \item[\bf 1.] For every $\mu$, the map $q\mapsto \tilde n_\mu(q)$ is a smooth map from $M_N \cong \R^{2N}$ into $L^2(\R^2)^4$ with values in $H^1(\R^2)^4$.
        \item[\bf 2.] Each $\tilde n_\mu(q)$ satisfies the {\em gauge orthogonality condition}  \eqref{eq:go}.
        \item[\bf 3.] At every $q$, the vectors $\{ \tilde n_\mu\}_{\mu=1}^{2N}$ are linearly independent in $L^2(\R^2)^4$.
         \end{enumerate}
    
\end{theorem}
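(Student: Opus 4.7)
The plan is to first prove existence, uniqueness, and exponential decay of $\chi_\mu$, then derive the three conclusions for $\tilde n_\mu$. Direct inspection of \eqref{eq:TaubesSol1} shows that at infinity $(i\mphi, \pp_\mu \mphi) = O(1/|z|^j)$ for $\mu$ corresponding to the $j$-th symmetric-function parameter, so the source in \eqref{chimu.def} is bounded but does not decay rapidly. I would therefore set $\psi := \chi_\mu - (i\mphi, \pp_\mu\mphi)$ and rewrite the equation as
\[
(-\Delta + |\mphi|^2)\psi = \Delta (i\mphi, \pp_\mu\mphi) + (1 - |\mphi|^2)(i\mphi,\pp_\mu\mphi).
\]
The right-hand side lies in $L^2(\R^2)$: the first term decays like $1/|z|^3$, and the second is exponentially small by \eqref{eq10}. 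The bilinear form $a(u,v) = \int \nabla u \cdot \nabla v + |\mphi|^2 u v$ is coercive on $H^1(\R^2)$, since \eqref{eq10} yields $|\mphi|^2 \ge \tfrac12$ outside a large ball and a Poincar\'e-type inequality handles the bounded region. Lax--Milgram then produces a unique $\psi \in H^1$, and standard elliptic regularity together with an Agmon-type comparison argument (exploiting the exponential convergence $|\mphi|^2\to 1$) gives the exponential decay of $\psi$, and hence of $\chi_\mu - (i\mphi,\pp_\mu\mphi)$.

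Smooth dependence of $\chi_\mu(q)$ on $q$ follows from the implicit function theorem: the map $(q,\psi) \mapsto (-\Delta + |\mphi(q)|^2)\psi - G(q)$, with $G(q)$ the right-hand side above, is smooth in $(q,\psi) \in \R^{2N}\times H^1$ by part 4 of Theorem \ref{thm1.new}, and its $\psi$-derivative is a uniform isomorphism $H^1 \to H^{-1}$ for $q$ in compact subsets of $M_N$. This yields a smooth map $q\mapsto \chi_\mu(q) \in H^1$, hence smoothness of $q \mapsto \tilde n_\mu(q)$ into $L^2(\R^2)^4$ with values in $H^1(\R^2)^4$, proving conclusion 1. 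Conclusion 2 (gauge orthogonality) is a one-line algebraic check: using the Coulomb gauge $\nabla \cdot \mA = 0$ from \eqref{eq:coulomb}, which passes to $\nabla\cdot\pp_\mu\mA = 0$, and the identity $(i\mphi, -i\mphi\chi_\mu) = -|\mphi|^2\chi_\mu$, one computes
\[
\nabla \cdot \tilde n_{\mu,a} - (i\mphi, \tilde n_{\mu,\phi}) = -\Delta \chi_\mu + |\mphi|^2 \chi_\mu - (i\mphi, \pp_\mu\mphi) = 0
\]
directly from \eqref{chimu.def}.

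The hard part is conclusion 3, linear independence of $\{\tilde n_\mu\}$. If $\sum_\mu c_\mu \tilde n_\mu = 0$ in $L^2$, then $\sum_\mu c_\mu(\pp_\mu\mphi,\pp_\mu\mA)$ is an infinitesimal gauge vector $(i\mphi\eta,\nabla\eta)$ with $\eta := \sum_\mu c_\mu \chi_\mu$. Evaluating the $\phi$-component at a simple zero $z_j$ of $\mphi(\cdot;q)$ and invoking \eqref{eq:TaubesSol1} gives $f(z_j)\sum_\mu c_\mu \pp_\mu p(z_j;q) = 0$; since $f>0$ and the real-parameter derivatives $\pp_\mu p$ form an $\R$-basis of the polynomials of degree less than $N$ with complex coefficients, a Vandermonde argument at the $N$ distinct zeros forces $c=0$. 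This settles the open dense stratum of $q\in M_N$ whose roots are all simple. To handle the collision stratum, where the map $q \mapsto (z_1(q),\ldots, z_N(q))$ ceases to be a local diffeomorphism, I would match Taylor coefficients of $\sum_\mu c_\mu \pp_\mu \mphi$ and $i\mphi\eta$ to sufficiently high order at each multiple root of $p(\cdot;q)$, translating the matching conditions into vanishing of the coefficients of $\sum_\mu c_\mu \pp_\mu p$, and hence of $c$ itself via the bijection between $q$ and the coefficients of $p$. Carrying this expansion out carefully, and verifying that no degeneration occurs along the collision stratum of $M_N$, is the technically most delicate step of the argument.
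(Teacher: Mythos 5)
The paper defers the proof of this theorem entirely to the literature: immediately after the statement it says ``This is proved in \cite{Palvelev2}, Theorem 2, relying in part on results of \cite{Stuart} and other earlier work.'' There is therefore no paper-internal argument against which to compare your proposal; it must be judged on its own terms.

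Your treatment of existence, uniqueness, exponential decay, and smooth dependence is standard and sound, and the gauge-orthogonality computation is a correct one-liner once you invoke the Coulomb gauge $\nabla\cdot\mA=0$ from \eqref{eq:coulomb}. The genuine gap is in the linear-independence argument: you finish it only on the open dense stratum of $q$ with $N$ distinct vortex centers, and you leave the collision stratum as a plan (``I would match Taylor coefficients\,\ldots''). Linear independence is open but not closed, so establishing it on a dense set says nothing about the remaining $q$; as written the proof is incomplete. That said, the step you call ``technically the most delicate'' is in fact not: if $z_k$ is a root of $p(\cdot;q)$ of multiplicity $m_k$, a direct expansion of \eqref{eq:TaubesSol1} gives
\[
\sum_\mu c_\mu\pp_\mu\mphi \;=\; f\cdot\sum_\mu c_\mu\pp_\mu p(\cdot;q)\;+\;O\!\big(|z-z_k|^{m_k}\big),
\]
because every correction term carries a factor of $p$ or $|p|^2$. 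The relation $\sum_\mu c_\mu\pp_\mu\mphi = i\mphi\,\eta=O(|z-z_k|^{m_k})$ then forces the \emph{holomorphic} polynomial $P(z):=\sum_\mu c_\mu\pp_\mu p(z;q)=\sum_{j=1}^N(c_{2j-1}+ic_{2j})\,z^{N-j}$ to vanish to order $\ge m_k$ at each $z_k$. Since $\sum_k m_k = N$ while $\deg P\le N-1$, this yields $P\equiv 0$ and hence $c=0$. Holomorphy of $p$ handles all strata uniformly, so no separate ``verifying no degeneration along the collision stratum'' is needed; you should simply record this computation to close the gap.
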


This is proved in \cite{Palvelev2}, Theorem  2, relying in part on results of \cite{Stuart} and other earlier work.

Given a tangent vector $v = \sum_\mu v^\mu \frac \pp{\pp q_\mu} \in T_q M_N$, we will write
\begin{equation}\label{chiv.def}
\chi_v = \sum_\mu v^\mu \chi_\mu, \qquad \qquad  \tilde n_v := \sum_\mu v^\mu \tilde n_\mu,
\end{equation}
Note that $\chi_v, \tilde n_v$ are independent of the choice of coordinates on $M_N$.

\begin{definition}\label{def:metric}
 For $\frac \pp{\pp q_\mu} ,\frac \pp{\pp q_\nu}\in T_q M_N$,
we define a Riemannian metric $g$ on $M_N$ by setting
\[
g(\frac \pp{\pp q_\mu} ,\frac \pp{\pp q_\nu} ) := g_{\mu\nu}(q) := \int_{\R^2} \tilde n_\mu(q) \cdot \tilde n_\nu(q) \,dx_a = 
(\tilde n_\mu, \tilde n_\nu)_{L^2(\R^2)} 
\]
where we use the shorthand $\tilde n_\mu \cdot \tilde n_\mu = (\tilde n_{\mu,\phi}, \tilde n_{\nu,\phi}) + \tilde n_{\mu,a} \tilde n_{\nu,a}$. We will normally not explicitly indicate the dependence of $g_{\mu\nu}$ on $q$.
\end{definition}

The linear independence of $\{ \tilde n_\mu\}_{\mu=1}^{2N}$ implies that $(g_{\mu\nu})$ is a nonsingular matrix for every $q$, which in turn implies that the coordinates $(q_\mu)$ fixed above are good coordinates for $M_N$ in the sense that the metric tensor is smooth and nondegenerate when written with respect to this coordinate system.

It is sometimes convenient to use the notation
\[
\pp_\mu := \frac \pp{\pp q^\mu}, \qquad \qquad \calD_\mu \mphi(q) := (\pp_\mu - i \chi_\mu)\mphi(q), 
\]
\[
\mF_{\mu\nu} (q):= \pp_\mu \chi_\nu(q) - \pp_\nu\chi_\mu(q), \qquad\qquad
\mF_{\mu a}(q)  := \pp_\mu \mA_a(q)  - \pp_a\chi_\mu(q).
\]
We emphasize that derivatives $\pp_\mu, \pp_\nu$ with indices $\mu,\nu\in 1,\ldots, 2N$ are derivatives  with respect to $q\in M_N$, whereas $\pp_a$ is a derivative with respect to the spatial variables $x_a, a=1,2$.
With this notation,
\[
\tilde n_\mu(q) = \binom {\calD_\mu \mphi(q)}{\mF_{\mu a}(q)}.
\]

\begin{lemma}\label{lem:connection}
Writing $\nabla^g$ to denote the Levi-Civita connection on $(M_N, g)$, the identity
\begin{equation}\label{lifted.nabla}
g( \nabla^g_{\pp_\mu} \pp_\nu , \pp_\lambda) =  
\left(\binom{\calD_\mu \calD_\nu \mphi(q)}{\pp_\mu \mF_{\nu a}(q))} , \tilde n_\lambda(q) \right)_{L^2(\R^2)}
\end{equation}
holds. More generally, for any two vector fields $v = v^\mu\pp_\mu$ and $w = w^\nu \pp_\nu$ on $M_N$, 
\begin{equation}\label{lifted2}
g(\nabla^g_v w, \pp_\lambda) = \left(\binom{\calD_v \calD_w \mphi(q)}{\pp_v \mF_{w a}(q))} , \tilde n_\lambda(q) \right)_{L^2(\R^2)}
\end{equation}
where $\calD_v := v^\mu\calD_\mu$ and $\mF_{w a} = w^\nu \mF_{\nu a}$.
\end{lemma}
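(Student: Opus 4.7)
The natural tool is Koszul's formula for the Levi-Civita connection, which for coordinate vector fields (whose Lie brackets vanish) reduces to
\[
2 g(\nabla^g_{\pp_\mu}\pp_\nu, \pp_\lambda) = \pp_\mu g_{\nu\lambda} + \pp_\nu g_{\mu\lambda} - \pp_\lambda g_{\mu\nu}.
\]
Setting $T_{\mu\nu} := \binom{\calD_\mu\calD_\nu\mphi}{\pp_\mu \mF_{\nu a}}$, my first task is to establish the gauge-covariant metric identity
\[
\pp_\mu g_{\nu\lambda} = (T_{\mu\nu}, \tilde n_\lambda)_{L^2(\R^2)} + (T_{\mu\lambda}, \tilde n_\nu)_{L^2(\R^2)},
\]
which I will get by differentiating $g_{\nu\lambda} = \int_{\R^2}[(\calD_\nu\mphi, \calD_\lambda\mphi) + \mF_{\nu a}\mF_{\lambda a}]\, dx$ under the integral and using the Leibniz-type identity
\[
\pp_\mu(\calD_\nu\mphi, \calD_\lambda\mphi) = (\calD_\mu\calD_\nu\mphi, \calD_\lambda\mphi) + (\calD_\nu\mphi, \calD_\mu\calD_\lambda\mphi).
\]
This in turn is a direct consequence of the real-bilinear skew-symmetry $(i\xi, \eta) + (\xi, i\eta) = 0$, which ensures that the $-i\chi_\mu$ contributions that appear when $\pp_\mu$ is promoted to $\calD_\mu$ in the two slots cancel each other exactly.

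The structural observation that drives the whole argument is the symmetry-modulo-gauge relation
\[
T_{\mu\nu} - T_{\nu\mu} = \binom{-i\mF_{\mu\nu}\mphi}{-\pp_a \mF_{\mu\nu}},
\]
which is exactly an infinitesimal gauge transformation of the form \eqref{infinis.gt} with real parameter $\tilde\eta = -\mF_{\mu\nu}$. The $\phi$-component is the standard curvature commutator $[\calD_\mu, \calD_\nu]\mphi = -i\mF_{\mu\nu}\mphi$, while the $A$-component follows from commutativity of partial derivatives in $q$. Pairing this antisymmetric part against $\tilde n_\lambda = \binom{\calD_\lambda\mphi}{\mF_{\lambda a}}$ and integrating by parts on the $A$-component produces the expression $\int \tilde\eta\,[\pp_a \mF_{\lambda a} - (i\mphi, \calD_\lambda \mphi)]\, dx$, which vanishes identically by the gauge orthogonality condition \eqref{eq:go}. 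Substituting the metric identity into Koszul therefore produces six pairings $(T_{\cdot\cdot}, \tilde n_\cdot)$; the four ``mixed'' terms cancel in pairs using the preceding observation, and what remains is exactly $2(T_{\mu\nu}, \tilde n_\lambda)_{L^2}$, proving \eqref{lifted.nabla}.

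The extension \eqref{lifted2} to general vector fields $v = v^\mu\pp_\mu$, $w = w^\nu \pp_\nu$ is then a short algebraic verification. Applying the Leibniz rules for $\calD_v = v^\mu \calD_\mu$ and $\pp_v = v^\mu \pp_\mu$ yields
\[
\binom{\calD_v\calD_w\mphi}{\pp_v\mF_{wa}} = v^\mu(\pp_\mu w^\nu)\,\tilde n_\nu + v^\mu w^\nu\, T_{\mu\nu},
\]
and pairing with $\tilde n_\lambda$ reproduces exactly the Leibniz expansion of $g(\nabla^g_v w, \pp_\lambda)$ in terms of $g_{\nu\lambda} = (\tilde n_\nu, \tilde n_\lambda)_{L^2}$ and $g(\nabla^g_{\pp_\mu}\pp_\nu, \pp_\lambda)$. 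The main obstacle is the symmetrization-modulo-gauge step: recognizing that the apparent $(\mu,\nu)$-asymmetry of $T_{\mu\nu}$ is purely vertical (a gauge direction), so that pairing against any horizontal $\tilde n_\lambda$ annihilates it. This is the analytic incarnation of the fact that $(M_N, g)$ arises as a Riemannian submersion quotient of the space of $N$-vortex solutions modulo gauge, with $\nabla^g$ obtained as the horizontal projection of the naive flat $L^2$-connection on the total space; the gauge orthogonality condition \eqref{eq:go} is what implements this projection analytically.
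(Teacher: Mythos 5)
Your proof is correct, and it takes the same overall strategy as the paper (Koszul formula plus the observation that the $\mu\leftrightarrow\nu$ antisymmetry of $T_{\mu\nu}$, being an infinitesimal gauge transformation, pairs trivially against the gauge-orthogonal $\tilde n_\lambda$). Where you diverge is in how you compute $\pp_\mu g_{\nu\lambda}$. The paper differentiates $g_{\nu\lambda}=(\tilde n_\nu,\tilde n_\lambda)_{L^2}$ naively, giving $(\pp_\mu\tilde n_\nu,\tilde n_\lambda)+(\tilde n_\nu,\pp_\mu\tilde n_\lambda)$, and must then relate $\pp_\mu\tilde n_\nu$ to $T_{\mu\nu}$ via $T_{\mu\nu}=\pp_\mu\tilde n_\nu-\binom{i\chi_\mu\calD_\nu\mphi}{0}$; the resulting $\chi_\mu$-dependent correction terms then have to be carried through the Koszul expansion and shown to cancel symmetrically, which is the content of the intermediate identity \eqref{reducesto}. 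You instead differentiate under the integral using the \emph{covariant} Leibniz rule $\pp_\mu(\calD_\nu\mphi,\calD_\lambda\mphi)=(\calD_\mu\calD_\nu\mphi,\calD_\lambda\mphi)+(\calD_\nu\mphi,\calD_\mu\calD_\lambda\mphi)$, valid because $(iv,w)+(v,iw)=0$, arriving directly at $\pp_\mu g_{\nu\lambda}=(T_{\mu\nu},\tilde n_\lambda)+(T_{\mu\lambda},\tilde n_\nu)$ with no corrections to track. With that in hand the Koszul sum collapses in three trivial cancellations to $2(T_{\mu\nu},\tilde n_\lambda)$. The two arguments are equivalent in content, but your bookkeeping is cleaner: the covariant Leibniz rule absorbs at the outset the $\chi_\mu$ terms that the paper has to dispose of separately, so no analogue of \eqref{reducesto} is needed. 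The verification of \eqref{lifted2} via the Leibniz expansion $\binom{\calD_v\calD_w\mphi}{\pp_v\mF_{wa}}=v^\mu(\pp_\mu w^\nu)\tilde n_\nu+v^\mu w^\nu T_{\mu\nu}$ matches the paper's essentially verbatim. One minor remark: the Riemannian-submersion picture you invoke in the last sentence is an accurate description of what is really going on (it is the viewpoint of \cite{DemouliniStuart}, cited in Section 2.2), but your proof does not actually rely on it, so it serves as motivation rather than a load-bearing step.
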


\begin{proof}
We will write $J_{\mu\nu} := \binom{\calD_\mu \calD_\nu \mphi(q)}{\pp_\mu \mF_{\nu a}(q))}$. 
Note that for any $\mu, \nu$, 
\beq\label{Jcomm}
J_{\mu \nu} - J_{\nu \mu}  =  -\binom{i \mphi \mF_{\mu\nu}}{ \pp_a \mF_{\mu\nu}}
\eeq
which is an infinitessimal gauge transformation, see \eqref{infinis.gt}, Since the gauge tangent vectors $\tilde n_\lambda$ satisfy the gauge orthogonality condition \eqref{eq:go}, it follows that
\begin{equation}\label{commute}
(J_{\mu\nu}, \tilde n_\lambda)_{L^2(\R^2)} = (J_{\nu\mu}, \tilde n_\lambda)_{L^2(\R^2)}  \qquad\mbox{ for all }\mu, \nu, \lambda \in \{1, \ldots, 2N\}.
\end{equation}
It is clear that
\beq\label{Jmunu}
J_{\mu\nu} = \pp_\mu \tilde n_\nu - \binom{i \chi_\mu \calD_\nu\mphi(q)}{0},
\eeq
so it follows from \eqref{commute} that
\begin{equation}\label{right-handside}
2( J_{\mu\nu}, \tilde n_\lambda)_{L^2} = (\pp_\mu \tilde n_\nu + \pp_\nu\tilde n_\mu,\tilde n_\lambda)_{L^2}
 -(i(\chi_\mu \calD_\nu \mphi + \chi_\nu \calD_\mu\mphi) , \calD_\lambda\mphi)_{L^2}.
\end{equation}
We now consider the right-hand side of \eqref{lifted.nabla}. From the Koszul formula and the definition of the metric,
\[
\begin{aligned}
2 g( \nabla^g_{\pp_\mu} \pp_\nu , \pp_\lambda)  
&= \pp_\mu g_{\nu\lambda} + \pp_\nu g_{\lambda\nu} - \pp_\lambda g_{\mu\nu} \\
&=
\pp_\mu (\tilde n_\nu, \tilde n_\lambda)_{L^2} +
\pp_\nu (\tilde n_\nu, \tilde n_\lambda)_{L^2} -
\pp_\lambda (\tilde n_\mu, \tilde n_\nu)_{L^2}\\
&= 
(\pp_\mu \tilde n_\nu + \pp_\nu\tilde n_\mu,\tilde n_\lambda)_{L^2} \\
&\qquad\qquad\qquad
+
(\pp_\mu \tilde n_\lambda - \pp_\lambda\tilde n_\mu,\tilde n_\nu)_{L^2}
+
(\pp_\nu \tilde n_\lambda - \pp_\lambda\tilde n_\nu,\tilde n_\mu)_{L^2}.
\end{aligned}
\]
In view of \eqref{right-handside}, to prove \eqref{lifted.nabla}  it now suffices to check that
\begin{equation}\label{reducesto}
(\pp_\mu \tilde n_\lambda - \pp_\lambda\tilde n_\mu,\tilde n_\nu)_{L^2}
+
(\pp_\nu \tilde n_\lambda - \pp_\lambda\tilde n_\nu,\tilde n_\mu)_{L^2}
=
 -(i(\chi_\mu \calD_\nu \mphi + \chi_\nu \calD_\mu\mphi) , \calD_\lambda\mphi)_{L^2}.
\end{equation}
To do this, note from \eqref{Jcomm} and \eqref{Jmunu} that
\[
\pp_\mu \tilde n_\lambda - \pp_\lambda\tilde n_\mu
=
-\binom{i\mphi \mF_{\mu\lambda}}{\pp_a \mF_{\mu\lambda}} - \binom{i(\chi_\lambda \pp_\mu\mphi -\chi_\mu \pp_\lambda\mphi)}{0}.
\]
Again using gauge orthogonality, we deduce that 
\begin{align*}
(\pp_\mu \tilde n_\lambda - \pp_\lambda\tilde n_\mu, \tilde n_\nu)_{L^2}
&=
 -\chi_\lambda (i\pp_\mu \mphi, \calD_\nu \mphi)_{L^2}
+ \chi_\mu (i\pp_\lambda \mphi, \calD_\nu \mphi)_{L^2}
\\
&=
 -\chi_\lambda (i\pp_\mu \mphi, \pp_\nu \mphi)_{L^2}
 +\chi_\lambda \chi_\nu (\pp_\mu\mphi,  \mphi)_{L^2}
+ \chi_\mu (i\pp_\lambda \mphi, \calD_\nu \mphi)_{L^2}.
\end{align*}
Since $(\pp_\mu\mphi,  \mphi) = (\calD_\mu \mphi, \mphi) = (i\calD_\mu \mphi, i\mphi)$, we can rewrite as
\[
(\pp_\mu \tilde n_\lambda - \pp_\lambda\tilde n_\mu, \tilde n_\nu)_{L^2}
=
 -\chi_\lambda (i\pp_\mu \mphi, \pp_\nu \mphi)_{L^2}
 - \chi_\nu (-i\chi_\lambda  \mphi, i D_\mu\mphi)_{L^2}
- \chi_\mu (\pp_\lambda \mphi, i\calD_\nu \mphi)_{L^2}.
\]
We arrive 
at \eqref{reducesto} and complete the proof of  \eqref{lifted.nabla}  by permuting the $\mu$ and $\nu$ indices,  adding, and recalling that $(i \pp_\mu\mphi, \pp_\nu \mphi)_{L^2} = -(i \pp_\nu\mphi, \pp_\mu \mphi)_{L^2}$. 

To prove \eqref{lifted2}, note that 
\[
\nabla^g_v \pp_w = v^\mu\nabla^g_\mu(w^\nu\pp_\mu) = v^\mu\pp_\mu w^\nu \pp_\nu + v^\mu w^\nu \nabla^g_\nu \pp_\mu
= \pp_v w^\nu\pp_\nu + v^\mu w^\nu \nabla^g_\nu \pp_\mu
\]
and similarly
\[
\binom{ D_v D_w\mphi(q)}{\pp_v \mF_{wa}(q)} = v^\mu\binom{ D_\mu (w^\nu \calD_\nu \mphi(q)}{\pp_\mu(w^\nu  \mF_{\nu a}(q)}
= v^\mu \pp_\mu w^\nu \,\tilde n_\nu + v^\mu w^\nu \binom{ D_\mu D_\nu\mphi(q)}{\pp_\mu \mF_{\nu a}(q)}.
\]
So \eqref{lifted2} follows from \eqref{lifted.nabla} and Definition \ref{def:metric}.
\end{proof}

\begin{definition}\label{def:nmu}
    We will write $\{ n_\mu(q) \}_{\mu = 1}^{2N}$ to denote a fixed orthonormal basis for the vector space generated by $\{ \tilde n_\mu(q)\}_{\mu=1}^{2N}$ and depending smoothly on $q$, for example defined by $n_\mu := \sum_{\alpha=1}^{2N}A_{\mu\alpha} \tilde n_\alpha$ for the unique symmetric positive definite $(A_{\mu\alpha})$  such that $\sum_{\alpha\beta}A_{\mu\alpha} A_{\nu\beta} g_{\alpha\beta} = \delta_{\mu\nu}$.
\end{definition}

We close this section by recording more results on regularity and decay of the functions introduced above. 

We may consider $\mphi, \mA, n_\mu,$ \ldots as functions on $\R^2\times M_N$. In fact we will always restrict our attention to situations where $q\in K\subset M_N$, where $K$ is a fixed compact set. 
It is clear that for any compact $K\subset M_N$ there exists $C = C(K)$ such for any $q\in K$ and $v = v^\mu \pp_\mu\in T_qM_N$, 
\[
C^{-1}|v|^2_{\R^{2N}} = \delta_{\mu\nu} v^\mu v^\nu
\le 
|v|^2_{g} = g_{\mu\nu}(q)v^\mu v^\nu \le 
C|v|^2_{\R^{2N}}
\]
In view of this, for $q\in K$ we can and will use the Euclidean norm (but writing $| \cdot|$ rather than $|\cdot|_{\R^{2N}}$) in defining norms of derivatives with respect to $q_\mu, \mu=1,\ldots, 2N$
of $\mphi, \mA, n_\mu$ etc, and we will often consider derivatives with respect to the Euclidean structure rather than covariant derivatives on $M_N$.

\begin{lemma}
\label{lem:decayq}
    Let $K$ be a compact subset of $M^n$, and let  $\pp^r$ denote differentiation with respect to $x\in \R^2$ and/or $q\in \R^{2N}\cong M_N$ as indicated by a multi-index $r$.
    Then there exists $C, \gamma$, possibly depending $K$ and $r$, such that for any $q\in K$,
    \begin{align*}
        &|\pp^r\mphi(x,q)| \le C(|x|+1)^{-|r|}, \\
        &|\pp^r\chi_\lambda(x,q)|, |\pp^r\mA(x,q)|\le C(|x|+1)^{-|r|-1},\\
        &|\pp^r(|\mphi(x;q)|^2-1)|, \ |\pp^r(D_{a}\mphi(x;q)|,\   |\pp^r \tilde n_\mu(x;q)|   \le C e^{- \gamma |x|}.
    \end{align*}
\end{lemma}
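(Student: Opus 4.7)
My plan is to handle the four estimates in sequence: first polynomial decay of $\pp^r\mphi$ from the explicit Taubes form; next exponential decay of $|\mphi|^2-1$, $D_a\mphi$, and $\tilde n_\mu$ by gauge-covariant elliptic bootstrapping on the Bogomolnyi system; and finally polynomial decay of $\chi_\lambda$ and $\mA$ by isolating an explicit leading piece and controlling the remainder by weighted elliptic estimates. Throughout, uniformity in $q\in K$ will follow from the smooth $q$-dependence of Theorem \ref{thm1.new}(4) and Theorem \ref{thm:MNmetric}, so I focus below on $x$-decay.

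For $\mphi$, I would write $\mphi = f\,w$ with $w(z;q) := p(z;q)/\sqrt{1+|p(z;q)|^2}$. Because $|p(z;q)|\ge c(K)|z|^N$ at infinity, a direct computation gives $|\pp^r w(x;q)|\le C(|x|+1)^{-|r|}$ for every multi-index $r$ in $x$ and $q$. The factor $f$ is bounded and bounded away from $0$ by \eqref{eq:TaubesSol2}, and \eqref{eq10} together with $|\mphi| = f|w|$ and $|w|\to 1$ polynomially forces $1 - f$ to decay exponentially at infinity; an elliptic bootstrap on the Gauss-type equation $\Delta\log|\mphi|^2 = |\mphi|^2 - 1$, valid on $\{\mphi\ne 0\}$ and extracted from the Bogomolnyi pair, will propagate this exponential decay to all $x$- and $q$-derivatives of $f - 1$. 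The Leibniz rule then yields the claim.

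For the exponentially decaying quantities, the $r=0$ bound on $D_a\mphi$ is \eqref{eq10}, and that on $|\mphi|^2-1$ follows from \eqref{eq10} via the factorization $|\mphi|^2 - 1 = (|\mphi|-1)(|\mphi|+1)$ together with boundedness of $\mphi$. Higher $x$-derivatives I would obtain by iterating the identity $D_a D_a\mphi = \tfrac12(|\mphi|^2-1)\mphi$ (from \eqref{2dAHM}) with $\pp_a(|\mphi|^2 - 1) = 2(\mphi, D_a\mphi)$ (the $iA_a\mphi$ term is killed by $(\mphi,i\mphi)=0$), which reduces each further derivative to products of quantities already known to be exponentially small. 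For $\tilde n_\mu = (\calD_\mu\mphi, \mF_{\mu a})$ I would split $\chi_\mu = |\mphi|^2\pp_\mu\arg p + r_\mu$ with $r_\mu$ exponentially small (Theorem \ref{thm:MNmetric}); using $\arg\mphi = \arg p$ (from $f>0$), the algebraic identity
\[
\pp_\mu\mphi - i|\mphi|^2(\pp_\mu\arg p)\mphi = \bigl(\pp_\mu\log|\mphi| + i(1-|\mphi|^2)\pp_\mu\arg p\bigr)\mphi
\]
combined with the exponential smallness of $\pp_\mu\log|\mphi|$, $1-|\mphi|^2$, and $r_\mu$ will show that $\calD_\mu\mphi = \pp_\mu\mphi - i\chi_\mu\mphi$ is exponentially small; the analogous identity for $\mF_{\mu a}$ uses equality of mixed partials of $\arg p(\cdot;q)$ in $x$ and $q$. $q$-derivatives and higher $x$-derivatives are then handled by differentiating in $q_\mu$ or $x_a$ and rerunning the same bootstrap on the resulting linearized equations.

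For $\chi_\lambda$ and $\mA$, Theorem \ref{thm:MNmetric} furnishes $r_\lambda := \chi_\lambda - (i\mphi,\pp_\lambda\mphi)$ decaying exponentially, and the equation for $r_\lambda$ obtained by subtracting from \eqref{chimu.def} has exponentially small source by the previous paragraphs, so weighted elliptic estimates propagate this decay to all derivatives of $r_\lambda$. The leading piece $(i\mphi,\pp_\lambda\mphi) = |\mphi|^2\pp_\lambda\arg p$ is smooth in $(x,q)$ with $O(|x|^{-1})$ decay and $\pp^r$ of order $(|x|+1)^{-|r|-1}$, which gives the claim for $\chi_\lambda$. For $\mA$, the Coulomb gauge \eqref{eq:coulomb} and $F_{12} = -\tfrac12(|\mphi|^2-1)$ let one write $\mA = \nabla^\perp\psi$ with $\Delta\psi = F_{12}$; since $F_{12}$ decays exponentially and $\int_{\R^2}F_{12} = 2\pi N$, the 2D Newtonian potential gives $\psi = N\log|x| + O(|x|^{-1})$, yielding the claimed polynomial rates for $\mA$ and its derivatives. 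The hardest part will be the cancellation underlying exponential decay of $\tilde n_\mu$: individually the terms composing $\calD_\mu\mphi$ and $\mF_{\mu a}$ decay only polynomially, and the exponential rate at all orders of differentiation emerges only after the gauge orthogonality \eqref{eq:go} is used to organize the cancellation---keeping this bookkeeping straight uniformly in $q\in K$ is the bulk of the technical work.
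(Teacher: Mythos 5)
Your argument is in the right spirit (explicit decay of $w := p/\sqrt{1+|p|^2}$, Bogomolnyi bootstraps, the decomposition $\chi_\mu = |\mphi|^2\pp_\mu\arg p + r_\mu$), but the opening step is wrong. You claim that \eqref{eq10}, $|\mphi|=f|w|$, and the polynomial convergence $|w|\to 1$ ``force $1-f$ to decay exponentially,'' and you propose to propagate this by a bootstrap on $\Delta\log|\mphi|^2 = |\mphi|^2 - 1$. In fact the polynomial tail of $|w|$ is an obstruction, not an aid: from $|w|^2 = 1 - (1+|p|^2)^{-1}$ one has $1-|w|\sim \tfrac12 |z|^{-2N}$, so
\[
f - 1 \;=\; \frac{(1-|w|)-(1-|\mphi|)}{|w|}\;=\;\frac{1}{2|z|^{2N}}\bigl(1+o(1)\bigr),
\]
which decays only algebraically. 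Equivalently, writing $u := \log|\mphi|^2$, one has $f^2 = e^u(1+|p|^2)/|p|^2$, hence $f^2 - 1 = (e^u - 1) + e^u/|p|^2$: the second term is genuinely polynomial. The Gauss-type equation $\Delta\log|\mphi|^2 = |\mphi|^2-1$ gives exponential decay of $u$, but not of $\log f$; exponential smallness of the multiplicative correction is a feature of the product normalization $\mphi = g\prod_j(z-z_j)/\sqrt{1+|z-z_j|^2}$, not of the single-denominator form \eqref{eq:TaubesSol1}, which differs from the product form by a factor that is $1+O(|z|^{-2})$.

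The first conclusion of the lemma nevertheless survives: $w$ already decays at the advertised rate $(|x|+1)^{-|r|}$, and $f-1$ together with its $x$- and $q$-derivatives decays like $|z|^{-2N-|r|}$, which is more than enough for the Leibniz argument --- but you need to state and prove the correct polynomial rate for $\pp^r(f-1)$, for instance from $f^2 = e^u(1+|p|^2)/|p|^2$ with $\pp^r u$ exponentially small, rather than claim exponential decay. The error does not propagate to your later paragraphs, which use $\pp_\mu\log|\mphi|=\tfrac12\pp_\mu u$ (exponentially small, by differentiating $\Delta u = e^u - 1$ in $q$ and running the elliptic argument) rather than $\pp_\mu\log f$; your reading of Theorem~\ref{thm:MNmetric} is also correct since $(i\mphi,\pp_\mu\mphi) = |\mphi|^2\pp_\mu\arg p$ identically. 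For context, the paper's own proof of this lemma is a one-sentence citation to \cite{JaffeTaubes, Stuart, Palvelev2} via \cite{Masoud-thesis}, so your detailed sketch goes well beyond it; once the $f-1$ claim is repaired it appears sound, modulo the substantial bookkeeping you yourself flag for $\tilde n_\mu$.
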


\begin{proof}
As noted in Proposition 3 of \cite{Masoud-thesis}, this is a straightforward consequence of basic theory as developed in \cite{JaffeTaubes, Stuart, Palvelev2},  etc.
\end{proof}

\subsection{The Higgs mechanism}

Our main results rely heavily on the analysis of a linearization of the \eqref{dAHM}, and the linearization of the 2d elliptic Abelian Higgs model plays a crucial role in this analysis. 

We will write
the $2d$ equations as $S^{2d}[u] = 0$, where 
\begin{equation}\label{2d.AHM}
S^{2d}[u] := 
\left(\begin{array}{c}
S^{2d}_\phi[u] \\
S^{2d}_b [u]
\end{array}\right)
=
\left(\begin{array}{c}
-D_aD_a \Phi
+\frac \lambda 2(|\Phi|^2-1)\Phi \\
-\pp_a F_{ab} - (i\Phi, D_b\Phi)
\end{array}\right)_{b=1,2}
\end{equation}
For $u=(\Phi,A_a)$ and $\tilde u = (\tPhi, \tA_a)$, we define the linearized operator as
\beq\label{calL.def0}
\calL[u](\tilde u)
:= \frac d{ds}\Big|_{s=0} S^{2d}[u+s\tilde u] 
\eeq
A calculation shows that
\beq\label{calL.def}
\begin{aligned}
\calL_\phi[u](\tilde u)
&= 
-D_{A_a} D_{A_a}  \tPhi  + 2 i\tA_a D_{A_a}\Phi
+ i \Phi \pp_a\tA_a
+ \frac 12 (|\Phi|^2-1)\tPhi + ( \Phi, \tPhi) \Phi
\\
\calL_a[u](\tilde u)
&=
-\pp_b \tF_{ba}
-( i\Phi, D_{A_a} \tPhi) 
-( i\tPhi, D_{A_a} \Phi) 
+ \tA_a |\Phi|^2, 
\end{aligned}
\eeq
where we recall that repeated indices $a,b,...$ are implicitly summed from $1$ to $2$, and
\[
D_{A_a} = \pp_a - i A_a, \qquad \tilde F_{ab} = \pp_a\tA_b - \pp_b \tA_a.
\]
Awkwardly, $\calL[\modu(q)]$ has a huge nullspace for $q\in M_N$, or for any gauge-equivalent $u$.
Indeed, 
because 
$S^{2d}[e^{i s \eta}\mphi(q), \mA(q)+s d\eta] = 0$ for all $s$, and noting that
\[(e^{i s \eta}\mphi(q), \mA(q)+s d\eta) = \modu(q) + s (i\eta\mphi(q), d\eta) + O(s^2),
\]
we find that
\beq\label{bigkernel}
\calL[\modu(q)](i\eta\mphi(q), d \eta) = 0 \qquad\mbox{ for all smooth enough }\eta.
\eeq
In other words, the kernel of $\calL[\modu(q)]$ contains all infinitessimal gauge transformations at $\modu(q)$. To remedy this, following Stuart \cite{Stuart}, we will modify $\calL$ to obtain an operator with a finite-dimemsional nullspace. We define
\beq\label{Lu.def}
L[u](\tilde u) := \calL[u](\tilde u) +
\left( \begin{array}{c}
    i\Phi (\nabla\cdot \tilde A - (i\Phi, \tPhi))\\
    \pp_a (\nabla\cdot \tilde A - (i\Phi, \tPhi))
\end{array}
\right)_{a=1,2}.
\eeq
One readily checks that
\begin{equation}
\label{eq33.14}
{L}[u](\tu)=
\begin{pmatrix}
-D_{A_a} D_{A_a}  \tPhi  + 2 i\tA_a D_{A_a}\Phi
+\frac{1}{2}\big(3|\Phi|^2-1\big)\tPhi
\\
-\Delta \tA_b+|\Phi|^2\tA_b-2\big(i\tPhi,D_{A_b}\Phi\big)
\end{pmatrix}_{_{b=1,2}}
\end{equation}
It is immediate from the definitions that
\begin{equation}\label{L=calL}
\calL[\modu(q)](\tilde u) = L[u](\tilde u) \qquad\mbox{ if $\tilde u$ is gauge-orthogonal, see \eqref{eq:go}.}
\end{equation}

The following theorem plays a key role in our analysis.

\begin{theorem}[\cite{Stuart}, Theorems 2.6 and 3.1]\label{thm:hessian}
    For $q\in M_N$, the nullspace of $L[\modu(q)]$ is $2N$-dimensional and is spanned by 
    $\{ \tilde n_\mu\}_{\mu=1}^{2N}$. Moreover, for every compact $K\subset M_N$ there exists $\gamma  =\gamma(K)>0$ such that if  $q\in K$ and $\tilde u\in (H^1)^4$ satisfies $(\tilde u, \tilde n_\mu(q))_{L^2} = 0$ for all $\mu=1,\ldots, 2N$, then
    \beq\label{coercivity}
    \int_{\R^2} \tilde u\cdot L[\modu(q)]\tilde u \, dx \ \ge \ \gamma \| \tilde u\|_{H^1}^2.
    \eeq
\end{theorem}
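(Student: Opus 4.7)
My plan is to combine three ingredients: a direct identification of $2N$ candidate null vectors, a Bogomolnyi-type identity that writes the quadratic form of $L[\modu(q)]$ as a sum of squares, and a Higgs-mechanism argument that produces the spectral gap needed for strict $H^1$-coercivity.

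First, I would check that $\mathrm{span}\{\tilde n_\mu\}\subseteq \ker L[\modu(q)]$. Differentiating the identity $S^{2d}[\modu(q+se_\mu)]=0$ at $s=0$ gives $\calL[\modu(q)](\pp_\mu \modu(q))=0$. The vector $(-i\chi_\mu \mphi(q),\,-\pp_a\chi_\mu)$ is an infinitesimal gauge transformation, so it too lies in $\ker\calL[\modu(q)]$ by \eqref{bigkernel}. Adding, $\calL[\modu(q)]\tilde n_\mu=0$; since Theorem~\ref{thm:MNmetric} asserts that $\tilde n_\mu$ is gauge-orthogonal, \eqref{L=calL} upgrades this to $L[\modu(q)]\tilde n_\mu=0$. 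Linear independence of the $\tilde n_\mu$ in $L^2$ is already part of Theorem~\ref{thm:MNmetric}, so $\mathrm{span}\{\tilde n_\mu\}$ has dimension $2N$.

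Next, I would establish non-negativity of the quadratic form and exhaust the kernel. Integrating \eqref{Btrick} gives $E(u)-\pi N = \tfrac12\|\mathcal B(u)\|_{L^2}^2$ for the Bogomolnyi operator $\mathcal B(u)=((D_1+iD_2)\phi,\,F_{12}+\tfrac12(|\phi|^2-1))$, and $\mathcal B(\modu(q))=0$. Expanding to second order and using $\calL=E''$ yields $\int_{\R^2} \tilde u\cdot\calL[\modu(q)]\tilde u\, dx = \|\mathcal B'(\modu(q))\tilde u\|_{L^2}^2 \geq 0$. Adjoining the gauge-fixing row $G(\tilde u) = \nabla\cdot\tilde A-(i\mphi(q),\tilde\Phi)$ to $\mathcal B'$ produces an augmented first-order operator $\widetilde{\mathcal B}'$ whose associated quadratic form is $\int_{\R^2} \tilde u\cdot L[\modu(q)]\tilde u\, dx = \|\widetilde{\mathcal B}'\tilde u\|_{L^2}^2\geq 0$. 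The pointwise kernel of $\widetilde{\mathcal B}'$ consists of gauge-orthogonal solutions of the linearized Bogomolnyi system, which by Theorems~\ref{thm1.new} and~\ref{thm:MNmetric} is precisely $\mathrm{span}\{\tilde n_\mu\}$; this identifies $\ker L[\modu(q)]$ exactly.

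The main obstacle I anticipate is upgrading the non-negativity $\int \tilde u\cdot L\tilde u\ge 0$ to the strict $H^1$-coercivity \eqref{coercivity}. Here the Higgs mechanism does the work: because $|\mphi(q)|\to 1$ exponentially fast at infinity (Lemma~\ref{lem:decayq}), the zeroth-order part of $L[\modu(q)]$ is bounded below by a positive constant outside a compact set. A Persson/Weyl-type argument then places $\sigma_{\mathrm{ess}}(L[\modu(q)])\subset[m_0,\infty)$ for some $m_0>0$, while spectrum in $[0,m_0)$ is discrete and of finite multiplicity. Combined with the non-negativity and kernel description above, this yields an $L^2$-spectral gap $\delta(q)>0$ on $\mathrm{span}\{\tilde n_\mu\}^\perp$. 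G\aa rding's inequality applied to the elliptic principal symbol $-\Delta\otimes\mathrm{Id}$ of $L$ then bootstraps $L^2$-coercivity to the $H^1$-bound \eqref{coercivity}. Uniformity of $\gamma$ over compact $K\subset M_N$ follows from continuous dependence of $\modu(q),\chi_\mu(q),\tilde n_\mu(q)$ on $q$ (Lemma~\ref{lem:decayq}) together with compactness of $K$: the spectral gap is lower-semicontinuous in $q$ and strictly positive on $K$, hence admits a uniform lower bound $\gamma=\gamma(K)>0$.
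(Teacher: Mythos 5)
Note first that the paper does not prove this statement; it is cited from Stuart, so there is no internal proof to compare against. Your outline follows the standard route (Bogomolnyi sum-of-squares for non-negativity, Higgs-mechanism gap for the essential spectrum, continuity and compactness for uniform $\gamma$), and most of the steps are sound. In particular, the forward inclusion $\mathrm{span}\{\tilde n_\mu\}\subseteq\ker L[\modu(q)]$ is correctly argued via \eqref{bigkernel} and \eqref{L=calL}, and the identity
$\int\tilde u\cdot L[\modu(q)]\tilde u=\|\mathcal B'[\modu(q)]\tilde u\|_{L^2}^2+\|G(\tilde u)\|_{L^2}^2$
(with $\mathcal B$ the Bogomolnyi operator and $G$ the gauge-fixing form) follows from $E(u)=\pi N+\tfrac12\|\mathcal B(u)\|_{L^2}^2$ together with \eqref{Lu.def} after one integration by parts, and it gives non-negativity cleanly.

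The genuine gap is the reverse inclusion $\ker L[\modu(q)]\subseteq\mathrm{span}\{\tilde n_\mu\}$, i.e.\ the claim that $\dim\ker L[\modu(q)]=2N$. You write that the pointwise kernel of $\widetilde{\mathcal B}'$ is ``by Theorems~\ref{thm1.new} and~\ref{thm:MNmetric} precisely $\mathrm{span}\{\tilde n_\mu\}$,'' but those results do not deliver this. Theorem~\ref{thm1.new} classifies solutions of the \emph{nonlinear} Bogomolnyi system in $\calE_N$ up to gauge; Theorem~\ref{thm:MNmetric} constructs the $\tilde n_\mu$ and shows they are independent, which only yields $\dim\ker L\ge 2N$. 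To obtain the matching upper bound you must know that the gauge-fixed linearized Bogomolnyi operator $\widetilde{\mathcal B}'[\modu(q)]:H^1\to L^2$ has trivial cokernel and $L^2$-index $2N$ (equivalently, that the Taubes parametrization $q\mapsto[\modu(q)]$ is a local diffeomorphism in the appropriate Banach-manifold sense, so its differential spans the \emph{entire} kernel). That is a vanishing/index theorem in the style of Weinberg and Taubes, or in Stuart's treatment a direct spectral argument; it cannot be read off from the nonlinear classification alone, since a surjective parametrization at the nonlinear level does not by itself exclude excess $H^1$ kernel of the linearization. Your proof should invoke (or reprove) that index computation at this point. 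The remaining steps --- Persson's theorem placing $\sigma_{\mathrm{ess}}(L[\modu(q)])\subset[1,\infty)$ because $|\mphi(q)|\to 1$ and $D_a\mphi(q)\to 0$ exponentially so the off-diagonal couplings in \eqref{eq33.14} are relatively compact, the resulting $L^2$ spectral gap on $\{\tilde n_\mu\}^\perp$, G\aa rding's inequality to upgrade to the $H^1$ bound \eqref{coercivity}, and lower semicontinuity of the gap together with compactness of $K$ for uniformity --- are correctly sketched and do not present obstacles.
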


Stuart \cite{Stuart} establishes \eqref{coercivity} with the norm
\[
\|\tilde u\|_{H^1_q}^2 := \|\tilde A\|_{H^1}^2 + \int_{\R^2} |\tPhi|^2 + |(\nabla - i \mA(q))\tPhi|^2\, dx
\]
on the right-hand side, but using the fact that $\sup_{q\in K} \| \mA_a(\cdot;q)\|_{L^\infty(\R^2)}<\infty$ for $K$ compact, this is easily seen to imply the estimate as we have stated it.

We remark that, since $\tilde n_\mu$ is gauge-orthogonal, the theorem and \eqref{L=calL} immediately imply that
\begin{equation}\label{kercalL}
\calL[\modu(q)] \tilde n_\mu = 0 \qquad\mbox{ everywhere}.
\end{equation}

\section{construction of approximate solutions}\label{sec:construction}

In this section we construct an approximate solution of the Abelian Higgs model. 

We start with a solution of $(y_{\bar a}, y_0)\in \R^{d-2}\times (0,T)\mapsto q(y_{\bar a}, y_0)\in M_N$ of the geometric evolution equation \eqref{dwm} satisfying assumptions \eqref{q.compact0} and \eqref{q.decay}
and that $\Theta$ as defined in \eqref{Theta.def0} is finite.

In coordinates, equation \eqref{dwm} takes the form
\[
(\kz \pp_{0}\pp_0+\ko \pp_0 - \Delta_{\bar a}) q^\mu = - \Gamma^\mu_{\nu\lambda}(\kz \pp_0q^\nu \pp_0 q^\lambda - \pp_{\bar a}q^\nu \pp_{\bar a}q^\lambda),
\]
where $\Gamma^\mu_{\nu\lambda}$ are the Christoffel symbols. It follows 
that $\pp_0^2q$ has the same regularity as $\Delta_{\bar a} q$ if $\kz>0$:
\beq
\kz \| \pp_0^2 q\|_{L^\infty_T H^{L-1}(\R^{d-2})} \le C.
\label{p00q}\eeq
We will write 
\[
q_\ep(x_{\bar a},t) = q( \ep x_{\bar a}, \ep t) = q(y_{\bar a},y_0), \qquad ( y_a, y_{\bar a}, y_0) := ( x_a, \ep x_{\bar a}, \ep t).
\]
The leading-order term in our approximate solution will have the form
\begin{equation}\label{U0.form}
U^\ep_0(x,t) = 
\begin{pmatrix}
u^\ep\\
A^\ep_{\bar a}\\
A^\ep_0
\end{pmatrix}
=
\begin{pmatrix}
 \modu(x_a ; q_\ep(x_{\bar a}, t))\\
\chi_{\pp_{\bar a} q_\ep(x_{\bar a}, t)}(x_a); q_\ep(x_{\bar a},t)) \\
\chi_{\pp_t q_\ep(x_{\bar a}, t)}(x_a; q_\ep(x_{\bar a},t)) \\
\end{pmatrix}
\end{equation}
using notation introduced in Theorems \ref{thm1.new} and \ref{thm:MNmetric}, see also \eqref{chiv.def}. Since $|\pp_t q_\ep|, |\partial_{\bar a}q_\ep| \le C \ep$, it follows that
\[
|A^\ep_{\bar a}|, |A^\ep_0| \le C \ep.
\]
From the definitions, see again \eqref{chiv.def}  and Theorem \ref{thm:MNmetric}, one finds that $A^\ep_0$ and $A^\ep_{\bar a}$ are chosen exactly to guarantee that
\beq\label{Aabar1}
\binom{D_\alpha \Phi^\ep}{\pp_\alpha A^\ep_a - \pp_a A^\ep_\alpha} = 
\sum_\mu \pp_\alpha q_\ep^\mu \,  \tilde n_\mu(q_\ep)= \tilde n_{\pp_\alpha q_\ep}
\qquad \qquad\alpha = 0, 3,\ldots, d .
\eeq
In particular, for every $(x_{\bar a}, t)$,
\begin{equation}\label{Aabar.choice}
\binom{D_\alpha \Phi^\ep}{\pp_\alpha A^\ep_a - \pp_a A^\ep_\alpha} (\cdot , x_{\bar a}, t) \ 
\mbox{ satisfies gauge-orthogonality condition \eqref{eq:go}}.
\end{equation}

We will construct an approximate solution by a formal expansion in powers of $\ep$.  In the statement below, $U^\ep_m$ collects all the terms up to order $\ep^{2m}$ in the expansion,  apart from the leading term. Thus the size of $U^\ep_m$ reflects the size of the first correction term.   The full ansatz, which involves some notation to be introduced below, is displayed in \eqref{expand.Uke}, and estimates for individual terms in the expansion appear in \eqref{cmmu.est}, \eqref{Um.est}.

In the result below, for a multiindex $r= (r_0,\ldots, r_d)$  with each $r_j\in \Z_{\ge 0}$, we use the notation
\[
\pp^r = \pp_{t}^{r_0}\pp_{x_1}^{r_1} \cdots\pp_{x_d}^{r_d}, 
\qquad\qquad 
|r_{\bar a}| = r_3+\cdots +r_d.
\]
Our estimates will control one time derivative if $\kz>0$ and no time derivatives if $\kz=0$, so
we always assume
\beq\label{rconstraint}
r_0\le 1,\qquad\mbox{ and }r_0=0\ \ \mbox{ if }\kz=0.
\eeq

\begin{proposition}\label{prop:approx.sol}
Let $j=\lfloor d/2\rfloor$, and fix positive integers
$L,m$ such that
\beq\label{parameters}
k_m:= L-4m >\frac d2 \ge j > \frac d2-1.
\eeq
Let $q: \R^{d-2}\times (0,T)\to M_N$ be a solution of \eqref{dwm} satisfying \eqref{q.compact0}, \eqref{q.decay} for this choice of $L$, with $\Theta<\infty$.

There exists $T_0\in (0,T]$ and $\ep_0>0$ such that for every positive $\ep<\ep_0$, there exists a map $U_m^\ep : (0,T_0/\ep)\times \R^d\to \C \times \R^{d+1}$ with the following properties:

\medskip

{\bf 1.} The map $U_{m,\ep}$ defined by 
\[
U_{m,\ep} = U^\ep_0 + \ep^2 U_m^\ep
\]
is an approximate solution of \eqref{dAHM} in the sense that for any $\gamma\in (0,1/2)$ and any multi-index $|r|\le k_m-j$ satisfying \eqref{rconstraint}, there exist a positive constant\footnote{All constants in the proposition, and throughout this section, can in principle be estimated in terms of  $\kz,\ko$, the set $K$ from \eqref{q.compact0}, $\Theta$ from \eqref{q.decay} and $\gamma\in (0,1)$, where the latter is only needed for estimates that contain a weight $e^{-\gamma|x_a|}$, such as \eqref{tilde.size}.} $C$ such that (using notation introduced in \eqref{S.def}, \eqref{S.components})
\begin{equation}\label{S.est}
\begin{aligned}
|\pp^r S_u[U_{m,\ep}]|(x, t) &\le C \ep^{2(m+1) + r_0 + |r_{\bar a}|} e^{- \gamma |x_a|}\\
|\pp^r S_\alpha[U_{m,\ep}]|(x, t)
&\le C \ep^{2m+3+ r_0 + |r_{\bar a}|} e^{- \gamma |x_a|}, \qquad\quad
\alpha = 0, 3,\ldots, d
\end{aligned}
\end{equation}
for all $(x,t)\in (0,T_0/\ep)\times \R^d$. Moreover,
\beq\label{H.Hkest}
\| S[U_{m,\ep}] (\cdot, t)\|_{H^{k_m}(\R^d)}
+  \kz\| \pp_t S[U_{m,\ep}] (\cdot, t)\|_{H^{k_m-1}(\R^d)} 
\le C\ep^{2m+2 - \frac d2} 
\eeq
for all $t\in (0,T_0/\ep)$.
\medskip

{\bf 2} There exists a constant $C$ such that for any multi-index $|r| \le k_m+2-j$ satisfying \eqref{rconstraint},
\begin{equation}\label{tilde.size}
\begin{aligned}
|\pp^r u_m^\ep|(x,t) &\le C \ep^{r_0+|r_{\bar a}|} e^{- \gamma |x_a|}\\
 |\pp^r A_{m,\alpha}^\ep|(x,t) 
 &\le C\ep^{1+r_0+|r_{\bar a}|} e^{- \gamma |x_a|}\quad\mbox{ for }\alpha \in \{0,3,\ldots, d\} 
\end{aligned}
\end{equation}
for all $(x,t)\in (0,T_0/\ep)\times \R^d$, where 
\[
U^m_\ep = \begin{pmatrix}u_m^\ep\\ A_{m, {\bar a}}^\ep \\ A_{m,0}^\epsilon
\end{pmatrix}
\quad\mbox{ with }
u^\ep_m = \begin{pmatrix}
 \Phi^\ep_m \\ A^\ep_{m,a}     
\end{pmatrix}   .
\]
\end{proposition}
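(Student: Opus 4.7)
My plan is to build $U_{m,\ep} = U_0^\ep + \ep^2 V_1 + \ep^4 V_2 + \cdots + \ep^{2m} V_m$ by induction on $m$, choosing each $V_k$ so that the residual $S[\,\cdot\,]$ drops by a factor of $\ep^2$. At every step I split $V_k = V_k^\perp + V_k^\parallel$, where $V_k^\parallel = \sum_\mu c_\mu^{(k)}(x_{\bar a},t)\,\tilde n_\mu(q_\ep)$ lies in the kernel described by Theorem \ref{thm:hessian} and $V_k^\perp$ is required to satisfy the pointwise gauge-orthogonality condition \eqref{eq:go}. The orthogonal part is recovered from a Fredholm inversion of $L[\modu(q_\ep)]$ on $\R^2$ (one $(x_{\bar a},t)$-slice at a time), while the kernel coefficients $c_\mu^{(k)}$ are determined by a linear wave map (or harmonic map heat flow) equation on $\R^{d-2}\times(0,T_0/\ep)$. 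In parallel, at each order $A_0^\ep$ and $A_{\bar a}^\ep$ are updated so that the analogue of \eqref{Aabar.choice} continues to hold; this amounts to an auxiliary scalar elliptic problem on $\R^2$ and realizes the perturbative horizontal Coulomb gauge referred to in the outline.

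For the leading order I would compute $S[U_0^\ep]$ directly, exploiting the Bogomolny equations \eqref{eq:Bogsys} satisfied by $\modu(q_\ep)$ and the gauge-orthogonality built into \eqref{U0.form}. This yields $S_u[U_0^\ep]=O(\ep^2)$ and $S_\alpha[U_0^\ep]=O(\ep^3)$ for $\alpha\in\{0,3,\ldots,d\}$, with exponential decay in $|x_a|$ inherited from Lemma \ref{lem:decayq}. The decisive observation is that the $L^2(\R^2)$-projection of the leading $\ep^2$-part of $S_u[U_0^\ep]$ onto $\tilde n_\nu(q_\ep)$ is, by Lemma \ref{lem:connection}, exactly $g_{\nu\mu}$ times the $\mu$-component of the wave map / heat flow operator applied to $q$; since $q$ solves \eqref{dwm}, this projection vanishes, leaving an error that lies in the range of $\calL[\modu(q_\ep)]$.

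Assume inductively that $S_u[U_{m-1,\ep}] = \ep^{2m}\mathcal{R}^u$ with $\mathcal{R}^u$ of unit size in $W^{k_m,\infty}_\gamma$, and write $\mathcal{R}^u = \sum_\mu \gamma_\mu \tilde n_\mu(q_\ep) + \mathcal{R}^{u,\perp}$. With $V_m^{u,\perp}$ constrained to be gauge-orthogonal, \eqref{L=calL} reduces the relevant equation to $L[\modu(q_\ep)]V_m^{u,\perp} = -\mathcal{R}^{u,\perp}$, which Theorem \ref{thm:hessian} inverts; a standard Combes--Thomas/Agmon argument, powered by the spectral gap in \eqref{coercivity}, propagates the weighted exponential decay at the cost of two derivatives. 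Substituting $V_m^\parallel = \sum_\mu c_\mu^{(m)}\tilde n_\mu(q_\ep)$ and projecting $S_u[U_{m-1,\ep}+\ep^{2m}V_m]$ onto $\tilde n_\nu$ yields, via Lemma \ref{lem:connection}, a linear variable-coefficient system for the $2N$-tuple $c^{(m)}(x_{\bar a},t)$,
\[
\bigl(\kz\pp_t^2 + \ep\ko\pp_t - \pp_{\bar a}\pp_{\bar a}\bigr)c^{(m)} + (\mbox{lower order}) = -\gamma,
\]
which I solve on $[0,T_0/\ep]$ by energy methods. Finally I restore the Coulomb-type gauge condition on $A_0^\ep,A_{\bar a}^\ep$ by solving an elliptic equation on $\R^2$ for a scalar correction, and propagate \eqref{tilde.size} by differentiating in $(x_{\bar a},t)$: each such derivative brings a factor of $\ep$ through the chain rule on $q_\ep$.

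The main obstacle is the regularity accounting. Each iteration loses roughly four derivatives---two from the Fredholm inversion of $L[\modu(q_\ep)]$ and one or two more for Sobolev embedding used to convert $H^{k}$-bounds on $\R^{d-2}$ into pointwise bounds and to close energy estimates for the linear wave map equations---dictating the choice $k_m = L - 4m$. The exponential decay in $|x_a|$ is preserved throughout because $\modu$, $\tilde n_\mu$ and their $q$-derivatives decay exponentially (Lemma \ref{lem:decayq}) and the inverse of $L[\modu(q_\ep)]$ on the orthogonal complement respects this decay. Estimate \eqref{H.Hkest} then follows from \eqref{S.est} by integrating $e^{-\gamma|x_a|}$ against $\R^2$ and invoking \eqref{q.decay}, \eqref{p00q}. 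The horizon $T_0$ is the supremum of times on which all kernel evolution equations retain $H^{k_m}$-regularity; when $\kz=0$ parabolic smoothing guarantees this on all of $(0,T)$, while for $\kz>0$ it is controlled by the hyperbolic energy estimate and the finiteness of $\Theta$.
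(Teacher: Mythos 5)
Your overall strategy matches the paper's: construct $U_{m,\ep}$ iteratively in powers of $\ep^2$, split each correction into a ``perpendicular'' part (found by Fredholm-inverting $L[\modu(q_\ep)]$) and a ``parallel'' (kernel) part, enforce gauge-orthogonality to turn $\calL$ into $L$, and determine the kernel coefficients via a wave map / heat flow on $\R^{d-2}$. Lemma \ref{lem:connection} applied to the leading residual, combined with $q$ solving \eqref{dwm}, is indeed what yields solvability at the first step (Lemma \ref{lem:J1}).

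There is, however, a concrete gap in the treatment of the kernel directions. You write $V_k^\parallel = \sum_\mu c_\mu^{(k)}\,\tilde n_\mu(q_\ep)$, but $\tilde n_\mu$ takes values in $\C\times\R^2$ — it specifies only the $(\Phi, A_1, A_2)$ components of the correction, and you never say what the $A_0$ and $A_{\bar a}$ components of the kernel direction are. This matters: if those components are taken to be zero, then when $\ell^0$ (the $\ep^0$-order part of the linearized operator, cf. \eqref{eq:wSe5}) is applied to the kernel direction, its $\alpha$-components $\ell^0_\alpha$ do \emph{not} vanish, because they contain the coupling term $-2(ic^{(k)}_\mu n_{\mu,\phi}, \wD_\alpha\mphi(q))$. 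This residual sits at the same $\ep$-order as $c^{(k)}_\mu \tilde n_\mu$ itself, which is one order too low: the expansion of $S_\alpha$ that you need breaks. The paper resolves this by augmenting $n_\mu$ to $N_\mu$ with the extra components $\xi_{\mu,\alpha}$ solving the scalar elliptic equation \eqref{ximualpha}, precisely so that $\ell^0[\wU^0](c_\mu N_\mu) = 0$ \emph{component by component} (Lemma \ref{lem:kerell0}). Your remark about updating $A_0^\ep, A_{\bar a}^\ep$ via ``the analogue of \eqref{Aabar.choice}'' does not repair this: that elliptic solve corresponds to the $\wA^{m+1}_\alpha$ components of the perpendicular part, which live at order $\ep^{2(m+1)}$, one order higher than $c^{m+1}_\mu N_\mu$ (at $\ep^{2m}$, cf.\ \eqref{expand.Uke}). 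You must explicitly carry the $\xi_{\mu,\alpha}$-part of the kernel direction at order $\ep^{2m}$.

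Two smaller imprecisions worth flagging. First, the evolution equation \eqref{orth.pde} for $c^{(m)}_\mu$ does not come from Lemma \ref{lem:connection}; it comes from the $\ell^1$-contribution of the kernel direction (i.e., the d'Alembertian built into \eqref{dAHM}), whose leading term is $[(\kz\wpp_0^2 + \ko\wpp_0 - \wpp_{\bar a}\wpp_{\bar a})c^{m+1}_\mu]n_{\mu,\phi}$, cf.\ \eqref{ell1phi}. Lemma \ref{lem:connection} is used only once, at the first step, to show that $\wS^1[\wU^0]$ is automatically orthogonal to the kernel (Lemma \ref{lem:J1}). Second, the Fredholm inversion of $L$ \emph{gains} two derivatives rather than losing them; the $\sim 4$ derivatives lost per iteration come primarily from the $A_\alpha$ elliptic solve (the term $\pp_\alpha\pp_a\wA_a$ in \eqref{wA1alpha.eqn} costs two) and from the $\wpp_0$- and $\ell^1$-estimates. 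Your arithmetic $k_m = L - 4m$ is correct, but for different reasons than you state.
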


\begin{remark}
Our full results, which are sharper and more detailed than the conclusions stated above,
are stated (after the change of variables described in
Section \ref{sec:3.1} below) in \eqref{expand.Uke} -- \eqref{wSmj.est}.
For example, they imply that if $r$ is any multi-index such that $|r|\le k_m$ and satisfying \eqref{rconstraint}, then
\beq\label{strongerS}
\begin{aligned}
\| \pp^r S_u[U_{m,\ep}] (\cdot, t)\|_{L^2(\R^d)} &\le  C\ep^{2m+2 + r_0 + |r_{\bar a}| - \frac d2} ,\\
\| \pp^r S_\alpha[U_{m,\ep}] (\cdot, t)\|_{L^2(\R^d)} &\le  C\ep^{2m+3 + r_0 + |r_{\bar a}| - \frac d2} , \qquad \alpha = 0, 3, \ldots, d
\end{aligned}
\eeq
which is stronger than  \eqref{H.Hkest}.

Note that as a result of \eqref{tilde.size} and the definition \eqref{U0.form} of the leading-order term $U^\ep_0$, for $|r|\le k_m+2-j$ such that $r_0\le 1$ the approximate solution
$U_{m,\ep}$ satisfies
\begin{equation}\label{Uep.size}
\begin{aligned}
\|\pp^r u_{m,\ep} \|_{L^\infty((0,T_0)\times \R^d)} &\le C \ep^{r_0+|r_{\bar a}|}\\
\|\pp^r A_{m,\ep,\alpha}\|_{L^\infty((0,T_0)\times \R^d)} 
&\le C\ep^{1+r_0+|r_{\bar a}|} \quad\mbox{ for }\alpha \in \{0,3,\ldots, d\}.
\end{aligned}
\end{equation}
\end{remark}

\medskip

The rest of this section is devoted to the proof of the proposition.

\medskip

\subsection{Change of variables}\label{sec:3.1}
In our construction of an approximate solution,  we will work with the variables $y = (y_a, y_{\bar a}, y_0)$ defined by
\begin{equation}\label{sy.vs.tx}
(y_a, y_{\bar a}, y_0) = (x_a, \ep x_{\bar a}, \ep t).
\end{equation}
We will use the notation
\[
\wpp_0 = \frac\pp {\pp y_0} = \frac 1\ep \frac\pp {\pp t} = \frac 1 \ep \pp_0, \qquad\quad
\wpp_{\bar a} = \frac \pp {\pp y_{\bar a} }= \frac 1\ep \frac \pp {\pp x_{\bar a}} = \frac 1 \ep \pp_{\bar a}, \qquad\quad \wpp_a = \pp_a.
\]
We will use the fonts\footnote{If these fonts look uncomfortably similar to $A,D,F$, one should just keep in mind that all computations in all proofs in this section are carried out with respect to the $y$ variables.}
 $\wA, \wD, \wF$ to denote the connection $1$-form, covariant derivative, and curvature $2$-form expressed with respect to these new variables. 
Thus
\[
A_0 dt +\sum_{j=1}^d A_j dx^j = 
\wA_0 dy_0 +\sum_{j=1}^d \wA_j dy^j 
\]
which implies that
\begin{align*}
 \wA_\alpha(y_a, y_{\bar a}, y_0) &= \frac 1 \ep A_\alpha(x_a, \ep x_{\bar a},\ep t) \quad\mbox{ for }\alpha=0,3,\ldots, d, \\
\wA_a(y_a, y_{\bar a}, y_0) &= A_a(x_a, \ep x_{\bar a}, \ep t), \quad \mbox{ for }a=1,2.
\end{align*}
Henceforth omitting the arguments when no confusion can result, we similarly have
\[
\wD_{a} = D_a, \qquad \qquad\qquad\wD_{\alpha} = \wpp_{\alpha} - i \wA_{\alpha} = \frac 1 \ep D_{\alpha} \ \ \mbox{ for } \ \ \alpha=0,3,\ldots, d
\]
and $\wF_{\alpha\beta} = \wpp_{\alpha} \wA_{\beta} - \wpp_{\beta} \wA_{\alpha} $, so that
\[
\wF_{\alpha\beta}
= \begin{cases} 
F_{\alpha\beta}&\mbox{ if }\alpha, \beta \in \{1,2\} \\
\ep^{-1}F_{\alpha\beta}& \mbox{ if }\alpha \in \{1,2\}\mbox{ and }\beta \not\in \{1,2\}\mbox{ or vice versa}\\
\ep^{-2}F_{\alpha\beta}&\mbox{ if }\alpha, \beta \not\in \{1,2\} 
\end{cases}
\]
We will also write $\wU$ rather than $U$ to denote a map $\binom \Phi \wA$ where the $1$-form coefficients are written with respect to the $y = (y_a, y_{\bar a}, y_0)$ variables.
We define
\beq\label{wSe.def}
\wSe_u[\wU](y) := S_u[U](x,t), \qquad\qquad
\wSe_\alpha[\wU](y) := \frac 1\ep  S_\alpha[U](x,t), \ \ \ 
\alpha=0,3,\ldots,d
\eeq
where $\wU$ and $U$ related as above, with $\wSe_u = \binom{\wSe_\phi}{\wSe_a}$ as usual.
In the new variables, we will write our ansatz as
\begin{equation}\label{r.ansatz}
\wU_{m,\ep} = \wU^0 +\ep^2 \wU_m^\ep \qquad\mbox{ where }
\wU^0(y) := 
\begin{pmatrix}
\wu^0\\
\wA^0_{\bar a}\\
\wA^0_0
\end{pmatrix}(y)
:=
\begin{pmatrix}
 \modu(y_a ; q(y_{\bar a}, y_0))\\
\chi_{\pp_{\bar a} q(y_{\bar a}, y_0)}(y_a) \\
\chi_{\pp_t q(y_{\bar a}, y_0)}(y_a) \\
\end{pmatrix}.
\end{equation}
To establish the estimates in Proposition \ref{prop:approx.sol}, it now suffices to prove, using notation introduced in \eqref{Xkj}, \eqref{weightednorm},  that
\begin{equation}\label{wS.est}
\| \wSe[\wU_{m,\ep}] \|_{L^\infty_{T_0} X^{k_m,j}_\gamma} + \kz\|\wpp_0 \wSe[\wU_{m,\ep}] \|_{L^\infty_{T_0} X^{k_m-1,j}_\gamma} \le C \ep^{2(m+1)}
\end{equation}
and
\begin{equation}\label{wUep.size1}
\| \wU^\ep_m \|_{L^\infty_{T_0} X^{k_m+2,j}_\gamma} + \kz\|\wpp_0\wU^\ep_m  \|_{L^\infty_{T_0} X^{k_m+1,j}_\gamma} \le C. 
\end{equation}
Indeed, \eqref{wS.est} contains estimates of 
$\| \wSe[\wU_{m,\ep}]\|_{L^\infty_{T_0}H^{k_m}}$, 
and \eqref{wUep.size1} contains estimates of 
$\| \wU^\ep_m\|_{L^\infty_{T_0}W^{k_m+2-j}_\gamma}$
that are equivalent to 
\eqref{strongerS} and \eqref{tilde.size} respectively.

\medskip

\subsection{Some preliminary computations} \label{sec:3.2}
It is straightforward to check that
\begin{equation}\label{eq:wSe1}
\wSe[\wU] = \wS^0[\wU] + \ep^2 \wS^1[\wU]
\end{equation}
for
\begin{equation}\label{eq:wSe2}
\wS^0[\wU] =
\begin{pmatrix}
\wS^0_\phi[\wU]\\
\wS^0_a[\wU]\\
\wS^0_{\bar a}[\wU]\\
\wS^0_0[\wU]
\end{pmatrix}
=
\begin{pmatrix}
 - \wD_a\wD_a \Phi + \frac 12 (|\Phi|^2-1)\Phi \\
 - \wpp_{b} \wF_{ba} - (i\Phi, \wD_a\Phi)  \\
  - \wpp_{a}  \wF_{a \bar a} - (i\Phi, \wD_{\bar a}\Phi) \\
  -  \wpp_{a}  \wF_{a 0} - (i\Phi, \wD_0\Phi) 
\end{pmatrix}
\end{equation}
and
\begin{equation}\label{eq:wSe3}
\wS^1[\wU] =
\begin{pmatrix}
(\kz \wD_0\wD_0 + \ko \wD_0 - \wD_{\bar a}\wD_{\bar a})\Phi \\
(\kz\wpp_0+\ko)\wF_{0a} - \wpp_{\bar a} \wF_{\bar a a} \\
(\kz\wpp_{0}+\ko) \wF_{0  \bar a}  - \wpp_{\bar b}  \wF_{\bar b \bar a} \\
-\wpp_{\bar b}  \wF_{\bar b 0 }
\end{pmatrix}.
\end{equation}
More than once we will encounter expressions of the form $\wS^\ep[\wU + \ep^p\wtU]$, which we will expand in powers of $\ep$.
To this end we introduce the notation
\[
\wZ = (\wU, \kz\wpp_0 \wU, \nabla \wU), \qquad 
\wtZ = (\wtU, \kz \wpp_0\wtU, \nabla \wtU), 
\]
and we note that all nonlinear terms in $S^0[ \wU ]$ and $S^1[\wU]$ are polynomials in $\wZ$ of degree at most 3.
For $m=0,1$ we can therefore expand
\begin{equation}\label{eq:wSe4}
S^m[\wU+\ep^p\wtU] = S^m[\wU] + \ep^p \ell^m[\wU](\wtU) + \ep^{2p}p^m_2(\wZ,\wtZ) + \ep^{3p} p^m_3(\wtZ)
\end{equation}
where $\ell^m[\wU]$ is a linear operator on $\wtU$ with coefficients depending on $\wU$, and $p^m_j$ is a polynomial in its arguments, at most cubic, and of order $j$ in $\wtZ$, all uniquely determined by \eqref{eq:wSe4} and the definitions of $\wS^m$, $m=0,1$.  Noting that $\wS^0_u[\wU] = S^{2d}[\wu]$ as defined in \eqref{2d.AHM},
we find that
\begin{equation}\label{eq:wSe5}
\begin{aligned}
\ell^0[\wU](\wtU)
&= 
\begin{pmatrix}
\ell^0_u[\wU]\\
\ell^0_{\bar a}[\wU]\\
\ell^0_0[\wU]\\
\end{pmatrix}(\wtU)
&=
\begin{pmatrix}
\calL[\wu](\wtu)\\
-\wpp_b \wtF_{b \bar a} + |\Phi|^2 \wtA_{\bar a} - (i\tPhi, D_{\wA_{\bar a}}\Phi) - (i\Phi, D_{\wA_{\bar a}}\tPhi) \\
-\wpp_b \wtF_{b 0} + |\Phi|^2 \wtA _{0}- (i\tPhi, D_{\wA_{0}}\Phi) - (i\Phi, D_{\wA_{0}}\tPhi)
\end{pmatrix}
\end{aligned}
\end{equation}
where  $\ell^m_u = \binom {\ell^m_\phi}{\ell^m_a}$ and
\[
\wtF_{\alpha\beta} = \wpp_\alpha\wtA_\beta - \wpp_\beta\wtA_\alpha . 
\]
Next, observing that $\wS^1_\alpha$ is linear for all $\alpha = 0,\ldots, d$, we  compute
\begin{equation}\label{eq:wSe6}
\ell^1[\wU](\wtU)
= 
\begin{pmatrix}
\ell^1_\phi[\wU](\wtU)\\
(\kz\wpp_0+\ko)\wtF_{0a} - \wpp_{\bar a} \wtF_{\bar a a} \\
(\kz\wpp_{0}+\ko) \wtF_{0  \bar a}  - \wpp_{\bar b}  \wtF_{\bar b \bar a} \\
-\wpp_{\bar b}  \wtF_{\bar b 0 }
\end{pmatrix}
\end{equation}
where
\begin{equation}\label{eq:wSe7}
\begin{aligned}
\ell^1_\phi[\wU](\wtU) &= (\kz \wD_{A_0}\wD_{A_0} + \ko \wD_{A_0} - \wD_{A_{\bar a}}\wD_{A_{\bar a}})\tPhi 
\\
&\hspace{1em}\ \  
-
 (\kz\wpp_0 \wtA_0 + \ko \wtA_0 -  \wpp_{\bar a}\wtA_{\bar a} )i\Phi -2i(\kz\wtA_0 \wD_0\Phi  - \wtA_{\bar a}\wD_{\bar a}\Phi) .
\end{aligned}
\end{equation}

The leading-order term in the ansatz is designed exactly to satisfy the following lemma.

\begin{lemma}\label{lem:kerS0}
$\wS^0[\wU^0] = 0$.
\end{lemma}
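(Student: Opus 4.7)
The plan is to check the four components of $\wS^0[\wU^0]$ separately, splitting them into the ``horizontal'' part $\wS^0_u$, which involves only $y_a$-derivatives, and the two components $\wS^0_{\bar a}$, $\wS^0_0$, which involve derivatives in the slow variables $y_{\bar a}, y_0$.

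First, observe that $\wS^0_u[\wU]$ is exactly the 2d elliptic operator $S^{2d}[\wu]$ defined in \eqref{2d.AHM}, since it only involves $\Phi$, $\wA_a$ and derivatives $\wpp_a = \pp_a$. For each fixed $(y_{\bar a}, y_0)$, the horizontal components of $\wU^0$ are precisely $\modu(\,\cdot\, ; q(y_{\bar a}, y_0))$, which solves the Bogomolnyi system \eqref{eq:Bogsys} by Theorem \ref{thm1.new}, and therefore the Euler-Lagrange equations \eqref{2dAHM}. Hence $\wS^0_u[\wU^0] = 0$ pointwise.

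Next, to handle $\wS^0_{\bar a}[\wU^0]$, I will apply the chain rule: since $\wA^0_{\bar a} = \chi_{\pp_{\bar a}q} = \pp_{\bar a}q^\mu \, \chi_\mu(q)$ and $\wA^0_a = \mA_a(q)$, a direct computation gives
\[
\wF_{a\bar a} = \wpp_a \wA^0_{\bar a} - \wpp_{\bar a} \wA^0_a = \pp_{\bar a}q^\mu\bigl(\pp_a\chi_\mu(q) - \pp_\mu\mA_a(q)\bigr) = -\pp_{\bar a}q^\mu \, \mF_{\mu a}(q),
\]
while $\wD_{\bar a}\Phi = \pp_{\bar a}q^\mu(\pp_\mu\mphi(q) - i\chi_\mu\mphi(q)) = \pp_{\bar a}q^\mu \, \calD_\mu \mphi(q)$. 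Consequently
\[
\wS^0_{\bar a}[\wU^0] = \pp_{\bar a}q^\mu \Bigl(\wpp_a \mF_{\mu a}(q) - (i\mphi(q), \calD_\mu \mphi(q))\Bigr),
\]
so it suffices to verify the pointwise identity $\wpp_a \mF_{\mu a} = (i\mphi, \calD_\mu \mphi)$ for every $\mu$. The Coulomb gauge \eqref{eq:coulomb} yields $\pp_a\mF_{\mu a} = \pp_\mu(\pp_a\mA_a) - \Delta\chi_\mu = -\Delta \chi_\mu$, and then the defining equation \eqref{chimu.def} for $\chi_\mu$, combined with the expansion $(i\mphi, \calD_\mu\mphi) = (i\mphi, \pp_\mu\mphi) - |\mphi|^2\chi_\mu$, gives exactly this identity. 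Thus $\wS^0_{\bar a}[\wU^0] = 0$.

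The argument for $\wS^0_0[\wU^0]$ is identical, with $\pp_{\bar a}q^\mu$ replaced by $\pp_0q^\mu$ throughout; no new computation is required. There is no real obstacle here: the lemma is essentially a bookkeeping exercise that shows that $\wU^0$ was designed precisely so that the horizontal equations reduce to the 2d Bogomolnyi system while the transverse Gauss-law-type equations encode exactly the PDE \eqref{chimu.def} defining the gauge-orthogonal lifts $\chi_\mu$. The only mild subtlety is to use the Coulomb gauge condition on $\mA_a(q)$ to convert $\pp_a \mF_{\mu a}$ into $-\Delta\chi_\mu$, which is the crucial link between the two characterizations of $\chi_\mu$.
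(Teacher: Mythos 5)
Your proof is correct and takes essentially the same approach as the paper: split into the horizontal part $\wS^0_u$, which vanishes because each slice is a moduli-space solution, and the transverse components $\wS^0_\alpha$ ($\alpha = 0, 3, \ldots, d$), which reduce to the gauge-orthogonality condition \eqref{eq:go} for the lifted tangent vectors. The only difference is one of presentation: the paper cites the construction \eqref{Aabar.choice} (itself resting on Theorem \ref{thm:MNmetric}, part 2) to conclude that gauge-orthogonality holds, whereas you re-derive that fact from scratch via the chain rule, the Coulomb gauge \eqref{eq:coulomb}, and the defining PDE \eqref{chimu.def} for $\chi_\mu$. Your unpacked computation is accurate and arguably more transparent, but it duplicates work the paper already packages into its theorem on the moduli-space metric.
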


\begin{proof}
The terms $\wS^0_u = \binom{\wS^0_\phi}{\wS^0_a}$ act only on $\wu = \binom{\Phi}{\wA_a}$ and coincide with
$S^{2d}$ as defined in \eqref{2d.AHM} on every 2d slice $\R^2\times \{ (y_{\bar a,a)}\}$. 
Since $\wu^0(\cdot, y_{\bar a}, y_0) = \modu(\cdot; q(y_{\bar a}, y_0))$ belongs to the moduli space for every $(y_{\bar a}, y_0)$,
it follows that $\wS^0_u[\wU^0]= 0$.

Next, recall from \eqref{Aabar.choice}, rescaled to the $y$ variables, that $\wA^0_\alpha$ for $\alpha=0,3,\ldots, d$ is chosen to guarantee that
\[
\wpp_\alpha \wu^0 - \binom{i \wA^0_\alpha\Phi^0}{d\wA^0_\alpha}  \ \ 
\mbox{ satisfies  gauge-orthogonality condition \eqref{eq:go}}
\]
with respect to the $y_a$ variables for every $(y_{\bar a}, y_0)$.
This states exactly that
\[
-\wpp_\alpha ( \wpp_a \wA^0_{\alpha} - \wpp_{\alpha} \wA^0_a) + (i\Phi^0, \wD_{\alpha}\Phi^0) = 0
\]
which is equivalent to the assertion that $\wS^0_{\alpha}[U^0] = 0$ for $\alpha = 0, 3, \ldots d$.
\end{proof}

The next lemma will be used at a later stage of the construction of approximate solutions.

\begin{lemma}\label{lem:kerell0}
For $\mu = 1,\ldots, 2N$, let
\[
N_\mu(y)
 := \begin{pmatrix}
N_{\mu,u}\\
N_{\mu, \bar a}\\
N_{\mu, 0}
\end{pmatrix}(y)
 := \begin{pmatrix}
n_{\mu}(y_a; q(y_{\bar a}, y_0)) \\
\xi_{\mu, \bar a}\\
\xi_{\mu, 0}
\end{pmatrix}
\]
where  $\xi_{\mu,\alpha}$ solves, for every $(y_{\bar a}, y_0)\in \R^{d-2}\times (0,T)$ and  $\alpha=0,3,\ldots, d$, 
\begin{equation}\label{ximualpha}
(-\Delta_a + |\mphi(q) |^2)\xi_{\mu,\alpha} = 2(i n_{\mu,\phi}(\cdot, q) , \wD_{\wA^0_\alpha} \mphi(\cdot; q) ) \qquad\mbox{ in }\R^2.
\end{equation}
Then for any smooth functions $c_\mu:\R^{d-2}\times (0,T)\to \R$ for $\mu=1,\ldots, 2N$,
\[
\ell^0[\wU_0]( c_\mu N_\mu) = 0 \qquad 
\]
where as usual the repeated $\mu$ index is summed implicitly $\mu=1,\ldots, 2N$.
\end{lemma}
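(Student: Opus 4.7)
The strategy is to check the identity component-by-component using the explicit formula \eqref{eq:wSe5} for $\ell^0$, exploiting the fact that $c_\mu$ depends only on $(y_{\bar a}, y_0)$, not on $y_a$. For the $u$-component, $\ell^0_u[\wU^0](c_\mu N_\mu) = \calL[\modu(q)](c_\mu n_\mu)$; since $\calL$ differentiates only in the $y_a$-directions, the scalar $c_\mu$ passes outside $\calL$, and the remaining expression vanishes because $n_\mu$ is a smoothly $q$-dependent linear combination of the kernel elements $\tilde n_\alpha$, each annihilated by $\calL[\modu(q)]$ (cf. \eqref{kercalL} and Definition \ref{def:nmu}).

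For $\alpha\in\{0,3,\ldots,d\}$ I would split $\ell^0_\alpha[\wU^0](c_\mu N_\mu)$ by the Leibniz rule into a piece proportional to $c_\mu$ and a piece proportional to $\wpp_\alpha c_\mu$. The $\wpp_\alpha c_\mu$-piece has exactly two sources: from $-\wpp_b\wtF_{b\alpha}$ with $\wtF_{b\alpha} = \wpp_b(c_\mu \xi_{\mu,\alpha}) - \wpp_\alpha(c_\mu n_{\mu,b})$, one obtains $(\wpp_\alpha c_\mu)\wpp_b n_{\mu,b}$; from expanding $D_{\wA^0_\alpha}(c_\mu n_{\mu,\phi})$ inside $-(i\mphi, D_{\wA^0_\alpha}\tPhi)$, one obtains $-(\wpp_\alpha c_\mu)(i\mphi, n_{\mu,\phi})$. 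Their sum $(\wpp_\alpha c_\mu)[\wpp_b n_{\mu,b}-(i\mphi, n_{\mu,\phi})]$ vanishes identically, because $n_\mu$, being a $q$-dependent linear combination of the $\tilde n_\alpha$, satisfies the gauge-orthogonality condition \eqref{eq:go} at every $q$.

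It then suffices to show $\ell^0_\alpha[\wU^0](N_\mu) = 0$, i.e., the case $c_\mu\equiv 1$. I would expand the definition, use $\wpp_b n_{\mu,b}=(i\mphi,n_{\mu,\phi})$ from gauge-orthogonality to rewrite $\wpp_b \wpp_\alpha n_{\mu,b}$ in terms of $\wpp_\alpha$-derivatives of $\mphi$ and $n_{\mu,\phi}$, and then regroup using $D_{\wA^0_\alpha} = \wpp_\alpha - i\wA^0_\alpha$ together with the elementary identities $(iv,w) = -(iw,v)$ and $(iv,iw) = (v,w)$. After the $(i\mphi,\wpp_\alpha n_{\mu,\phi})$ contributions cancel, the remaining lower-order terms collapse to $-2(in_{\mu,\phi}, D_{\wA^0_\alpha}\mphi)$, yielding
\[
\ell^0_\alpha[\wU^0](N_\mu) = (-\Delta_a + |\mphi|^2)\xi_{\mu,\alpha} - 2(in_{\mu,\phi}, D_{\wA^0_\alpha}\mphi),
\]
which vanishes by the very defining equation \eqref{ximualpha} for $\xi_{\mu,\alpha}$.

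The main obstacle is the algebraic bookkeeping in this last identity: several $\wpp_\alpha$-derivatives land on the $q$-dependent objects $\mphi$ and $n_{\mu,\phi}$, and one must verify that they reorganize, together with the $\wA^0_\alpha = \chi_{\wpp_\alpha q}$ terms, precisely into the covariant-derivative combination $(in_{\mu,\phi}, D_{\wA^0_\alpha}\mphi)$ appearing on the right-hand side of \eqref{ximualpha}. This is exactly what dictates the particular choice of $\xi_{\mu,\alpha}$, and it is the mechanism by which $\ell^0$ inherits a $2N$-parameter family of zero modes from the kernel of $\calL$ at each slice.
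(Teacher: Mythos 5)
Your proof is correct and follows essentially the same route as the paper: the $u$-component vanishes because $\calL[\modu(q)]$ acts only in $y_a$ and annihilates each $n_\mu$ (via \eqref{kercalL} and Definition \ref{def:nmu}), while the $\alpha$-components collapse to $(-\Delta_a+|\mphi(q)|^2)\xi_{\mu,\alpha}-2(in_{\mu,\phi},\wD_{\wA^0_\alpha}\mphi(q))$, which vanishes by the defining equation \eqref{ximualpha}. The only difference is organizational: you peel off the $\wpp_\alpha c_\mu$ terms first and cancel them directly by gauge-orthogonality of $n_\mu$, then reduce to $c_\mu\equiv 1$, whereas the paper keeps $c_\mu$ in place and applies gauge-orthogonality to rewrite $\wpp_\alpha\wpp_a(c_\mu n_{\mu,a})$ before invoking the covariant Leibniz identity; the underlying cancellations are the same.
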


\begin{proof}
Because $\calL$ involves only derivatives with respect to the $y_a$ variables, 
\[
\ell^0_u[U_0](c_\mu N_\mu) 
\overset{\eqref{eq:wSe5} }= \calL[\modu(q)]( c_\mu n_\mu) 
 =  c_\mu \calL[\modu(q)]( n_\mu) \overset{ \eqref{kercalL}}= 0.
\]
Next, for any $\alpha\in 0,3,\ldots, d$, we compute
\[
\begin{aligned}
\ell^0_\alpha[U_0](c_\mu N_\mu)
&= 
(-\Delta_a+|\mphi(q)|^2) (c_\mu \xi_{\mu,\alpha}) + \wpp_{\alpha} \wpp_a ( c_\mu n_{\mu,a})\\
&\hspace{7em}- ( i c_\mu n_{\mu,\phi}, \wD_{\wA_\alpha}\mphi(q)) 
- ( i \mphi(q),  \wD_{\wA_\alpha}(c_\mu n_{\mu,\phi}) .
\end{aligned}
\]
Since $n_\mu(\cdot, q)$ is gauge-orthogonal at $\modu(q)$, 
\[
\begin{aligned}
\wpp_{\alpha} \wpp_a ( c_\mu n_{\mu,a})
&= 
\wpp_{\alpha} [ c_\mu \wpp_a ( n_{\mu,a})]
=
\wpp_\alpha[ c_\mu  (  i\mphi(q), n_{\mu,\phi})] \\
&=
\wpp_\alpha (  i\mphi(q), c_\mu n_{\mu,\phi})\\
&= 
 (  i\wD_\alpha \mphi(q), c_\mu n_{\mu,\phi})
+
 (  i \mphi(q), \wD_\alpha(c_\mu n_{\mu,\phi}))
\end{aligned}
\]
Substituting into the above and simplifying, we find that
\[
\ell^0_\alpha[U_0](c_\mu N_\mu)
= (-\Delta_a+|\mphi(q)|^2) (c_\mu \xi_{\mu,\alpha})  - 2 c_\mu (i n_{\mu,\phi}, \wD_\alpha\mphi(q)) \overset{\eqref{ximualpha}}=0.
\]
\end{proof}

The next two lemmas record some properties of the auxiliary functions introduced above.

\begin{lemma}\label{lem:U0Nmu}
Assume \eqref{q.compact0} and \eqref{q.decay}, and let $j>\frac d2-1$.
    Then 
    \beq\label{U0N1}
    \begin{aligned}
            \| \wU^0 \|_{L^\infty_{T/\ep}W^{L-j, \infty}(\R^d)} +  \kz \|\wpp_0 \wU^0 \|_{L^\infty_{T/\ep}W^{L-j, \infty}(\R^d)} &\le C(\Theta)   \\
            \| N_\mu\|_{L^\infty_{T/\ep}W^{L-j, \infty}(\R^d)} +  \kz  \| \wpp_0N_\mu\|_{L^\infty_{T/\ep}W^{L-j, \infty}(\R^d)} &\le C(\Theta)
    \end{aligned}    
    \eeq
    Moreover, if $v\in H^l(\R^d)$ for $l\le L$, then for any $t\in (0, T/\ep)$,  $\wU^0(t) v, N_\mu(t) v\in H^l(\R^d)$, and
    \beq\label{U0N2}
    \| \wU^0 v \|_{H^l(\R^d)}  +  \| N_\mu v \|_{H^l(\R^d)} \le C(\Theta)  \|  v \|_{H^l(\R^d)} 
    \eeq
    where $\wU^0 v$ or $N_\mu v$ may denote the product of $v$ with any component of $\wU^0(t)$ or $N_\mu(t)$.
    Similarly, for every $t\in (0,T/\ep)$,
    \beq\label{U0N3}
   \left. \begin{aligned}
        \| (\nabla  \wU^0) v \|_{H^l(\R^d)}  +  \kz \|(\wpp_0 \wU^0) v \|_{H^l(\R^d)} &\le C(\Theta)  \|  v \|_{H^l(\R^d)}\\
        \| (\nabla N_\mu) v \|_{H^l(\R^d)}  + \kz \| (\wpp_0 N_\mu) v \|_{H^l(\R^d)} &\le C(\Theta)  \|  v \|_{H^l(\R^d)}
    \end{aligned}  \right\}   \quad\mbox{ for }l \le L-1.
    \eeq
\end{lemma}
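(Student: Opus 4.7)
The plan is to view $\wU^0$ and $N_\mu$ as smooth, compactly $q$-dependent families of functions of $y_a\in\R^2$ evaluated at the slow argument $q(y_{\bar a},y_0)$. For $\wU^0$ this is manifest from \eqref{r.ansatz}; for $N_\mu$ the functions $\xi_{\mu,\alpha}$ solve the $y_a$-elliptic equation \eqref{ximualpha} whose right-hand side is a smooth $q$-family of exponentially decaying sources, so by standard elliptic regularity they inherit the decay and $q$-smoothness recorded in Lemma \ref{lem:decayq} (applied to $\mphi,\mA,\chi_\mu,n_\mu$). In particular, for every compact $K\subset M_N$ and every multi-index $r=(r_a,r_{\bar a},r_0)$, derivatives of the form $\pp_{y_a}^{r_a}\pp_q^{k}\Psi(y_a,q)$ (where $\Psi$ stands for any component of $\wU^0$ or $N_\mu$) are bounded uniformly for $q\in K$, with exponential decay in $y_a$ in all cases except the ``constant-at-infinity'' components of $\mphi$ and the $y_a$-slowly-decaying $\mA_a$, which are nonetheless bounded.

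For the $W^{L-j,\infty}$ bound \eqref{U0N1}, I would expand $\pp^r \wU^0$ by Faà di Bruno as a finite sum of terms
$(\pp_{y_a}^{r_a}\pp_q^{k}\Psi)(y_a,q(y_{\bar a},y_0))\cdot \pp^{\alpha_1}q\cdots \pp^{\alpha_k}q$
with $\sum|\alpha_i|=|r_{\bar a}|+r_0$. Since $|r|\le L-j$, each $|\alpha_i|\le L-j$, and hence $\pp^{\alpha_i}q\in H^{L-|\alpha_i|}(\R^{d-2})\subset L^\infty(\R^{d-2})$ by Sobolev embedding thanks to the hypothesis $j>(d-2)/2$. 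Combining uniform $y_a$-bounds on the $\Psi$-factor with $L^\infty_{y_{\bar a}}$-bounds on the $q$-factors yields \eqref{U0N1}. The $\kz\wpp_0$-estimate uses the same scheme after invoking the hypothesis $\kz\pp_0q\in L^\infty_T H^L$ and the derived bound $\kz\pp_{00}q\in L^\infty_T H^{L-1}$ from \eqref{p00q}.

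For the multiplicative bound \eqref{U0N2}, apply Leibniz and split the sum at the threshold $|r_1|=L-j$. When $|r_1|\le L-j$, the previous paragraph gives $\pp^{r_1}\wU^0\in L^\infty$, so
$\|\pp^{r_1}\wU^0\cdot\pp^{r_2}v\|_{L^2}\le C(\Theta)\|v\|_{H^l}$.
When $L-j<|r_1|\le l\le L$, then $|r_2|<l-(L-j)\le j\le d/2$, so Gagliardo--Nirenberg furnishes $\|\pp^{r_2}v\|_{L^{p}}\le C\|v\|_{H^l}$ for a suitable $p>2$ with $\frac{1}{p}+\frac{1}{p'}=\frac{1}{2}$ and $p'<\infty$; meanwhile the Faà di Bruno expansion of $\pp^{r_1}\wU^0$, combined with the uniform $y_a$-decay of its $\Psi$-factor and Gagliardo--Nirenberg in $\R^{d-2}$ applied to products of $\pp^{\alpha_i}q$ (at most one factor needs the full $L$ derivatives, the rest being $L^\infty$ since their order is strictly less than $L-j$), bounds $\|\pp^{r_1}\wU^0\|_{L^{p'}}$ by $C(\Theta)$. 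Hölder closes the estimate. The bounds \eqref{U0N3} follow by the identical scheme applied to $\nabla\wU^0$ and $\kz\wpp_0\wU^0$, which have one fewer order of $q$-regularity available and hence require $l\le L-1$.

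The main obstacle is the ``high'' regime $|r_1|>L-j$ in the multiplicative bound, where $\pp^{r_1}\wU^0$ is no longer in $L^\infty$: the proof must exploit both the rapid $y_a$-decay built into the $\Psi$-coefficients (coming from Lemma \ref{lem:decayq}) and the anisotropic fact that only the $y_{\bar a}$-direction loses $L^\infty$-control, pairing these with Gagliardo--Nirenberg estimates on $v$ in $\R^d$. The remaining points (boundedness and elliptic regularity for $\xi_{\mu,\alpha}$, and the $\kz\pp_0$-variants) are essentially bookkeeping once this technical core is set up.
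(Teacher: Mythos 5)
Your overall strategy (chain rule / Fa\`a di Bruno in the slow variables, Sobolev embedding for $L^\infty$ control of $\pp^{\alpha_i}q$, and Gagliardo--Nirenberg interpolation to close the high/low split in the multiplicative bound) is the same as the paper's, which packages exactly this calculus into Lemma~\ref{lem:fq} and Lemma~\ref{lem:25} of the appendix. However, you mischaracterize the structure of the non-$u$ components in a way that breaks the derivative bookkeeping your argument rests on.

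Concretely, the components $\wA^0_\alpha$ and $N_{\mu,\alpha}$ for $\alpha\in\{0,3,\ldots,d\}$ are \emph{not} ``smooth, compactly $q$-dependent families of functions of $y_a$ evaluated at $q(y_{\bar a},y_0)$.'' From \eqref{r.ansatz} and \eqref{chiv.def} one reads off
$\wA^0_\alpha(y)=\pp_\alpha q^\mu(y_{\bar a},y_0)\,\chi_\mu(y_a;q(y_{\bar a},y_0))$,
and similarly the right-hand side of \eqref{ximualpha} contains $\wD_{\wA^0_\alpha}\mphi$, which carries the same $\pp_\alpha q$ factor; so the source in \eqref{ximualpha} is \emph{not} a ``smooth $q$-family of exponentially decaying sources.'' The paper resolves this by extracting the linear $\pp_\alpha q$ dependence up front, proving $\xi_{\mu,\alpha}(y)=\pp_\alpha q^\nu(y_{\bar a},y_0)\,\xi_{\mu\nu}(y_a;q(y_{\bar a},y_0))$ with $\xi_{\mu\nu}$ a genuine $(y_a,q)$-function, and then applies the dedicated clause \eqref{gfqpsi}--\eqref{paqfq} of Lemma~\ref{lem:fq} (and, for the time-derivatives, Lemma~\ref{lem:25}) to products of the form $\pp_\alpha q\,f(q)\,v$ or $\pp_\alpha q\,\pp_\beta q\,f(q)$. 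Without this factorization, your proposed expansion $(\pp_{y_a}^{r_a}\pp_q^k\Psi)(y_a,q)\cdot\pp^{\alpha_1}q\cdots\pp^{\alpha_k}q$ is wrong for $\Psi=\wA^0_\alpha$ or $\Psi=\xi_{\mu,\alpha}$: each $y_{\bar a}$-derivative lands either on the inner argument $q$ or on the prefactor $\pp_\alpha q$, and the latter shifts all your derivative counts up by one. The counting can still close (e.g.\ for $|r|\le L-j$ one needs $\pp^{L-j+1}q\in L^\infty$, which does hold since $\pp_{\bar a}q\in H^L\hookrightarrow W^{L-j,\infty}$ for $j>\tfrac{d-2}{2}$; and for $|r|=L$ one needs $\pp^{L+1}q=\pp^L(\pp_{\bar a}q)\in L^2$), but this requires recognizing the $\pp_\alpha q\cdot f(q)$ structure explicitly, which your proposal does not.

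A secondary point: you describe some non-exponentially-decaying components of $\mphi,\mA_a$ as merely ``bounded,'' but the estimates of Lemma~\ref{lem:fq} require the algebraic decay $|\pp^r f|\le C(1+|y_a|)^{-|r|}$ (and, for the $\alpha$-components, the faster rate $(1+|y_a|)^{-|r|-1}$); both rates come from Lemma~\ref{lem:decayq} and are used in an essential way in the Gagliardo--Nirenberg step, since $(1+|y_a|)^{-1}\in L^{p}(\R^2)$ for $p>2$ is what makes the H\"older exponents work out.
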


\begin{remark}\label{rmk:multiplier}
    if $X$ is a function space, we will say that a function $\wU$ is an {\em $X$-multiplier} if 
\[
\| \wU \psi \|_{X} \le C \| \psi\|_{X} \qquad\mbox{ for all }\psi\in X.
\]
For a vector-valued $\wU$, we understand the left-hand side to mean the product of any component of $\wU$ with $\psi$. In all that follows, the constant for an $X$-multiplier will depend only on $\Theta$ from \eqref{Theta.def0}.
It is easy to see that every $W^{k,\infty}$ function is a $W^{k,\infty}_\gamma$ multiplier, so the lemma implies that for every $t\in (0,T/\ep)$,
\[
\left.
\begin{aligned}
    &(\wU^0, \kz\wpp_0\wU^0, \nabla\wU^0)(\cdot,t) \\
    &(N_\mu, \kz\wpp_0 N_\mu, \nabla N_\mu)(\cdot,t)
    \end{aligned}
    \ \  \right\} \quad \mbox{ are $X^{l,j}_{\gamma}(\R^d)$ -multipliers for $l\le L-1$.}
\]
Thus
\[
\left.
\begin{aligned}
    &(\wU^0, \kz\wpp_0\wU^0, \nabla\wU^0) \\
    &(N_\mu, \kz\wpp_0 N_\mu, \nabla N_\mu)
    \end{aligned}
    \ \  \right\} \quad 
    \mbox{ are $L^\infty_{T/\ep} X^{l,j}_{\gamma}(\R^d)$ -multipliers for $l\le L-1$.}
\]
\end{remark}
\begin{proof}
Note that the $\C\times \R^2$ components of $\wU^0$ and $N_\mu$, {\em i.e.}
\[
\wU^0_u(y)
= \binom{\mphi}{\mA_a}(y_a; q(y_{\bar a}, y_0))
\qquad
N_{\mu,u}(y) = n_\mu(y_a; q(y_{\bar a}, y_0))
\]
all have the form
\beq\label{ac0}
f(q)(y) := f(y_a; q(y_{\bar a}, y_0))
\eeq
for a function $f:\R^2\times M_N\to \C$ or $\R$ such that 
\[
|\pp^r f(y_a,q)| \le C (1+|y_a|)^{-|r|}
\]
where $r$ is a multiindex denoting differentiation with respect to both $y_a\in \R^2$ and $q\in \R^{2N}\cong M_N$.
In Lemma \ref{lem:fq} in Appendix \ref{App:more}, we establish that such functions satisfy estimates \eqref{U0N1}, \eqref{U0N2}, subject to hypotheses \eqref{q.compact0}, \eqref{q.decay}.

The other components of $\wU^0$ and $N_\mu$ have the form
\beq\label{alpha.compoonents}
\pp_\alpha q(y_{\bar a}) f(q)(y), \qquad\alpha = 0,3 ,\ldots, d.
\eeq
for $f:\R^2\times M_N\to \R$ such that $|\pp^r f(y_a, q)| \le C(1+|y_a|)^{-1-|r|}$, when $q\in K$ compact $\subset M_N$.
Indeed, it follows from \eqref{r.ansatz} and \eqref{chiv.def} that 
\[
\wU^0_\alpha (y)= \pp_\alpha q^\lambda(y_{\bar a}) \chi_\lambda(y_a; q(y_{\bar a}, y_0)), \qquad\alpha = 0, 3,\ldots, d
\]
and every $\chi_\lambda$ has the required decay, see Lemma \ref{lem:decayq}.

Similarly, the $\alpha$ components of $N_\mu$ are defined 
as the solutions $\xi_{\mu,\alpha}$ of equation \eqref{ximualpha}. Recall that, due to the choice \eqref{r.ansatz} of $\wA^0_\alpha$, we can rewrite $\wD_{\wA^0_\alpha}\mphi(y)$, which appears on the right-hand side of \eqref{ximualpha},  as $\pp_\alpha q^\nu(y_{\bar a}) \, \tilde n_{\nu,\phi}(y_a; q(y_{\bar a}, y_0))$. From this one deduces that $\xi_{\mu,\alpha}$ can be written as 
\[
\xi_{\mu,\alpha}(y)  = \pp_\alpha q^\nu(y_{\bar a}) \xi_{\mu\nu}(y_a; q(y_{\bar a}, y_0))
\]
where $\xi_{\mu\nu}(\cdot, q)$ solves, for every $q\in M_N$,
\[
(-\Delta_a + |\mphi(q) |^2)\xi_{\mu\nu} = 2(i n_{\mu,\phi}(\cdot, q) , \tilde n_{\nu,\phi}(\cdot; q) ) \qquad\mbox{ in }\R^2 .
\]
Regularity and decay properties of $n_\mu$ imply that the right-hand side is smooth, with
\[
| \pp^r (i n_{\mu,\phi}, \tilde n_{\nu,\phi}) |( y_a, q) \ \le C(K,r) e^{-\gamma|y_a|}\qquad\mbox{ whenever }q\in K
\]
for $\gamma<1$, so it follows from Lemma \ref{scalar.lem1} that 
\[
\| \xi_{\mu\nu}(\cdot, q)\|_{W^{k,\infty}_\gamma(\R^2)} \le C(k,K) \qquad \mbox{ whenever }q\in K.
\]
It is proved in Lemma \ref{lem:fq} that functions of the form\footnote{Lemma \ref{lem:fq} proves \eqref{paqfq} only for $\alpha = \bar a = 3,\ldots, d$, but the
proof also shows that the same inequality holds for $\alpha =0$.}  shown in \eqref{alpha.compoonents} satisfy
\beq\label{paqfq}
\| \pp_\alpha q f(q) v \|_{H^l(\R^d)} \le C \| v \|_{H^l(\R^d)}, \qquad \mbox{ for } l \le L,
\eeq
completing the verification of \eqref{U0N2}. Estimate \eqref{U0N1} for the $\alpha$ components of $\wU^0$ and $N_\mu$
follows easily from conclusion \eqref{fq.Wkinfty} of Lemma \ref{lem:fq} and the embedding $H^L(\R^{d-2})\hookrightarrow W^{L-j,\infty}(\R^{d-2})$ for $j>\frac d2-1$.

The verification of \eqref{U0N3} is very similar. The main point is that every component of 
$\nabla\wU^0 , \wpp_0\wU^0, \nabla N_\mu, \wpp_0N_\mu$ either has the form \eqref{ac0} or\eqref{alpha.compoonents}, or else the form
\[
\wpp_\alpha q(y_{\bar a})\wpp_\beta q(y_{\bar a})f(q)(y) \qquad\mbox{ or }\quad 
\wpp_{\alpha \beta} q(y_{\bar a})f(q)(y).
\]
\end{proof}

\begin{lemma}\label{lem:S1U0}
Assume \eqref{q.compact0} and \eqref{q.decay}, and let $j>\frac d2-1$. Then $\wS^1[\wU^0]$ satisfies
\beq\label{S1U0.est}
    \| \wS^1[\wU^0] \|_{L^\infty_{T/\ep} X^{L-1,j}_{\gamma}(\R^d)} +   \kz  \| \wpp_0\wS^1[\wU^0] \|_{L^\infty_{T/\ep} X^{L-2,j}_{\gamma}(\R^d)}  \le C(\Theta)
    \eeq
    for every $\gamma\in (0,1)$.
\end{lemma}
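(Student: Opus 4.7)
The plan is to expand $\wS^1[\wU^0]$ entry by entry using \eqref{eq:wSe3} and the explicit form \eqref{r.ansatz} of $\wU^0$, so that every scalar component becomes a finite sum of products of (i) polynomials in derivatives $\pp^s q(y_{\bar a}, y_0)$ of order $|s| \le 2$ (with a third derivative appearing only in the $\wpp_0 \wS^1$ estimate), times (ii) smooth functions on $\R^2 \times M_N$ of the form $g(y_a; q(y_{\bar a}, y_0))$ that are uniformly exponentially decaying in $|y_a|$ on the compact set $K\subset M_N$. Once this structural decomposition is in place, the estimate \eqref{S1U0.est} follows mechanically from Lemma \ref{lem:U0Nmu}, its multiplier corollary in Remark \ref{rmk:multiplier}, and Lemma \ref{lem:fq} in the appendix.

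The key observation enabling the decomposition is that the choice \eqref{r.ansatz} of $\wA^0_0, \wA^0_{\bar a}$ is precisely engineered (see \eqref{Aabar1}) so that
\[
\wD_\alpha \Phi^0 \;=\; \tilde n_{\pp_\alpha q,\,\phi}(y_a;q), \qquad \wF^0_{\alpha a} \;=\; \tilde n_{\pp_\alpha q,\,a}(y_a;q)
\]
for $\alpha \in \{0, 3,\ldots, d\}$, while $\wF^0_{\alpha\beta} = \wpp_\alpha \chi_{\pp_\beta q} - \wpp_\beta \chi_{\pp_\alpha q}$ for $\alpha, \beta \in \{0, 3,\ldots, d\}$ expands via the chain rule into terms of the form $(\pp_\alpha\pp_\beta q^\mu)\,\chi_\mu(q) + (\pp_\alpha q^\mu)(\pp_\beta q^\nu)(\pp_\nu \chi_\mu - \pp_\mu\chi_\nu)(q)$, in which each scalar factor of $q$ and $\chi$ has the required form. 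Applying an additional $\wpp_0$ or $\wpp_{\bar a}$ (as required by the second-order terms $\kz\wD_0\wD_0\Phi$, $\wD_{\bar a}\wD_{\bar a}\Phi$, $\wpp_{\bar a}\wF_{\bar a a}$, etc.\ in \eqref{eq:wSe3}) produces only: (a) one more derivative of $q$ landing in $\{\pp_\alpha\pp_\beta q\}$, at worst a $\kz\pp_0^2 q$ (controlled by \eqref{p00q}) or $\pp_{\bar a}\pp_{\bar b} q$ (controlled by \eqref{q.decay}); or (b) one more derivative landing on a smooth, exponentially decaying function of $(y_a, q)$ via Lemma \ref{lem:decayq}. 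The decay of $\tilde n_{\mu,\phi}, \tilde n_{\mu,a}, \nabla\chi_\mu, |\mphi|^2 - 1$, and $D_a\mphi$ from Lemma \ref{lem:decayq} gives the uniform exponential bound on $g$ needed to control any $\gamma < 1$ weight.

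To estimate the result, observe that every term in the expansion is a sum of products
\[
\big(\pp^{s_1}q\big)\cdots\big(\pp^{s_k}q\big)(y_{\bar a}, y_0)\, \cdot\, g(y_a;q(y_{\bar a}, y_0)),
\]
with $|s_i|\le 2$ (and in fact at most one index of maximal order, which is a $\kz\pp_0^2$ or $\pp_{\bar a}\pp_{\bar b}$). By Lemma \ref{lem:fq}, each factor $g(q)(y)$ of the second type lies in $L^\infty_{T/\ep}X^{L,j}_\gamma(\R^d)$ with norm depending only on $K$ and $\Theta$, and multiplication by a factor $\pp^{s}q(y_{\bar a}, y_0)$ with $|s| = 1$ preserves $X^{l,j}_\gamma$ for $l\le L$, while multiplication by $|s| = 2$ factors in $H^{L-1}(\R^{d-2})$ preserves $X^{l,j}_\gamma$ for $l\le L-1$ (using the algebra property that holds under the dimensional hypothesis \eqref{parameters}). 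Combining these multiplier estimates bounds each term in $L^\infty_{T/\ep}X^{L-1,j}_\gamma(\R^d)$ by $C(\Theta)$, yielding the first half of \eqref{S1U0.est}.

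For the $\kz\wpp_0\wS^1[\wU^0]$ estimate in $X^{L-2,j}_\gamma$, one differentiates the same expansion once more in $\wpp_0$. The worst new terms are $\kz\pp_0^3 q^\mu$ and $\kz\pp_0^2\pp_{\bar a}q^\mu$, which by \eqref{q.decay} and a further differentiation of \eqref{p00q} lie in $L^\infty_{T/\ep}H^{L-2}(\R^{d-2})$; all other products gain at most one extra derivative of $q$ that was already under control. The same multiplier reasoning now gives a bound in $L^\infty_{T/\ep}X^{L-2,j}_\gamma$, again with constant depending only on $\Theta$. The main obstacle is the careful bookkeeping needed to ensure that every $\wpp_0$ either lands on a $q$-derivative whose $\kz$-weighted norm is controlled (automatic when $\kz = 0$ because the second term in \eqref{S1U0.est} vanishes) or on a smooth function of $(y_a, q)$; no subtle cancellation is required.
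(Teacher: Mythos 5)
Your decomposition of $\wS^1[\wU^0]$ and the reduction to multiplier estimates via Lemma~\ref{lem:U0Nmu} and Lemma~\ref{lem:fq} follows the paper's structure, but there is a genuine gap in the decay claims that the paper spends most of the proof addressing and that you assume without justification.

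You assert that every scalar factor $g(y_a;q)$ appearing in the expansion is uniformly exponentially decaying on $K$, and you list $\nabla\chi_\mu$ among the objects with that property. But Lemma~\ref{lem:decayq} says only $|\pp^r\chi_\lambda(x,q)|\le C(1+|x|)^{-|r|-1}$ --- that is, $\chi_\lambda$ and all its $x$- and $q$-derivatives decay \emph{algebraically}, not exponentially. The exponentially decaying list in Lemma~\ref{lem:decayq} consists of $|\mphi|^2-1$, $D_a\mphi$, and $\tilde n_\mu$; it does not include $\chi_\mu$, $\pp_a\chi_\mu$, or $\pp_\nu\chi_\mu$. Thus the key fact your argument needs --- that the combination $\pp_\nu\chi_\mu - \pp_\mu\chi_\nu$, which appears in $\wF^0_{\alpha\beta}$ after the chain rule, lies in $W^{k,\infty}_\gamma(\R^2)$ --- is not covered by the cited lemmas. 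This is exactly the content of the paper's claim \eqref{curl.chi}, and its proof is nontrivial: one shows $\pp_\nu\chi_\mu - \pp_\mu\chi_\nu$ satisfies a linear elliptic equation whose right-hand side, after a cancellation ($\pp_\nu(i\mphi,\pp_\mu\mphi) - \pp_\mu(i\mphi,\pp_\nu\mphi) = 2(i\pp_\nu\mphi,\pp_\mu\mphi)$) and a polar decomposition $\mphi = \rho e^{i\theta}$ for large $|y_a|$, can be seen to decay exponentially, whereupon Lemma~\ref{scalar.lem1} upgrades this to exponential decay of the curl itself. Without this step your estimate of $\wF^0_{\alpha\beta}$ in the weighted space $X^{L-1,j}_\gamma$ does not go through, because the factor you need to bound decays only like $(1+|y_a|)^{-1}$.

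A second, smaller issue: your chain-rule expansion of $\wF^0_{\alpha\beta}$ lists a term $(\pp_\alpha\pp_\beta q^\mu)\chi_\mu(q)$. By the antisymmetry of $\wF_{\alpha\beta}$ this term cancels --- only $\pp_\beta q^\mu\,\wpp_\alpha q^\nu(\pp_\nu\chi_\mu - \pp_\mu\chi_\nu)$ survives. Had it not cancelled, your argument would fail outright, since $\chi_\mu$ decays only algebraically. The cancellation is what makes the curl the relevant object, and explicitly recording it would have made the gap above more visible.
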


\begin{proof}
We first consider $\wS^1_u[\wU^0]$. Recall that the choice of $\wA_\alpha$ for $\alpha = 0, 3,\ldots, d$ implies that
\[
\binom{\wD_\alpha\mphi}{\wF_{\alpha a}}(y)  = \pp_\alpha q^\mu (y_{\bar a}, y_0) \, \tilde n_{\mu}(y_a; q(y_{\bar a}, y_0)) = \pp_\alpha q^\mu \, n_\mu(q) (y)
\]
It follows that
\[
\begin{aligned}
\wS^1_u[\wU^0] &= (\kz\pp_0+\ko)(\pp_0 q^\mu \, \tilde n_\mu(q))
- \pp_{\bar a}(\pp_{\bar a}q^\mu \tilde n_\mu(q)) \\
&\hspace{10em}-i(\kz \wA_0 \, \pp_0 q^\mu -
i\wA_{\bar a} \pp_{\bar a} q^\mu) \binom{\tilde n_{\mu,\phi}(q)}0   . 
\end{aligned}
\]
Using \eqref{p00q} and smoothness and decay
properties of $n_\mu(\cdot, q)$, see Lemma \ref{lem:decayq}, it is straightforward to verify that the $u$ components of $\wS^1[\wU^0]$ satisfy \eqref{S1U0.est}, for example by arguing along the lines of the proof of Lemma \ref{lem:25}.

The other components have the form
\beq\label{wFalpha}
\wS^1_\alpha[\wU^0] = (\kz \pp_0+\ko) \wF_{0\alpha} -\pp_{\bar a} \wF_{\bar a \alpha}, \qquad\alpha = 0, 3,\ldots, d.
\eeq
So we need to understand regularity and decay properties of $\wF_{\alpha\beta}$ for $\alpha,\beta\in \{0,3,\ldots, d\}$. From the definitions, 
\beq\label{wF.formula}
\begin{aligned}
    \wF_{\alpha\beta} &= \wpp_\alpha (\pp_\beta q^\mu \chi_\mu(q)) - \wpp_\beta (\pp_\alpha q^\mu \chi_\mu(q)) \\
    &=
    \pp_\beta q^\mu \, \wpp_\alpha q^\nu\, \pp_\nu\chi_\mu(q) - \pp_\alpha q^\mu \, \wpp_\beta q^\nu\, \pp_\nu\chi_\mu(q) \\
    &=
    \pp_\beta q^\mu \, \wpp_\alpha q^\nu\, (\pp_\nu\chi_\mu(q) - \pp_\mu\chi_\nu(q)).
\end{aligned}
\eeq
We claim that
\beq\label{curl.chi}
(\pp_\nu\chi_\mu - \pp_\mu\chi_\nu)(q) \in W^{k,\infty}_\gamma(\R^2)\qquad\mbox{ for every $k\in \N$ and }
\gamma\in (0,1),
\eeq
with uniform estimates for $q$ in a compact subset $K\subset M_N$.
To see this, note from the definitions \eqref{chimu.def}  that  $(\pp_\nu\chi_\mu - \pp_\mu\chi_\nu)(\cdot, q)$ solves
\[
\begin{aligned}
    (-\Delta + |\mphi(q)|^2)(\pp_\nu\chi_\mu - \pp_\mu\chi_\nu)
&=
\pp_\mu|\mphi|^2 \chi_\nu - \pp_\nu|\mphi|^2 \chi_\mu+ \\
&\hspace{5em}\pp_\nu(i\mphi, \pp_\mu\mphi) - \pp_\mu(i\mphi, \pp_\nu \mphi)
\end{aligned}
\]
in $\R^2$. Lemma \ref{scalar.lem1} implies that \eqref{curl.chi} will follow if show that the right-hand side belongs to $W^{k,\infty}_\gamma(\R^2)$ for every $k$. This is  the case for 
$\pp_\mu|\mphi|^2 \chi_\nu - \pp_\nu|\mphi|^2 \chi_\mu$, in view of Lemma \ref{lem:decayq}. The other terms are clearly smooth, so we only need to check exponential decay of all derivatives. To see this, first note that
\[
\pp_\nu(i\mphi, \pp_\mu\mphi) - \pp_\mu(i\mphi, \pp_\nu \mphi)
=
(i\pp_\nu\mphi, \pp_\mu\mphi) - (i\pp_\mu\mphi, \pp_\nu \mphi)
= 2(i\pp_\nu\mphi, \pp_\mu\mphi).
\]
For $|y_a|$ large enough (depending on $q$)
we can write $\mphi(y_a;q) = \rho(y_a;q) e^{i\theta(y_a;q)}$ for $\rho:\R^2\to [0,\infty)$ and $\theta:\R^2\to \R/2\pi\Z$ smooth. One then checks that
\[
(i\pp_\nu\mphi, \pp_\mu\mphi) = \rho \pp_\mu \rho  \pp_\nu \theta - \rho \pp_\nu \rho \pp_\mu\theta
\]
It is easy to see from the form \eqref{eq:TaubesSol1} of $\mphi(q)$
that $\pp_\mu\theta(y_a)\in W^{k,\infty}$ and all its derivatives bounded (in fact decaying algebraically) as $|y_a|\to \infty$,  
so the desired exponential decay again follows from Lemma \ref{lem:decayq}. This completes the proof of
\eqref{curl.chi}.

It follows from \eqref{wF.formula} and \eqref{curl.chi}, along the same lines as the proof of Lemma \ref{lem:25}, that
\[
\| \wF_{\alpha\beta}\|_{ L^\infty H^L(\R^d)} +
\| \wF_{\alpha\beta}\|_{ L^\infty W^{L-j, \infty}_\gamma(\R^d)}  \le C(K,\Theta).
\]
Recalling \eqref{p00q}, this and \eqref{wFalpha} imply the conclusion of the lemma.
\end{proof}

\subsection{First step of the construction} \label{sec:3.3}

With these preliminaries, we now construct approximate solutions by an induction argument. 

We first assume that $k_1 = L-4m > \frac d2$, see
\eqref{parameters}, and 
we seek $\wU^1$ such that  $\wU_{1,\ep} = \wU^0+\ep^2\wU^1$  satisfies
\beq
\| \wS^1_\ep \|_{X^{L-4, j}_{\gamma}} 
+  \kz \|  \wpp_0\wS^1_\ep \|_{X^{L-5, j}_{\gamma}} 
\le C \ep^{4}
\qquad\qquad\mbox{ for }
{\wS^1_\ep} := \wSe[\wU_{1,\ep}]
\label{wS.est1}\eeq
and
\beq
\| \wU^1 \|_{X^{L-3, j}_{\gamma}} 
+ \|  \wpp_0\wU^1 \|_{X^{L-4, j}_{\gamma}} 
\le C
\label{wS.est2}
\eeq
where we recall that $X^{k,j}_{\gamma} = H^k\cap W^{k-j,\infty}_\gamma$, see \eqref{weightednorm} and \eqref{Xkj}, and all 
constants may depend on $K,\Theta$ and $\gamma$.

Using formulas in Section \ref{sec:3.2} above and Lemma \ref{lem:kerS0}, we compute
\begin{equation}\label{k=1.1}
\wS^\ep[\wU_{1,\ep}] =  \ep^2 \left( \ell^0[\wU^0](\wU^1) + \wS^1[\wU^0] \right) + \ep^4 P
\end{equation}
where $P$ is a polynomial in  $\ep, \wZ^0$ and $\wZ^1$, with $\wZ^j := (\wU^j,\kz \wpp_0 \wU^j, \nabla\wU^j)$. Indeed,
from \eqref{eq:wSe3} and \eqref{eq:wSe4} we have
\beq\label{P.formula}
P = p^0_2+\ep^2 p^0_3 + \ep^2 p^1_2 + \ep^4 p^1_3.
\eeq
with $p^m_j$ of order $j$ in $\wZ^1$.

We will choose $\wU^1$ so that the coefficient of $\ep^2$ vanishes and \eqref{wS.est2} holds. It will then be a straightforward matter to verify \eqref{wS.est1}.

To find $\wU^1$, we start by choosing the $\wu^1 = \binom{\Phi^1}{\wA^1_a}$ components so that 
\begin{equation}\label{basecase0}
\ell^0_u[\wU^0](\wU^1) + \wS^1_u[\wU^0]  = 0 . 
\end{equation}
We will solve for the $\wA^1_{\alpha}$ components for $\alpha=0,3,\ldots, d$ at a second stage.
In view of \eqref{eq:wSe5}, we can rewrite \eqref{basecase0} as
\begin{equation}\label{basecase1}
\calL[\wu^0](\wu^1) + \wS^1_{0,u} = 0\qquad\mbox{ for }\wS^1_0 := \wS^1[\wU^0].
\end{equation}

\begin{lemma}\label{lem:J1} $\wS^1_{0,u}(\cdot , y_{\bar a}, y_0)$ is orthogonal to all the zero modes $n_\mu(\cdot; q(y_{\bar a}, y_0))$ for every 
$(y_{\bar a}, y_0)$.
\end{lemma}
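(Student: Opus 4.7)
The plan is to reduce the claim to the wave map equation \eqref{dwm}. Fix $(y_{\bar a}, y_0)\in \R^{d-2}\times (0,T)$ and write $q = q(y_{\bar a}, y_0)$. By Definition \ref{def:nmu}, the vectors $\{n_\mu(q)\}$ and $\{\tilde n_\mu(q)\}$ span the same $2N$-dimensional subspace of $L^2(\R^2)^4$, so it suffices to check that $\wS^1_u[\wU^0](\cdot, y_{\bar a}, y_0)$ is $L^2(\R^2)$-orthogonal to each $\tilde n_\lambda(q)$, $\lambda = 1, \ldots, 2N$. I will show that this inner product equals the $g$-pairing of the left-hand side of \eqref{dwm} against $\pp_\lambda$, which vanishes by hypothesis.

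The main preliminary step is a second-derivative identity. Starting from \eqref{Aabar1}, which reads $\binom{\wD_\beta \Phi^0}{\wF_{\beta a}^0} = \pp_\beta q^\mu\, \tilde n_\mu(q)$ for $\beta \in \{0, 3, \ldots, d\}$, I would differentiate once more and use the chain rule together with the definitions $\wA^0_\alpha = \chi_{\pp_\alpha q}$ and $\calD_\mu = \pp_\mu - i\chi_\mu$ to derive
\begin{equation*}
\binom{\wD_\alpha \wD_\beta \Phi^0}{\wpp_\alpha \wF_{\beta a}^0} = \pp_{\alpha\beta} q^\mu\, \tilde n_\mu(q) + \pp_\alpha q^\mu\, \pp_\beta q^\nu \binom{\calD_\mu \calD_\nu \mphi(q)}{\pp_\mu \mF_{\nu a}(q)}.
\end{equation*}
Pairing against $\tilde n_\lambda(q)$ in $L^2(\R^2)$, the first summand contributes $\pp_{\alpha\beta} q^\mu\, g_{\mu\lambda}$ by Definition \ref{def:metric}, while Lemma \ref{lem:connection}, applied with $v, w$ any local vector fields on $M_N$ whose values at $q$ are $\pp_\alpha q$ and $\pp_\beta q$ respectively, identifies the second summand with $\pp_\alpha q^\mu\, \pp_\beta q^\nu\, \Gamma^\sigma_{\mu\nu}(q)\, g_{\sigma\lambda}$. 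Together these recombine as $g(\nabla^g_\alpha \pp_\beta q, \pp_\lambda)$, where $\nabla^g_\alpha$ denotes the pullback of the Levi-Civita connection along $q$.

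Combining this formula applied with $\alpha = \beta = 0$ and $\alpha = \beta = \bar a$, together with the zeroth-order identity $(\binom{\wD_0 \Phi^0}{\wF_{0a}^0}, \tilde n_\lambda)_{L^2} = \pp_0 q^\mu\, g_{\mu\lambda}$ used to handle the $\ko$-term, and reading off the operator in \eqref{eq:wSe3} defining $\wS^1_u$, yields
\begin{equation*}
(\wS^1_u[\wU^0](\cdot, y_{\bar a}, y_0),\, \tilde n_\lambda(q))_{L^2(\R^2)} = g\big(\kz \nabla^g_0 \pp_0 q + \ko\, \pp_0 q - \nabla^g_{\bar a} \pp_{\bar a} q,\, \pp_\lambda\big),
\end{equation*}
which is zero by \eqref{dwm}, completing the proof. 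The main obstacle I anticipate is the careful bookkeeping in the derivation of the second-derivative identity, particularly ensuring that the gauge-covariant Higgs component $\wD_\alpha \wD_\beta \Phi^0$ and the gauge-curvature component $\wpp_\alpha \wF_{\beta a}^0$ produce exactly the matching pair $\binom{\calD_\mu \calD_\nu \mphi}{\pp_\mu \mF_{\nu a}}$ that feeds into Lemma \ref{lem:connection}; once this matching is verified, the remainder is straightforward algebra.
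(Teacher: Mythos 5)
Your proof is correct and follows essentially the same route as the paper's: both reduce the claim to Lemma \ref{lem:connection} together with the geometric evolution equation \eqref{dwm}. The only difference is presentational — you derive the second-derivative identity explicitly, separating the $\pp_{\alpha\beta}q^\mu\,\tilde n_\mu$ term from the $\pp_\alpha q^\mu\,\pp_\beta q^\nu\,\binom{\calD_\mu\calD_\nu\mphi}{\pp_\mu\mF_{\nu a}}$ term and then recombining them as $g(\nabla^g_\alpha\pp_\beta q,\pp_\lambda)$ via the pullback connection, whereas the paper compresses this into a single application of the vector-field form \eqref{lifted2}, absorbing the $\pp_{\alpha\beta}q$ contribution into the notation. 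Your bookkeeping is a bit more careful on that point but the substance is the same.
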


\begin{proof}
The lemma follows directly from Lemma \ref{lem:connection} and the assumption \eqref{dwm}
that $q$ solves \eqref{dAHM}. Indeed, Lemma \ref{lem:connection}
implies that for any $\alpha,\beta\in \{0,3,\ldots, d\}$, 
\begin{align*}
\left( \binom{\wD_\alpha\wD_\beta \Phi}{\pp_\alpha \wF_{\beta a}}, \tilde n_\mu \right)_{L^2(\R^2)}
&=
\left( \pp_\alpha q^\nu \pp_\beta q^\lambda  \binom{\calD_\nu \calD_\lambda \Phi}{\pp_\nu \calF_{\lambda a}}, \tilde n_\mu \right)_{L^2(\R^2)}
\\
&= (\pp_\alpha q^\nu \pp_\beta q^\lambda \nabla^g_{\pp_{\nu}}\pp_\lambda, \pp_\mu)_g
= 
(\nabla^g_{\alpha} \pp_\beta q , \pp_\mu)_g\end{align*}
for any $\mu$, where $\pp_\alpha q = \pp_\alpha q^\lambda \pp_\lambda$ and 
$\nabla^g_\alpha = \nabla^g_{\pp_\alpha q} = \pp_\alpha q^\lambda \nabla^g_{\pp_\lambda}$. 

Thus
\begin{align*}
( \wS^1_{0,u} , \tilde n_\mu)_{L^2(\R^2)}
&= 
\kz \left( \binom{\kz \wD_0\wD_0 \Phi}{\wpp_0 \wF_{0a}},  \tilde  n_\mu \right)
+ \ko \left( \binom{\kz \wD_0 \Phi}{ \wF_{0a}},  \tilde n_\mu \right)
-  \left( \binom{\wD_{\bar a}\wD_{\bar a} }{ \wpp_{\bar a} \wF_{\bar a a}},  \tilde n_\mu \right) \\
&=
(\kz \nabla^g_0 \pp_0q +\ko \pp_0 q - \nabla^g_{\bar a}\pp_{\bar a}q , \pp_{\mu})_g \overset{\eqref{dwm}}= 0.
\end{align*}
\end{proof}

\begin{lemma}\label{lem:J2}$\wS^1_{0,u}(\cdot, y_{\bar a}, y_0)$ satisfies the gauge-orthogonality condition  \eqref{eq:go} for every 
$(y_{\bar a}, y_0)$.
\end{lemma}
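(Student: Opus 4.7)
The plan is to apply Lemma \ref{lem:identity}---rewritten in the rescaled $y$-coordinates---to the leading-order ansatz $\wU^0$ and combine it with Lemma \ref{lem:kerS0} to extract the desired orthogonality relation. This is the natural route because Lemma \ref{lem:identity} is a purely algebraic identity (it does not require $\wU^0$ to solve any equation) and it already packages $(S_\phi, i\Phi)$ together with $\pp_j S_j$, which is precisely the combination appearing in the gauge-orthogonality condition \eqref{eq:go}.

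First I would translate Lemma \ref{lem:identity} into $y$-coordinates. Using $\pp_0 = \ep \wpp_0$, $\pp_{\bar a} = \ep\wpp_{\bar a}$, $\pp_a = \wpp_a$ together with the conventions in \eqref{wSe.def}, namely $\wSe_u[\wU] = S_u[U]$ and $\wSe_\alpha[\wU] = \ep^{-1}S_\alpha[U]$ for $\alpha \in \{0,3,\ldots,d\}$, a short computation gives
\[
(\wSe_\phi[\wU], i\Phi) = -\ep^2(\kz\wpp_0 + \ko)\wSe_0[\wU] + \wpp_a\wSe_a[\wU] + \ep^2\wpp_{\bar a}\wSe_{\bar a}[\wU],
\]
valid for every sufficiently smooth $\wU$ and every $\ep > 0$. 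Next I would plug in $\wU = \wU^0$. By Lemma \ref{lem:kerS0} we have $\wS^0[\wU^0] = 0$, so the decomposition $\wSe = \wS^0 + \ep^2\wS^1$ gives $\wSe[\wU^0] = \ep^2\wS^1_0$. Substituting and dividing by $\ep^2$ yields
\[
(\wS^1_{0,\phi}, i\Phi^0) - \wpp_a\wS^1_{0,a} = \ep^2\bigl[-(\kz\wpp_0 + \ko)\wS^1_{0,0} + \wpp_{\bar a}\wS^1_{0,\bar a}\bigr].
\]

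The key observation is that $\wU^0$, and hence $\wS^1_0$, is independent of $\ep$, so the left-hand side above does not depend on $\ep$ while the right-hand side is $\ep^2$ times an $\ep$-independent quantity. Requiring the equality to hold for all $\ep > 0$ (or simply letting $\ep \to 0$) forces the left-hand side to vanish. Since $\Phi^0(\cdot, y_{\bar a}, y_0) = \mphi(\cdot; q(y_{\bar a}, y_0))$, this vanishing is exactly condition \eqref{eq:go} applied to $\wS^1_{0,u}$ at the base point $q(y_{\bar a}, y_0)$, which is what we need. The same argument yields the bonus identity $(\kz\wpp_0+\ko)\wS^1_{0,0} = \wpp_{\bar a}\wS^1_{0,\bar a}$ from the vanishing of the right-hand side, which may be useful elsewhere. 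I do not anticipate any real obstacle here; the only thing to be careful with is the bookkeeping in the rescaling of Lemma \ref{lem:identity}.
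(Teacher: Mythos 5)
Your proposal is correct and takes essentially the same approach as the paper: rescale Lemma~\ref{lem:identity} to the $y$-variables, use Lemma~\ref{lem:kerS0} to replace $\wS^\ep[\wU^0]$ by $\ep^2\wS^1[\wU^0]$, divide by $\ep^2$, and let $\ep\to 0$. Your observation that the $\ep$-independence of both coefficient functions forces both sides to vanish simultaneously is a clean way to obtain the paper's parenthetical remark (``Alternately, one can directly verify that the right-hand side vanishes'') without any additional computation; the ``bonus identity'' you state is indeed true, as it reduces to the antisymmetry $\wpp_{\bar a}\wpp_{\bar b}\wF_{\bar b\bar a}=0$ together with $\wF_{0\bar a}=-\wF_{\bar a 0}$.
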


\begin{proof}
It follows from computations in Section \ref{sec:3.2} that
\[
\wS^\ep[\wU^0]  = \wS^0[\wU^0]+  \ep^2 \wS^1[\wU^0] =   \ep^2 \wS^1[\wU^0] .
\]
It then follows from the algebraic identity in Lemma \ref{lem:identity}, rescaled to the $y$ variables,
that
\[
( \wS^1_\phi[\wU^0], i\Phi) - \wpp_a \wS^1[\wU^0] =  - \ep^2\big((\kz\wpp_0 +\ko)\wS_0^1[\wU^0] - \wpp_{\bar a} \wS_{\bar a}[\wU^0]\big) .
\]
The lemma states that the left-hand side vanishes, which follows by taking the limit as $\ep\to 0$ of the above identity. Alternately, one can directly verify that the right-hand side vanishes for every $\ep>0$.
\end{proof}

In view of Lemma \ref{lem:J1} as well as Lemma \ref{vsolvability1} in Appendix \ref{appendix}, for every $(y_{\bar a}, y_0)$ we can find $\wu^1(\cdot, y_{\bar a}, y_0):\R^2\to \C\times \R^2$
\[
L[\modu(q(y_{\bar a}, y_0))](\wu^1(\cdot, y_{\bar a}, y_0)) + \wS^1_{0,u}(\cdot, y_{\bar a}, y_0)\qquad\mbox{ in }\R^2.
\] 
Lemmas \ref{vsolvability1} and \ref{lem:J2} imply that 
$\wu^1(\cdot, y_{\bar a}, y_0)$ satisfies the gauge orthogonality condition \eqref{eq:go}.
It follows that
$
\calL[\modu(q)](\wu^1)  = 
L[\modu(q)](\wu^1)
$
and hence that $\wu^1$ solves \eqref{basecase1}.

Now it follows from estimates of $\wS^1[\wU^0]$ in Lemma \ref{lem:S1U0} and the elliptic estimates in Proposition \ref{system.pde}
that
\beq\label{wu1.est}
\| \wu^1 \|_{L^\infty_T X^{L-1,j}_{\gamma}(\R^d) } + \kz \|\wpp_0 \wu^1 \|_{L^\infty_T X^{L-2,j}_{\gamma}(\R^d) }
\le C.
\eeq

To complete the construction of $\wU^1$ it now suffices to choose the remaining components such that
\[
\ell^0_\alpha[\wU^0](\wU^1) + \wS^1_{\alpha}[\wU^0]=0\qquad\qquad\mbox{ for }\alpha= 0,3,\ldots, d.
\]
This equation states that
\beq\label{wA1alpha.eqn}
(-\pp_a\pp_a +|\mphi(q)|^2)\wA^1_\alpha = - \pp_\alpha\pp_a \wA^1_a -
 (i\Phi^1, \wD_{\wA^0_\alpha}\mphi(q)) -  (i\mphi(q)), \wD_{\wA^0_\alpha}\Phi^1) -\wS^1_{\alpha}[\wU^0].
\eeq
The right-hand side contains only components of $\wU^0$ and $\wS^1[\wU^0]$, as well as components $\Phi^1, \wA^1_a$ of $\wU^1$ that have
already been found. Writing $rhs$ to denote the right-hand side, 
it follows from Lemma \ref{lem:U0Nmu}, Lemma \ref{lem:S1U0} and \eqref{wu1.est}
that
\beq\label{wA1.rhs}
    \| \mbox{rhs} \|_{L^\infty_T X^{L-3,j}_\gamma(\R^d)} + 
   \kz \| \wpp_0(\mbox{rhs}) \|_{L^\infty_T X^{L-4,j}_\gamma (\R^d)} 
    \le C.
\eeq
It follows from Proposition \ref{scalar.pde} that \eqref{wA1alpha.eqn} has a unique solution that satisfies
\beq\label{wA1.est}
\| \wA^1_\alpha \|_{L^\infty_T X^{L-3,j}_\gamma(\R^d)} +
\kz\| \wpp_0 \wA^1_\alpha \|_{L^\infty_T X^{L-4,j}_\gamma(\R^d)} 
    C(\Theta, K, \gamma).
\eeq
Thus we have completed the construction of $\wU^1$ satisfying  equation \eqref{basecase0} and estimates \eqref{wS.est2}.

Finally, to verify \eqref{wS.est1}, note from \eqref{k=1.1} and \eqref{P.formula} that  \eqref{basecase0} implies that
\beq\label{recallS}
\wS_\ep = \wS^\ep[\wU_{1,\ep}] = \ep^4p^0_2 + \ep^6( p^1_2+p^0_3) + \ep^8 p^1_3
\eeq
where $p^l_j$ are implicitly defined in \eqref{eq:wSe4}. Every term of $p^l_j$ is a product of $j$ components
of $\wZ^1 = (\wU^1, \kz\wpp_0 \wU^1, \nabla \wU^1)$ and at most $3-j$ components of  
$\wZ^0 = (\wU^0, \kz\wpp_0 \wU^0, \nabla \wU^0)$. We readily deduce estimates of norms of $\wZ^1$ from \eqref{wS.est2}.
It follows from Lemma \ref{lem:U0Nmu} that multiplication by $\wZ^0$ preserves membership in $X^{k,j}_\gamma$ for $\frac d2< k \le L-1$, 
so it follows that
\[
\| p^l_j \|_{X^{L-4, j}_\gamma} + \kz\| \wpp_0 p^l_j \|_{X^{L-5, j}_\gamma} \le C( \| \wZ^1 \|_{X^{L-4, j}_\gamma}  + \kz \| \wpp_0\wZ^1 \|_{X^{L-5, j}_\gamma} )\le C.
\]
(Note that we have used \eqref{p00q}.)
This and \eqref{recallS} imply \eqref{wS.est1}.

\medskip

\subsection{completion of proof}\label{sec:3.4}

We now complete the proof by induction on $m$, with $L$ fixed. The base case is provided by section \ref{sec:3.3}.

We will take our approximate solution $\wU_{1,\ep}$ to equal  $\wU^0+ \ep^2 \wU^1$,
and for $m\ge 2$ we will take $\wU_{m,\ep}$ to have the form
\begin{equation}\label{expand.Uke}
\wU_{m,\ep} =
\wU^0+ \ep^2( \wU^1 + c^2_\mu N_\mu) +\cdots + \ep^{2(m-1)}(\wU^{m-1}+ c^m_\mu N_\mu) +\ep^{2m}\wU^m.
\end{equation}

The induction hypothesis is that for $m \ge 1$ such that 
\[
k_m := L-4m>d/2,
\] 
there exists $T_0\in (0,T]$ and
functions
$\wU^m:\R^d \times [0,T_0)\to \C\times \R^{d+1}$ and
$c^m_\mu:\R^{d-2}\times [0,T_0)\to \R$ for 
$\mu=1,\ldots, 2N$, such that
the following estimates hold for $m\ge 2$: 
\beq\label{cmmu.est}
\|c^m_\mu \|_{L^\infty_{T_0} H^{k_m+2}(\R^{d-2})} +
\kz\|\wpp_0 c^m_\mu \|_{L^\infty_{T_0} H^{k_m+1}(\R^{d-2})} \le C, 
\eeq
\beq\label{Um.est}
\|\wU^m \|_{L^\infty_{T_0} X^{ k_m+2,j}_\gamma(\R^{d})} + \kz\|\wpp_0 \wU^m \|_{L^\infty_{T_0} X^{k_m+1,j}_\gamma(\R^{d})}
\le C, 
\eeq
and such that the error of approximation $\wS^\ep[\wU_{\ep,m}]$
admits an expansion 
\begin{equation}\label{ih1}
\wS^\ep[\wU_{\ep,m}] =  \ep^{2(m+1)}\sum_{j=0}^{2m} \ep^{2j}\wS_{m,j}
\end{equation}
where every $\wS_{m,j}(y_a, y_{\bar a}, y_0)$ is a function independent of $\ep$ such that
for every $m,j$
\begin{equation}\label{wSmj.est}
\| \wS_{m,j}\|_{L^\infty_{T_0}X^{k_m,j}_\gamma} + 
\kz\|\wpp_0 \wS_{m,j}\|_{L^\infty_{T_0}X^{k_m-1,j}_\gamma} 
\le C.
\end{equation}
(Note that \eqref{Um.est} does not hold for $m=1$. However, the weaker estimate established in \eqref{wS.est2} suffices for our construction.)

We assume that $k_{m+1}>\frac d2$, see \eqref{parameters}, and we will construct $c_\mu^{m+1}$ and $\wU^{m+1}$
such that the induction hypotheses hold with $m$ replaced by $m+1$.

We first define
\[
\wV_{m,\ep} = \wU_{m,\ep} + c^{m+1}_\mu N_\mu
\]
for functions $c^{m+1}_\mu:\R^{d-2}\times (0,T)\to \R$ that will be chosen to satisfy an orthogonality condition, as  specified in the following lemma.

\begin{lemma}\label{lem:Vek}There exists $T_0\in (0,T)$ such that for every $m\ge 1$ such that $k_m>d/2$ and every $\mu = 1,\ldots, 2N$, there exist $c^{m+1}_\mu\in C(0,T_0, H^{k_m+1}(\R^{d-2}))$ such that $\wS^\ep[\wV_{\ep,m}]$ admits an expansion
\begin{equation}\label{wSe.expand}
\wS^\ep[\wV_{\ep,m}] =  \ep^{2(m+1)}\sum_{j=0}^{2m} \ep^{2j}\wS^m_j
\end{equation}
with the $u$-components of the leading term orthogonal to the zero modes:
\begin{equation}\label{Sk0.orth}
( \wS^m_{0, u} , n_\mu)_{L^2(\R^2)} = 0 \qquad\mbox{ for }\mu=1,\ldots, 2N, \quad \mbox{ for every }(y_{\bar a}, y_0)\ .
\end{equation}
Moreover, $\kz \wpp_0^j c^{m+1}_\mu\in C(0,T_0, H^{k_m+1-j}(\R^{d-2})$ for $j=1,2$,
with
\beq\label{cm+1mu}
\| c^{m+1}_\mu \|_{L^\infty_{T_0} H^{k_m+1}}
+
\kz \| \wpp_0 c^{m+1}_\mu \|_{L^\infty_{T_0} H^{k_m}}
+
\kz \| \wpp_0\wpp_0 c^{m+1}_\mu \|_{L^\infty_{T_0} H^{k_m-1}}\le C,
\eeq
and every $\wS^m_j$ satisfies the estimate
\beq\label{wSkj.est}
\| \wS^m_j \|_{L^\infty_{T_0} X^{k_m,j}_\gamma} +
\kz \|  \wpp_0\wS^m_j \|_{L^\infty_{T_0} X^{k_m-1,j}_\gamma} 
\le C
\eeq\end{lemma}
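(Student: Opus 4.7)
The plan is to add the correction $\ep^{2m} c^{m+1}_\mu N_\mu$ to $\wU_{m,\ep}$ (completing the pattern of \eqref{expand.Uke}) and to derive the PDE that $c^{m+1}_\mu$ must satisfy in order to enforce \eqref{Sk0.orth}. First, I would apply the Taylor formula \eqref{eq:wSe4} with $p = 2m$ and $\wtU = c^{m+1}_\mu N_\mu$ to both $\wS^0$ and $\wS^1$ around $\wU_{m,\ep}$, yielding
\[
\wS^\ep[\wV_{m,\ep}] = \wS^\ep[\wU_{m,\ep}] + \ep^{2m}\,\ell[\wU_{m,\ep}](c^{m+1}_\mu N_\mu) + \ep^{4m} p_2 + \ep^{6m} p_3,
\]
where $\ell = \ell^0 + \ep^2 \ell^1$. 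Using $\wU_{m,\ep} - \wU^0 = O(\ep^2)$ to expand $\ell^j[\wU_{m,\ep}] = \ell^j[\wU^0] + O(\ep^2)$, Lemma~\ref{lem:kerell0} eliminates $\ep^{2m}\ell^0[\wU^0](c^{m+1}_\mu N_\mu)$, so the correction contributes to $\wS^\ep[\wV_{m,\ep}]$ only at order $\ep^{2(m+1)}$ and higher. Combining with the induction hypothesis \eqref{ih1}, the leading coefficient of the new expansion is
\[
\wS^m_0 = \wS_{m,0} + \ell^1[\wU^0](c^{m+1}_\mu N_\mu) + R,
\]
with $R$ depending only on $c^{m+1}_\mu$ (not its derivatives) because $\ell^0$ differentiates only in $y_a$ while $c^{m+1}_\mu$ is independent of $y_a$.

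To impose \eqref{Sk0.orth}, I would project the $u$-component of $\wS^m_0$ against each $n_\nu$ in $L^2(\R^2)$. Inspecting \eqref{eq:wSe7}, the leading derivatives of $c^{m+1}_\mu$ in $\ell^1_u[\wU^0](c^{m+1}_\mu N_\mu)$ come from $(\kz \wD_0^2 + \ko \wD_0 - \wD_{\bar a}^2)(c^{m+1}_\mu n_{\mu,\phi}(q))$; distributing gives $(\kz \wpp_0^2 + \ko \wpp_0 - \wpp_{\bar a}^2) c^{m+1}_\mu \cdot n_{\mu,\phi}(q)$ plus commutators linear in $c^{m+1}_\mu$ and its first derivatives with coefficients determined by $\wU^0$. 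By the orthonormality of $\{n_\mu\}$ from Definition~\ref{def:nmu}, projection yields a $2N \times 2N$ system of the form
\[
(\kz \wpp_0^2 + \ko \wpp_0 - \wpp_{\bar a}^2) c^{m+1}_\nu + B_\nu^{\mu,\alpha}(y_{\bar a}, y_0)\,\wpp_\alpha c^{m+1}_\mu + C_\nu^\mu(y_{\bar a}, y_0)\,c^{m+1}_\mu = G_\nu,
\]
where $\alpha$ ranges over $\{0, 3, \ldots, d\}$, the coefficients $B, C$ inherit regularity from $q$ via Lemma~\ref{lem:U0Nmu}, and $G_\nu = -(\wS_{m,0,u}, n_\nu)_{L^2(\R^2)} - (R_u, n_\nu)_{L^2(\R^2)}$.

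This is a linear hyperbolic system (when $\kz > 0$) or a linear parabolic system (when $\kz = 0$, since then $\ko > 0$) on $\R^{d-2} \times (0, T)$ with bounded coefficients. Imposing trivial initial data $c^{m+1}_\mu|_{y_0 = 0} = \kz \wpp_0 c^{m+1}_\mu|_{y_0 = 0} = 0$, standard energy estimates give existence on $(0, T_0)$ with the bound \eqref{cm+1mu}; the regularity of $G_\nu$ follows from \eqref{wSmj.est} and Lemma~\ref{lem:U0Nmu}, and the bound on $\kz \wpp_0^2 c^{m+1}_\mu$ is read off from the equation. The time $T_0$ can be chosen uniformly in $m$ because the coefficients of the system depend only on $K$ and $\Theta$. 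For $m = 1$ the quadratic term $\ep^{4m} p^0_2$ contributes already at order $\ep^{2(m+1)} = \ep^4$, producing a mild semilinear correction that does not obstruct existence for small data.

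To conclude \eqref{wSkj.est}, I would identify each $\wS^m_j$ as the $\ep^{2(m+1)+2j}$ coefficient and bound it term by term. Each $\wS^m_j$ is a sum of $\wS_{m, l}$ (with $l \le j$), polynomial combinations of components of $\wU^0, \wU^1, \ldots, \wU^m$ and $c^k_\mu N_\mu$ (for $k \le m$), and at most quadratic combinations of $c^{m+1}_\mu$ and its first and second derivatives. The $X^{k_m, j}_\gamma$ bounds then follow from the algebra property (valid because $k_m > d/2$), Remark~\ref{rmk:multiplier}, the inductive estimates \eqref{Um.est}, and the bound \eqref{cm+1mu}. The main obstacle is the bookkeeping; in particular, the time-derivative bound $\kz \|\wpp_0 \wS^m_j\|_{L^\infty_{T_0} X^{k_m-1, j}_\gamma} \le C$ uses crucially that the equation for $c^{m+1}_\mu$ yields $\kz \wpp_0^2 c^{m+1}_\mu \in L^\infty_{T_0} H^{k_m-1}$, which is exactly enough to close the induction.
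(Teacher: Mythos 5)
Your proposal follows essentially the same route as the paper's proof: Taylor-expand $\wS^\ep[\wV_{m,\ep}]$ around $\wU_{m,\ep}$, use Lemma~\ref{lem:kerell0} to kill the $\ep^{2m}\ell^0[\wU^0]$ term, identify the leading operator $(\kz\wpp_0^2+\ko\wpp_0-\wpp_{\bar a}^2)$ acting on $c^{m+1}_\mu$ in $\ell^1_u[\wU^0]$, project against $n_\nu$ to derive the evolution PDE (linear for $m\ge 2$, semilinear only for $m=1$ via $p^0_2$), solve with trivial data by energy estimates, and close the $\wS^m_j$ bounds using the algebra property and the $\kz\wpp_0^2 c^{m+1}_\mu\in H^{k_m-1}$ bound read off from the equation. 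One imprecision worth flagging: your justification that $R$ ``depends only on $c^{m+1}_\mu$ (not its derivatives) because $\ell^0$ differentiates only in $y_a$'' is not quite right — $\ell^0_{\bar a}$ and $\ell^0_0$ do contain $\wpp_{\bar a}\tPhi$ and $\wpp_0\tPhi$ (see the $D_{\wA_{\bar a}}\tPhi$ and $D_{\wA_0}\tPhi$ terms in \eqref{eq:wSe5}), so the corresponding $\Lambda^0_1$ components do see first $y_{\bar a},y_0$-derivatives of $c^{m+1}_\mu$; the derivative-free claim holds only for the $u$-component $\ell^0_u=\calL$, which fortunately is the only one entering the $L^2(\R^2)$-projection against $n_\nu$, so the projected PDE has the form you write and the argument is unaffected.
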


\begin{proof}
Tp establish the lemma, we will show that the orthogonality condition \eqref{Sk0.orth} reduces to a system of equations for $c^{m+1}_\mu$ for $\mu=1,\ldots, 2N$, see \eqref{orth.pde}. We then prove that this system has a solution satisfying the estimates in \eqref{cm+1mu}, and that \eqref{wSkj.est} follows.

In carrying out this argument, we will  prove a priori estimates for  $c^{m+1}_\mu$ satisfying \eqref{cmmu.est}, for some $T_0>0$ to be determined. These estimates will imply existence of $c^{m+1}_\mu$ with the stated properties, via a fixed-point argument.

Using computations from Section \ref{sec:3.2} above, we find that
\begin{equation}\label{ind1}
\begin{aligned}
\wS^\ep[\wV_{m,\ep}] &
= \wS^\ep[\wU_{m,\ep} + \ep^{2m} c^{m+1}_\mu N_\mu ] \\
&
= \wS^\ep[\wU_{m,\ep}] + \ep^{2m}\ell^0[\wU_{m,\ep}](c^{m+1}_\mu N_\mu) \\
&\hspace{6em}
+ \ep^{2m+2}  \ell^1[\wU_{m,\ep}](c^{m+1}_\mu N_\mu)  + \ep^{4m}P_{m,\ep}
\end{aligned}
\end{equation}
where $P_\ep$ is a polynomial in $\wZ_{m,\ep}, \wtZ^{m+1}$ and $\ep$, for
\begin{equation}
\begin{aligned}
        \wZ_{m,\ep} &= (\wU_{m,\ep}, \kz\wpp_0 \wU_{m,\ep}, \nabla \wU_{m,\ep}), \\
    \wtZ^{m+1} &= (c^{m+1}_\mu N_\mu, \kz \wpp_0(c^{m+1}_\mu N_\mu), \nabla (c^{m+1}_\mu N_\mu)).
\end{aligned}\label{wtZj.def}
\end{equation}
This follows directly from \eqref{eq:wSe2} and \eqref{eq:wSe4}, with  $P_{m,\ep} = p^0_2 + \ep^2 p^1_2 + \ep^{2m}p^0_3 + \ep^{2m+2}p^1_3$.

We now expand the right-hand side of \eqref{ind1} as a polynomial in $\ep$, with coefficients that are
functions of $y$. The expansion of $\wS^\ep[\wU_{m,\ep}]$ is given in \eqref{ih1}. From the definition \eqref{eq:wSe5}, we see that the coefficients of the operator $\ell^0[\wU]$ are polynomials, at most quadratic, in $\wZ = (\wU,\kz\wpp_0\wU, \nabla \wU)$. By expanding, we thus find certain linear operators $\Lambda^0_j$ whose coefficients are polynomials, at most quadratic, in $\wZ^j = (\wU^j,\kz\wpp_0 \wU^j, \nabla \wU^j)$
for $j=0,\ldots, m$ and $\wtZ^j$, defined as in \eqref{wtZj.def}, for $j=2,\ldots, m$, such that
\beq\label{ell0.expand}
\begin{aligned}
    \ell^0[\wU_{\ep,m}](c_\mu^{m+1} N_\mu)
&=
\ell^0[\wU^0](c^{m+1}_\mu N_\mu) + \sum_{j=1}^{2m}\ep^{2j}\Lambda^0_j(c^{m+1}_\mu N_\mu)
\\&= 
\sum_{j=1}^{2m}\ep^{2j}\Lambda^0_j(c^{m+1}_\mu N_\mu) 
    \end{aligned}
    \eeq
by Lemma \ref{lem:kerell0}. 

We claim that $\Lambda^0_j(c^{m+1}_\mu N_\mu)(\cdot, t) \in  X^{k_m,j}_\gamma = H^{k_m}\cap W^{k_m-j,\infty}_\gamma(\R^{d})$ for every $t\in (0,T_0)$, and
\beq\label{Lambda0j.est}
\| \Lambda^0_j(c^{m+1}_\mu N_\mu)(\cdot, t)\|_{X^{k_m,j}_\gamma(\R^{d})} \le C \Theta_1(t)
\eeq
for
\beq\label{Theta1.def1}
\Theta_1(t) := \| c^{m+1}_\mu(\cdot, t) \|_{ H^{k_m+1}(\R^{d-2})}  + \kz \| \wpp_0 c^{m+1}_\mu(\cdot, t) \|_{ H^{k_m}(\R^{d-2})}
\eeq
for every $t\in (0,T_0)$.
This is clear, since $\Lambda^0_j$ is a first-order differential operator with coefficients that are products of functions that either belong to $X^{k_m+1,j}_\gamma\subset X^{k_m,j}_\gamma$, which is an algebra, or are $X^{k_m,j}_\gamma$ multipliers. This follows from the induction hypothesis, Lemma \ref{lem:U0Nmu} and Remark \ref{rmk:multiplier}.

Similarly, we can expand
\[
 \ell^1[\wU_{m,\ep}](c^{m+1}_\mu N_\mu) = \ell^1[\wU^0](c^{m+1}_\mu N_\mu) + \sum_{j=1}^{2m}\ep^{2j}\Lambda^1_j(c^{m+1}_\mu N_\mu)
\]
for first-order linear operators $\Lambda^1_j$, and
\[
P_{m,\ep} =  \sum_{j=0}^{m}\ep^{2j}P_{m,j}.
\]
Recalling that the polynomial terms are at most cubic in $c^{m+1}_\mu N_\mu$, we find by the same considerations that lead to \eqref{Lambda0j.est} that 
\beq
\| \Lambda^1_j(c^{m+1}_\mu N_\mu(\cdot, t))\|_{X^{k_m,j}_\gamma(\R^{d})} 
\le C\Theta_1(t)
\label{Lambda1j.est}\eeq
and
\beq\label{Pmj.est}
\begin{aligned}
\| P_{m,j}(\cdot, t)\|_{X^{k_m,j}_\gamma(\R^{d})} &\le 
C\left(
\Theta_1(t) +\Theta_1(t)^3\right)
\end{aligned}\eeq
for $t\in (0,T_0)$.
Combining these, keeping track of the powers of $\ep$ that appear in the expansion, and noting that $4m = 2(m+1)$ when $m=1$, we find that \eqref{wSe.expand} holds, with
\begin{equation}\label{Sk0}
\wS^m_0 = \begin{cases}
 \ell^1[\wU^0](c^{2}_\mu N_\mu) 
+ \Lambda^0_1(c^{2}_\mu N_\mu) + \wS_{1,0} + P_{1,0}
&\mbox{ if }m = 1  \\
\ell^1[\wU^0](c^{m+1}_\mu N_\mu)
+ \Lambda^0_1(c^{m+1}_\mu N_\mu) + \wS_{m, 0} 
&\mbox{ if }m\ge  2
\end{cases}
\end{equation}
and for $j\ge 1$,
\beq\label{Smj}
\wS^m_j = \mbox{a sum of certain }S_{m,k_0}, \  \Lambda^0_{k_1}(c^{m+1}_\mu N_\mu), \ \Lambda^1_{k_2}(c^{m+1}_\mu N_\mu),\  P_{m,k_3}  
\eeq
for  $k_0, \ldots,k_3$ depending on $m,j$ in a way that will not matter for us.

We next consider the orthogonality condition \eqref{Sk0.orth}.
Using the definitions \eqref{eq:wSe6}, \eqref{eq:wSe7} of $\ell^1$,
we check that
\beq\label{ell1phi}
\ell^1_\phi[\wU^0](c^{m+1}_\mu N_\mu) =  [(\kz \wpp_0\wpp_0 +\ko\wpp_0 - \wpp_{\bar a}\wpp_{\bar a})c^{m+1}_
\mu] n_{\mu,\phi} + \mbox{Tri} 
\eeq
where $\mbox{\em Tri}$  denotes a trilinear function of $(\wZ^0,\wZ^0, \wtZ^{m+1})$. Arguing as above,
we see that
\beq
\begin{aligned}
\|  \mbox{Tri}\|_{X^{k_m,j}_\gamma(\R^{d})} &\le 
C \Theta_1(t)\label{TriZ.est}
\end{aligned}
\eeq
Similarly, recalling that $\wpp_a c^{m+1}_\mu=0$,
\beq
\begin{aligned}
    \ell^1_a[\wU^0](c^{m+1}_\mu N_\mu) 
    &= (\kz\wpp_0+\ko)\left(\wpp_0 (c^{m+1}_\mu n_{\mu,a}) - \wpp_a( c^{m+1}_\mu N_{\mu,0})\right) \\
&\hspace{5em}-\wpp_{\bar a} \left( \wpp_{\bar a} (c^{m+1}_\mu n_{\mu,a}) - \wpp_a( c^{m+1}_\mu N_{\mu,\bar a}) \right)\\
&= [(\kz \wpp_0\wpp_0 +\ko\wpp_0 - \wpp_{\bar a}\wpp_{\bar a})c^{m+1}_
\mu] n_{\mu,a} + \mbox{Bi}
\end{aligned}
\eeq
where $\mbox{\em Bi}$ denotes a bilinear function of $(c^{m+1}_\mu,\kz\wpp_0 c^{m+1}_\mu,  \wpp_{\bar a}c^{m+1}_\mu)$ and terms
drawn from
\[
N_\mu,\   \nabla  N_\mu, \,   \wpp_0 N_\mu,\ 
\kz \wpp_0\nabla N_\mu, \ \kz \wpp_0\wpp_0 N_\mu,
\wpp_{\bar a}\nabla N_\mu . 
\]
In addition, $\wpp_0 N_\mu$ appears with a coefficient of $\kz$ except when multiplying $\kz \wpp_0 c^{m+1}_\mu$.
As above\footnote{The constant diverges as $\kz\searrow0$ but it finite for $\kz=0$. This is due to the presence of an unfavorable term $\kz \wpp_0 c^{m+1}_\mu \cdot \wpp_0 n_{\mu,a}$, which we can only say is bounded by $\kz^{-1}\Theta (\kz \|c^{m+1}_\mu\|_{H^{k_m+2}})$, but which vanishes when $\kz=0$.}
\beq
\begin{aligned}
\| \mbox{Bi}\|_{X^{k_m,j}_\gamma(\R^{d})} &\le C\Theta_1(t)
\label{Bi.est}
\end{aligned}
\eeq
We thus obtain
\[
\ell^1_u[\wU^0](c^{m+1}_\mu N_\mu)
=
[(\kz \wpp_0\wpp_0 +\ko\wpp_0 - \wpp_{\bar a}\wpp_{\bar a})c^{m+1}_
\mu] n_\mu + \mbox{Bi} + \mbox{Tri},
\]
with \eqref{Bi.est}, \eqref{TriZ.est} holding.

Recalling that $\{ n_\mu(\cdot;q)\}_{\mu=1}^{2N}$ are orthonormal in $L^2(\R^2)$ for every $q\in M_N$,
we deduce from the above that the orthogonality condition \eqref{Sk0.orth} holds if and only if
\beq\label{orth.pde}
(\kz \wpp_0\wpp_0 +\ko\wpp_0 - \wpp_{\bar a}\wpp_{\bar a})c^{m+1}_\nu= rhs_\nu, \qquad\nu=1,\ldots, 2N
\eeq
for
\[
rhs_\nu(y_{\bar a}, y_0) :=
\big( (RHS), n_\nu(q))_{L^2(\R^2)(y_{\bar a}, y_0)}
\]
with 
\[
RHS = 
\begin{cases}
- S_{1,0,u} - \Lambda^0_{1,u}(c^{2}_\mu N_\mu) - Bi - Tri - P_{1,0,u}
&\mbox{if }m=1\\
- S_{m,0,u} - \Lambda^0_{1,u}(c^{m+1}_\mu N_\mu) - Bi - Tri
&\mbox{if }m\ge2.   
\end{cases}
\]
Note in addition that for $m\ge 2$,  $rhs_\nu$ is a linear function of $(c^{m+1}_\mu)_{\mu=1}^{2N}$ and its first derivatives, and for $m=1$ it has polynomial nonlinearities that are at most cubic.

Since $X^{k_m,j}_\gamma\subset H^{k_m}$, we see from 
\eqref{wSmj.est}, \eqref{Lambda0j.est}, \eqref{Lambda1j.est}, and \eqref{Pmj.est}
that 
\[
\| RHS(\cdot, t) \|_{H^{k_m}(\R^d)} \le C (1+\Theta_1(t) + \delta_{m1}\Theta_1(t)^3).
\]
where $\delta_{m1}$ is a Kronecker delta. Since $n_\mu$ is a $H^{k_m}$ multiplier, see Lemma \ref{lem:U0Nmu}, we know that $\| RHS \cdot n_\mu(q)(\cdot, t)\|_{H^{k_m}} \le C \|RHS(\cdot, t)\|_{H^{k_m}}$. Lemma \ref{lem:last} in the appendix implies that
\begin{align}
\| rhs(\cdot, t) \|_{H^{k_m}(\R^{d-2})} &\le C 
\|RHS(\cdot, t)\|_{H^{k_m}(\R^d)} \nonumber\\
&\le C (1+\Theta_1(t) + \delta_{m1}\Theta_1(t)^3).
\label{rhs.est}
\end{align}
For initial conditions $c^{m+1}_\nu = \kz\wpp_0 c^{m+1}_\nu = 0$, standard theory guarantees that  \eqref{orth.pde}, \eqref{rhs.est} has a unique solution on $\R^{d-2}\times [0, T_0)$ for some $T_0 \in (0,T]$, satisfying
\[
\| c^{m+1}_\mu \|_{L^\infty_{T_0} H^{k_m+1}}
+
\kz \| \wpp_0 c^{m+1}_\mu \|_{L^\infty_{T_0} H^{k_m}} \le C.
\] 
Moreover, $T_0$ admits a lower bound that depends only on $C$. 
We review this theory for the nonlinear case $m=1$ in Lemma \ref{lem:dhs}, in Appendix \ref{app:hyp}. The linear case is very similar. Note that it is only for $m=1$ that the equation \eqref{orth.pde} is nonlinear and we may have to decrease the interval of existence $T_0$.

It follows from \eqref{orth.pde} that $\kz \wpp_0\wpp_0 c^{m+1}_\mu$ has the same regularity as $\wpp_{\bar a}\wpp_{\bar a}c^{m+1}_\mu$, so \eqref{cm+1mu} follows by
taking $T_0$ smaller if necessary. With control over $\kz\wpp_0\wpp_0 c^{m+1}_\mu$ and over $\Theta_1(t), 0<t<T_0$, we can go back and, by the same reasoning used to 
deduce \eqref{Lambda0j.est}, \eqref{Lambda1j.est}, \eqref{Pmj.est},
conclude that
\[
\left.
\begin{aligned}
\| \Lambda^0(c^{m+1}_\mu N_\mu)\|_{L^\infty_{T_0}X^{k_m,j}_\gamma}
+
\kz \| \wpp_0 \Lambda^0(c^{m+1}_\mu N_\mu)\|_{L^\infty_{T_0}X^{k_m-1,j}_\gamma}
\\
\| \Lambda^1(c^{m+1}_\mu N_\mu)\|_{L^\infty_{T_0}X^{k_m,j}_\gamma}
+
\kz \| \wpp_0 \Lambda^1(c^{m+1}_\mu N_\mu)\|_{L^\infty_{T_0}X^{k_m-1,j}_\gamma}
\\
\| P_{m,j}\|_{L^\infty_{T_0}X^{k_m,j}_\gamma}
+
\kz \| \wpp_0 P_{m,j}\|_{L^\infty_{T_0}X^{k_m-1,j}_\gamma}
\end{aligned}\right\} \le C.
\]
Then \eqref{wSkj.est} follows immediately from these estimates, \eqref{wSmj.est} and \eqref{Smj}.

\end{proof}

Having fixed $c^{m+1}_\mu$ and hence $\wV_{\ep,m}$ and $\wS^m_j, j=0,\ldots, 2m$, we now establish another 
important property of $\wS^m_0$.

\begin{lemma}\label{lem:Sk0.go}
$\wS^m_{0,u} (\cdot, y_{\bar a}, y_0)= \binom{\wS^m_{0,\phi}}{\wS^m_{0,a}}(\cdot, y_{\bar a}, y_0)$ satisfies the gauge-orthogonality condition \eqref{eq:go} for every $(y_{\bar a}, y_0)$.
\end{lemma}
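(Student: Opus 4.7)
The plan is to apply the algebraic identity from Lemma \ref{lem:identity} to $\wV_{m,\ep}$, rescaled to the $y$ variables, and then read off the leading-order coefficient in the expansion \eqref{wSe.expand}. This mirrors the proof of Lemma \ref{lem:J2}, which handles the special case $m=1$ (with $c^{m+1}_\mu \equiv 0$).

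Concretely, using $\wSe_u[\wU] = S_u[U]$, $\wSe_\alpha[\wU] = \ep^{-1}S_\alpha[U]$ for $\alpha\in\{0,3,\ldots,d\}$, $\wpp_a = \pp_a$, and $\wpp_\alpha = \ep^{-1}\pp_\alpha$ for $\alpha\in\{0,3,\ldots,d\}$, the identity \eqref{identity} applied to $U$ corresponding to $\wU = \wV_{m,\ep}$ becomes
\[
\bigl(\wSe_\phi[\wV_{m,\ep}], i\Phi_{m,\ep}\bigr) - \wpp_a \wSe_a[\wV_{m,\ep}] = -\ep^2\Bigl[(\kz\wpp_0+\ko)\wSe_0[\wV_{m,\ep}] - \wpp_{\bar a}\wSe_{\bar a}[\wV_{m,\ep}]\Bigr],
\]
where $\Phi_{m,\ep}$ denotes the $\Phi$-component of $\wV_{m,\ep}$. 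Since the identity is purely algebraic, it is valid pointwise and requires no regularity beyond what is needed to justify the derivatives, which is guaranteed by \eqref{cm+1mu} and the induction hypothesis \eqref{Um.est}.

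Now I would insert the expansion \eqref{wSe.expand}: every component $\wSe_\beta[\wV_{m,\ep}]$ begins at order $\ep^{2(m+1)}$ with leading coefficient $\wS^m_{0,\beta}$. Because the ansatz \eqref{expand.Uke} gives $\Phi_{m,\ep} = \mphi(q) + O(\ep^2)$ (the $\wU^k$ and $c^{k+1}_\mu N_\mu$ corrections all enter at $O(\ep^2)$ or higher), the left-hand side equals
\[
\ep^{2(m+1)}\bigl[(\wS^m_{0,\phi}, i\mphi(q)) - \wpp_a \wS^m_{0,a}\bigr] + O(\ep^{2(m+2)}),
\]
while the right-hand side is visibly $O(\ep^{2(m+2)})$. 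Matching the $\ep^{2(m+1)}$-coefficient (equivalently, dividing by $\ep^{2(m+1)}$ and sending $\ep\to 0^+$) yields
\[
(i\mphi(q), \wS^m_{0,\phi}) - \wpp_a \wS^m_{0,a} = 0
\]
at every $(y_{\bar a}, y_0)$, using symmetry of $(\cdot,\cdot)$. This is precisely the gauge-orthogonality condition \eqref{eq:go} for $\wS^m_{0,u}$ at $\modu(q)$.

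No real obstacle is expected. The only bookkeeping is to confirm that the correction $\ep^{2m} c^{m+1}_\mu N_\mu$ built into $\wV_{m,\ep}$, as well as the $\wU^k$ terms for $1\le k\le m$, contribute to $\Phi_{m,\ep} - \mphi(q)$ only starting at $O(\ep^2)$, which is immediate from \eqref{expand.Uke}. This ensures that the pairing with $i\Phi_{m,\ep}$ does not contaminate the $\ep^{2(m+1)}$ coefficient on the left.
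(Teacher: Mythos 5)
Your argument is correct and follows the same route as the paper: apply the algebraic identity of Lemma \ref{lem:identity} to $\wV_{\ep,m}$, rescale to the $y$ variables so that the $S_\alpha$, $\pp_\alpha$ terms for $\alpha\in\{0,3,\ldots,d\}$ pick up an extra factor of $\ep^2$, then insert \eqref{wSe.expand} and \eqref{expand.Uke} and read off the $\ep^{2(m+1)}$-coefficient. Your extra observation that $\Phi_{m,\ep}=\mphi(q)+O(\ep^2)$ ensures the pairing does not contaminate the leading coefficient is a useful clarification that the paper leaves implicit.
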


\begin{proof}As with Lemma \ref{lem:J2}, we start from the algebraic identity in Lemma \ref{lem:identity}, which after rescaling to the $y$ variables implies that
\[
0= (\wS^\ep_\phi [\wV_{\ep,m}] , i \Phi_{\ep, m}) 
- \pp_{ a}\wS^\ep_{a}[\wV_{\ep,m}] 
 +\ep^2 \left(- (\kz\wpp_0 + \ko)\wS^\ep_0 [\wV_{\ep,m}]
- \pp_{\bar a}\wS^\ep_{\bar a}[\wV_{\ep,m}]  \right)
\]
Now we use \eqref{wSe.expand} and \eqref{expand.Uke} to expand this identity in powers of $\ep$.
This yields
\[
0 = \ep^{2m+2}\left( (S^m_{0, \phi}, i\mphi(q)) - \pp_a \wS^m_{0,a}
\right)
+ \ep^{2m+4} \big( \ \cdots \ \big) + \cdots
\]
Since this identity holds for every $\ep$ and the coefficient functions are independent of $\ep$, it follows that
\[
(S^m_{0, \phi}, i\mphi(q)) - \pp_a \wS^m_{0,a}
=0
\]
which is what we need to prove.
\end{proof}

We are now ready to complete the induction argument. We must find $\wU^{m+1}$ such that
\[
\wU_{m+1,\ep} =  \wU_{m,\ep} + \ep^{2m}c_\mu^{m+1}N_\mu + \ep^{2m+2}\wU^{m+1} = \wV_{\ep,m} + \ep^{2m+2}\wU^{m+1}
\]
satisfies the conclusions of the proposition.
Using \eqref{ih1} and computations from Section \ref{sec:3.2} above, we compute
\beq\label{Sep.m+1}
\wS^\ep[\wU_{m+1,\ep}] =  \ep^{2m+2}\left( \wS^m_0 +  \ell^0[\wU^0](\wU^{m+1})\right) + \ep^{2m+4}\sum_{j=0}^{2(m+1)}\ep^{2j}\wS_{m+1,j}.
\eeq
The sum on the right-hand side 
gathers all the terms arising from
the polynomial terms $p^m_j$ for $m=0,1$ and $j=2,3$, appearing in \eqref{eq:wSe4}, together with
\beq\label{S.ingredients}
\begin{aligned}
    &\ep^{2m+2}\sum_{j=1}^{2m} \ep^{2j}\wS^m_j \qquad\mbox{ from \eqref{wSe.expand}}\\
    &\ep^{2m+4}\ell^1[\wV_{\ep,m}](\wU^{m+1}),\\
& \ep^{2m+2}\left(\ell^0[\wV_{\ep,m}](\wU^{m+1}) -  \ell^0[\wU^0](\wU^{m+1})\right).
\end{aligned}   
\eeq
For the last of these, notice that by expanding $\ell^0[\wV_{\ep,m}]$ as
in \eqref{ell0.expand}, we obtain a polynomial in $\ep^2$ with coefficient functions independent of $\ep$, whose leading term is of order $\ep^{2m+4}$ as required.

We must choose $\wU^{m+1}$ so that
\begin{equation}\label{Uk+1}
 \ell^0[\wU^0](\wU^{m+1}) + \wS^m_0 = 0
\end{equation}
This can be done exactly as in Step 3 above: we first consider the components $\wU^{m+1}_u =: \wu^{m+1} = \binom {\Phi^{m+1}}{\wA_a^{m+1}}$. As above, we take $\wu^{m+1}$ to solve the system of equations
\[
L[\modu(q)](\wu^{m+1}) =- \wS^m_{0,u} \qquad\mbox{ on }\R^d\times (0,T_0).
\]
The orthogonality condition established in Lemma \ref{lem:Vek} provides exactly the condition for solvability of this system, see Proposition \ref{system.pde} in Appendix \ref{app:system}. It follows from the proposition and the regularity \eqref{wSkj.est} of the right-hand side that there exists a unique solution satisfying \eqref{system.gauge1}, with the estimates
\beq\label{um+1.est}
\| \wu^{m+1} \|_{L^\infty_{T_0}X^{k_m,j}_\gamma} + \kz \| \wpp_0\wu^{m+1} \|_{L^\infty_{T_0}X^{k_m-1,j}_\gamma} \le C.
\eeq
It follows from  Lemmas \ref{lem:Sk0.go} and  \ref{vsolvability1} that $\wu^{m+1}(\cdot, y_{\bar a}, y_0)$ is gauge-orthogonal for every $(y_{\bar a}, y_0)$ and hence that 
\[
\ell^0_u[\wU^0](\wU^{m+1}) \overset{\eqref{eq:wSe5}}= \calL[\modu(q)](\wu^{m+1}) \overset{\eqref{L=calL}}= L[\modu(q)](\wu^{m+1}) = -\wS^m_{0,u}
\]
in $\R^d\times (0,T_0)$. Thus the $u$-components of \eqref{Uk+1} are satisfied.

To complete the construction of $\wU^{m+1}$ it now suffices to choose $\wA^{m+1}_0, \wA^{m+1}_{\bar a}$ such that
\[
\ell^0_\alpha[\wU^0](\wU^{m+1}) = - \wS^m_{0,\alpha} \quad\mbox{ on }(0,T_0)\times \R^d\qquad\qquad\mbox{ for }\alpha= 0,3,\ldots, d.
\]
This equation states that
\[
\begin{aligned}
    (-\pp_a\pp_a+|\mphi(q)|^2)\wA^{m+1}_\alpha &= - \pp_\alpha\pp_a \wA^{m+1}_a -
 (i\Phi^{m+1}, \wD_{\wA^0_\alpha}\mphi(q))
 \\
 &\hspace{5em}-  (i\mphi(q)), \wD_{\wA^0_\alpha}\Phi^{m+1}) - \wS^m_{0,\alpha} .
\end{aligned}
\]
The right-hand side contains only components of $\wU^0$ and components $\Phi^{m+1}, \wA^{m+1}_a$ of $\wU^{m+1}$ that have already been found.
The first term on the right-hand side is the least regular, and 
from \eqref{um+1.est} it satisfies
\[
\| \wpp_\alpha\wpp_a \wA^{m+1}_a \|_{L^\infty_{T_0} X^{k_m-2, j}_\gamma }
+
\kz \| \wpp_0\wpp_\alpha\wpp_a \wA^{m+1}_a \|_{L^\infty_{T_0} X^{k_m-3, j}_\gamma .
}\le C.
\]
It therefore follows from Proposition \ref{scalar.pde} that for $\alpha=0,3,\ldots, d$,
this equation has a unique solution that satisfies
\[
\| \wA^{m+1}_\alpha \|_{L^\infty_{T_0} X^{k_m-2, j}_\gamma }
+
\kz \| \wpp_0 \wA^{m+1}_\alpha \|_{L^\infty_{T_0} X^{k_m-3, j}_\gamma .
}\le C.
\]
Since $\wU^{m+1} = (\wu^{m+1}, \wA^{m+1}_{\bar a} , \wA^{m+1}_{0})$ is as regular as the least regular of its components, it follows that $\wU^{m+1}$ has the regularity required by the induction hypothesis \eqref{Um.est}

To complete the induction proof, we consider $\wS^\ep[\wU_{m+1,\ep}]$.
In view of \eqref{Sep.m+1} and our choice of $\wU^{m+1}$, we see that
$\wS^\ep[\wU_{m+1,\ep}]$ consists of a finite number of terms of order $\ep^{2m+4}$ or smaller, as required by \eqref{ih1}, so it only remains to 
show that they have the regularity required by the induction hypothesis \eqref{wSmj.est}.  For the terms $\wS^m_j$ appearing in \eqref{S.ingredients}, this has been established in \eqref{wSkj.est}. Every other term in the \eqref{S.ingredients} is a product of functions involving at most two derivatives of $\wU^{m+1}$
multiplied by functions arising from earlier stages in the construction that are more regular or are multipliers in more regular spaces, see Remark \ref{rmk:multiplier}. 
It follows that $\wS^\ep[\wU_{m+1,\ep}]$ has the regularity of $\ell^1[\wV_{\ep,m}](\wU^{m+1})$, the worst term appearing in the expansion. This and our estimates of $\wU^{m+1}$ imply that 
\[
\| \wS^\ep[\wU_{m+1,\ep}] \|_{L^\infty_{T_0} X^{k_m-4, j}_\gamma }
+
\kz \| \wS^\ep[\wU_{m+1,\ep}] \|_{L^\infty_{T_0} X^{k_m-5, j}_\gamma } 
\le C .
\]
Since $k_m-4 = k_{m+1}$, this completes the induction step.

\section{the modified system}\label{sec:auxsys}

We henceforth fix a solution $q:\R^{d-2}\times (0,T)\to M_N$ of \eqref{dwm} satisfying our standing assumptions \eqref{q.compact0}, \eqref{q.decay}, and with $\Theta<\infty$, see \eqref{Theta.def0}. For the choice of $L$ in \eqref{q.decay} and \eqref{Theta.def0},
we fix an integer $\ell$ such that $2\ell>\frac d2+1$, and we assume that the parameters in Proposition \ref{prop:approx.sol} satisfy
\beq\label{fix.parameters}
m > \frac d4+1, \qquad  j =  \lfloor d/2\rfloor>\frac d2-1, \qquad L \ge 4m+2\ell+j-1 .
\eeq
For example we can take $m =\lfloor d/4\rfloor +2$ and $\ell =\lfloor (d+2)/4\rfloor +1$. For these values,
the choice 
$L = 2d+11$ from  Theorems \ref{thm:1} and \ref{thm:2} satisfies the requirement from \eqref{fix.parameters}, but $L$ can be taken a bit smaller if $d= 1,2$, or $3 \mod 4$.


Given \eqref{fix.parameters}, the approximate solution $U_{m,\ep} = U^\ep_0 + \ep^2 U^\ep_m$ provided by the proposition satisfies
\begin{align}
    \| S[U_{m,\ep}] \|_{L^\infty_{T_0}H^{k_m}} &\le C\ep^{9/2} \qquad\mbox{ with }k_m \ge d+1, \label{good1}\\
    \| U_{m,\ep}\|_{L^\infty_{T_0}W^{2\ell+1, \infty}} + \ep^{-1} \kz  \| \pp_0 U_{m,\ep}\|_{L^\infty_{T_0}W^{2\ell, \infty}} &\le C, \label{good2}
\\
     \| U^\ep_m \|_{L^\infty_{T_0}W^{2\ell+1, \infty}}+ \ep^{-1} \kz \|\pp_0 U^\ep_m \|_{L^\infty_{T_0}W^{2\ell, \infty}} 
    &\le C. \label{good3}
\end{align}

We now set $\aU := U_{m,\ep}$ for the above choice of $m$.
We will prove the main theorem by finding $\tU = \binom \tPhi \tA$ that is small in suitable norms, and such that
\beq\label{tU.eqn1}
S[\aU+\tU] = 0.
\eeq
This is now a system of equations for $\tU$. In this section we will rewrite \eqref{tU.eqn1} in a way that will facilitate our later analysis.

These equations inherit a gauge symmetry from \eqref{dAHM}. A key point in our rewriting of \eqref{tU.eqn1} will be to eliminate this unwanted degree of freedom
by looking for solutions that satisfy the gauge condition
\beq\label{dynamic.gauge}
G[\aU](\tU) :=
-(\kz\pp_0+\ep\ko)\tA_0 + \pp_k \tA_k
- ( i\aPhi, \tPhi) = 0
\eeq

We will use the abbreviations
\begin{equation}\label{abbrev}
q_\ep = q_\ep (x_{\bar a}, t) = q(\ep x_{\bar a}, \ep t), 
\qquad
L[q_\ep] := L[\modu(q_\ep)],
\end{equation}
and we introduce the norm $\vvvert\,\cdot\,\vvvert$ defined by
\begin{equation}\label{nnnorm} 
\vvvert U \vvvert_k := \sup_{0<t<T_0/\ep} \left( \kz \| \pp_t U(\cdot, t)\|_{H^{k}(\R^d)} +
\| U(\cdot, t)\|_{H^{k+1}(\R^d)} \right)
\end{equation}
This same norm will appear later in a fixed-point argument used to prove our main existence theorems.

We will prove
\begin{proposition}\label{prop:auxsystem}
There exist constant $c_2, \ep_0>0$, linear differential operators $\calM, P$ and a nonlinear operator $\calN$, all acting on functions    
$\R^d\times [0,\frac{T_0}\ep)\to \C\times \R^{d+1}$, such that if $\tU$ is a solution of 
\begin{equation}\label{aux.system}
(\kz \pp_0^2+ \ep\ko\pp_0)\tU + \calM\tU + P\tU = -\calN(\tU) - S[\aU]    
\end{equation}
such that $\vvvert \tU\vvvert_{2\ell}<c$, 
with initial data such that
\beq\label{modified.data}
G[\aU](\tU) =0  \ \ \mbox{ and } \ \  S_0[\aU+\tU] = 0
\qquad\mbox{ at }t=0,
\eeq
and if $0<\ep<\ep_0$, then
\beq\label{prop.mod}
G[\aU](\tU) = 0  \ \ \mbox{ and } \ \  S[\aU+\tU] = 0 
\qquad\mbox{ in }\R^d\times [0, \frac {T_0}\ep).
\eeq
Moreover, 
\begin{equation}\label{M.def}
\calM\tU (x_a, x_{\bar a}, t) =  \begin{pmatrix}
(-\Delta_{x_{\bar a}} + L[q_\ep])\tu \\ 
(-\Delta +|\mphi(q_\ep)|^2)\tA_{\bar a}\\
(-\Delta +|\mphi(q_\ep)|^2)\tA_{0}
\end{pmatrix}
\end{equation}
and 
$P = P_1+ P_2$ where
\begin{equation}\label{P1.def}
\begin{aligned}
P_1\tU 
&= \begin{pmatrix}
    P_{1,\phi} \tU\\
    P_{1, a} \tU\\
    P_{1,\bar a}\tU\\
     P_{1,0}\tU
\end{pmatrix}
= 
\begin{pmatrix}
    2i \left[ - \kz \tA_{0}\De_{0}\aPhi +\tA_{\bar a}\De_{\bar a}\aPhi  - \kz\aAzero \pp_0\tPhi + \aAbara \pp_{\bar a}\tPhi \right]\\
    0\\
    -2(i\tPhi, \De_{\bar a}\aPhi)\\
    -2(i\tPhi, \De_{0}\aPhi)
\end{pmatrix}
    \end{aligned}
\end{equation}
and $P_2$ is a first-order linear differential operator with smooth coefficients, say $P_{2,\alpha\beta}$, satisfying
\beq\label{P2.est}
\| P_{2,\alpha\beta} \|_{L^\infty_{T_0/\ep}W^{2\ell,\infty}( \R^d)} \le C \ep^2.
\eeq
Finally, 
if $k>d/2$ and $\vvvert \tU\vvvert_k, \vvvert \tV\vvvert_k \le 1$, then
\begin{equation}\label{aux.props2}
\begin{aligned}
     \| \calN (\tU)(\cdot, t)\|_{L^\infty_{T_0/\ep} H^k(\R^d)} &\le C \vvvert \tU \vvvert_{k}^2,\\
     \|\calN (\tU)(\cdot, t) - \calN(\tV) (\cdot, t)\|_{L^\infty_{T_0/\ep} H^k(\R^d)} &\le 
     C \left( \vvvert \tU\vvvert_k + \vvvert \tV\vvvert_k \right) \left( \vvvert \tU -  \tV\vvvert_k \right) \end{aligned}
\end{equation}
where $C$ depends on $d, k$ and $\vvvert \aU \vvvert_{k}$. \end{proposition}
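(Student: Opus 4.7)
The plan is to expand $S[\aU+\tU]$ about $\aU$ as
\[
S[\aU+\tU] = S[\aU] + S'[\aU]\tU + \calN_0(\tU),
\]
where $S'[\aU]$ is the linearization (see \eqref{Sprime}) and $\calN_0(\tU)$ gathers all terms of degree at least $2$ in $\tU$. As in the $2d$ setting of Section \ref{sec:2dahm}, the operator $S'[\aU]$ inherits the gauge symmetry of \eqref{dAHM} and therefore has a huge kernel; I will remedy this in exactly the spirit of the passage from $\calL$ to $L$ in \eqref{Lu.def}, by adding $i\aPhi\, G[\aU](\tU)$ to the $\phi$-equation, $\pp_j G[\aU](\tU)$ to each spatial equation, and (with the appropriate sign) a time derivative of $G[\aU](\tU)$ to the $S_0$-equation. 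These additions cancel the unwanted $i\aPhi\,\pp_k\tA_k$ term in $S'_\phi[\aU]\tU$ and the $\pp_\alpha\pp_k\tA_k$ terms in $S'_\alpha[\aU]\tU$ for $\alpha\in\{0,\bar a\}$, producing the principal structure of $\calM$ together with the first-order operator $P$.

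To match the precise form claimed in the proposition, the next step is to substitute the leading-order $U^\ep_0$ in place of $\aU$ inside the resulting principal part. On the $\tu$ components this yields $-\Delta_{x_{\bar a}}+L[q_\ep]$ (since the $(\phi,A_a)$-part of $U^\ep_0$ is exactly $\modu(q_\ep)$), and on the $\tA_{\bar a}, \tA_0$ components it yields $-\Delta+|\mphi(q_\ep)|^2$. The first-order terms of order unity that are \emph{not} absorbed by the gauge modification are exactly the cross-covariant-derivative contributions $2i\tA_\alpha\De_\alpha\aPhi$ and $-2(i\tPhi,\De_\alpha\aPhi)$ with $\alpha\in\{0,\bar a\}$ appearing in $S'_\phi$ and $S'_\alpha$; these give $P_1$ as displayed in \eqref{P1.def}. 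The remaining first-order pieces come from (i) the substitution error $\aU - U^\ep_0 = \ep^2 U^\ep_m$, whose $W^{2\ell,\infty}$ norm is $O(\ep^2)$ by \eqref{good3}, and (ii) the $\ep\ko$ prefactors in the equations; together these produce $P_2$ with the bound \eqref{P2.est}. Every contribution of degree between $2$ and $3$ in $\tU$ and $\nabla\tU$ is gathered into $\calN(\tU)$.

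With $\calM,P,\calN$ so defined, \eqref{aux.system} is by construction equivalent, up to signs, to the system
\[
S_\phi[\aU+\tU]=-i(\aPhi+\tPhi)Z, \quad S_j[\aU+\tU]=-\pp_j Z, \quad S_0[\aU+\tU]=-\pp_0 Z
\]
with $Z:=G[\aU](\tU)$, modulo terms absorbed into $\calN$. This is precisely the hypothesis of Lemma \ref{lem:Z} applied to $U=\aU+\tU$. The initial conditions \eqref{modified.data} give $Z=0$ at $t=0$, and the condition $S_0[\aU+\tU]=0$ at $t=0$ yields $\kz\pp_0 Z=0$ at $t=0$; the uniform gap \eqref{unif.gap} for $\Phi=\aPhi+\tPhi$ (valid for $\ep$ small and $\vvvert\tU\vvvert_{2\ell}<c_2$) will follow perturbatively from the Higgs-mechanism spectral gap for $\modu(q_\ep)$ in Theorem \ref{thm:hessian}, combined with \eqref{good2} and Sobolev embedding. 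Lemma \ref{lem:Z} then forces $Z\equiv 0$, yielding both $G[\aU](\tU)=0$ and $S[\aU+\tU]=0$ throughout $\R^d\times[0,T_0/\ep)$.

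The estimates \eqref{aux.props2} on $\calN$ will follow directly from its explicit form as a polynomial of degree at most $3$ in $\tU$ and $\nabla\tU$ with coefficients depending smoothly on $\aU$: since $k>d/2$ makes $H^k(\R^d)$ a Banach algebra and $\aU$ is controlled in $W^{2\ell+1,\infty}$ by \eqref{good2}, standard product/Moser estimates deliver the quadratic bound and the Lipschitz difference estimate at once. The main obstacle in the whole argument is computational rather than conceptual: patiently tracking, term by term, how the many pieces of $S'[\aU]\tU$ and the added gauge corrections recombine into $\calM+P_1+P_2$, and then verifying that once the substitution $\aU\rightsquigarrow U^\ep_0$ has been made using \eqref{good3}, every leftover coefficient in $P_2$ is genuinely of size $O(\ep^2)$ in $W^{2\ell,\infty}$.
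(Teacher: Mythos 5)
Your proposal follows essentially the same strategy as the paper's proof: introduce the gauge-fixed modified system, expand $S[\aU+\tU]$ about $\aU$, split the linearization into $\calM + P_1 + P_2$ by comparing with $L[q_\ep]$ and tracking what is $O(\ep^2)$, use Lemma~\ref{lem:Z} to force $G[\aU](\tU)\equiv 0$, and control $\calN$ via the Banach-algebra property of $H^k$. Two small cautions. First, your sign convention is off: you write the gauge correction as an \emph{addition} of $i\aPhi\,G[\aU](\tU)$ and pose the modified system as $S_\phi[\aU+\tU]=-i(\aPhi+\tPhi)Z$ with $Z=G[\aU](\tU)$, whereas the cancellation of $i\aPhi\,\pp_k\tA_k$ that you correctly want (and which produces the coercive potential $\tfrac12(3|\aPhi|^2-1)$) requires forming $S'_\phi[\aU](\tU)-i\aPhi\,G[\aU](\tU)$; with your sign the divergence term doubles rather than cancels, so the paper's choice \eqref{modify1} uses the opposite sign, and Lemma~\ref{lem:Z} is then applied with $Z=-G[\aU](\tU)$. (The displayed sign in \eqref{Lu.def} is consistent only with \eqref{eq33.14} if the added column is subtracted; be guided by \eqref{eq33.14} and \eqref{linearized.ops}.) Second, $P_2$ is not produced solely by the substitution error $\ep^2 U^\ep_m$ and the $\ep\ko$ prefactors: expanding $\kz\De_0\De_0$ and $\De_{\bar a}\De_{\bar a}$ already yields, even with the leading-order $U^\ep_0$, coefficients such as $\kz(\aAzero)^2$, $\kz\,\pp_0\aAzero$ and $\aAbara\aAbara$, which are $O(\ep^2)$ for independent reasons (the approximate connection components are $O(\ep)$); and the uniform gap \eqref{unif.gap} you need for Lemma~\ref{lem:Z} is the scalar spectral-gap fact of Lemma~\ref{basic.elliptic}, not Theorem~\ref{thm:hessian}.
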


The equation $S_0[\aU+\tU]=0$ is an elliptic equation and if $\kz>0$ imposes a constraint on the initial data, as is standard in Yang-Mills-Higgs theories. If $\kz=0$ it holds automatically. 

From the explicit form of $P_1$ and estimates \eqref{Uep.size} of the approximate solution, as well as the choice \eqref{fix.parameters} of $L$,  we see that the coefficients of $P_1$, say $P_{1,\alpha\beta}$, satisfy
\beq\label{P1.est}
\| P_{1,\alpha\beta} \|_{L^\infty_{T_0/\ep}W^{2\ell,\infty}( \R^d)} \le C \ep.
\eeq
Similarly, the coefficients of  $\calM$, say $\calM_{\alpha\beta}$, satisfy
\beq\label{calM.est}
\| \calM_{\alpha\beta} \|_{L^\infty_{T_0/\ep}W^{2\ell,\infty}( \R^d)} \le C.
\eeq

\begin{proof}
{\bf Step 1}.  For simplicity we will write $G(\tU)$ instead of $G[\aU](\tU)$.  We will enforce the gauge condition \eqref{dynamic.gauge} by solving
the system
\begin{equation}\label{modify1}
\begin{aligned}
    S_\phi[\aU +\tU ] &= i(\aPhi +\tPhi) G(\tU)\\
    S_j[\aU +\tU ] &= \pp_j G(\tU), \qquad\qquad j=1,\ldots, d \\
    S_0[\aU +\tU ] &= \pp_0 G(\tU).
\end{aligned}
\end{equation}
We first claim that if $\tU$ solves \eqref{modify1} with initial data as described above, then $G(\tU) = 0$ and hence $\tU$ solves \eqref{tU.eqn1}. To see this, note that our choice of initial data immediately implies that $G(\tU)=0$ at $t=0$. Moreover, in the hyperbolic case $\kz>0$, it follows from \eqref{modified.data} and \eqref{modify1} that $\pp_0 G(\tU) \overset{\eqref{modify1}}= S_0[\aU+\tU] \overset{\eqref{modified.data}}= 0$ at $t=0$. We wish to apply Lemma \ref{lem:Z} to conclude that $G(tU)$ vanishes identically. To do this we must verify some assumptions. Regularity condition \eqref{suff.smooth1} on $\aU$ holds due to \eqref{good2}, and condition \eqref{suff.smooth2} on $\tU$ is exactly the assumption that $\vvvert \tU\vvvert_{2\ell}<\infty$, which also is easily seen to imply assumption \eqref{suff.smooth3}.
We must also check that there exists some $c>0$ such that
\[
\int_{\R^d} |\nabla f(x)|^2 + |\aPhi+\tPhi(x,t)|^2 f^2(x)\, dx \ge c \int_{\R^d} f^2(x)\, dx 
\]
for all $f\in H^1(\R^d)$.
To do this, note that $\aPhi = \mphi(q_\ep) + \ep^2\Phi^\ep_m$, elementary inequalities
imply that
\[
|\aPhi+\tPhi|^2 = |\mphi(q_\ep) + \ep^2\Phi^\ep_m +\tPhi|^2 \ge \frac 12 |\mphi(q_\ep)|^2 - |\ep^2\Phi^\ep_m +\tPhi|^2.
\]
For any fixed $t$, since we continue to assume \eqref{q.compact0}, we can use Lemma \ref{basic.elliptic} to compute
\begin{align*}
    \int_{\R^d} |\nabla f|^2 + |\aPhi+\tPhi|^2 f^2\, 
    &
    \ge \frac 12 \int_{\R^{d-2}} \int_{\R^2} |\nabla_a f|^2 + |\mphi(q_\ep)|^2 f^2 \, dx_a \, dx_{\bar a}\\
    &\hspace{10em}
    - \| \Phi^\ep_m +\tPhi|^2_{L^\infty} \|f \|_{L^2}^2\\
    &\ge \left(\frac c2 -  \| \ep^2\Phi^\ep_m +\tPhi|^2_{L^\infty}\right) \|f\|_{L^2}^2
    \end{align*}
where the constant $c$ comes from \eqref{basic.elliptic}. Since $\| \Phi^\ep_m\|_{L^\infty} \le C$, and because
$\| \tPhi\|_{L^\infty}\le C \vvvert \tPhi \vvvert_{2\ell} < C c_2$ by assumption, we can fix $\ep_0$ and $c_2$ so small that 
 \[
 \int_{\R^d} |\nabla f|^2 + |\aPhi+\tPhi|^2 f^2\, 
    \ge \frac c4 \|f\|_{L^2}^2 
 \]   
 whenever $\vvvert \tU\vvvert_{2\ell} < c_2$ and $0<\ep <\ep_0$. Thus the hypotheses of Lemma \ref{lem:Z} are satisfied, and it follows that $G(\tU)=0$.

{\bf Step 2}. 
Our goal now is to rewrite \eqref{modify1} in the form given in the statement of the proposition.
Toward this end, we define
\begin{align}
\label{Sprime.def}
S'[\aU](\tU) &:= \frac d{ds} S[\aU+s\tU]\Big|_{s=0},\\
\label{calN0.def}
\calN^0[\aU](\tU) &:= S[\aU+\tU] - S[\aU] - S'[\aU](\tU)
\end{align}
These are operators that act on sufficiently smooth functions $\tU: [0, \frac{T_0}\ep)\times \R^d\to \C\times \R^{d+1}$. 
With this notation, \eqref{modify1} can be rewritten
\begin{equation}\label{modify2}
\begin{aligned}
      S_\phi[\aU] + S_\phi'[\aU](\tU ) - i\aPhi G(\tU)&= - \calN^0_\phi[\aU](\tU) + i\tPhi G(\tU)\\
     S_j[\aU] + S_j'[\aU](\tU )  - \pp_j G(\tU)&= - \calN^0_j[\aU](\tU) \\
     S_0[\aU] + S_0'[\aU](\tU )  -\pp_0 G(\tU) &= - \calN^0_0[\aU](\tU) .
\end{aligned}
\end{equation}

In view of \eqref{modify2}, to complete the proof, it suffices to show that 
\beq\label{aux.suffices}
\left(\begin{aligned}
    S'_\phi[\aU](\tU) - i\aPhi  G(\tU) \\
    S'_j[\aU](\tU) - \pp_j G(\tU) \\
    S'_0[\aU](\tU)- \pp_0 G(\tU) \\
\end{aligned}\right) =(\kz \pp_0^2+ \ep\ko\pp_0)\tU + \calM\tU + P\tU
\eeq
for $\calM$ and $P$ as described in the statement of the Proposition, and that
\begin{equation}\label{props2bis}
\calN(\tU) 
 := \begin{pmatrix}
    \calN^0_\phi[\aU](\tU) - i \tPhi G(\tU)\\
    \calN_j^0[\aU](\tU)\\
    \calN_0^0[\aU](\tU)
\end{pmatrix}_{j=1,\ldots, d} 
\quad\mbox{ satisfies \eqref{aux.props2}.
}
\end{equation}

{\bf Step 3: proof of \eqref{props2bis}}
First note that $S[U]$ is a (constant coefficient) cubic polynomial function of $U$, $\nabla_{t,x} U$ and $\nabla^2_{t,x}U$, where
$\nabla_{t,x}^k\tU$ denotes the collection of all partial derivatives in all the $t,x$ variables of order $k$. Moreover, all second derivatives appear linearly in $S[U]$. It then follows from the definition \eqref{calN0.def} that $\calN^0[\aU](\tU)$ is a cubic polynomial in $\tU$ and $\nabla_{t,x}\tU$ containing only terms that are quadratic or cubic, and with coefficients that are linear functions\footnote{that is, polynomials of degree at most $1$.} of $\aU$. The same is clearly true of $i\tPhi G(\tU)$, so it remains true of $\calN(\tU)$. That is, $\calN(\tU)$ has the form
\begin{equation}\label{calN.form}
    \calN(\tU) = 
\sum_{\alpha_{i}, \alpha_j} L_{\alpha_{i}, \alpha_j}(\aU) Z_{\alpha_i} Z_{\alpha_j} 
+
\sum_{\alpha_{i}, \alpha_j, \alpha_k} C_{\alpha_{i}, \alpha_j,\alpha_k} Z_{\alpha_i} Z_{\alpha_j} Z_ {\alpha_k},
\end{equation}
where $Z_{\alpha}$ denotes a generic component of $\tU$ or $\nabla_{t,x}\tU$, every
$L_{\alpha_i \alpha_j}(\aU)$ is a linear function of $\aU$, and every $C_{\alpha_{i}, \alpha_j,\alpha_k}$ is a constant.

 When $\kz>0$, the first inequality in \eqref{aux.props2} follows directly from the standard fact that 
\[
\| u v \|_{H^s(\R^d)} \le C_{s,d} \| u  \|_{H^s(\R^d)} \|  v \|_{H^s(\R^d)}
\]
(which of course implies the corresponding inequality for products of more terms).
The second inequality follows by similar considerations, since
\[
x_i x_j - y_iy_j = (x_i-y_i)x_j + y_i (x_j-y_j)
\]
and 
\[
x_i x_j x_k - y_iy_j y_k = (x_i-y_i)x_jx_k + y_i (x_j-y_j) x_k + y_i y_j(x_k-y_k).
\]

When $\kz=0$, it is straightforward to check that $\calN(\tU)$ as written in \eqref{calN.form} does not contain any terms involving components of  $\pp_t\tU$, so both
inequalities of \eqref{aux.props2} continue to hold, by the same arguments.

{\bf Step 4: proof of \eqref{aux.suffices}}

Introducing the notation
\[
\De_{\alpha} := \pp_\alpha - i \aAalpha, \qquad
\tF_{\alpha\beta} := \pp_\alpha \tA_\beta - \pp_\beta \tA_\alpha,\qquad
F^{ap}_{\ep,\alpha\beta} := \pp_\alpha \aAbeta - \pp_\beta \aAalpha,
\]
we find after a calculation that
\beq\label{Sprime}
\begin{aligned}
S'_\phi[\aU](\tU)
&= 
(\kz \De_0\De_0 +\ep\ko \De_0)\tPhi-\De_j \De_j  \tPhi  + 2 i(-\kz \tA_0 \De_0\aPhi + \tA_j \De_j\aPhi) \\
&\qquad + i \aPhi( -(\kz\pp_0+\ep\ko)\tA_0 +\pp_j\tA_j)
+ \frac 12 (|\aPhi|^2-1)\tPhi + ( \aPhi, \tPhi) \aPhi
\\
S'_j[\aU](\tU)
&=
(\kz\pp_0+\ep\ko)\tF_{0j} -\pp_k \tF_{kj}
-( i\aPhi, \De_j \tPhi) 
-( i\tPhi, \De_j \aPhi) 
+ \tA_j |\aPhi|^2\\
S_0'[\aU](\tU)
&=
-\pp_k \tF_{k0}
-( i\aPhi, \De_0\tPhi) 
-( i\tPhi, \De_0 \aPhi) 
+ \tA_0 |\aPhi|^2
\end{aligned}
\eeq
(Recall that repeated indices $j,k$ are summed from $1$ to $d$.)
The terms $S'_\phi[\aU](\tU)$ and $S'_a[\aU](\tU), a=1,2$ contain the degenerate operator
$\calL[\aU]$ defined in \eqref{calL.def}. 
The gauge condition \eqref{dynamic.gauge}
will effectively replace $\calL[\au]$ by the coercive operator $L[\au]$ from \eqref{eq33.14}.
To start, we compute
\begin{equation}\label{linearized.ops}
\begin{aligned}
S'_\phi[\aU](\tU) - i\aPhi G(\tU)
&= 
(\kz \De_0\De_0 +\ep\ko \De_0)\tPhi-\De_j \De_j  \tPhi  + \frac 12 (3|\aPhi|^2-1)\tPhi
\\
&\qquad\qquad
+ 2 i(-\kz \tA_0 \De_0\aPhi + \tA_j \De_j\aPhi)
\\
S'_j[\aU](\tU) - \pp_j G(\tU)&=
(\kz\pp_0\pp_0 +\ep\ko\pp_0)\tA_j - \pp_k\pp_k \tA_j
-2( i\tPhi, \De_j \aPhi) 
+ \tA_j |\aPhi|^2\\
S'_0[\aU](\tU)- \pp_0 G(\tU)&=
(\kz\pp_0\pp_0 +\ep\ko\pp_0)\tA_0-\pp_k\pp_k \tA_0
-2( i\tPhi, \De_0 \aPhi) 
+ \tA_0 |\aPhi|^2.
\end{aligned}
\end{equation}
We now consider various terms on the right-hand side, with the aim of eventually rewriting everything as in \eqref{aux.suffices}.

\bigskip

{\bf Step 4a}. 
First,
\[
\De_0 \tPhi = \pp_0\tPhi - i \aAzero \tPhi
\]
and 
\[
\De_0\De_0 \tPhi = \pp_0\pp_0 \tPhi -2i \aAzero\pp_0\tPhi - i (\pp_0 \aAzero) \tPhi - (\aAzero)^2\tPhi.
\]
Using these we find that
\begin{equation}
\label{step3a}
\begin{aligned}
(\kz \De_0\De_0 +\ep\ko \De_0)\tPhi - 2 i\kz \tA_0 \De_0\aPhi
&= 
(\kz \pp_0^2 + \ep\ko \pp_0) \tPhi 
\\
&\qquad - 2i\kz(\tA_0 \De_0\aPhi+ \aAzero \pp_0\tPhi) \\
&\qquad 
+ P_{2a,\phi} \tPhi 
\end{aligned}
\end{equation}
where $P_{2a,\phi}= -\kz [i\pp_0 \aAzero + \aAzero^2] + i\ep\ko \aAzero$. Recalling that $\aU = U_{j,\ep}$, it then follows from \eqref{good2} that
\[
\| P_{2a,\phi}\|_{L^\infty_{T_0} W^{2\ell,\infty}(\R^d)} \le C \ep^2.
\]
Indeed, the choices in \eqref{fix.parameters} are made exactly to guarantee this and similar inequalities below.

{\bf Step 4b}.
Next, splitting the remaining terms in
$S'_\phi[\aU](\tU) - i\aPhi G(\tU)$ into sums over $a = 1,2$ and $\bar a = 3, \ldots, d$ and recalling the definition of $L[\au]$ from \eqref{eq33.14}, we find that
\[
    -\De_j \De_j  \tPhi  + \frac 12 (3|\aPhi|^2-1)\tPhi
    +2i \tA_j \De_j\aPhi
    =
    L_\phi[\au]\tu 
    -\De_{\bar a } \De_{\bar a} \tPhi + 2i  \tA_{\bar a} \De_{\bar a}\aPhi.
\]
Computing as in Step 3a, we find that
\[
 -\De_{\bar a } \De_{\bar a} \tPhi + 2i  \tA_{\bar a} \De_{\bar a}\aPhi
 = -\pp_{\bar a}\pp_{\bar a}\tPhi +2i(
 \tA_{\bar a} \De_{\bar a}\aPhi+ \aAbara \pp_{\bar a}\tPhi) + P_{2b,\phi}\tPhi 
\]
where $P_{2b,\phi} = - [ i \pp_{\bar a} \aAbara + \aAbara\aAbara]$. It again follows from \eqref{good2}
that 
\[
\| P_{2b,\phi}\|_{L^\infty_{T_0} W^{2\ell,\infty}(\R^d)}
\le C \ep^2.
\]

Next, we claim that 
\[
L_\phi[\au](\tu) = L_\phi[q_\ep](\tu) + P_{2c,\phi} \tu + P_{2d,\phi} \nabla \tu
\]
where 
\[
\| P_{2c,\phi} \|_{L^\infty_{T_0} W^{2\ell,\infty}} \le C\ep^2, \qquad
\| P_{2d,\phi} \|_{L^\infty_{T_0} W^{2\ell,\infty}} \le
C\ep^2
\]
Indeed, recalling from Proposition \ref{prop:approx.sol} that 
\[
\au = u_{m,\ep} = \modu(q_\ep) + \ep^2 u^\ep_m \quad\mbox{ with $u^\ep_m$ satisfying \eqref{good3}}
\]
the above estimate holds because the coefficients of the first-order operator $L_\phi[u]$ depend smoothly, indeed polynomially, on components of $u$.

Combining this with \eqref{step3a}, we find that
\begin{equation}\label{step3b}
\begin{aligned}
    &S'_\phi[\aU](\tU) - i\aPhi G(\tU) \\
    &\qquad    =(\kz \pp_0\pp_0 +\ep\ko \pp_0)\tPhi  - \pp_{\bar a }\pp_{\bar a}\tPhi + L_\phi[q_\ep]\tu\\
    &\qquad \qquad\qquad+ 2i[ ( \tA_{\bar a} \De_{\bar a}\aPhi+ \aAbara \pp_{\bar a}\tPhi)
    - \kz(\tA_0 \De_0\aPhi+ \aAzero \pp_0\tPhi)] - i \ep\ko \aAzero\tPhi\\
    &\qquad\qquad\qquad  P_{2,\phi} \tilde u
    \end{aligned}
\end{equation}
where
\[
P_{2,\phi}\tU := ( P_{2a,\phi}+P_{2b,\phi}+P_{2c,\phi})\tilde u 
+ P_{2d,\phi}\nabla\tilde u.
\]
This verifies \eqref{P2.est} for $P_{2,\phi}$.

{\bf Step 4c}. Similar but easier arguments show that for $\alpha=0, 3,\ldots, d$ 
\begin{equation}\label{step3c1}
\begin{aligned}
        S'_\alpha[\aU](\tU) - \pp_\alpha G(\tU)&=
    (\kz\pp_0\pp_0 + \ep\ko \pp_0)\tA_\alpha - \pp_{\bar a}\pp_{\bar a}\tA_\alpha + (-\pp_a\pp_a +|\mphi(q_\ep)|^2)\tA_\alpha \\
    &\qquad\qquad  - 2(i\tPhi, \De_\alpha \aPhi) + P_{2,\alpha} \tU
\end{aligned}
\end{equation}
where
\[
P_{2,\alpha}\tU := (|\aPhi|^2 - |\mphi(q_\ep)|^2)\tA_\alpha, \qquad
\|P_{2,\alpha}\|_{L^\infty_{T_0}W^{2\ell,\infty}} \le C\ep^2
\]
For $a = 1,2$, 
we start as in \eqref{step3c1}, then
use the definition \eqref{eq33.14} of $L[u]$
to obtain
\begin{equation}\label{step3c2} 
    S'_a[\aU](\tU) - \pp_a G(\tU) =
    (\kz\pp_0\pp_0 + \ep\ko \pp_0)\tA_a - \pp_{\bar a}\pp_{\bar a}\tA_a- L_a[q_\ep]\tu + P_{2,a}\tU.
\end{equation}
where
\[
P_{2,a}\tU = (|\aPhi|^2 - \mphi(q_\ep)|^2)\tA_a - 2(i\tPhi, \De_a \aPhi - (\pp_a-i\mA(q_\ep)_a)\mphi(q_\ep)). 
\]
And again it follows from \eqref{Uep.size} that
\[
\| P_{2,a} \|_{L^\infty_{T_0} W^{2\ell,\infty}(\R^d)}
\le
C\ep^2.
\]
The remaining conclusions of the proposition follow by comparing
\eqref{step3b}, \eqref{step3c1}, and \eqref{step3c2} with \eqref{aux.system} -- \eqref{P2.est}.
\end{proof}

\section{linear estimates for the modified system}\label{sec:linests}

In this section, $q: \R^{d-2}\times [0,T) \to M_N$ continues to denote a fixed solution of \eqref{dwm}
satisfying \eqref{q.compact0}, \eqref{q.decay} and with $\Theta<\infty$, see \eqref{Theta.def0}.
We continue to assume that \eqref{fix.parameters} holds with some $\ell$ such that $2\ell>\frac d2+2$.

We consider the linear system
\begin{equation}\label{lin.system}
\kz \pp_0^2\tU + \ep\ko \pp_0\tU + \calM\tU + P\tU = \eta  
\end{equation}
where the operators $\calM$ and $P$, encoding properties of $q$, are as described in Proposition \ref{prop:auxsystem},
and $\eta: [0, \frac {T_0}\ep)\times \R^d\to \C\times \R^{d+1}$ is a given function,
with components that we will as usual write as
\[
\eta =\begin{pmatrix} \eta_u \\ \eta_{\bar a}\\ \eta_0 \end{pmatrix} \quad\mbox{ where }\eta_u = \binom {\eta_\phi}{\eta_a}.
\]
Equation \eqref{lin.system} is defined for functions $\tU:\R^d\times [0,T_0/\ep)\to \C\times \R^{d+1}$, where $T_0\le T$ is the time from the construction
of the approximate solution.
Existence of solutions of \eqref{lin.system} is standard.

For $\tU = (\tu, \tA_{\bar a}, \tA_0)^T$ solving \eqref{lin.system},
we define
\beq\label{cmu.def}
c_\mu(x_{\bar a}, t) 
:= ( \tu , n_\mu )_{L^2(\R^2_{(x_{\bar a},t)})}
:= \int_{\R^2}  \tu(x_a, x_{\bar a}, t) \cdot n_\mu(x ; q_\ep(x_{\bar a}, t))\,dx_a
\eeq
where $\R^2_{(x_{\bar a},t)} := \R^2\times \{ (x_{\bar a},t)\}$ and  $\{n_\mu\}_{\mu=1}^{2N}$ is the orthonormal basis introduced in Definition \eqref{def:nmu}. 

We will prove

\begin{theorem}\label{thm:linearest}
Assume that
\[
\eta \in L^2_{T_0/\ep}( H^{2\ell}(\R^d ; \C\times \R^{d+1})),
\]
and let $\tilde U:\R^d\times (0,T_0/\ep)\to \C\times \R^{d+1}$ solves \eqref{lin.system}
with initial data such that
\beq\label{cmu.initialdata}
c_\mu(x_{\bar a}, 0) = \kz \pp_t c_\mu(x_{\bar a}, 0) = 0.
\eeq
Then there exists a constant $C>0$, independent of $\ep$ and $\eta$, such that 
\begin{align*}
    \kz \| \pp_t \tU(\cdot, t)\|_{H^{2\ell}}^2 + \|\tU(\cdot, t)\|_{H^{2\ell+1}}^2
&\le Ce^{Ct/\ep} \left( \kz \| \pp_t \tU(\cdot, 0)\|_{H^{2\ell}}^2 + \|\tU(\cdot, 0)\|_{H^{2\ell+1}}^2\right) \\
&\hspace{4em}+ \frac C{\ep^3} \int_0^t e^{C \ep (t-s)}\| \eta(s, \cdot)\|_{H^{2\ell}}^2 \, ds
\end{align*}
for every $t\in (0,T_0/\ep)$.
\end{theorem}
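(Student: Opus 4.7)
The linear system \eqref{lin.system} couples a hyperbolic/parabolic time operator with the second-order spatial operator $\calM$, whose quadratic form on the $\tilde u$-block degenerates on the $2N$-dimensional subspace spanned by $\{n_\mu(x_a; q_\ep(x_{\bar a}, t))\}_{\mu=1}^{2N}$ on each horizontal fiber (Theorem \ref{thm:hessian}: $L[q_\ep] n_\mu = 0$). The plan is to split
\[
\tilde u(x, t) \;=\; \tilde u^\perp \;+\; \sum_\mu c_\mu(x_{\bar a}, t)\, n_\mu(x_a; q_\ep(x_{\bar a}, t)),
\]
with $c_\mu$ defined by \eqref{cmu.def}, and run two coupled energy estimates: a direct one on $\tilde U$ that controls $\tilde u^\perp$ and the gauge-field blocks cleanly while seeing $c_\mu$ only as a Gr\"onwall tail, and a separate and more delicate one on the projections $c_\mu$ themselves.

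First I would carry out the direct energy estimate. Commuting \eqref{lin.system} with $\pp^r$ for $|r|\le 2\ell$ and pairing in $L^2(\R^d)$ with $\pp_t \pp^r \tilde U$ in the hyperbolic case (or with $\pp^r \tilde U$ when $\kz=0$), the time-derivative part gives an exact energy and $\calM$ produces the positive quantity
\[
\tfrac12\frac{d}{dt}\!\int_{\R^d}\! |\nabla_{x_{\bar a}} \pp^r \tilde u|^2 + \langle L[q_\ep] \pp^r \tilde u, \pp^r \tilde u\rangle + \!\!\sum_{\alpha\in\{0,3,\ldots,d\}}\!\!\bigl(|\nabla \pp^r \tilde A_\alpha|^2 + |\mphi(q_\ep)|^2 |\pp^r \tilde A_\alpha|^2\bigr)\, dx.
\]
By Theorem \ref{thm:hessian}, the $L[q_\ep]$ form controls $\|\tilde u^\perp\|_{H^1(\R^2)}^2$ uniformly in $(x_{\bar a}, t)$, and the $\mphi$-mass controls the $\tilde A_\alpha$ blocks. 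Hence the $\calM$ quadratic form dominates $\|\pp^r \tilde U\|_{H^1}^2$ modulo $\sum_\mu\|c_\mu\|_{L^2(\R^{d-2})}^2$ plus lower-order defects, and the commutators with $\pp^r$ and the low-order operator $P=P_1+P_2$ are absorbable into Gr\"onwall thanks to \eqref{P1.est}, \eqref{P2.est} and the multiplier estimates for the coefficients of $\calM$ and $P$ (Lemma \ref{lem:U0Nmu} and Remark \ref{rmk:multiplier}, applied to $q_\ep$).

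The heart of the proof is the estimate for $c_\mu$. Taking the $L^2(\R^2_{(x_{\bar a},t)})$ inner product of the $\tilde u$-equation with $n_\mu(q_\ep)$, using self-adjointness of $L[q_\ep]$ together with $L[q_\ep] n_\mu = 0$, and commuting the $t$- and $x_{\bar a}$-derivatives past $n_\mu(q_\ep)$ (whose dependence on $(x_{\bar a},t)$ enters only through $q_\ep$, hence with a gain of $\ep$ per derivative), one obtains an effective wave/heat equation
\[
(\kz\pp_{tt} + \ep\ko \pp_t - \Delta_{x_{\bar a}}) c_\mu \;=\; H_\mu.
\]
The naive bound $H_\mu = O(\ep)$ is, as explained in the overview following \eqref{lin2}, fatal: Duhamel then only gives $\|c_\mu\|_{L^2} = O(\ep t^2)$, useless for $t\sim 1/\ep$. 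The key structural step is to expose divergence structure in $H_\mu$: using the explicit form of $P_1$ in \eqref{P1.def} and integrating by parts in $x_a$ against $n_\mu$ — exploiting the gauge-orthogonality \eqref{eq:go} of $n_\mu$ and the defining equations \eqref{chimu.def}, \eqref{ximualpha} — the right-hand side can be rewritten as
\[
H_\mu \;=\; \pp_t h_{\mu,0} + \pp_{\bar a} h_{\mu,\bar a} + h_\mu, \qquad h_\mu = O(\ep^2),
\]
with divergence-form coefficients of size $O(\ep)$. A standard energy identity for the linear operator on the left, in which the divergence pieces are integrated by parts onto $\pp_t c_\mu$ (respectively $c_\mu$ in the purely parabolic case), then yields the decisive control of $\|c_\mu\|_{H^{2\ell+1}(\R^{d-2})}^2 + \kz\|\pp_t c_\mu\|_{H^{2\ell}(\R^{d-2})}^2$; commuting with $\pp_{x_{\bar a}}^\alpha$ for $|\alpha|\le 2\ell$ gives the high-regularity version, and the vanishing initial data \eqref{cmu.initialdata} eliminates the boundary contribution.

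Combining the two estimates into a single Gr\"onwall inequality for the sum of the two energies closes the loop and produces the stated bound, with the $\ep^{-3}$ prefactor in the source term and the $e^{Ct/\ep}$ / $e^{C\ep(t-s)}$ exponentials arising from tracking the $\ep$-dependent coefficients through the Gr\"onwall step. The main obstacle is the term-by-term bookkeeping behind the divergence-form decomposition of $H_\mu$: one must exhibit, for each contribution of $P_1 \tilde U$ projected onto $n_\mu$, cancellations that let the derivatives be thrown onto $c_\mu$ itself or onto smooth factors of size $\ep$, without leaving uncontrolled $O(\ep)$ residuals. This is precisely where the gauge-orthogonality built into the choice of $n_\mu$ and the specific form of $P_1$ dictated by the adiabatic ansatz are essential, and is the substance of the arguments collected in Section \ref{sec:Qtop}.
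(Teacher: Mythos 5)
You have correctly identified the skeleton of the argument: split the energy into a piece controlled by the coercivity of $L[q_\ep]$ (Theorem \ref{thm:hessian}) and a defect $\sum_\mu\|c_\mu\|^2$, observe that a naive $O(\ep)$ source in the effective wave/heat equation for $c_\mu$ is fatal at times of order $1/\ep$, and therefore seek a decomposition of the source into $\pp_0 h_0+\nabla_{\bar a}\cdot h_{\bar a}+h$ with $h=O(\ep^2)$ so the divergence pieces can be integrated by parts in the energy. This is precisely the logic of Propositions \ref{prop:ellenergy}--\ref{prop:Qtop} and Lemmas \ref{lem:cmupde2}--\ref{lem:heatest}, so at the strategic level your proposal matches the paper.

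However, the mechanism you propose for producing the $O(\ep^2)$ divergence structure --- ``integrating by parts in $x_a$ against $n_\mu$, exploiting gauge-orthogonality and \eqref{chimu.def}, \eqref{ximualpha}'' --- is not the one the paper uses and does not obviously close. Gauge orthogonality and \eqref{ximualpha} are ingredients in the construction of the approximate solution (Section \ref{sec:construction}), not in this linear estimate; and the projections $(P_1\tU, n_\mu)$ contain terms such as $\kz\tA_0\,\De_0\aPhi$ that are genuinely $O(\ep)$ and not a priori a divergence, so no amount of $x_a$-integration by parts removes them. The actual device is Lemma \ref{lem:tA_split}: one writes $\tA_\alpha=f_\alpha+\pp_0\chi_\alpha$ for $\alpha\in\{0,3,\ldots,d\}$, with $\chi_\alpha$ solving $(-\Delta+|\mphi(q_\ep)|^2)\chi_\alpha=-\kz\pp_0\tA_\alpha-\ep\ko\tA_\alpha$, so that $f_\alpha=O(\ep)(\|\tU\|_{H^{2\ell+1}}+\kz\|\pp_0\tU\|_{H^{2\ell}})+\|\eta\|_{H^{2\ell}}$. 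Since each $\tA_\alpha$ in $P_1\tU$ multiplies $\De_\alpha\aPhi$ or $\aAalpha$, themselves $O(\ep)$, the $f_\alpha$-pieces are already $O(\ep^2)+O(\ep)\|\eta\|$, and the $\pp_0\chi_\alpha$-pieces are absorbed into $\pp_0(\cdot)$ with an $O(\ep^2)$ remainder because an extra $\pp_0$ on slow factors gains an $\ep$. This elliptic resolvent trick on the $\tA_\alpha$-block is what yields \eqref{cmu.pde2}--\eqref{h.estimates}; without it you are left with uncontrolled $O(\ep)$ non-divergence residuals, precisely the failure described after \eqref{lin2}.

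A second, more technical gap: commuting \eqref{lin.system} with $\pp^r$ for general $|r|\le 2\ell$ does not cleanly reproduce the split. When $r$ contains $x_a$-derivatives, the coercivity defect for $\pp^r\tilde u$ is $\|(\pp^r\tilde u,n_\mu)_{L^2(\R^2)}\|^2$, which is not $\|\pp^r c_\mu\|^2$, and the discrepancy involves $x_a$-derivatives of $\tilde u$ projected on $n_\mu$ that your separate $c_\mu$ estimate does not see. The paper instead applies $\calM^\ell$ to the equation: since the $L[q_\ep]$ part of $\calM$ annihilates $n_\mu$, one has $(\calM_{u}^\ell\tu,n_\mu)_{L^2(\R^2)}=(-\Delta_{\bar a})^\ell c_\mu+O(\ep^2)$ (Lemma \ref{lem:higherest}, estimate \eqref{higherest2}), so the higher-order defect is genuinely $\|\Delta_{\bar a}^\ell c_\mu\|^2$ as required. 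That choice, together with the commutator bound \eqref{etaell}, is what lets the Gr\"onwall argument close at level $2\ell$ without a proliferation of error terms.
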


The Theorem will follow rather directly from 3 propositions that we will state below, after introducing some notation.

Next, for $\ell\ge 0$ we define
\begin{align*}
    Q_\ell^\perp(t) &:=  \int_{\R^d}(\calM^\ell\tU(\cdot, t))\cdot (\calM^{\ell+1}\tU (\cdot, t)) \\
    Q_\ell^\top (t) &:= \int_{\R^{d-2}} \sum_\mu |\Delta^\ell c_\mu(\cdot, t)|^2 \\
    Q_\ell(t) &:= \kz \|\pp_t (\calM^\ell\tU) (\cdot, t)\|_{L^2(\R^d)}^2 + Q_\ell^\perp(t) + Q_\ell^\top(t)
\end{align*}
and 
\begin{equation}\label{calQ.def}
\calQ_\ell(t) :=   Q_\ell(t) + Q_0(t).    
\end{equation}

Several times in the proof of the theorem we will establish estimates by differentiating \eqref{lin.system} up to $2\ell$ times
with respect to the spatial variables. These computations are all jutified, since the coefficients appearing in the definitions of $\calM$ and $P$ belong to $W^{2\ell,\infty}$, see \eqref{P2.est}, \eqref{P1.est} and \eqref{calM.est}.

The first proposition, whose proof is deferred to Section \ref{sec:Qell}, implies that
it suffices to estimate $\calQ_\ell(t)$ for $t\in [0, T/\ep).$

\begin{proposition}\label{prop:ellenergy}
There exist positive constants $c<C$ such that
\begin{equation}\label{ell.energy}
c \calQ_\ell(t) \le C\left( \kz \| \pp_t \tU(\cdot, t) \|_{H^{2\ell}(\R^{d})}^2
+ \| \tU(\cdot, t)\|_{H^{2\ell+1}(\R^{d-2})}^2 \right) \le C\calQ_\ell(t).
\end{equation}
for every $t\in [0, T_0/\ep)$.    
\end{proposition}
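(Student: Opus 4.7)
We prove the two inequalities of \eqref{ell.energy} separately. The upper bound $\calQ_\ell(t)\le C\bigl(\kz\|\pp_t\tU\|_{H^{2\ell}}^2+\|\tU\|_{H^{2\ell+1}}^2\bigr)$ is straightforward: since $\calM$ is a second-order differential operator whose coefficients are uniformly bounded in $W^{2\ell,\infty}$ by \eqref{calM.est}, one has $\|\calM^\ell v\|_{L^2}\le C\|v\|_{H^{2\ell}}$, which controls $\kz\|\pp_t\calM^\ell\tU\|_{L^2}^2$. Because each block of $\calM$ is formally self-adjoint, integration by parts gives $Q_\ell^\perp\le C\|\calM^\ell\tU\|_{H^1}^2\le C\|\tU\|_{H^{2\ell+1}}^2$. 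Finally, differentiating $c_\mu=(\tu,n_\mu(q_\ep))_{L^2(\R^2)}$ under the integral and using the exponential decay in $x_a$ of $n_\mu$ (Lemma \ref{lem:decayq}), together with the fact that derivatives on $n_\mu(q_\ep)$ only produce factors of $\ep$, yields $\|\Delta^\ell c_\mu\|_{L^2(\R^{d-2})}\le C\|\tu\|_{H^{2\ell}(\R^d)}$.

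The reverse inequality rests on a base-case coercivity at $\ell=0$. For each fixed $(x_{\bar a},t)$ decompose $\tu=\tu^\perp+\sum_\mu c_\mu n_\mu(q_\ep)$ with $\tu^\perp$ orthogonal to every $n_\mu(q_\ep)$ in $L^2(\R^2)$. Since $\{n_\mu\}$ spans the kernel of $L[\modu(q_\ep)]$ (Theorem \ref{thm:hessian}) and $L[q_\ep]$ is self-adjoint on $L^2(\R^2)$, Stuart's coercivity \eqref{coercivity} gives
\[
\int_{\R^2}\tu\cdot L[q_\ep]\tu=\int_{\R^2}\tu^\perp\cdot L[q_\ep]\tu^\perp\ge\gamma\|\tu^\perp\|_{H^1(\R^2)}^2.
\]
Integrating in $x_{\bar a}$ and combining with $\int_{\R^d}|\nabla_{\bar a}\tu|^2$ (both contained in $Q_0^\perp$), together with the orthogonal decomposition $\|\tu\|_{L^2(\R^d)}^2=\|\tu^\perp\|_{L^2}^2+\sum_\mu\|c_\mu\|_{L^2(\R^{d-2})}^2$ and the bound $\|c_\mu\nabla_a n_\mu(q_\ep)\|_{L^2(\R^d)}^2\le C\|c_\mu\|_{L^2}^2$ coming from the exponential decay, yields $\|\tu\|_{H^1(\R^d)}^2\le C(Q_0^\perp+Q_0^\top)$. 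For the $\tA_\alpha$ blocks the slicewise uniform gap (Lemma \ref{basic.elliptic}, applicable because $q_\ep\in K$ compact) directly gives $\|\tA_\alpha\|_{H^1(\R^d)}^2\le C\int\tA_\alpha(-\Delta+|\mphi(q_\ep)|^2)\tA_\alpha$. Adding $\kz\|\pp_t\tU\|_{L^2}^2$ completes the base case: $\kz\|\pp_t\tU\|_{L^2}^2+\|\tU\|_{H^1(\R^d)}^2\le C\calQ_0$.

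The higher-order bound follows by induction on $\ell$, applying the base case to $v=\calM^\ell\tU$. Because $\calM$ is block-diagonal, $(\calM^\ell\tU)_u=(-\Delta_{\bar a}+L[q_\ep])^\ell\tu$. Expanding and using $L[q_\ep]n_\mu(q_\ep)=0$ together with the fact that each derivative $\pp_{\bar a}$ landing on $n_\mu(q_\ep)$ or on a coefficient of $L[q_\ep]$ produces a factor of $\ep$ from $q_\ep(x_{\bar a},t)=q(\ep x_{\bar a},\ep t)$, a commutator computation yields
\[
\bigl((\calM^\ell\tU)_u,\,n_\mu(q_\ep)\bigr)_{L^2(\R^2)}=(-\Delta_{\bar a})^\ell c_\mu+\ep R_\mu,\qquad \|R_\mu\|_{L^2(\R^{d-2})}\le C\|\tU\|_{H^{2\ell-1}(\R^d)}.
\]
The inductive hypothesis bounds $\|\tU\|_{H^{2\ell-1}}^2\le C\calQ_{\ell-1}\le C\calQ_\ell$, so applying the base case to $v=\calM^\ell\tU$ gives $\|\calM^\ell\tU\|_{H^1(\R^d)}^2\le C\calQ_\ell$. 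Standard elliptic regularity for $\calM$ (a second-order elliptic operator whose block-diagonal principal part is $-\Delta$ on each component) then yields $\|\tU\|_{H^{2\ell+1}}^2\le C(\|\calM^\ell\tU\|_{H^1}^2+\|\tU\|_{L^2}^2)\le C\calQ_\ell$. The time-derivative estimate $\kz\|\pp_t\tU\|_{H^{2\ell}}^2\le C\calQ_\ell$ follows analogously by writing $\calM^\ell\pp_t\tU=\pp_t(\calM^\ell\tU)+[\calM^\ell,\pp_t]\tU$, noting that the commutator is of order $2\ell$ in space but carries an extra factor of $\ep$ from $\pp_tq_\ep=\ep\pp_0q$.

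The main obstacle is the commutator bookkeeping in the induction step: one must systematically verify that every commutator produced when swapping $\calM^\ell$ past the $L^2(\R^2)$-projection onto $n_\mu$ or past $\pp_t$ either carries an extra factor of $\ep$ or is of strictly lower order in space, so that the resulting remainders can be absorbed by $\calQ_{\ell-1}$ via the inductive hypothesis. The structural facts that $L[q_\ep]n_\mu=0$ and that all $x_{\bar a}$- and $t$-dependence of the coefficients enters only through the slow variable $q_\ep$ are precisely what make this bookkeeping succeed.
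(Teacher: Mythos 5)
Your proposal follows the same overall architecture as the paper: a base-case coercivity at $\ell=0$ combining Stuart's coercivity (Theorem \ref{thm:hessian}) with the slicewise gap (Lemma \ref{basic.elliptic}), followed by a higher-order estimate obtained by applying the base case to $\calM^\ell\tU$ and commuting $\calM^\ell$ past the $L^2(\R^2)$ projection onto $n_\mu$, using that each $\pp_{\bar a}$ or $\pp_t$ landing on $n_\mu(q_\ep)$ or on a coefficient of $\calM$ produces a factor of $\ep$. The upper bound, the elliptic-regularity step $\|\tU\|_{H^{2\ell+1}}^2\le C(\|\calM^\ell\tU\|_{H^1}^2+\|\tU\|_{L^2}^2)$, and the treatment of $\kz\|\pp_t\tU\|_{H^{2\ell}}$ via the commutator $[\calM^\ell,\pp_t]$ all match the paper's argument (Lemmas \ref{lem:ell=1}, \ref{lem:higherest} and the closing paragraph of Section 5.4).

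There is one step you should not gloss over: the chain ``$\|\tU\|_{H^{2\ell-1}}^2\le C\calQ_{\ell-1}\le C\calQ_\ell$'' is not justified. Even granting the inductive hypothesis that $\calQ_{\ell-1}\sim\kz\|\pp_t\tU\|_{H^{2\ell-2}}^2+\|\tU\|_{H^{2\ell-1}}^2$, there is no a priori monotonicity $\calQ_{\ell-1}\lesssim\calQ_\ell$: $\calQ_\ell=Q_\ell+Q_0$ controls $\|\tU\|_{H^1}^2$ via $Q_0$ and (only after the proposition is proved) $\|\tU\|_{H^{2\ell+1}}^2$ via $Q_\ell$, so invoking it to bound $\|\tU\|_{H^{2\ell-1}}^2$ at this stage is circular. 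The paper avoids this by not comparing $\calQ_{\ell-1}$ to $\calQ_\ell$ at all: it writes the remainder bound directly as $\|c_{q,\mu}^\ell-(-\Delta_{\bar a})^\ell c_{q,\mu}\|_{L^2}\le C\ep\,\|\tU\|_{H^{2\ell-1}}$, then either absorbs $\ep^2\|\tU\|_{H^{2\ell-1}}^2\le\ep^2\|\tU\|_{H^{2\ell+1}}^2$ for small $\ep$, or interpolates $\|\tU\|_{H^{2\ell-1}}^2\le\delta\|\tU\|_{H^{2\ell+1}}^2+C_\delta\|\tU\|_{L^2}^2$ and uses $\|\tU\|_{L^2}^2\le CQ_0\le C\calQ_\ell$. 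Replacing your $\calQ_{\ell-1}\le C\calQ_\ell$ step with either of these closings repairs the argument without otherwise changing it.
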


We will first control the part of $Q_\ell$ that, roughly speaking, is orthogonal to the kernel of $\calM$.

\begin{proposition}
        \label{prop:Qperp}
    There exists a constant $C$ such that 
    \begin{equation}\label{Qellperp.est}
            \frac 12 \frac d{dt}\left( \kz \|\pp_t \calM^\ell \tU\|_{L^2}^2 + Q_\ell^\perp(t) \right) \le C \ep \calQ_\ell(t)  + \frac C  \ep \| \eta(\cdot, t)\|_{H^{2\ell}(\R^d)}^2 
        \end{equation}
    for all $t\in [0,T_0/\ep)$.
\end{proposition}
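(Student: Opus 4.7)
The plan is to run a standard energy estimate at the $\calM^\ell$ level. Concretely, I would apply $\calM^\ell$ to both sides of \eqref{lin.system}, take the $L^2(\R^d)$ inner product with $\pp_t(\calM^\ell\tU)$, and identify the terms $\tfrac{\kz}{2}\tfrac{d}{dt}\|\pp_t\calM^\ell\tU\|_{L^2}^2$ and $\tfrac{1}{2}\tfrac{d}{dt}Q_\ell^\perp$ on the left-hand side. Since $\calM$ has coefficients that depend on $t$ only through $q_\ep(x_{\bar a},t)=q(\ep x_{\bar a},\ep t)$, the operator $\pp_t\calM$ is of order at most one with $O(\ep)$ coefficients, so the commutator $[\calM^\ell,\pp_0]$, and iteratively $[\calM^\ell,\pp_0^2]$, produce terms that are of order $2\ell$ (with at most one extra $\pp_0$) with an $\ep$ prefactor. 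This is where one must be careful to bookkeep orders.

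First I would use the self-adjointness of $\calM$, which is immediate from \eqref{M.def} and the block form of $L[q_\ep]$ displayed in \eqref{eq33.14}, to rewrite
\begin{equation*}
\int_{\R^d} \calM(\calM^\ell\tU)\cdot \pp_t(\calM^\ell\tU)
=\tfrac{1}{2}\tfrac{d}{dt}Q_\ell^\perp(t)-\tfrac{1}{2}\int_{\R^d}((\pp_t\calM)\calM^\ell\tU)\cdot(\calM^\ell\tU),
\end{equation*}
and the correction is bounded by $C\ep\|\tU\|_{H^{2\ell+1}}^2\le C\ep\calQ_\ell(t)$ thanks to Proposition \ref{prop:ellenergy}. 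The time-derivative term in \eqref{lin.system} yields $\tfrac{\kz}{2}\tfrac{d}{dt}\|\pp_t\calM^\ell\tU\|_{L^2}^2$, while the damping term $\ep\ko\|\pp_t\calM^\ell\tU\|_{L^2}^2$ is nonnegative and will play a useful role as an absorber in the parabolic regime.

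The remaining terms to control are $(\calM^\ell P\tU,\pp_t\calM^\ell\tU)$, the commutator contribution $(R_\ell,\pp_t\calM^\ell\tU)$ where $R_\ell=[\calM^\ell,\kz\pp_0^2+\ep\ko\pp_0]\tU$, and the forcing term $(\calM^\ell\eta,\pp_t\calM^\ell\tU)$. By \eqref{P1.est}, \eqref{P2.est} together with \eqref{calM.est}, $P$ is first order with coefficients of size $O(\ep)$ in $W^{2\ell,\infty}$, so
\begin{equation*}
\|\calM^\ell P\tU\|_{L^2}\le C\|P\tU\|_{H^{2\ell}}\le C\ep\|\tU\|_{H^{2\ell+1}}\le C\ep\,\calQ_\ell(t)^{1/2}.
\end{equation*}
The commutator $R_\ell$ admits an analogous bound $\|R_\ell\|_{L^2}\le C\ep(\kz\|\pp_0\tU\|_{H^{2\ell}}+\|\tU\|_{H^{2\ell+1}})\le C\ep\,\calQ_\ell(t)^{1/2}$, by expanding $[\calM^\ell,\pp_0]=-\sum_{k}\calM^k(\pp_0\calM)\calM^{\ell-1-k}$ and noting that each factor $\pp_0\calM$ is at most first-order with $O(\ep)$ coefficients. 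Applying weighted Cauchy--Schwarz in the form $ab\le\tfrac{\delta}{2}a^2+\tfrac{1}{2\delta}b^2$ to each of these three pairings, and choosing $\delta$ proportional to $\ep\max(\kz,\ko)$, the $\tfrac{\delta}{2}\|\pp_t\calM^\ell\tU\|_{L^2}^2$ part is absorbed either into $\tfrac{1}{2}\ep\ko\|\pp_t\calM^\ell\tU\|_{L^2}^2$ (when $\kz=0$) or bounded by $\ep\calQ_\ell$ via $\kz\|\pp_t\calM^\ell\tU\|_{L^2}^2\le\calQ_\ell$ (when $\kz>0$), while the $\tfrac{1}{2\delta}b^2$ part gives $C\ep\calQ_\ell$ for the $P$ and $R_\ell$ contributions and $\tfrac{C}{\ep}\|\eta\|_{H^{2\ell}}^2$ for the forcing.

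The main technical obstacle I anticipate is treating the hyperbolic ($\kz>0$, $\ko=0$) and parabolic ($\kz=0$, $\ko>0$) regimes within a unified bookkeeping, since the functional $\kz\|\pp_t\calM^\ell\tU\|^2+Q_\ell^\perp$ controls different norms in the two limits, and only one of the two terms $\kz\|\pp_t\calM^\ell\tU\|^2$ or $\ep\ko\|\pp_t\calM^\ell\tU\|^2$ is available to absorb the $\pp_t\calM^\ell\tU$ factors produced by Cauchy--Schwarz. Allowing constants to depend on $(\kz,\ko)$, which is consistent with the convention fixed in Section \ref{sec:linests}, resolves this, but the choice of weights $\delta$ must be made carefully so that the final bound is exactly $C\ep\calQ_\ell+\tfrac{C}{\ep}\|\eta\|_{H^{2\ell}}^2$ and not degraded by an unfavourable power of $\ep$.
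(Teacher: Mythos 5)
Your plan is correct and follows essentially the same route as the paper's proof: apply $\calM^\ell$, run the energy estimate, exploit self-adjointness of $\calM$, and control the $O(\ep)$ commutator and lower-order terms via weighted Cauchy--Schwarz, with a case split to handle absorption in the hyperbolic versus parabolic regimes. The only organizational difference is that the paper isolates the $\ell=0$ energy estimate as a separate lemma (Lemma \ref{lem:Qperp0}) and then observes that $\tU_\ell := \calM^\ell\tU$ satisfies an equation of the same form \eqref{tUell.eqn} with a modified forcing $\eta_\ell$, reducing the whole proposition to the single estimate $\|\eta_\ell\|_{L^2}^2 \le C\|\eta\|_{H^{2\ell}}^2 + C\ep^2\calQ_\ell(t)$, whereas you carry out the commutator bookkeeping directly at the $\calM^\ell$ level — an equivalent calculation, just not modularized.
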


This will be proved in Section \ref{sec:Qperp}.

The other ingredient in the proof is provided by the next Proposition, which is the most subtle part of the proof of Theorem \ref{thm:linearest}. It is proved in Section \ref{sec:Qtop}.

\begin{proposition}
    \label{prop:Qtop}
    There exists a constant $C$ such that
    \begin{equation}\label{Qelltop.est}
    Q^\top_\ell(t) \le C \kz^2\calQ_\ell(0)+
    C \ep \int_0^t \calQ_\ell(s)\, ds + \frac C {\ep^3} \int_0^t  \|\eta (s, \cdot)\|_{H^{2\ell}(\R^d)}^2\, ds 
    \end{equation}
    for all $t\in [0,T_0/\ep)$. 
    \end{proposition}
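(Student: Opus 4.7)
The plan is to project the $u$-component of \eqref{lin.system} onto $n_\mu$ in the $L^2(\R^2)$ inner product over the $x_a$ variables, derive a scalar damped-wave equation for each $c_\mu$, expose a divergence structure on the right-hand side, and conclude via Duhamel-type estimates that exploit that structure.

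First, take $(\,\cdot\,,n_\mu)_{L^2(\R^2)}$ of the $u$-row of \eqref{lin.system}. The $L[q_\ep]\tu$ contribution to $\calM\tu$ vanishes, because $L[q_\ep]$ is self-adjoint and $L[q_\ep]n_\mu=0$ by Theorem~\ref{thm:hessian}. Passing $\Delta_{\bar a}$ and $\pp_t^k$ ($k=1,2$) under the inner product produces commutators involving $\pp_{\bar a}n_\mu,\Delta_{\bar a}n_\mu,\pp_t n_\mu,\pp_{tt}n_\mu$; since $n_\mu$ depends on $(x_{\bar a},t)$ only through $q_\ep(x_{\bar a},t)=q(\ep x_{\bar a},\ep t)$, each such derivative brings down an explicit $\ep$. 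The result is an identity
\[
(\kz\pp_{tt}+\ep\ko\pp_t-\Delta_{\bar a})c_\mu \;=\; (\eta_u,n_\mu)_{L^2(\R^2)} - ((P\tU)_u,n_\mu)_{L^2(\R^2)} + E_\mu,
\]
where $E_\mu$ collects the commutators and is linear in $\tU$ and its first derivatives with $O(\ep)$ coefficients.

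The crucial step, and the main technical obstacle, is rewriting the right-hand side as $\pp_t h_0^\mu + \sum_{\bar a}\pp_{\bar a}h_{\bar a}^\mu + h^\mu$ with $\|h_0^\mu\|_{L^2(\R^{d-2})}$ and $\|h_{\bar a}^\mu\|_{L^2(\R^{d-2})}$ controlled by $C\ep\|\tU\|_{H^1(\R^d)}$, and $\|h^\mu\|_{L^2(\R^{d-2})}$ by $C\ep^2\|\tU\|_{H^1}+\|\eta\|_{L^2(\R^d)}$ (with analogous higher-derivative bounds). The $P_2$ contribution is already $O(\ep^2)\tU$ by \eqref{P2.est}. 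For $P_1$, only the $\phi$-component contributes (since $P_{1,a}=0$), and by \eqref{P1.def} this is a sum of cross-terms $\tA_\alpha\De_\alpha\aPhi + \aAalpha\pp_\alpha\tPhi$ for $\alpha\in\{0,3,\dots,d\}$. The Leibniz identity $\aAalpha\pp_\alpha\tPhi = \pp_\alpha(\aAalpha\tPhi) - (\pp_\alpha\aAalpha)\tPhi$ extracts a total $\pp_\alpha$-derivative with bulk remainder $(\pp_\alpha\aAalpha)\tPhi=O(\ep^2)\tU$ (since $\aAalpha=O(\ep)$ is smooth by Proposition~\ref{prop:approx.sol}). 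The remaining piece $(\tA_\alpha\De_\alpha\aPhi, i n_{\mu,\phi})_{L^2(\R^2)}$ is handled by invoking the identity $\De_\alpha\aPhi = \pp_\alpha q_\ep^\nu\,\tilde n_{\nu,\phi}(q_\ep) + O(\ep^2)$ (inherited from the leading-order term \eqref{U0.form}) together with the gauge-orthogonality \eqref{eq:go} of $n_\mu$, which allows an integration by parts in $x_a$ to bring the expression into divergence form in $(t,x_{\bar a})$ modulo an $O(\ep^2)$ bulk. The commutators $E_\mu$ split analogously: first-order pieces factor through $\pp_t$ or $\pp_{\bar a}$ and produce further divergence contributions, while zeroth-order pieces are directly $O(\ep^2)$.

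With the PDE for $c_\mu$ in divergence form, applying $\Delta_{\bar a}^\ell$ commutes with the constant-coefficient LHS and yields the same equation for $v_\mu:=\Delta^\ell c_\mu$ with $\Delta^\ell$ applied to the source. Duhamel's formula and the operator bound $\|\sin(\tau\sqrt{-\Delta_{\bar a}})/\sqrt{-\Delta_{\bar a}}\|_{L^2\to L^2}\le\tau$ in the hyperbolic case (with analogous damped/parabolic bounds when $\ko>0$ or $\kz=0$), combined with integration by parts in $s$ to absorb the $\pp_t h_0^\mu$ source and in $x_{\bar a}$ to absorb $\pp_{\bar a}h_{\bar a}^\mu$, yield
\[
\|v_\mu(t)\|^2_{L^2(\R^{d-2})} \le C\kz^2\|\Delta^\ell h_0^\mu(0)\|^2 + Ct\!\int_0^t\!\bigl(\|\Delta^\ell h_0^\mu\|^2+\|\Delta^\ell h_{\bar a}^\mu\|^2\bigr)\,ds + Ct^3\!\int_0^t\|\Delta^\ell h^\mu\|^2\,ds.
\]
Using $t\le T_0/\ep$, the $O(\ep)$ divergence contributions yield $C\ep\int_0^t\calQ_\ell\,ds$; the $O(\ep^2)\tU$ part of $h^\mu$ yields $Ct^3\ep^4\int\calQ_\ell\le C\ep\int\calQ_\ell$; the $\eta$ part of $h^\mu$, handled by the $t^3$ factor alone, yields $Ct^3\int\|\eta\|^2\le C\ep^{-3}\int\|\eta\|_{H^{2\ell}}^2$; and the boundary term $\Delta^\ell h_0^\mu(0)$ involves $\kz\pp_t\tU(0)$ (since the $h_0^\mu$ piece arises from $\kz$-proportional terms in $P_1$), producing $C\kz^2\calQ_\ell(0)$. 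Summing these contributions over $\mu$ gives \eqref{Qelltop.est}.
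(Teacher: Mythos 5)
Your plan correctly identifies the overall strategy -- project onto $n_\mu$ to get a damped wave equation for $c_\mu$, expose a divergence structure $\pp_0 h_0 + \nabla_{\bar a}\cdot h_{\bar a} + h$ with $h_0,h_{\bar a} = O(\ep)\tU$ and $h = O(\ep^2)\tU + O(\eta)$, and integrate against that structure. The Leibniz trick $\aAalpha\pp_\alpha\tPhi = \pp_\alpha(\aAalpha\tPhi) - (\pp_\alpha\aAalpha)\tPhi$ for the terms with $\pp_\alpha\tPhi$ is also exactly right and matches the paper. However, there is a genuine gap in how you handle the terms $(\tA_\alpha \De_\alpha\aPhi, in_{\mu,\phi})_{L^2(\R^2)}$ with $\alpha\in\{0,3,\dots,d\}$, and this is the most delicate part of the whole argument.

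The problem is that $\tA_\alpha$ is a full $O(1)$ component of $\tU$, while $\De_\alpha\aPhi$ contributes only a single factor of $\ep$. An $O(\ep)\tU$ term that is left as ``bulk'' $h$ (rather than divergence) gives, via the $t^3$ weight in Lemma~\ref{lem:dwest}, a contribution $\sim t^4\ep^2\,\calQ_\ell$, which is useless on time scales $t\sim 1/\ep$. So these terms \emph{must} be exhibited as a $\pp_0$ or $\nabla_{\bar a}$ divergence, but your proposed mechanism --- invoking gauge-orthogonality \eqref{eq:go} of $n_\mu$ and ``integration by parts in $x_a$'' --- cannot do this. Gauge-orthogonality is a constraint on the $x_a$-divergence of the tangent vector, $\pp_a n_{\mu,a} = (i\mphi(q),n_{\mu,\phi})$; integration by parts in $x_a$ only moves $\pp_{x_a}$ derivatives around inside the two-dimensional inner product and cannot produce a $\pp_t$ or $\pp_{\bar a}$ in front. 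The coefficient $\tA_\alpha(\cdot,x_{\bar a},t)$ is a general function of $x_a$, and there is no identity converting ``$\tA_\alpha$ paired against a zero mode'' into a derivative in the slow variables.

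What the paper actually does at this juncture (Lemma~\ref{lem:tA_split}) is a decomposition of a very different nature: it defines $\chi_\alpha$ by solving the \emph{elliptic} equation $(-\Delta + |\mphi(q_\ep)|^2)\chi_\alpha = -\kz\pp_0\tA_\alpha - \ep\ko\tA_\alpha$ and sets $f_\alpha := \tA_\alpha - \pp_0\chi_\alpha$. Crucially, the $\alpha$-th row of the linearized system then shows that $f_\alpha$ itself satisfies $(-\Delta+|\mphi(q_\ep)|^2)f_\alpha = -P_\alpha\tU + \eta_\alpha + (\pp_0|\mphi|^2)\chi_\alpha$, so $\|f_\alpha\|_{H^{2\ell+1}}\lesssim \ep(\|\tU\|_{H^{2\ell+1}}+\kz\|\pp_t\tA\|_{H^{2\ell}}) + \|\eta\|_{H^{2\ell}}$, i.e.\ $f_\alpha$ is \emph{intrinsically} small. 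Substituting $\tA_\alpha = f_\alpha + \pp_0\chi_\alpha$ into the $P_1$ terms then produces exactly the required structure: the $f_\alpha\cdot\De_\alpha\aPhi$ parts are $O(\ep^2)\tU + O(\ep)\eta$ and go into $h$, while $\pp_0\chi_\alpha\cdot\De_\alpha\aPhi = \pp_0[\chi_\alpha\De_\alpha\aPhi] - \chi_\alpha\pp_0(\De_\alpha\aPhi)$ produces the time-divergence piece $h_0$ and another $O(\ep^2)$ bulk remainder. This gain comes from using the \emph{system} -- specifically the $\alpha$-rows that you never invoke in your plan -- not from any property of $n_\mu$. Without this step your argument only produces $h = O(\ep)\tU$, which is fatal. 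The remainder of your plan (Duhamel vs.\ the paper's energy estimate Lemma~\ref{lem:dwest}; the accounting of the $\kz^2\calQ_\ell(0)$ boundary term) is fine.
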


We now show that as remarked above, these three propositions easily imply the conclusion of Theorem \ref{thm:linearest}.

\begin{proof}[Proof of Theorem \ref{thm:linearest}]
We integrate \eqref{Qellperp.est} from $0$ to $t$, add to \eqref{Qelltop.est}, and add the result to the same inequality for $\ell=0$ to obtain
\[
    \calQ_\ell(t) \le C\calQ_\ell(0) + 
    C \ep \int_0^t \calQ_\ell(s)\, ds + \frac C {\ep^3} \int_0^t  \|\eta (s, \cdot)\|_{H^{2\ell}(\R^d)}^2\, ds .       
\] 
Then Gr\"onwall's inequality implies that
    \[
    \calQ_\ell(t) \le e^{C\ep t}\calQ_\ell(0) + 
    \frac C {\ep^3}\int_0^t e^{C \ep (t-s)}\| \eta(s, \cdot)\|_{H^{2\ell}(\R^d)}^2 \, ds.
    \]
The Theorem now follows from Proposition \ref{prop:ellenergy}.
\end{proof}

\subsection{Proof of Proposition \ref{prop:Qperp}}\label{sec:Qperp}

We start with a lemma that is exactly the $\ell=0$ case of Proposition \ref{prop:Qperp}.

\begin{lemma}\label{lem:Qperp0}
        There exists a constant $C$,  such that
    \begin{align*}
    \frac 12 \frac d{dt}\left( \kz \|\pp_t \tU\|_{L^2}^2 + Q_0^\perp(t) \right) 
    &\le C \ep (\kz  \|\pp_t\tU\|_{L^2}^2 + \|  \tU(\cdot, t) \|_{H^1}^2) + \frac C{\ep} \|\eta(\cdot, t)\|_{L^2}^2
    \\
    &\le C \ep Q_0(t)  + \frac C  \ep \| \eta(\cdot, t)\|_{L^2(\R^d)}^2
    \end{align*}
    for all $t\in [0,T/\ep)$.
\end{lemma}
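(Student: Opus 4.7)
The plan is the standard energy estimate: pair the equation \eqref{lin.system} with $\pp_t \tU$ in $L^2(\R^d)$ and integrate by parts. Since the coefficients of $\calM$ and $P$ are in $W^{2\ell,\infty}$ (cf.\ \eqref{P2.est}, \eqref{P1.est}, \eqref{calM.est}) and $\tU$ has sufficient regularity for the energy identities to be justified, this gives
\[
\frac{\kz}{2}\frac{d}{dt}\|\pp_t \tU\|_{L^2}^2 + \ep\ko\|\pp_t\tU\|_{L^2}^2 + (\pp_t\tU,\calM\tU)_{L^2} + (\pp_t\tU,P\tU)_{L^2} = (\pp_t\tU,\eta)_{L^2}.
\]
The operator $\calM$ in \eqref{M.def} is a direct sum of $-\Delta_{x_{\bar a}} + L[q_\ep]$ acting on $\tu$ and scalar Schr\"odinger-type operators $-\Delta + |\mphi(q_\ep)|^2$ acting on $\tA_{\bar a},\tA_0$; each block is formally self-adjoint (the self-adjointness of $L[q_\ep]$ follows from the symmetric form \eqref{eq33.14}). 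Thus
\[
(\pp_t\tU,\calM\tU)_{L^2}
= \tfrac12 \tfrac{d}{dt}(\tU,\calM\tU)_{L^2} - \tfrac12(\tU,(\pp_t\calM)\tU)_{L^2}
= \tfrac12 \tfrac{d}{dt} Q_0^\perp(t) - \tfrac12(\tU,(\pp_t\calM)\tU)_{L^2}.
\]

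Next I would estimate the three error terms. The coefficients of $\calM$ depend on $q_\ep(x_{\bar a},t) = q(\ep x_{\bar a},\ep t)$, so $\pp_t\calM$ has coefficients of size $O(\ep)$ in $L^\infty$, giving $|(\tU,(\pp_t\calM)\tU)_{L^2}| \le C\ep \|\tU\|_{H^1}^2$. For $P = P_1 + P_2$, the explicit form of $P_1$ in \eqref{P1.def} together with $\aAbara = O(\ep)$, $\kz \aAzero = O(\ep)$, $\De_{\bar a}\aPhi = O(\ep)$, $\kz\De_0\aPhi = O(\ep)$ (obtained from \eqref{good2} and \eqref{Uep.size}) yields $\|P_1\tU\|_{L^2} \le C\ep\|\tU\|_{H^1}$; combined with \eqref{P2.est} this gives
\[
\|P\tU\|_{L^2} \le C\ep \|\tU\|_{H^1}.
\]
Hence $|(\pp_t\tU,P\tU)| \le C\ep \|\pp_t\tU\|_{L^2}\|\tU\|_{H^1}$.

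The main (modest) obstacle is controlling the two terms $|(\pp_t\tU,P\tU)|$ and $|(\pp_t\tU,\eta)|$ uniformly in both the hyperbolic ($\kz>0$) and parabolic ($\kz=0$, $\ko>0$) regimes, since only $\kz\|\pp_t\tU\|_{L^2}^2$ enters $Q_0$. I would resolve this by splitting into two cases. If $\kz>0$ (and so $\kz$ is bounded below in that regime), apply Young's inequality as $|(\pp_t\tU,P\tU)| \le \ep \kz\|\pp_t\tU\|^2 + \tfrac{C\ep}{\kz}\|\tU\|_{H^1}^2$ and $|(\pp_t\tU,\eta)| \le \ep \kz\|\pp_t\tU\|^2 + \tfrac{C}{\ep\kz}\|\eta\|_{L^2}^2$, both of which fit into the claimed bound. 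If $\kz=0$ and thus $\ko>0$, instead use
\[
|(\pp_t\tU,P\tU)| \le \tfrac{\ep\ko}{4}\|\pp_t\tU\|_{L^2}^2 + \tfrac{C\ep}{\ko}\|\tU\|_{H^1}^2, \qquad
|(\pp_t\tU,\eta)| \le \tfrac{\ep\ko}{4}\|\pp_t\tU\|_{L^2}^2 + \tfrac{C}{\ep\ko}\|\eta\|_{L^2}^2,
\]
and absorb the two $\tfrac{\ep\ko}{4}\|\pp_t\tU\|_{L^2}^2$ terms into the dissipative term $\ep\ko\|\pp_t\tU\|_{L^2}^2$ sitting on the left. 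Combining everything and dropping the (non-negative) leftover $\ep\ko\|\pp_t\tU\|_{L^2}^2$ from the left side yields the first inequality of the lemma, and the second inequality is then immediate from the definition of $Q_0(t)$.
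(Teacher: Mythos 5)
Your proof is correct and mirrors the paper's argument: an energy identity obtained by pairing \eqref{lin.system} with $\pp_t\tU$, self-adjointness of $\calM$ to produce $\tfrac12\tfrac{d}{dt}Q_0^\perp$, $O(\ep)$ bounds on $\pp_t\calM$ and on $P$, and Young's inequality to close. The only organizational difference is that the paper splits the Young step on $\ko>0$ versus $\ko=0$ (using the dissipative term $\ep\ko\|\pp_t\tU\|_{L^2}^2$ whenever it is available, and the $\kz\|\pp_t\tU\|_{L^2}^2$ contribution to $Q_0$ otherwise), whereas you split on $\kz>0$ versus $\kz=0$; both partitions cover all admissible $(\kz,\ko)$ precisely because $\ko>0$ whenever $\kz=0$, so either works.
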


\begin{proof}
We compute
\[
\begin{aligned}
   \frac 12 \frac d{dt}\left( \kz \|\pp_t \tU\|_{L^2}^2 + Q_0^\perp(t) \right) 
    &= \int _{\R^d} \kz \tU_t \cdot \tU_{tt} + \tU_t \cdot \calM\tU +  \frac 12 \tU \cdot \calM_t \tU \\
    &= \int_{\R^d}\tU_t\cdot (-\ep \ko \tU_t - P\tU - \eta)  + \tU \cdot \calM_t\tU
\end{aligned}
\]
where $\calM_t$ denotes the operator obtained by differentiating the coefficients of $\calM$ with respect to $t$.
The definition \eqref{M.def} of $\calM$ implies that
\begin{equation}\label{Mt.est}
\int \tU \cdot \calM_t \tU \le C \ep \| \tU\|_{H^1}^2.
\end{equation}
If $\ko>0$ we have the pointwise inequality
\[
\tU_t\cdot (-\ep \ko \tU_t - P\tU - \eta) \le \frac 1{2\ep \ko} (|P\tU|^2 + |\eta|^2) .
\]
Integrating and using the estimate  $\|P\tU\|\le C\ep \|\tU\|_{H^1}$ from Proposition \ref{prop:auxsystem}, as well as the $\ell =0$ case of \eqref{ell.energy},
we find that
\[
\begin{aligned}
\frac d{dt}\left( \kz \|\pp_t \tU\|_{L^2}^2 + Q_0^\perp(t) \right) 
&\le C \ep \|  \tU(\cdot, t) \|_{H^1}^2 + \frac C{\ep} \|\eta(\cdot, t)\|_{L^2}^2 \\
&\overset{\eqref{ell.energy}}\le C \ep \calQ_0(t) +\frac C{\ep} \|\eta(\cdot, t)\|_{L^2}^2 .    
\end{aligned}
\]
On the other hand, if $\ko=0$ then $\kz>0$, so we may compute
\[
\begin{aligned}
    \frac d{dt}\left( \kz \|\pp_t \tU\|_{L^2}^2 + Q_0^\perp(t) \right)   
    &\overset{\eqref{Mt.est}}\le \int_{\R^d} -\tU_t\cdot  ( P\tU + \eta)  + C \ep \| \tU\|_{H^1}^2 \\ 
    &\le \int \frac \ep 2 |\tU_t|^2 + \frac 1{2\ep} (|P\tU|^2  + |\eta|^2) + C \ep \| \tU\|_{H^1}^2\\
    &\le C \ep ( \|\tU_t\|_{L^2}^2 + \|  \tU(\cdot, t) \|_{H^1}^2) + \frac C{\ep} \|\eta(\cdot, t)\|_{L^2}^2\\ &\overset{\eqref{ell.energy}}\le C \ep Q(t)+ \frac C{\ep} \|\eta(\cdot, t)\|_{L^2}^2 .
\end{aligned}
\]
\end{proof}

We can now complete the

\begin{proof}[Proof of Proposition \ref{prop:Qperp}]
By applying the operator $\calM^\ell$ to equation \eqref{lin.system}, we find that $\tU_\ell := \calM^\ell \tU$ satisfies
    \begin{equation}\label{tUell.eqn}
    \kz \pp_0^2\tU_\ell + \ep\ko \pp_0\tU_\ell + \calM\tU_\ell + P\tU_\ell = \eta_\ell  
    \end{equation}
    where
    \[
    \eta_\ell = \kz [\pp_0^2, \calM^\ell]\tU + \ep\ko [\pp_0, \calM^\ell]\tU + [P,\calM^\ell]\tU + \calM^\ell \eta
    \]
    and $[A,B]$ denotes the commutator $AB-BA$. These computations and similar ones below are justified because
    all coefficients in \eqref{lin.system} are bounded in $L^\infty_{T_0/\ep}W^{2\ell, \infty}(\R^d)$, see 
    \eqref{P2.est}, \eqref{P1.est}, \eqref{calM.est}.
    
    In view of \eqref{tUell.eqn} we can apply the previous lemma to $\tU_\ell$ to find that
    \begin{align*}
        &\frac d{dt} \left( \kz \|\pp_t \calM^\ell \tU\|_{L^2}^2 + \int \calM^{\ell+1}\tU\cdot\calM^\ell\tU \right)\\
        &\hspace{5em}
        \le C\ep \left(\kz  \| \pp_t\calM^\ell \tU\|_{L^2}^2 + \|\calM^\ell \tU\|_{H^1}^2\right) + \frac C \ep \|\eta_\ell \|_{L^2}^2.
    \end{align*}
    Thus Proposition \ref{prop:ellenergy} and the definitions implies that
    \[
    \frac d{dt} \left( \kz \|\pp_t \calM^\ell \tU\|_{L^2}^2 + Q_\ell^\perp(t)\right)\le C\ep \calQ_\ell(t)  + \frac C \ep \|\eta_\ell \|_{L^2}^2 \qquad\mbox{ at every }t\in (0,T_0/\ep).
    \]
    To complete the proof, it therefore suffices to prove that
    \begin{equation}\label{etaell}
    \| \eta_\ell\|_{L^2}^2 \le C \| \eta\|_{H^{2\ell}}^2 + C \ep^2 \calQ_\ell(t) 
    \end{equation}
    We consider each of the terms in $\eta_\ell$ in turn. Due to the appearance of time derivatives in the estimates, we will sometimes write $\calM(t)$ to explicitly record the dependence of $\calM$ on $t$.
    
    First, from the definition \eqref{M.def} of $\calM$, we see that
    \[
    \calM(t) = -\Delta + L_{q_\ep(t), 1}
    \]
    where $L_{q_\ep(t), 1}$ is a first-order linear differential operator whose coefficients depend on $q_\ep(\cdot, t) = q(\ep \cdot ,\ep t )$. 
    Thus 
    \[
    \calM(t)^\ell = (-\Delta)^\ell + L_{q_\ep(t), \ell}
    \]
    where $L_{q_\ep, \ell}$ is a linear differential operator of order at most $2\ell -1$ whose coefficients depend smoothly on $q_\ep$. Since $(-\Delta)^\ell$ clearly commutes with $\pp_t^2$, 
    \[
    [\pp_0^2, \calM(t)^\ell]\tU = [\pp_0^2, L_{q_\ep(t), \ell}]\tU
    = (\pp_t^2  L_{q_\ep(t), \ell})\tU + 2  (\pp_t L_{q_\ep(t), \ell})\pp_t\tU
    \]
    where $(\pp_t^k L_{q_\ep(t), \ell})$ denotes the differential operator, still of order at most $2\ell-1$, obtained by differentiating the coefficients of $L_{q_\ep(t), \ell}$ as indicated. From the fact that $\pp_t^k q_\ep(x_{\bar a}, t) = \ep^k (\pp_t^k q)\ep x_{\bar a}, \ep t)$ as well as the smooth dependence of the coefficients of $L_{q_\ep(t), \ell}$ on $q_\ep$, we infer that
    \[
     \left\|\kz [\pp_0^2, \calM(t)^\ell]\tU \right\|_{L^2}^2 \le C\kz\left( \ep^2\| \pp_t\tU\|_{H^{2\ell -1}}^2
     + \ep^4 \| \tU\|_{H^{2\ell -1}}^2\right).
    \]
    Very similar arguments show that 
    \[
    \left\|\ep\ko [\pp_0, \calM(t)^\ell]\tU \right\|_{L^2}^2 \le C \ep^4 \| \tU\|_{H^{2\ell -1}}^2.
    \]
It follows from \eqref{P2.est}, \eqref{P1.est} that
    \[
    \left \| [P, \calM^\ell] \tU \right\|_{L^2}^2  \le  C \ep^2 \| \tU\|_{H^{2\ell+1}}^2.
    \]
    Finally, it is clear that
    \[
    \left\| \calM^\ell \eta \right\|_{L^2}^2 \le C \| \eta \|_{H^{2\ell}}^2\ .
    \]
    Claim \eqref{etaell} follows by adding up the previous inequalities and using \eqref{ell.energy} to bound the right-hand sides in terms of $\calQ_\ell(t)$.

\end{proof}

\subsection{Proof of Proposition \ref{prop:Qtop}} \label{sec:Qtop}

For this proof we will need to write the PDE satisfied by $c_\mu$ in a way that carefully exploits the structure of \eqref{lin.system}, in particular that of the linear operator $P$. The following decomposition of $\tA_\alpha$ for $\alpha = 0, 3, \ldots, d$ will play a crucial role in our arguments.

\begin{lemma}\label{lem:tA_split}
Assume that
\[
 \| \tU\|_{L^\infty_{T_0}H^{2\ell+1} } + \kz \| \pp_t \tU\|_{L^\infty_{T_0}H^{2\ell} } <\infty.
\]
    For $\alpha = 0,3,\ldots, d$ exist functions $f_\alpha$ and $\chi_\alpha$ such that
    \begin{equation}\label{tA.split}
        \tA_\alpha = f_\alpha + \pp_0\chi_\alpha
    \end{equation}
    and
    \begin{equation}\label{chi.and.f}
    \begin{aligned}
       \|\chi_\alpha(\cdot, t) \|_{H^{2\ell+1}} &\le C\kz \|\pp_0 \tA_\alpha(\cdot, t)\|_{H^{2\ell}} + C \ep \ko\| \tA_\alpha(\cdot, t)\|_{H^{2\ell}} 
       \\ 
       \|f_\alpha(\cdot, t) \|_{H^{2\ell+1}} &\le C\ep \left( \| \tU(\cdot, t)\|_{H^{2\ell+1}}  + \kz \|\pp_0 \tA_\alpha(\cdot, t)\|_{H^{2\ell}} \right)+ \|\eta(\cdot, t)\|_{H^{2\ell}}
    \end{aligned}
        \end{equation}
    for all $t\in [0,T/\ep)$.
\end{lemma}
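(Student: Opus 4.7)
The plan is to define $\chi_\alpha$ as the solution of a static elliptic equation and then set $f_\alpha := \tA_\alpha - \pp_0 \chi_\alpha$, deriving an elliptic equation for $f_\alpha$ whose right-hand side is small. Concretely, for each $t \in [0, T_0/\ep)$, I define $\chi_\alpha(\cdot, t)$ as the unique $H^{2\ell+1}(\R^d)$ solution of
\[
    (-\Delta + |\mphi(q_\ep)|^2)\chi_\alpha = -(\kz \pp_0 \tA_\alpha + \ep\ko \tA_\alpha).
\]
Solvability in $H^{2\ell+1}$ follows from the coercivity of $-\Delta + |\mphi(q_\ep)|^2$ on $H^1(\R^d)$, which is established in Step 1 of the proof of Proposition \ref{prop:auxsystem}, combined with elliptic regularity (in the spirit of Proposition \ref{scalar.pde}); this immediately gives
\[
    \|\chi_\alpha\|_{H^{2\ell+1}} \le C\bigl(\kz \|\pp_0 \tA_\alpha\|_{H^{2\ell-1}} + \ep\ko \|\tA_\alpha\|_{H^{2\ell-1}}\bigr),
\]
which is stronger than the first inequality in \eqref{chi.and.f}.

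Setting $f_\alpha := \tA_\alpha - \pp_0 \chi_\alpha$ gives the decomposition \eqref{tA.split}, and it remains to estimate $f_\alpha$. Differentiating the defining equation for $\chi_\alpha$ in $t$,
\[
    (-\Delta + |\mphi(q_\ep)|^2)\pp_0 \chi_\alpha = -(\kz \pp_0^2 \tA_\alpha + \ep\ko \pp_0 \tA_\alpha) - (\pp_0 |\mphi(q_\ep)|^2)\chi_\alpha,
\]
and comparing with the $\alpha$-component of \eqref{lin.system},
\[
    \kz \pp_0^2 \tA_\alpha + \ep\ko \pp_0 \tA_\alpha + (-\Delta + |\mphi(q_\ep)|^2) \tA_\alpha + (P\tU)_\alpha = \eta_\alpha,
\]
one finds that $f_\alpha$ satisfies
\[
    (-\Delta + |\mphi(q_\ep)|^2) f_\alpha = \eta_\alpha - (P\tU)_\alpha + (\pp_0 |\mphi(q_\ep)|^2) \chi_\alpha.
\]

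The key structural observation is that, for $\alpha \in \{0, 3, \ldots, d\}$, the right-hand side of this equation is of size $O(\ep)$ apart from $\eta_\alpha$. Indeed, from the explicit form \eqref{P1.def} we have $(P_1 \tU)_\alpha = -2(i\tPhi, \De_\alpha \aPhi)$, a zeroth-order multiplier acting on $\tPhi$, and because $\pp_\alpha q_\ep$ and $\aAalpha$ are both $O(\ep)$, one has $\De_\alpha \aPhi = O(\ep)$ in $W^{2\ell-1,\infty}$, hence $\|(P_1 \tU)_\alpha\|_{H^{2\ell-1}} \le C \ep \|\tPhi\|_{H^{2\ell-1}}$. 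From \eqref{step3c1}--\eqref{step3c2}, $(P_2 \tU)_\alpha$ is multiplication by $|\aPhi|^2 - |\mphi(q_\ep)|^2 = O(\ep^2)$, so $\|(P_2 \tU)_\alpha\|_{H^{2\ell-1}} \le C \ep^2 \|\tA_\alpha\|_{H^{2\ell-1}}$. Crucially, \emph{neither term contains any time derivatives of $\tU$}. Moreover, $\|\pp_0|\mphi(q_\ep)|^2\|_{W^{2\ell-1,\infty}} \le C\ep$, since $\pp_0 q_\ep = \ep(\pp_{y_0} q)(\ep x_{\bar a}, \ep t)$. Applying elliptic regularity to the equation for $f_\alpha$ and then substituting the bound already obtained for $\chi_\alpha$ yields
\[
    \|f_\alpha\|_{H^{2\ell+1}} \le C \|\eta\|_{H^{2\ell}} + C\ep \|\tU\|_{H^{2\ell+1}} + C\ep \|\chi_\alpha\|_{H^{2\ell-1}},
\]
which gives the second inequality in \eqref{chi.and.f}.

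The main conceptual obstacle is the non-commutativity of $\pp_0$ with the $t$-dependent operator $-\Delta + |\mphi(q_\ep)|^2$; the resulting commutator term $(\pp_0 |\mphi(q_\ep)|^2)\chi_\alpha$ is precisely what forces the extra factor of $\ep$ in the bound for $f_\alpha$. A minor technical point is justifying the formal computation of $\pp_0 \chi_\alpha$ under the regularity hypothesized on $\tU$; this can be handled by combining the defining equation for $\chi_\alpha$ with \eqref{lin.system} to obtain an elliptic equation for $\pp_0 \chi_\alpha$ whose right-hand side is controlled without any a priori $\pp_t^2 \tU$ regularity.
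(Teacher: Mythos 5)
Your proof is correct and follows the same route as the paper's: define $\chi_\alpha$ as the solution of the elliptic problem $(-\Delta + |\mphi(q_\ep)|^2)\chi_\alpha = -\kz\pp_0\tA_\alpha - \ep\ko\tA_\alpha$, set $f_\alpha := \tA_\alpha - \pp_0\chi_\alpha$, derive the equation $(-\Delta + |\mphi(q_\ep)|^2)f_\alpha = \eta_\alpha - P_\alpha\tU + (\pp_0|\mphi(q_\ep)|^2)\chi_\alpha$ by combining the $t$-differentiated $\chi_\alpha$ equation with \eqref{lin.system}, and use $\|\pp_0|\mphi(q_\ep)|^2\|_{W^{2\ell,\infty}} = O(\ep)$. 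Two minor remarks. First, your structural observation that $P_{1,\alpha}\tU$ and $P_{2,\alpha}\tU$ involve no time derivatives of $\tU$ for $\alpha\in\{0,3,\ldots,d\}$ is correct and is actually what yields the conclusion in the sharper form stated (with $\kz\|\pp_0\tA_\alpha\|_{H^{2\ell}}$); the paper's own proof invokes the blanket $P$-estimate from Proposition \ref{prop:auxsystem} at this step and hence closes with the coarser $\kz\|\pp_t\tU\|_{H^{2\ell}}$, which nonetheless suffices for all downstream uses (see \eqref{hii.estimates}). Second, you invoke a two-derivative elliptic gain $H^{2\ell-1}\to H^{2\ell+1}$, whereas the paper proves and uses only the one-derivative gain $H^{2\ell}\to H^{2\ell+1}$ (equation \eqref{standard}); the two-derivative gain is indeed valid for $-\Delta+|\mphi(q_\ep)|^2$ on $\R^d$, but it is more than what is needed, and your final displayed inequality mixes the two conventions (an $H^{2\ell}$ norm of $\eta$ paired with an $H^{2\ell-1}$ norm of $\chi_\alpha$)—harmless here, but worth cleaning up for consistency.
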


\begin{proof}
We first note that if $\psi\in H^1(\R^d)$ is a weak solution of 
\begin{equation}\label{psig}
(-\Delta + |\mphi(q_\ep)|^2)\psi = g\in H^{k}(\R^d),
\end{equation}
then $\psi\in H^{k+1}(\R^d)$ and
\begin{equation}\label{standard}
\| \psi\|_{H^{k+1}(\R^d)} \le C \| g \|_{H^{k}(\R^d)}.
\end{equation}
Since $|\mphi(q_\ep)|^2$ is smooth, standard elliptic regularity implies that $\psi\in H^{k+2}_{loc}$. The
$k=0$ case of \eqref{standard} then follows directly from Lemma \ref{basic.elliptic}. The general case follows by induction.
Indeed, if $\psi$ solves \eqref{psig}, then for any $i\in \{1,\ldots, d\}$ we see by differentiating \eqref{psig} that $\psi_i := \pp_i \psi$ solves
\[
(-\Delta + |\mphi(q_\ep)|^2)\psi_i = \pp_i g - (\pp_i |\mphi(q_\ep)|^2) \psi  \in H^{k-1}(\R^d).
\]
So the induction hypothesis implies that 
\[
\| \pp_i\psi\|_{H^{k}}\le C(\|\pp_i g\|_{H^{k-1}} + \|\psi\|_{H^{k-1}}) \le C \| g\|_{H^k}
\]
for every $i$, which immediately implies \eqref{standard}.

Next, we define $\chi_\alpha$ as the solution of the equation
\[
(-\Delta + |\mphi(q_\ep)|^2)\chi_\alpha(\cdot, t) = -\kz \pp_0 \tA_\alpha(\cdot, t) - \ep \ko \tA_\alpha(\cdot, t).
\]
Then the first estimate in \eqref{chi.and.f} follows directly from \eqref{standard}.

We are forced by \eqref{tA.split} to define
$f_\alpha = \tA_\alpha - \pp_0\chi_\alpha$.
Writing $\mphi$ for $\mphi(q_\ep)$  and recalling the definition \eqref{M.def} of the operator $\calM$ in \eqref{lin.system}, we find that $f_\alpha$ solves
\begin{align}
    (-\Delta + |\mphi(q_\ep)|^2) f_\alpha 
    &= (-\Delta +  |\mphi|^2) (\tA_\alpha - \pp_0\chi_\alpha) \nonumber\\
    &= (-\Delta +  |\mphi|^2) \tA_\alpha - \pp_0[ (-\Delta +  |\mphi|^2)\chi_\alpha)] + \pp_0 |\mphi|^2 \chi_\alpha \nonumber\\
    &= (\kz\pp_0^2 + \ep \ko \pp_0 - \Delta +|\mphi|^2) \tA_\alpha+ \pp_0 |\mphi|^2 \chi_\alpha
    \nonumber\\
    &\overset{\eqref{lin.system}}=- P_\alpha\tU +\eta_\alpha + \pp_0 |\mphi|^2 \chi_\alpha.
    \label{falpha.eqn}
\end{align}
Recall from \eqref{good2} that 
\[
\kz \| \pp_0 |\mphi|^2\|_{W^{2\ell, \infty}(\R^d)} \le C \ep
\]
Note also that $H^{2\ell}(\R^d)$ is an algebra, as we have assumed that $2\ell>\frac d2$. It thus follows from \eqref{standard} and properties of the operator $P$ in Proposition \ref{prop:auxsystem} and our earlier estimate of $\chi_\alpha$ that
\[
\begin{aligned}    
\|f_\alpha\|_{H^{2\ell+1}(\R^d)} 
&\le
\| P_\alpha \tU\|_{H^{2\ell}} + \|\eta_\alpha\|_{H^{2\ell}} + \| \pp_0|\mphi(q_\ep)|^2 \chi_\alpha\|_{H^{2\ell}}
\\
&\le C \ep \left( \| \tU\|_{H^{2\ell+1}} + \kz \| \pp_t\tU \|_{H^{2\ell}}\right) + \|\eta_\alpha\|_{H^{2\ell}}
\end{aligned}
\]
at time $t$.
\end{proof}

\begin{lemma}
    \label{lem:cmupde2}
    $c_\mu$ satisfies the initial value problem
    \begin{equation}
        \label{cmu.pde2}
        \begin{aligned}
        (\kz \pp_{tt} +\ep \ko \pp_t - \Delta_{\bar a}) c_\mu &=
        \pp_0 h_0 + \nabla_{\bar a} \cdot h_{\bar a} + h, \\
        c_\mu = \kz \pp_t c_\mu = 0\qquad&\mbox{ at }t=0
        \end{aligned}
        \end{equation}
    where the functions on the right-hand side satisfy the estimate 
    \begin{equation}
        \label{h.estimates}
        \begin{aligned}    
        \| h_0(\cdot, t)\|_{H^{2\ell}(\R^{d-2})} &\le  C \ep \kz \left( \| \tU(\cdot, t)\|_{H^{2\ell+1}(\R^d)} + \kz\| \pp_t \tU(\cdot, t)\|_{H^{2\ell}(\R^{d})} \right)
        \\
        \| h_{\bar a}(\cdot, t)\|_{H^{2\ell}(\R^{d-2})} &\le C \ep\| \tU(\cdot, t)\|_{H^{2\ell}(\R^d)}
        \\
        \| h(\cdot, t)\|_{H^{2\ell}(\R^{d-2})} &\le C \ep^2 \left (\| \tU(\cdot, t)\|_{H^{2\ell+1}(\R^d)} + \kz \| \pp_t\tU(\cdot, t)\|_{H^{2\ell}(\R^{d})} \right) \\
        &\hspace{8em} + C\|\eta(\cdot, t)\|_{H^{2\ell}(\R^{d})}.
        \end{aligned}
    \end{equation}
    \end{lemma}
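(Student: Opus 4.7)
The plan is to compute $(\kz\pp_{tt}+\ep\ko\pp_t-\Delta_{\bar a})c_\mu$ directly from the definition \eqref{cmu.def}, then substitute the $u$-component of \eqref{lin.system} for $\Box\tu$ (with $\Box:=\kz\pp_{tt}+\ep\ko\pp_t-\Delta_{\bar a}$), and finally reorganize the resulting terms so that the pieces of order $\ep$ appear either in divergence form or multiplied by the small ingredients produced by Lemma~\ref{lem:tA_split}. Abbreviate $n^\ep_\mu(x,t):=n_\mu(x_a;q_\ep(x_{\bar a},t))$. Since $\Box$ involves no $x_a$-derivative, it commutes with $\int\cdot\,dx_a$, and Leibniz gives $\Box(\tu\cdot n^\ep_\mu)=(\Box\tu)\cdot n^\ep_\mu+\tu\cdot(\Box n^\ep_\mu)+2\kz\pp_t\tu\cdot\pp_tn^\ep_\mu-2\pp_{\bar a}\tu\cdot\pp_{\bar a}n^\ep_\mu$. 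The two mixed-derivative terms are rewritten by pulling the outer derivative out of $\int\cdot\,dx_a$ and then expanding the other factor; this manufactures divergences $\pp_0(2\kz\int\tu\cdot\pp_tn^\ep_\mu\,dx_a)$ and $-\pp_{\bar a}(2\int\tu\cdot\pp_{\bar a}n^\ep_\mu\,dx_a)$, to be absorbed into $h_0$ and $h_{\bar a}$, plus remainders involving $\pp_{tt}n^\ep_\mu$ and $\pp_{\bar a\bar a}n^\ep_\mu$ that, by the chain rule through $q_\ep$, carry two $\ep$-scale derivatives and so contribute $O(\ep^2)$ to $h$.

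For $(\Box\tu)\cdot n^\ep_\mu$, use \eqref{lin.system} and \eqref{M.def} to write $\Box\tu=-L[q_\ep]\tu-P_u\tU+\eta_u$. Self-adjointness of $L[q_\ep]$ and $L[q_\ep]n_\mu(q_\ep)=0$ (Theorem~\ref{thm:hessian}) kill the leading contribution after integration against $n^\ep_\mu$. The $\eta_u$ term feeds $h$ trivially. What remains is $-\int P_u\tU\cdot n^\ep_\mu\,dx_a$, whose treatment is the whole difficulty.

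The $P_2$-piece is benign: by \eqref{P2.est} its coefficients are $O(\ep^2)$, so it drops into $h$. The hard part is $P_{1,u}$; since $P_{1,a}=0$, only $P_{1,\phi}\tU$ from \eqref{P1.def} pairs against $n_{\mu,\phi}$, and its coefficients are only $O(\ep)$, which placed naively into $h$ would yield $h=O(\ep)$ — exactly the insufficient scaling flagged in the introduction. The resolution has two flavors. For the terms $\aAbara\pp_{\bar a}\tPhi$ and $\kz\aAzero\pp_0\tPhi$, integrate by parts in $x_{\bar a}$, respectively $t$, as $\aAbara\pp_{\bar a}\tPhi=\pp_{\bar a}(\aAbara\tPhi)-(\pp_{\bar a}\aAbara)\tPhi$ and $\kz\aAzero\pp_0\tPhi=\pp_0(\kz\aAzero\tPhi)-\kz(\pp_0\aAzero)\tPhi$: the divergences feed $h_{\bar a}$ and $h_0$ (at sizes $O(\ep)$ and $O(\ep\kz)$ consistent with \eqref{h.estimates}), while the coefficients $\pp_{\bar a}\aAbara$ and $\pp_0\aAzero$ drop an extra factor of $\ep$ by \eqref{good2} and send the remainders to $h$. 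For the truly problematic terms $\tA_{\bar a}\De_{\bar a}\aPhi$ and $\kz\tA_0\De_0\aPhi$, invoke Lemma~\ref{lem:tA_split} to split $\tA_\alpha=f_\alpha+\pp_0\chi_\alpha$: the $f_\alpha$-contribution is directly $O(\ep^2)+O(\|\eta\|_{H^{2\ell}})$ using $\|f_\alpha\|_{H^{2\ell+1}}=O(\ep)+O(\|\eta\|)$ combined with $\|\De_\alpha\aPhi\|_{W^{2\ell,\infty}}=O(\ep)$, and so sits in $h$; the $\pp_0\chi_\alpha$-contribution is rewritten as $\pp_0(\chi_\alpha\De_\alpha\aPhi)-\chi_\alpha\pp_0\De_\alpha\aPhi$, sending the first piece into $h_0$ (with $\|\chi_\alpha\|_{H^{2\ell+1}}$ bounded by a factor of $\kz$, matching the $\ep\kz$ scaling demanded by \eqref{h.estimates}) and the second piece into $h$ because $\pp_0\De_\alpha\aPhi=O(\ep^2)$ by two $\ep$-scale derivatives landing on the approximate solution.

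After gathering every divergence-in-$t$ into $h_0$, every divergence-in-$x_{\bar a}$ into $h_{\bar a}$, and the rest into $h$, the three norm inequalities in \eqref{h.estimates} drop out by combining Lemma~\ref{lem:tA_split}, the approximate-solution bounds \eqref{good2}--\eqref{good3}, the algebra property of $H^{2\ell}(\R^d)$ (recall $2\ell>d/2$), and the elementary fact that pairing in $x_a$ against a Schwartz-decaying factor sends $H^k(\R^d)$ into $H^k(\R^{d-2})$ (a version of which is Lemma~\ref{lem:last}). The initial conditions $c_\mu(\cdot,0)=\kz\pp_tc_\mu(\cdot,0)=0$ are precisely the hypothesis \eqref{cmu.initialdata}. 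The main obstacle, as anticipated in the discussion before the lemma, is the $P_{1,\phi}\tU$ contribution: without the Lemma~\ref{lem:tA_split} splitting of $\tA_{\bar a}$ and $\tA_0$ there is no way to peel off a divergence or a spare factor of $\ep$ from these terms, and the resulting $h=O(\ep)$ bound would be fatal for the delicate estimate in Proposition~\ref{prop:Qtop} and thus for the entire $T_0/\ep$ lifespan.
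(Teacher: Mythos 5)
Your proposal matches the paper's proof in all essential respects: you pair the $u$-components of \eqref{lin.system} against $n_\mu$, use self-adjointness of $L[q_\ep]$ together with $L[q_\ep]n_\mu=0$ to kill the leading term, peel $\pp_0$- and $\pp_{\bar a}$-divergences off the mixed-derivative Leibniz remainders, and — crucially — handle the $O(\ep)$ operator $P_{1,\phi}$ by (i) integrating by parts the $\aAalpha\pp_\alpha\tPhi$ pieces and (ii) invoking Lemma~\ref{lem:tA_split} to split $\tA_\alpha=f_\alpha+\pp_0\chi_\alpha$ in the $\tA_\alpha\De_\alpha\aPhi$ pieces, which is exactly the paper's device for upgrading the naive $O(\ep)$ bound on $h$ to $O(\ep^2)$. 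The only minor imprecision is the parenthetical claim that $\|\chi_\alpha\|_{H^{2\ell+1}}$ carries an overall factor $\kz$: from \eqref{chi.and.f} it is bounded by $C\kz\|\pp_0\tA_\alpha\|_{H^{2\ell}}+C\ep\ko\|\tA_\alpha\|_{H^{2\ell}}$, so the second summand supplies the extra smallness via $\ep\ko$ rather than $\kz$; this does not affect the structure of the argument.
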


\begin{proof}
We will write $( \, \cdot\, , \, \cdot\, )$ as an abbreviation for $( \, \cdot\, , \, \cdot\, )_{L^2(\R^2)}$.

The initial conditions for $c_\mu$ have been assumed in \eqref{cmu.initialdata}.

Next, note that
    \[
    \begin{aligned}
    \pp_t c_\mu &= ( \pp_t \tu , n_\mu ) + ( \tu, \pp_t n_\mu)\\
    \pp_{tt} c_\mu &=  ( \pp_{tt} \tu , n_\mu ) + 2 \pp_t( \tu, \pp_t n_\mu) - ( \tu, \pp_{tt} n_\mu).
    \end{aligned}
    \]
    and similarly $\pp_{\bar a \bar a} c_\mu$ for $\bar a = 3,\ldots, d$.
    Next, note that because $L[\modu(q)]$ is self-adjoint and $L[\modu(q)] n_\mu(q) = 0$ (as follows from Theorem \ref{thm:hessian} and Definition \ref{def:nmu}),
    \[
    ( \calM_u\tU, n_\mu) = ( -\Delta_{\bar a} \tu + L[\modu(q)]\tu, n_\mu) 
    = -\Delta_{\bar a} c_\mu +   2\nabla_{\bar a}\cdot( \tu, \nabla_{\bar a}n_\mu  ) - ( \tu, \Delta_{\bar a}n_\mu  ) 
    \]
    where we write $\calM_u\tU$ to denote the relevant rows of $\calM\tU$ (and similarly $P_u\tU$ below). 
    We now take the $L^2(\R^2)$ inner product of the relevant components of \eqref{lin.system} with $n_\mu$ and use the above computations to rewrite, leading to
    \[
    \begin{aligned}
    (\kz\pp_{tt} + \ep\ko \pp_t - \Delta_{\bar a})c_\mu &= 
    2\kz\pp_t (  \tu, \pp_t n_\mu) - 
    2\pp_{\bar a}(  \tu, \pp_{\bar a}  n_\mu) \\
    & \qquad -  (  \tu, (\kz \pp_t^2 - \ep\ko \pp_t -\Delta_{\bar a })n_\mu) 
    -( P_u \tU, n_\mu)+ ( \eta_u, n_\mu) . 
    \end{aligned}
    \]
    We next write $P = P_1 + P_2$ as in Proposition \ref{prop:auxsystem} (and thus $P_u = P_{1u} + P_{2u}$) and we set 
    \[
    h_{0,i} := 2\kz ( \tu, \pp_tn_\mu ), \qquad
    h_{\bar a,i} := -2( \tu, \pp_{\bar a}n_\mu ), \qquad
    \]
    and
    \[
    h_{,i} := -  (  \tu, (\kz \pp_t^2 - \ep\ko \pp_t -\Delta_{\bar a })n_\mu)   -( P_{2u} \tU, n_\mu) + ( \eta_u, n_\mu),
    \]
    so that
    \[
    (\kz\pp_{tt} + \ep\ko \pp_t - \Delta_{\bar a})c_\mu = 
    \pp_0 h_{0,i} + \pp_{\bar a}h_{\bar a, i} + h_{,i} -( P_{1u}\tU, n_\mu)\ .
    \]
    Below we will look more carefully at $( P_{1u}\tU, n_\mu).$ First, however, we note that
    \begin{equation}\label{hi.estimates}
    \begin{aligned}
    \|h_{0, i}(\cdot, t)\|_{H^{2\ell}(\R^{d-2})} &\le C\ep  \kz \|\tu(\cdot, t)\|_{H^{2\ell}(\R^d)}  \\ 
    \|h_{\bar a, i}(\cdot, t)\|_{H^{2\ell}(\R^{d-2})} &\le C\ep \|\tu(\cdot, t)\|_{H^{2\ell}(\R^d)}\qquad\qquad \mbox{ for }\bar a=3,\ldots, d, \\ 
    \|h_{,i}(\cdot, t)\|_{H^{2\ell}(\R^{d-2})} &\le C \ep^2 \|\tu(\cdot, t)\|_{H^{2\ell+1}(\R^d)} + C \|\eta(\cdot, t)\|_{H^{2\ell}(\R^d)}.
    \end{aligned}
    \end{equation}
    Indeed, these follow from the estimate for $P_2$ in Proposition \ref{prop:auxsystem}, see \eqref{P2.est}, and the fact that $n_\mu = n_\mu(x_a; q_\ep(x_{\bar a}, t)) = n_\mu(x_a ; q( \ep x_{\bar a}, \ep t))$.
    
    Writing $n_\mu = (n_{\mu, \phi}, n_{\mu,a})^T$, we use the definition \eqref{P1.def} of $P_1$ and  substitute $\tA_\alpha = f_\alpha+\pp_0\chi_\alpha$ for $\alpha = 0,3, \ldots, d$ to find
    that
    \[
    \begin{aligned}
        ( P_{1u}\tU, n_\mu) &= 
        -\kz(f_0( 2i \De_0\aPhi, n_{\mu,\phi})  + \pp_0\chi_0( 2i \De_0\aPhi, n_{\mu,\phi})) +
        f_{\bar a}( 2i\De_{\bar a}\aPhi, n_{\mu,\phi}) \\
        &\qquad\qquad\qquad  + \pp_0\chi_{\bar a}( 2i\De_{\bar a}\aPhi, n_{\mu,\phi})
        -\kz( 2i\aAzero\pp_0\tPhi,n_{\mu,\phi}) + ( 2i\aAbara\pp_{\bar a}\tPhi,n_{\mu,\phi})  \\
        &=\pp_0 h_{0,ii} + \pp_{\bar a} h_{\bar a, ii} + h_{, ii}
    \end{aligned}
    \]
    for
    \begin{align*}
        h_{0, ii} &:=  -\kz\chi_0( 2i\De_0\aPhi, n_{\mu,\phi}) + 
        \chi_{\bar a}( 2i\De_{\bar a}\aPhi, n_{\mu,\phi})
         -\kz( 2i\aAzero\tPhi,n_{\mu,\phi})\\
        h_{\bar a, ii} &:= ( 2i\aAbara\tPhi,n_{\mu,\phi}) \\
        h_{,ii} &:= 
         -\kz f_0( 2i \De_0\aPhi, n_{\mu,\phi})  + \kz\chi_0\pp_0( 2i \De_0\aPhi, n_{\mu,\phi}) +
        f_{\bar a}( 2i\De_{\bar a}\aPhi, n_{\mu,\phi}) \\
        &\qquad\qquad  -\chi_{\bar a}\pp_0( 2i\De_{\bar a}\aPhi, n_{\mu,\phi})
        +\kz( 2i\pp_0(\aAzero)\tPhi,n_{\mu,\phi}) +( 2i\aAzero\tPhi,\pp_0n_{\mu,\phi}) \\
        &\qquad\qquad- ( 2i\pp_{\bar a}(\aAbara)\tPhi,n_{\mu,\phi}) 
        - ( 2i\aAbara\tPhi,\pp_{\bar a}n_{\mu,\phi})
    \end{align*}
    Then Lemma \ref{lem:tA_split} and properties of the approximate solution imply that for every $t$,
    \begin{equation}
        \label{hii.estimates}
        \begin{aligned}
        \| h_{0,ii}(\cdot, t)\|_{H^{2\ell}(\R^{d-2})} &\le  C \kz\ep \left( \| \tU(\cdot, t)\|_{H^{2\ell+1}(\R^d)} + \kz \| \pp_t \tU(\cdot, t)\|_{H^{2\ell}(\R^d)} \right)
        \\
        \| h_{\bar a,ii}(\cdot, t)\|_{H^{2\ell}(\R^{d-2})} &\le C \ep\| \tU\|_{H^{2\ell}(\R^d)}
        \\
        \| h_{,ii}(\cdot, t)\|_{H^{2\ell}(\R^{d-2})} &\le C \ep^2 \left (\| \tU(\cdot, t)\|_{H^{2\ell+1}(\R^d)} + \kz \| \pp_t\tU(\cdot, t)\|_{H^{2\ell}(\R^d)} \right)\\
        &\hspace{9em}+ C \ep \|\eta(\cdot, t)\|_{H^{2\ell}(\R^d)} \ .
        \end{aligned}
    \end{equation}
It follows from this and \eqref{hi.estimates} that conclusions \eqref{h.estimates}  hold if we define
    \[
    h = h_{,i} + h_{,ii} \ \  \qquad\mbox{and}\ \ \qquad h_\alpha = h_{\alpha,i} + h_{\alpha,ii} \ \ \mbox{ for }\alpha = 0,3,\ldots, d .
    \]
    
    \end{proof}

We now present the

\begin{proof}[proof of Proposition \ref{prop:Qtop}] The desired estimate follows directly from Lemma \ref{lem:cmupde2} and basic estimates for hyperbolic and parabolic equations, recalled in Section  \ref{auxlin} below. We first check this in the case $\kz > 0$. Then, since $c_\mu(\cdot, 0) = \pp_t c_\mu(\cdot, 0) = 0$ by assumption,
Lemma \ref{lem:dwest} implies that for $0< t < T/\ep$, 
\[
\begin{aligned}
Q_\ell^\top(t) 
&\le \| c_\mu(\cdot, t)\|_{L^2(\R^{d-2)}}^2
\\
&\le 
C t^3\int_0^t  \| h(s, \cdot)\|^2_{H^{2\ell}}\,ds  +  t\int_0^t  \left(\| h_{0}(s, \cdot)\|^2_{H^{2\ell}} +  \sum_{\bar a=3}^d
\| h_{\bar a}(s, \cdot)\|^2_{H^{2\ell}} \,\right) ds \\
&\hspace{12em} + C t^2 \| h_0(\cdot, 0)\|_{H^{2\ell}}^2 .
\end{aligned}
\]
Using Lemma \ref{lem:cmupde2} to estimate the right-hand side and Proposition \eqref{prop:ellenergy} to express the result in terms of $\calQ_\ell$ yields \eqref{Qelltop.est}.

When $\kz =0$, the proof is essentially the same, but using Lemma \ref{lem:heatest} instead of Lemma \ref{lem:dwest}
\end{proof}

\subsection{auxiliary linear hyperbolic and parabolic estimates}\label{auxlin}
 
We have used the following basic estimates in the proof of Proposition \ref{prop:Qtop} above.

\begin{lemma}\label{lem:dwest}
Assume that $f:\R^{1+n}\to \R$ solves
\begin{equation}\label{f.eqn2}
	\kz\pp_{tt}f +\ep \ko\pp_t f - \Delta f = \pp_0 h_0 + \cdots + \pp_n h_n  + h
	\end{equation}
    with $\kz>0$ and $\ko \ge 0$, 
	for $h_\alpha, h:\R^{1+n}\to \R$ such that 
	\[
	h\in L^2_t H^{2\ell} , \qquad h_0\in H^1_t H^{2\ell},\qquad  h_j\in L^2_t H^{2\ell+1}\quad\mbox{ for }j=1,\ldots, n.
	\]
	and with initial data $(f, \pp_t f)|_{t=0}\in H^{2\ell+1}\times H^{2\ell}$.
Then there exists a constant $C$ that may depend on $\kz$ but is independent of $\ko\ep >0$ such that 
\begin{equation}\label{wave.est2}
\begin{aligned}
\| f(\cdot, t)\|_{H^{2\ell}}^2  &\le
C t^3\int_0^t  \| h(s, \cdot)\|^2_{H^{2\ell}}\,ds  +  t\int_0^t  \sum_{\alpha=0}^n\| h_\alpha(s, \cdot)\|^2_{H^{2\ell}} \, ds   \\ 
&
\hspace{1em} 
+ C\| f(\cdot, 0)\|_{H^{2\ell}}^2 +C t^2 \left( \| \nabla f(\cdot, 0)\|_{H^{2\ell}}^2 + \| \pp_t f(\cdot, 0)\|_{H^{2\ell}}^2  + \| h_0(\cdot, 0)\|_{H^{2\ell}}^2 \right).
\end{aligned}
\end{equation}
\end{lemma}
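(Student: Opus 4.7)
The plan is to decompose $f = f_0 + f_1 + f_2 + f_3$ by linearity, where $f_0$ solves the homogeneous damped wave equation with the prescribed initial data and no forcing, while $f_1$, $f_2$, $f_3$ solve the equation with zero initial data and forcings $\pp_0 h_0$, $\sum_{j=1}^n \pp_j h_j$, and $h$ respectively. Since the damped wave operator has constant coefficients in the spatial variables, it commutes with $\pp^\alpha$, and we may reduce to an $L^2$ estimate for each $\pp^\alpha f$ with $|\alpha|\le 2\ell$ and sum.

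For $f_0$, the basic energy identity
\[
\tfrac{d}{ds}\!\left(\kz\|\pp_s f_0\|_{L^2}^2 + \|\nabla f_0\|_{L^2}^2\right) = -2\ep\ko\|\pp_s f_0\|_{L^2}^2 \le 0
\]
gives $\|\pp_s f_0(s)\|_{L^2}^2 \le \|\pp_t f(0)\|_{L^2}^2 + \kz^{-1}\|\nabla f(0)\|_{L^2}^2$ uniformly in $\ep\ko \ge 0$. Writing $f_0(t) = f(0) + \int_0^t \pp_s f_0\,ds$ and applying Cauchy--Schwarz delivers the initial-data contributions on the right-hand side of \eqref{wave.est2}.

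For $f_1$, $f_2$, $f_3$, Duhamel gives $f_i(t) = \int_0^t K(t-s) F_i(s)\,ds$, where $K(s)$ is the propagator sending $\psi$ to the solution of the homogeneous equation with data $(0,\psi/\kz)$. Applying the energy identity to $K(s)\psi$ (whose initial energy equals $\|\psi\|_{L^2}^2/\kz$), using $K(s)\psi = \int_0^s \pp_r K(r)\psi\, dr$, and noting that $K(s)$ commutes with $\pp_j$, we obtain the propagator bounds
\[
\|K(s)\psi\|_{L^2}\le\tfrac{s}{\kz}\|\psi\|_{L^2},\qquad
\|K(s)\pp_j\psi\|_{L^2}\le\tfrac{1}{\sqrt{\kz}}\|\psi\|_{L^2},\qquad
\|\pp_s K(s)\psi\|_{L^2}\le\tfrac{1}{\kz}\|\psi\|_{L^2},
\]
uniformly in $\ep\ko\ge 0$ and for $j=1,\ldots,n$. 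Combined with Cauchy--Schwarz in the time integral, these yield $\|f_3(t)\|_{L^2}^2\lesssim t^3\int_0^t\|h\|^2\,ds$ and $\|f_2(t)\|_{L^2}^2\lesssim t\int_0^t\sum_j\|h_j\|^2\,ds$.

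For $f_1$, the integration-by-parts identity
\[
f_1(t) = \int_0^t K(t-s)\,\pp_s h_0(s)\,ds = -K(t)h_0(0) + \int_0^t \pp_s K(t-s)\,h_0(s)\,ds,
\]
valid since $K(0) = 0$, reduces the $\pp_0$-forced case to the three propagator bounds: the boundary term is bounded by $\|K(t)\|_{L^2\to L^2}\|h_0(0)\|_{L^2}$ and contributes $Ct^2\|h_0(0)\|^2$, while the remaining integral contributes $Ct\int_0^t\|h_0\|^2\,ds$. The main conceptual point is that the energy identity yields propagator bounds uniform in $\ep\ko$; I expect this uniformity to be the mildly delicate step, but once in hand it allows the purely hyperbolic and the weakly damped regimes to be treated by a single argument.
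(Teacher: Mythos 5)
Your proof is correct and follows essentially the same skeleton as the paper's: decompose by linearity into a homogeneous piece and three inhomogeneous pieces (forced by $h$, by $\pp_0 h_0$, and by the spatial-derivative terms), use the damped-wave energy identity uniformly in $\ep\ko\ge 0$ for every piece, and handle the derivative-in-the-forcing terms by effectively moving a derivative onto the solution. The difference is packaging. Where you introduce the propagator $K(s)$ and record the three operator bounds $\|K(s)\|_{L^2\to L^2}\le s/\kz$, $\|\pp_j K(s)\|_{L^2\to L^2}\le\kz^{-1/2}$, $\|\pp_s K(s)\|_{L^2\to L^2}\le\kz^{-1}$, the paper extracts the same information concretely by antidifferentiating the solution: for the $\pp_0 h_0$ forcing it sets $F_0=\int_0^t f$ (which solves the equation with forcing $h_0(t)-h_0(0)$ and zero data), and for the $\pp_1 h_1$ forcing it works with an $x_1$-antiderivative $F_1$ of $f$ and uses $\|f\|_{L^2}=\|\pp_1 F_1\|_{L^2}\le\|\nabla F_1\|_{L^2}$. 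Your reformulation via the commutation $K(s)\pp_j=\pp_j K(s)$ is slightly cleaner for the spatial case, since it sidesteps the question of whether $\int_{-\infty}^{x_1}f$ is well-defined for an $L^2$ function — the paper handles this by instead solving the integrated equation and recovering $f=\pp_1 F_1$ by uniqueness, which amounts to the same thing. Your integration by parts $f_1(t)=-K(t)h_0(0)+\int_0^t\pp_s K(t-s)\,h_0(s)\,ds$, valid because $K(0)=0$, is precisely the propagator-language translation of the paper's $F_0$ trick and produces the same boundary term $Ct^2\|h_0(\cdot,0)\|^2$. Both arguments rest on the identical single input — the damping-insensitive energy estimate — and differ only in whether the Duhamel kernel is named explicitly.
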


\begin{proof}
It suffices to prove the lemma for $\ell=0$, since the general case then follows by differentiating the equation and applying the $\ell=0$ case.

Several times during the proof we will use the fact that for any function $f:\R^{1+n}\to \R$
and  $t>0$,
\begin{equation}\label{basicL22}
\| f(\cdot, t)\|_{L^2}^2 \le  2\left(\| f(\cdot, 0)\|_{L^2}^2 + t \int_0^t \| f_t(s, \cdot) \|_{L^2}^2\, ds\right).
\end{equation}
as long as the right-hand side makes sense  and is bounded (that is, $f(\cdot, 0)\in L^2(\R^n)$, and $f(\cdot, x)\in H^1((0,\infty))$ for {\em a.e.} $x\in \R^n$). Indeed, for almost every $x$ we have
\begin{align*}
f(x,t)^2  =  \left(f(0,x) + \int_0^t f_t(s,x)\, ds \right)^2
&\le 2 f(0,x)^2 + 2\left(\int_0^t f_t(x,s)\,ds\right)^2 
\\ &\le 2 f(0,x)^2 + 2t\int_0^t f_t(x,s)^2\, ds.
\end{align*}
We obtain \eqref{basicL22} by integrating both sides of this inequality.

\medskip

We now consider several cases.

\smallskip

{\bf Case 1}: $h_0 = h_1 = \cdots = h_n = h = 0$. Then basic estimates (that is, \eqref{basic.ee} below, with $h=0$) imply that 
\[
\kz \| f_t(\cdot, t) \|_{L^2}^2 + \| \nabla f(\cdot, t) \|_{L^2}^2  \le 
\kz \| f_t(\cdot, 0) \|_{L^2}^2 + \| \nabla f(\cdot, 0) \|_{L^2}^2 
\]
for all $t$. We deduce \eqref{wave.est2} from this and \eqref{basicL22}.

\smallskip

{\bf Case 2}: $h_0 = \cdots h_n = 0$, and $(f, f_t)|_{t=0} = 0$. Then the equation becomes
\[
\kz \pp_{tt}f + \ep \ko \pp_t f - \Delta f = h, \qquad (f, f_t)|_{t=0} = 0.
\]
We recall the standard argument. Basic energy estimates imply that 
\begin{equation}\label{basic.ee}
\ep \ko  \int f_t^2\,dx + \frac d{dt} \frac 12 \int (\kz f_t^2+|\nabla f|^2) \, dx = \int h f_t \le \left( \int h^2 \ dx \ \int f_t^2\,dx\right)^{1/2}.
\end{equation}
Defining $\eta(t) := \sqrt{ \int \kz f_t(\cdot, t)^2+|\nabla f(\cdot, t)|^2 \, dx}$ and $H(t) := \| h(\cdot, t)\|_{L^2}$, we deduce that
\[
\frac 12 \frac d{dt}\eta^2(t) \le  \frac 1 {\sqrt \kz} \eta(t)H(t)\qquad\mbox { and thus }\eta'(t) \le \frac 1{\sqrt \kz}  H(t).
\]
The assumptions of Case 2 imply that $\eta(0)=0$, so
\begin{equation}\label{case2extra2}
\int (\kz f_t^2+|\nabla f|^2 )\, dx\Big|_t = \eta^2(t) \le\left( \frac 1{\sqrt \kz}\int_0^t \| h(s,\cdot)\|_{L^2}\, ds \right)^2 \le \frac t \kz \int_0^t \|h(s,\cdot)\|_{L^2}^2\, ds.
\end{equation}
Then since we have assumed that $f=0$ when $t=0$,  we can combine this with \eqref{basicL22} to find that
\[
\| f(\cdot, t) \|_{L^2}^2 \le Ct\int_0^t s\int_0^s \|h(r,\cdot)\|_{L^2}^2 \, dr \, ds \le C t^3 \int_0^t \|h(s,\cdot)\|_{L^2}^2 \, ds
\]
where the constant depends on $\kz$ and blows up as $\kz \searrow 0$. Thus \eqref{wave.est2} holds in this case.

\smallskip

{\bf Case 3}: There exists some $j\in \{0,\ldots, n\}$ such that $h_k = 0$ if $k\ne j$, and in addition $h= 0$,  and $(f, f_t)|_{t=0} = 0$. 

Then the equation becomes
\begin{equation}\label{case32}
\kz \pp_{tt}f +\ep \ko \pp_t f - \Delta f = \pp_j h_j , \qquad \qquad (f, f_t)|_{t=0} = 0.
\end{equation}
where here we do {\em not} implicitly sum over $j$.

{\bf Case 3a}. $j=0$.  Then we define
\[
F_0(x,t) = \int_0^t f(s,x)\,ds, \qquad
H_0(x,t) = \int_0^t \pp_t h_0(s,x)\,ds = h_0(x,t) - h_0(0,x).
\]
Clearly
\[
\kz \pp_{tt}F_0 +\ep \ko \pp_t F_0 - \Delta F_0 = H_0 , \qquad \qquad (F_0, \pp_t F_0 )|_{t=0} = (0,0).
\]
So we deduce from \eqref{case2extra2} that
\begin{align*}
\| f(\cdot, t)\|_{L^2}^2  = \| \pp_t F_0(\cdot, t)\|_{L^2}^2 &\le \frac t\kz  \int_0^t \|H_0(s, \cdot)\|_{L^2}^2\, ds \\
& \le \frac{2t}\kz \int_0^t \|h_0(s, \cdot)\|_{L^2}^2\, ds 
+ \frac{2 t^2}{\kz} \| h_0(\cdot, 0)\|_{L^2}^2.
\end{align*}
This establishes \eqref{wave.est2} in this case.

{\bf Case 3b}: $j\in \{1, \ldots, n\}$. We may assume without loss of generality that $j=1$. We may then argue as in Case 3a, defining
\[
F_1(t,x_1,\ldots, x_n) = \int_{-\infty}^{x_1} f(t,s, x_2, \ldots x_n)\, ds, 
\]
and noting that  $\| f(\cdot, t) |_{L^2}^2 = \| \pp_1 F_1(\cdot, t) \|_{L^2}^2 \le \| \nabla F_1(\cdot, t) \|_{L^2}^2$, with  $F_1$ solving $\pp_{tt}F_1 - \Delta F_1 = h_1$ and $(F_1,\pp_t F_1)|_{t=0}=(0,0)$. 

\smallskip

Since \eqref{f.eqn2} is linear, the lemma follows from Cases 1, 2 and 3.
\end{proof}

Our next lemma establishes a similar estimate to that of Lemma \ref{lem:dwest} when $\kz=0$, that is, in the case of a pure heat flow.

\begin{lemma}\label{lem:heatest}
Assume that $f:\R^{1+n}\to \R$ solves
\begin{equation}\label{f.eqn2a}
	\ep \ko\pp_t f - \Delta f = \pp_1 h_1 + \cdots + \pp_n h_n  + h
	\end{equation}
    with $\ko > 0$, 
	for $h_j, h:\R^{1+n}\to \R$ such that 
	\[
	h\in L^2_t H^{2\ell} , \qquad   h_j\in L^2_t H^{2\ell+1}\quad\mbox{ for }j=1,\ldots, n.
	\]
	and with initial data $f(\cdot, t) \in H^{2\ell}$.
Then there exists a constant $C$ that may depend on $\ko$ but is independent of $\ep>0$, such that 
\begin{equation}\label{heat.est2}
\begin{aligned}
\| f(\cdot, t)\|_{H^{2\ell}}^2  &\le
C \frac t{\ep^2}\int_0^t  \| h(s, \cdot)\|^2_{H^{2\ell}}\,ds  +  \frac C \ep\int_0^t  \sum_{j=1}^n\| h_j(s, \cdot)\|^2_{H^{2\ell}} \, ds   \\ 
&
\hspace{10em} 
+ C\| f(\cdot, 0)\|_{H^{2\ell}}^2.
\end{aligned}
\end{equation}
\end{lemma}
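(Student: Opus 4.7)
The strategy is to reduce the estimate to a standard heat equation estimate by a time rescaling, following the structure of the proof of Lemma \ref{lem:dwest}.

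First, since equation \eqref{f.eqn2a} is linear with constant coefficients in the spatial variables, I would immediately reduce to the case $\ell=0$: for any multi-index $r$ with $|r|\le 2\ell$, the function $\pp^r f$ solves the same equation with right-hand side $\pp_j (\pp^r h_j) + \pp^r h$ and initial data $\pp^r f(\cdot,0)$. Summing the $\ell=0$ estimate over $|r|\le 2\ell$ yields \eqref{heat.est2}. Next, I would rescale time by setting $\tau = t/\ep$ and writing $G(\tau,x) = f(\ep\tau,x)$, $H_j(\tau,x) = h_j(\ep\tau,x)$, $H(\tau,x) = h(\ep\tau,x)$. Then $G$ satisfies the $\ep$-free heat equation
\[
\ko \pp_\tau G - \Delta G = \pp_j H_j + H, \qquad G(\cdot,0) = f(\cdot,0),
\]
and the target estimate reduces to showing, for $\tau_0 = t/\ep$,
\[
\|G(\cdot,\tau_0)\|_{L^2}^2 \le C\left(\|G(\cdot,0)\|_{L^2}^2 + \tau_0\!\int_0^{\tau_0}\!\|H\|_{L^2}^2 d\tau + \int_0^{\tau_0}\sum_j \|H_j\|_{L^2}^2\, d\tau\right),
\]
since $\int_0^{\tau_0}\|H(\tau,\cdot)\|_{L^2}^2\, d\tau = \frac{1}{\ep}\int_0^t\|h(s,\cdot)\|_{L^2}^2\, ds$ and the leading factor $\tau_0 = t/\ep$ produces the $t/\ep^2$ weight on the $h$ term.

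By linearity, I would split into three cases. Case 1, with $H = H_j = 0$, is immediate from the standard energy identity $\frac{\ko}{2}\frac{d}{d\tau}\|G\|_{L^2}^2 + \|\nabla G\|_{L^2}^2 = 0$. Case 2, with $H_j \equiv 0$ and $G(\cdot,0) = 0$, is handled by Duhamel's formula: since $e^{\tau\Delta/\ko}$ is a contraction on $L^2(\R^n)$,
\[
\|G(\cdot,\tau_0)\|_{L^2} \le \int_0^{\tau_0}\|H(\cdot,\tau)\|_{L^2}\, d\tau \le \sqrt{\tau_0}\left(\int_0^{\tau_0}\|H(\cdot,\tau)\|_{L^2}^2\, d\tau\right)^{1/2},
\]
which gives the $\tau_0$ prefactor after squaring. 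Case 3, with $H \equiv 0$ and $G(\cdot,0)=0$, follows from the energy identity $\frac{\ko}{2}\frac{d}{d\tau}\|G\|_{L^2}^2 + \|\nabla G\|_{L^2}^2 = -\int H_j\pp_j G\, dx$; applying Cauchy--Schwarz and absorbing $\tfrac12\|\nabla G\|_{L^2}^2$ into the left-hand side, then integrating in $\tau$, yields $\|G(\cdot,\tau_0)\|_{L^2}^2 \le \ko^{-1}\int_0^{\tau_0}\sum_j\|H_j\|_{L^2}^2\, d\tau$.

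Since there are no low-frequency resonance issues (the heat semigroup is dissipative rather than unitary) and no gauge degrees of freedom to track, no step is particularly delicate; the only minor subtlety is the absence of a Poincar\'e inequality on $\R^n$, which forces the use of Duhamel in Case 2 rather than a Gr\"onwall-type energy argument (the latter would produce an unwanted exponential factor). Adding the three cases and undoing the rescaling produces \eqref{heat.est2}, with a constant depending only on $\ko$.
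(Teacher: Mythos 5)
Your proof is correct. The overall structure (reduce to $\ell=0$, split by linearity into the three cases $H=H_j=0$, $H_j=0$ with zero data, $H=0$ with zero data) mirrors the paper's, but the details of Cases 2 and 3 differ in ways worth noting.

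Your rescaling $\tau = t/\ep$ is a clean device that the paper does not use; it lets you track the $\ep$-powers once, at the end, rather than carrying $\ep\ko$ through each energy identity. In Case 2 the paper does not actually appeal to Duhamel: it multiplies by $f$, drops $\|\nabla f\|^2$, and integrates $\frac{d}{dt}\|f\|_{L^2}\le\frac{1}{\ep\ko}\|h\|_{L^2}$ directly — no Poincar\'e inequality needed, contrary to the worry in your last paragraph. Both routes are fine and yield the same $t/\ep^2$ weight. The more interesting divergence is Case 3. The paper, mirroring the hyperbolic Lemma \ref{lem:dwest}, introduces the spatial antiderivative $F_1 = \int_{-\infty}^{x_1} f$, tests the resulting equation against $\pp_t F_1$, and reads off $\|f\|_{L^2}^2\le\|\nabla F_1\|_{L^2}^2$. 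You instead test the equation for $G$ against $G$ itself, integrate by parts to move the derivative off $\pp_j H_j$, and absorb $\tfrac12\|\nabla G\|^2$ into the left side. This is simpler and exploits precisely the dissipative structure that is absent in the hyperbolic case (where the $\pp_t F_1$ trick is genuinely needed to get the $L^2$ norm of $f$ rather than of $\nabla f$). One small slip: your Duhamel bound in Case 2 should carry a factor $\ko^{-1}$, since the semigroup is $e^{\tau\Delta/\ko}$ and the forcing is $H/\ko$; this is harmless because $C$ is allowed to depend on $\ko$, but the displayed inequality as written omits it.
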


\begin{proof}
It suffices to prove the lemma for $\ell=0$, since the general case then follows by differentiating the equation and applying the $\ell=0$ case.

    We consider several cases.

\smallskip

{\bf Case 1}: $ h_1 = \cdots = h_n = h = 0$. Then basic estimates (that is, \eqref{basic.heat} below, with $h=0$) imply that 
\[
\| f(\cdot, t) \|_{L^2}^2  \le 
\| f(\cdot, 0) \|_{L^2}^2  
\]
for all $t$, immediately implying \eqref{heat.est2}.

\smallskip

{\bf Case 2}: $h_0 = \cdots =  h_n = 0$, and $f|_{t=0} = 0$. Then the equation becomes
\[
 \ep \ko \pp_t f - \Delta f = h, \qquad f|_{t=0} = 0.
\]
Multiplying the equation by $f$ and integrating by parts, we obtain
\begin{equation}\label{basic.heat}
\frac 12 \ep \ko \frac d{dt}\int f^2\,dx +  \int |\nabla f|^2\,dx  = \int fh \,dx \le \ \left( \int h^2\,dx \int f^2\,dx
\right)^{1/2}.
\end{equation}
Arguing as in Lemma \ref{lem:dwest}, we find that 
\[
\| f(\cdot, t) \|_{L^2}^2  \le \frac{t}{(\ep \ko)^2}\int_0^t \| h(s, \cdot)\|_{L^2}^2\, ds.
\]

\smallskip

{\bf Case 3}: There exists some $j\in \{1,\ldots, n\}$ such that $h_k = 0$ if $k\ne j$, and in addition $h= 0$,  and $f|_{t=0} = 0$. 

We may assume without loss of generality that $j=1$. Then the equation becomes
\begin{equation}\label{case32b}
\ep \ko \pp_t f - \Delta f = \pp_1 h_1 , \qquad \qquad f|_{t=0} = 0.
\end{equation}

Consider the equation obtained by formally integrating \eqref{case32b} with respect to the $x_1$ variable:
\[
\ep\ko \pp_t F_1 - \Delta F_1 = h_1, \qquad\qquad \left. F_1\right|_{t=0}=0 .
\]
This equation is clearly solvable, and  $\pp_1 F_1$ solves \eqref{case32b} and hence equals $f$.
We multiply both sides by $\pp_t F_1$ and integrate by parts to find that
\[
\begin{aligned}
    \ep \ko \int(\pp_t F_1)^2\,dx  + \frac 12 \frac d{dt}\int|\nabla F_1|^2\,dx 
    &= \int  h_1 \pp_t F_1\, dx \\
    &\le \frac 1{4\ep \ko} \int h_1^2\, dx +  \int\ep \ko (\pp_t F_1)^2\,dx.
\end{aligned}
\]
Discarding $\ep \ko \int(\pp_t F_1)^2\,dx $ on both sides and integrating with respect to $t$,
we obtain
\[
\left. \int f^2 \,dx \right|_{t} \le \left. \int|\nabla F_1|^2\,dx \right|_{t} \le \frac C\ep \int_0^t \| h_1(s, \cdot)\|^2\,ds.
\]

\smallskip

Since \eqref{f.eqn2a} is linear, the lemma follows from Cases 1, 2 and 3.
\end{proof}

\subsection{Proof of Proposition \ref{prop:ellenergy}} \label{sec:Qell}

We next present the proof of Proposition \ref{prop:ellenergy}.
We start with the following basic elliptic estimate.

\begin{lemma}\label{basic.elliptic} For every compact $K\subset M_N$, there exists $c<C$ such that if  $q\in K$ 
\[
c\| f\|_{H^1(\R^2)}^2 
\le \int_{\R^2} f (-\Delta f + |\mphi(q)|^2 f)  \le 
\| f\|_{H^1(\R^2)}^2 
\]
for every $f\in H^1(\R^2)$.
\end{lemma}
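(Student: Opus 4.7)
The plan is to prove the two inequalities separately. After integration by parts, we may rewrite the middle expression as $\int_{\R^2}(|\nabla f|^2+|\mphi(q)|^2 f^2)\,dx$, and the whole statement reduces to a uniform coercivity estimate for the scalar Schrödinger-type operator $-\Delta+|\mphi(q)|^2$.

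For the upper bound, I would invoke the classical fact that Bogomolnyi solutions of the critical 2d Abelian Higgs model satisfy $|\mphi(q)|\le 1$ pointwise. This follows from the Bogomolnyi system \eqref{eq:Bogsys}: on $\{|\mphi|>0\}$, the identity $\Delta \log|\mphi|=\tfrac12(|\mphi|^2-1)$ holds (derived from $(D_1+iD_2)\mphi=0$ together with $F_{12}=-\tfrac12(|\mphi|^2-1)$), so at any interior maximum of $|\mphi|^2$ the left-hand side is nonpositive, forcing $|\mphi|^2\le 1$; combined with the decay $1-|\mphi|\to 0$ at infinity asserted in Theorem \ref{thm1.new}.3, this yields $|\mphi|\le 1$ globally. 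The upper bound is then immediate.

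For the lower bound, I would argue by compactness and contradiction. Using the uniform exponential decay in Theorem \ref{thm1.new}.3, one can fix $R=R(K)$ so that $|\mphi(q)(x)|^2\ge \tfrac12$ for all $|x|\ge R$ and all $q\in K$. Suppose the lower bound fails; then there exist $q_n\in K$ and $f_n\in H^1(\R^2)$ with $\|f_n\|_{H^1}=1$ and $\int(|\nabla f_n|^2+|\mphi(q_n)|^2 f_n^2)\,dx\to 0$. In particular $\|\nabla f_n\|_{L^2}\to 0$ and $\int_{|x|>R}f_n^2\,dx\le 2\int|\mphi(q_n)|^2 f_n^2\,dx\to 0$. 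After passing to a subsequence, $q_n\to q^\ast\in K$ and $f_n\rightharpoonup f^\ast$ weakly in $H^1(\R^2)$; the strong convergence $\nabla f_n\to 0$ forces $\nabla f^\ast=0$, hence $f^\ast$ is constant, and since $f^\ast\in H^1(\R^2)$ one must have $f^\ast\equiv 0$. Rellich compactness then gives $f_n\to 0$ in $L^2(B_R)$, so combined with the tail bound $f_n\to 0$ in $L^2(\R^2)$, and together with $\nabla f_n\to 0$ in $L^2$ this yields $f_n\to 0$ in $H^1(\R^2)$, contradicting $\|f_n\|_{H^1}=1$.

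The only subtlety — and arguably the main obstacle — is ensuring that the key ingredients (the pointwise bound $|\mphi|\le 1$, the exponential tail bound giving $|\mphi|^2\ge 1/2$ outside a fixed ball, and the existence of $\modu(\cdot;q_n)$) are uniform in $q\in K$. Uniformity of the decay constant is exactly the content of Theorem \ref{thm1.new}.3 with $|q|\le $ const, and the continuous dependence $q\mapsto\modu(\cdot;q)$ from Theorem \ref{thm1.new}.4 makes the compactness step in the contradiction argument legitimate. No further structure from the moduli-space geometry is needed.
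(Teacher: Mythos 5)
Your proof is correct and follows essentially the same compactness-and-contradiction strategy as the paper: both use the uniform exponential convergence $|\mphi(\cdot;q)|\to 1$ over $q\in K$ to extract a fixed radius $R$ outside which the potential dominates, then derive a contradiction from a minimizing sequence via Rellich compactness plus the fact that a nonzero constant cannot lie in $H^1(\R^2)$. The paper streamlines slightly by introducing a single auxiliary potential $V\le|\mphi(q)|^2$ independent of $q$, which removes the need to pass to a subsequence of $q_n$ (a step that, incidentally, you extract but never actually use), and it invokes $|\mphi|\le 1$ as a standing fact rather than rederiving it; these are cosmetic differences.
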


We recall the standard proof. 

\begin{proof}
    Since $|\mphi(\cdot;q)|\le 1$ everywhere on $\R^2$ for every $q$, the inequality on the right is obvious.
    
    For the other inequality, since $K$ is compact and $|\mphi(x;q)|\to 1$ as $|x|\to \infty$ for every 
    $q$, we can fix a function $V:\R^2\to \R$ and a number $R>0$ such that
    \[
    V(x)\ge \frac 12\mbox{ if }|x|\ge R, \qquad 0 \le V(x) \le |\mphi(x;q)|^2 \ \ \mbox{ for every $q\in K$ and $x\in \R^2$}.
    \]
    Then it suffices to show that there exists $c>0$ such that 
    \[
   c\| f\|_{L^2(\R^2)}^2 \le \int_{\R^2} (|\nabla f|^2+ V f^2)
    \] 
    as the desired inequalities then follow easily.
    This can be proved by an argument by contradiction. Indeed, if we fix functions $f_k$ such that
    \[
    1 = \| f_k\|_{L^2}^2 \ \ge  \ k\int_{\R^2} (|\nabla f_k|^2+ V f_k^2) \ \ge \ \frac k2\int_{\{ x : |x|\ge R\}}  f_k^2
    \]
    then it is easy to check that $f_k$ converges weakly in $H^1(\R^2)$ and strongly in $L^2(\R^2)$, after passing to a subsequence, to a nonzero limit, say $f\in H^1(\R^2)$, supported on $\{ x : |x|\le R\}$. It follows that
    \[
    0< \int_{\R^2} (|\nabla f|^2+ V f^2) \le \liminf_k \int_{\R^2} (|\nabla f_k|^2+ V f_k^2) =0,
    \]   
    a contradiction.
\end{proof}

\begin{remark}
    In fact the previous Lemma holds with a constant independent of $q\in M_N$. This follows from the above considerations combined with a symmetrization argument (replacing $V$ by its radial increasing symmetrization and $f$ by its radial decreasing symmetrization), the Hardy-Littlewood and Polya-Szego inequalities, and the observation that for any $q\in M_N$,
    \[
    \frac 18 \int_{\R^2} (1-|\mphi(q)|^2)^2 \le E(\modu(q)) = \pi N,
    \]
    and thus $\{ x\in \R^2 : |\mphi(x;q)|^2 \le \frac 12\}$ has measure bounded by some constant $C_N$ depending only on $N$. 
    \end{remark}

Let $q:\R^{d-2}\to M_N$ temporarily denote a map satisfying 
\[
\mbox{Image}(q)\subset K \mbox{ compact }\subset M_N,  \mbox{ with  }\| \pp_{\bar a} q\|_{H^L}\le \Theta<\infty
\]
and let
    \[
    \calM_q\tU (t,x_a,x_{\bar a}) =  \begin{pmatrix}
(-\Delta_{\bar a} + L[q_\ep])\tu \\ 
(-\Delta +|\mphi(q_\ep)|^2)\tA_{\bar a}\\
(-\Delta +|\mphi(q_\ep)|^2)\tA_{0}
\end{pmatrix}, \qquad q_\ep(x_{\bar a}) := q(\ep x_{\bar a}).
    \]
    Thus $\calM_q$ denotes the operator $\calM$ based around a generic map $q:\R^{d-2}\to M_N$, independent of $t$.
    Given $\tU:\R^d\to \C\times \R^{d+1}$ of the form
    \[
    \tU = \begin{pmatrix} \tu  \\ \tA_{\bar a} \\ \tA_0\end{pmatrix} \qquad\mbox { with }\qquad \tu = \binom \tPhi {\tA_a}
    \]
    where as usual $a=1,2$ and $\bar a = 3,\ldots, d$, we will write
    \[
    c_{q,\mu}(x_{\bar a}) = ( \tu(\cdot , x_{\bar a}), n_\mu(\cdot; q_\ep(x_{\bar a}))_{L^2(\R^2)}
    \]
    where $q$ now denotes a map as above. We will also write 
    \[
    Q_{q,\ell}^\perp(\tU) := \int_{\R^d} \calM_q^{\ell+1}\tU \cdot \calM_q^\ell\tU, \qquad\qquad
    Q_{q,\ell}^\top(\tU) :=  \int_{\R^{d-2}} \sum_\mu |(-\Delta_{\bar a})^\ell c_{q,\mu}|^2
    \]

The $\ell=0$ case of \eqref{ell.energy} is an immediate consequence of the next lemma.

\begin{lemma}\label{lem:ell=1}
There exist positive constants $c<C$ such that for any $\tU:\R^d\to \C\times \R^{d+1}$,
    \[
    c \| \tU \|_{H^1(\R^d)}^2  \le 
    Q_{q,0}^\perp(\tU) + Q_{q,0}^\top(\tU) 
    \le C \| \tU \|_{H^1(\R^d)}^2 .
    \]
\end{lemma}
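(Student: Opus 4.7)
The plan is to deduce both bounds from Theorem~\ref{thm:hessian} and Lemma~\ref{basic.elliptic}, combined with a fiberwise (in $x_{\bar a}$) orthogonal decomposition of the $\tu$-component. After integration by parts,
\[
Q_{q,0}^\perp(\tU) = \int_{\R^d} \Bigl( |\nabla_{\bar a} \tu|^2 + \tu \cdot L[q_\ep]\tu + |\nabla \tA_{\bar a}|^2 + |\mphi(q_\ep)|^2 \tA_{\bar a}^2 + |\nabla \tA_0|^2 + |\mphi(q_\ep)|^2 \tA_0^2 \Bigr).
\]
The upper bound is routine: since $q$ takes values in the compact set $K$, the coefficients $\mA_a(q_\ep), \mphi(q_\ep), D_{\mA_a}\mphi(q_\ep)$ appearing in $L[q_\ep]$ are uniformly bounded (Theorem~\ref{thm1.new}), so $Q_{q,0}^\perp(\tU)\le C\|\tU\|_{H^1(\R^d)}^2$. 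For $Q_{q,0}^\top(\tU)$, Cauchy--Schwarz gives $|c_{q,\mu}(x_{\bar a})|^2\le \|n_\mu(\cdot;q_\ep)\|_{L^2(\R^2)}^2\|\tu(\cdot,x_{\bar a})\|_{L^2(\R^2)}^2$, and integrating yields $Q_{q,0}^\top(\tU)\le C\|\tU\|_{H^1(\R^d)}^2$.

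For the lower bound, the decisive step is, for each $x_{\bar a}\in\R^{d-2}$, to write
\[
\tu(\cdot,x_{\bar a}) = \tu^\top(\cdot,x_{\bar a}) + \tu^\perp(\cdot,x_{\bar a}), \qquad \tu^\top(\cdot,x_{\bar a}) := \sum_{\mu=1}^{2N} c_{q,\mu}(x_{\bar a})\, n_\mu\bigl(\cdot;q_\ep(x_{\bar a})\bigr),
\]
so that $\tu^\perp(\cdot,x_{\bar a})$ is $L^2(\R^2)$-orthogonal to each $n_\mu(\cdot;q_\ep(x_{\bar a}))$. Because $L[q_\ep]n_\mu = 0$ (by \eqref{kercalL}, \eqref{L=calL} and Definition~\ref{def:nmu}) and $L[q_\ep]$ is self-adjoint, the top part drops out of the quadratic form. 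Hence Theorem~\ref{thm:hessian} gives
\[
\int_{\R^2} \tu\cdot L[q_\ep]\tu\, dx_a = \int_{\R^2}\tu^\perp\cdot L[q_\ep]\tu^\perp\, dx_a \ge \gamma\,\|\tu^\perp(\cdot,x_{\bar a})\|_{H^1(\R^2)}^2,
\]
with $\gamma=\gamma(K)>0$. Integrating over $x_{\bar a}$ and combining with the nonnegative $|\nabla_{\bar a}\tu|^2$ piece of $Q^\perp_{q,0}$ produces
\[
\gamma\|\tu^\perp\|_{L^2_{x_{\bar a}}H^1_{x_a}(\R^d)}^2 + \|\nabla_{\bar a}\tu\|_{L^2(\R^d)}^2 \le Q^\perp_{q,0}(\tU).
\]

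The remaining step is to reconstruct $\|\tU\|_{H^1(\R^d)}^2$ from the terms just controlled and from $Q^\top_{q,0}(\tU)$. Using orthonormality of $\{n_\mu(\cdot;q)\}_{\mu=1}^{2N}$ in $L^2(\R^2)$ for every $q$, and the uniform bound $\sup_{q\in K}\|n_\mu(\cdot;q)\|_{H^1(\R^2)}<\infty$ (Theorem~\ref{thm:MNmetric}(1)), I get
\[
\|\tu^\top\|_{L^2(\R^d)}^2 = Q^\top_{q,0}(\tU), \qquad \|\nabla_a\tu^\top\|_{L^2(\R^d)}^2 \le C(K)\,Q^\top_{q,0}(\tU),
\]
so $\|\tu\|_{L^2_{x_{\bar a}}H^1_{x_a}}^2\le 2\|\tu^\perp\|_{L^2_{x_{\bar a}}H^1_{x_a}}^2 + 2C(K)Q^\top_{q,0}(\tU)$, and adding $\|\nabla_{\bar a}\tu\|_{L^2}^2$ recovers $\|\tu\|_{H^1(\R^d)}^2$. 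For $\tA_{\bar a}$ and $\tA_0$, I apply Lemma~\ref{basic.elliptic} fiberwise in $x_a$ to the expression $\int_{\R^2}(|\nabla_a\tA_\alpha|^2+|\mphi(q_\ep)|^2\tA_\alpha^2)\,dx_a$, then integrate in $x_{\bar a}$ and add the $\|\nabla_{\bar a}\tA_\alpha\|_{L^2}^2$ contribution already contained in $Q^\perp_{q,0}$. I do not anticipate a genuine obstacle: the lemma is essentially a packaging of the coercivity results of Theorem~\ref{thm:hessian} and Lemma~\ref{basic.elliptic} via the top/perp splitting, with the only modest care being the verification of measurability of $c_{q,\mu}$ and $\tu^\top$, which is immediate from smoothness of $q\mapsto n_\mu(\cdot;q)$ (Theorem~\ref{thm1.new}(4)).
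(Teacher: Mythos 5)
Your proof is correct and follows the same route as the paper: integrate by parts in $Q_{q,0}^\perp$ to expose the $|\nabla_{\bar a}\tU|^2$ term plus the fiberwise quadratic forms, then apply Lemma~\ref{basic.elliptic} for the $\tA_\alpha$ components and Theorem~\ref{thm:hessian} for the $\tu$ component. The only difference is that you make explicit the fiberwise orthogonal splitting $\tu=\tu^\top+\tu^\perp$ (together with the $H^1$ bound on $\tu^\top$ coming from $\sup_{q\in K}\|n_\mu(\cdot;q)\|_{H^1}<\infty$) that is needed to pass from Stuart's coercivity to the bound $\int_{\R^2}\tu\cdot L[q_\ep]\tu\,dx_a+\sum_\mu|c_{q,\mu}|^2\ge c\|\tu(\cdot,x_{\bar a})\|_{H^1(\R^2)}^2$; the paper leaves this step implicit.
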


\begin{proof}
    The inequality on the right is rather clear and we omit the details of its verification.

    For the inequality on the left, we start from the definition \eqref{M.def} of $\calM_q$ and write 
    $-\Delta = -\Delta_{\bar a} - \Delta_a$,
    to find that
    \[
    \begin{aligned}
    &\int_{\R^d} \calM_q \tU \cdot \tU + \int_{\R^{d-2}} \sum_\mu |c_{q,\mu}|^2   \ = \  \int_{\R^d} \tU \cdot (-\Delta_{\bar a} \tU)\\
    &\hspace{5em}
    +  \int_{\R^{d-2}} \left(\sum_{\alpha= 0,3,\ldots, d}\int_{\R^2} \tA_\alpha(-\Delta_a + |\mphi(q_\ep)|^2) \tA_\alpha\, dx_a\right)dx_{\bar a}
    \\
    &\hspace{5em} + \int_{\R^{d-2}} \left( \int_{\R^2}  \tu \cdot L[q_\ep]\tu \ dx_a\,  + \sum_\mu |c_{q,\mu}|^2\right) dx_{\bar a}         .
    \end{aligned}
    \]
    The first integral on the right-hand side becomes $\| \nabla_{\bar a}\tU\|_{L^2(\R^d)}^2$ after integrating by parts. 
    For every $\alpha$,  Lemma \ref{basic.elliptic} implies that
    \[
    \int_{\R^{d-2}} \left(\int_{\R^2} \tA_\alpha(-\Delta_a + |\mphi(q_\ep)|^2) \tA_\alpha\, dx_a\right)dx_{\bar a}
    \ge c \int_{\R^{d-2}} \int_{\R^2}( |\nabla_a \tA_\alpha|^2 + \tA_\alpha^2) \, dx_a\, dx_{\bar a}.
    \]
    Similarly, Stuart's coercivity result, Theorem \ref{thm:hessian}, implies that
    \[
    \int_{\R^{d-2}} \left( \int_{\R^2}  \tu \cdot  L[q_\ep]\tu dx_a\,  + \sum_\mu |c_{q,\mu}|^2\right) dx_{\bar a} 
    \ge c \int_{\R^{d-2}} \int_{\R^2}( |\nabla_a \tu|^2 + |\tu|^2) \, dx_a\, dx_{\bar a}.
    \]
    Adding up these estimates, we obtain
    \[
    \int_{\R^d} \calM_q \tU \cdot \tU + \int_{\R^{d-2}} \sum_\mu |c_{q,\mu}|^2  \ge c\int_{\R^d} |\nabla_{\bar a}\tU|^2 + |\nabla_a \tU|^2 + |\tU|^2  \ = \ c\|\tU\|_{H^1(\R^d)}^2.
    \]
\end{proof}

\begin{lemma}\label{lem:higherest}
    for any $\ell \ge 1$, there exist positive constants $c<C$ such that for any $\tU\in H^{2\ell+1}(\R^d; \C\times \R^{d+1})$,
    \[
    \begin{aligned}
    c \| \tU \|_{H^{2\ell+1}(\R^d)}^2  \le Q_{q,\ell}^\perp(\tU) + Q_{q,\ell}^\top(\tU)  + \| \tU \|_{L^2}^2
    \le C \| \tU \|_{H^{2\ell+1}(\R^d)}^2 .   
    \end{aligned}
    \]
\end{lemma}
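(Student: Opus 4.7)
My plan is to reduce the lower bound to the $\ell=0$ case of Lemma \ref{lem:ell=1} via three ingredients: elliptic regularity for $\calM_q$, coercivity on the complement of the zero modes $\{n_\mu\}$ supplied by Stuart's theorem, and a commutator identity relating $c_{q,\mu}[\calM_q^\ell\tU]$ to $(-\Delta_{\bar a})^\ell c_{q,\mu}[\tU]$. The upper bound is routine, since $\calM_q$ is a bounded second-order operator with smooth, uniformly bounded coefficients, and $(-\Delta_{\bar a})^\ell c_{q,\mu}[\tU]$ is a functional in $\tu$ carrying at most $2\ell$ spatial derivatives of $\tu$, with $n_\mu(\cdot;q_\ep)$ acting as a Schwartz-type test function in $x_a$.

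For the lower bound I will first invoke standard elliptic regularity: $\calM_q$ has principal part $-\Delta$ on each scalar slot (the lower-order parts of $L[q_\ep]$ and $|\mphi(q_\ep)|^2$ have smooth, uniformly bounded coefficients on $\mathrm{Im}(q)\subset K$), so iterating the basic estimate $\|\tU\|_{H^{s+2}}\le C(\|\calM_q\tU\|_{H^s}+\|\tU\|_{L^2})$ and using interpolation to absorb intermediate norms yields
\[
\|\tU\|_{H^{2\ell+1}(\R^d)}^2 \le C\bigl(\|\calM_q^\ell\tU\|_{H^1(\R^d)}^2 + \|\tU\|_{L^2(\R^d)}^2\bigr),
\]
uniformly in $q$ and $\ep$. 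Applying Lemma \ref{lem:ell=1} to $V:=\calM_q^\ell\tU$ in place of $\tU$, and noting that $Q_{q,0}^\perp(V)=Q_{q,\ell}^\perp(\tU)$, gives
\[
c\,\|\calM_q^\ell\tU\|_{H^1}^2 \le Q_{q,\ell}^\perp(\tU) + \int_{\R^{d-2}}\sum_\mu |c_{q,\mu}[\calM_q^\ell\tU]|^2\,dx_{\bar a},
\]
so the task reduces to controlling the last integral by $Q_{q,\ell}^\top(\tU)$ plus a lower Sobolev norm of $\tU$.

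The main technical step is to establish by induction on $\ell$ the commutator identity
\[
c_{q,\mu}[\calM_q^\ell\tU] = (-\Delta_{\bar a})^\ell c_{q,\mu}[\tU] + R_{\mu,\ell}[\tU],\qquad \|R_{\mu,\ell}[\tU]\|_{L^2(\R^{d-2})} \le C\|\tU\|_{H^{2\ell-1}(\R^d)},
\]
with $C$ depending only on $K$ and $\Theta$. Two cancellations drive this: writing $\calM_{q,u}=-\Delta_{\bar a}+L[q_\ep]$, the operator $L[q_\ep]$ is self-adjoint on $L^2(\R^2)$ and annihilates $n_\mu(\cdot;q_\ep)$ (Theorem \ref{thm:hessian} and Definition \ref{def:nmu}), so its contribution vanishes upon testing against $n_\mu$; and because $\Delta_{\bar a}$ commutes with $\int dx_a$, the product rule gives
\[
\int_{\R^2}(-\Delta_{\bar a}W_u)\cdot n_\mu\,dx_a = -\Delta_{\bar a}\,c_{q,\mu}[W] + \int_{\R^2}\bigl(W_u\cdot\Delta_{\bar a}n_\mu + 2\nabla_{\bar a}W_u\cdot\nabla_{\bar a}n_\mu\bigr)dx_a,
\]
with both remainder terms carrying at most one more spatial derivative on $\tU$ than appeared in $W$. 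Iterating $\ell$ times and using the smoothness of $n_\mu(\cdot;q_\ep)$ in $q$ (supplied by Lemma \ref{lem:decayq}) together with the assumed bound on $\|\pp_{\bar a}q\|_{H^L}$, the accumulated remainder is controlled by $\|\tU\|_{H^{2\ell-1}(\R^d)}$ in $L^2(\R^{d-2})$. Combining the three ingredients and using the Gagliardo--Nirenberg interpolation $\|\tU\|_{H^{2\ell-1}}^2 \le \delta\|\tU\|_{H^{2\ell+1}}^2 + C_\delta\|\tU\|_{L^2}^2$ with $\delta$ small enough to absorb into the left-hand side will conclude the proof. The main obstacle I anticipate is the bookkeeping in the commutator induction: I must verify that each iteration generates only terms of differential order at most $2\ell-1$ in $\tU$, with $K$- and $\Theta$-uniform (but $\ep$-independent) prefactors, which although ultimately routine given the decay and smoothness estimates for $n_\mu$ established in Section \ref{sec:2dahm}, requires careful tracking of how derivatives distribute between $\tU$ and $n_\mu$ at each stage.
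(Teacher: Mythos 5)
Your proposal is correct and follows essentially the same route as the paper: apply Lemma \ref{lem:ell=1} to $\calM_q^\ell\tU$, establish the elliptic estimate $\|\tU\|_{H^{2\ell+1}}^2 \le C(\|\calM_q^\ell\tU\|_{H^1}^2 + \|\tU\|_{L^2}^2)$ via the decomposition $\calM_q^\ell = (-\Delta)^\ell + (\text{lower order})$, and then a commutator/telescope argument to compare $c_{q,\mu}[\calM_q^\ell\tU]$ with $(-\Delta_{\bar a})^\ell c_{q,\mu}[\tU]$ using $L[q]n_\mu=0$ and self-adjointness. The one small difference is that the paper's telescoping bound exhibits a factor $\ep$ coming from $\nabla_{\bar a}n_\mu(\cdot;q(\ep x_{\bar a}))=O(\ep)$, which lets the commutator error be absorbed outright when $\ep$ is small, whereas you absorb it via Gagliardo--Nirenberg interpolation plus a small-$\delta$ argument regardless of $\ep$; the paper in fact mentions your route as the fallback when $\ep$ is not small relative to $\delta$, so the two are interchangeable here.
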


\begin{proof}Again we will prove only the estimate on the left, since the other is straightforward.

Define
\[
c_{q,\mu}^\ell := ( (\calM_{q,u}^\ell(\cdot, x_{\bar a}) \tu, n_\mu(\cdot, x_{\bar a}))_{L^2(\R^2)},
\]
where
\[
\calM_{q,u}\tu :=
(-\Delta_{\bar a} + L[q_\ep])\tu \ = \ \mbox{ the $u$ components of }\calM_q\tU.
\]
Then applying the previous lemma with $\tU$ replaced by $\calM_q^\ell \tU$ implies that
\begin{equation}\label{higherest0}
   c \| \calM_q^\ell \tU \|_{H^1(\R^d)}^2  \le \int_{\R^d} \calM_q^{\ell+1} \tU \cdot \calM_q^\ell\tU + 
   \sum_\mu \|c_{q,\mu}^\ell\|_{L^2}^2 . 
\end{equation}
To prove the Lemma, it therefore suffices to prove the following two inequalities. 
First, there exists $C>0$ such that 
\begin{equation}\label{higherest1}
  \| \tU\|_{H^{2\ell+1}}^2 \le C \| \calM_q^\ell \tU \|_{H^1}^2 +  C \| \tU\|_{L^2}^2
\end{equation}
and second, for any $\delta>0$ and any $\mu=1,\ldots, 2N$, there exists $C$  such that 
\begin{equation}\label{higherest2}
\|c_{q,\mu}^\ell \|_{L^2}^2 \le  \delta   \| \tU\|_{H^{2\ell+1}}^2 + 2\| (-\Delta_{\bar a})^\ell c_{q,\mu} \|_{L^2}^2 + C \| \tU\|_{L^2}^2.
\end{equation}
Indeed, for a suitably small choice of $\delta$ in \eqref{higherest2}, these imply that
\begin{align*}
     \| \tU\|_{H^{2\ell+1}}^2 &\overset{\eqref{higherest1}, \eqref{higherest0}}\le
     C \int_{\R^d} \calM_q^{\ell+1} \tU \cdot \calM_q^\ell\tU + C\sum_\mu \|c_{q,\mu}^\ell\|^2_{L^2} + C \| \tU\|_{L^2}^2
     \\
     &\overset{\eqref{higherest2}}\le 
     C \int_{\R^d} \calM_q^{\ell+1} \tU \cdot \calM_q^\ell\tU + C\sum_\mu \|(-\Delta)^\ell c_{q,\mu}\|^2_{L^2} + C \| \tU\|_{L^2}^2 + \frac 12  \| \tU\|_{H^{2\ell+1}}^2
\end{align*}
from which the desired conclusion follows.

{\em Proof of \eqref{higherest1}}:
From the definition \eqref{M.def} of $\calM_q$, we see that
\[
\calM_q \tU = - \Delta \tU + L_{q,1}\tU
\]
where 
$L_{q,1}$ is a first-order differential operator with smooth coefficients. Thus
\begin{equation}\label{leadingorder}
\calM_q^\ell \tU = (-\Delta)^\ell \tU + L_{q,\ell}\tU
\end{equation}
where $L_{q,\ell}$ is a differential operator of order at most $2\ell-1$ with smooth coefficients. 
Thus
\begin{align}
    \| \tU\|_{H^{2\ell+1}}^2 
    &\le C\left( \|\nabla(-\Delta)^\ell \tU\|_{L^2}^2 + \|  \tU\|_{L^2}^2 \right)  \nonumber\\
    &\le  C\left(\|\nabla \calM_q ^\ell \tU\|_{L^2}^2 +  \|\nabla  L_{q,\ell} \tU\|_{L^2}^2 +  \| \tU\|_{L^2}^2\right)\nonumber \\
    &\le  C\left(\|\calM_q ^\ell \tU\|_{H^1}^2 +  \|\tU\|_{H^{2\ell}}^2 +  \| \tU\|_{L^2}^2\right).\label{higherest1a}
\end{align}
We now use the fact that for every $\delta>0$ there exists $C=C_\delta$ such that 
\[
\|\tU\|_{H^{2\ell}}^2 \le \delta \| \tU\|_{H^{2\ell+1}}^2  + C\|\tU\|_{L^2}^2.
\]
We deduce \eqref{higherest1} by inserting this into \eqref{higherest1a}, choosing $\delta$ small enough that $C\delta<1/2$, then rearranging.

{\em Proof of \eqref{higherest2}}:
Recall that for every $q\in M_N$, $L[q]$ is self-adjoint and $L[q]n_\mu=0$. Thus for any $\tu\in H^2(\R^2; \C\times \R^2)$ and any $\mu\in \{1,\ldots, 2N\}$,
\begin{align*}
    ( \calM_{q,u} \tu, n_\mu )_{L^2(\R^2)}
& =
( (-\Delta_{\bar a} + L[q_\ep]) \tu, n_\mu )_{L^2(\R^2)}
\\
&
=
( -\Delta_{\bar a} \tu, n_\mu )_{L^2(\R^2)}
\\
&
=
-\Delta_{\bar a}(  \tu, n_\mu )_{L^2(\R^2)}
+ 2( \nabla_{\bar a} \tu , \nabla_{\bar a} n_\mu )_{L^2(\R^2)} +
( \tu , \Delta_{\bar a} n_\mu )_{L^2(\R^2)}.
\end{align*}
Since $\nabla_{\bar a}^k n_\mu = O(\ep^k)$  for every $k$, we deduce that
\begin{equation}\label{cmu.ell.1}
\left\|    ( \calM_{q,u} \tu, n_\mu )_{L^2(\R^2)}
- (-\Delta_{\bar a}) (  \tu, n_\mu )_{L^2(\R^2)} \right\|_{H^k(\R^{d-2})}^2 \le C \e^2 \| \tu \|_{H^{k+1}(\R^d)}^2
\end{equation}
for every $k$.
Next, we write a telescoping sum
\begin{align*}
c_{q,\mu}^\ell - (-\Delta_{\bar a})^\ell c_{q,\mu}
&= ( \calM_{q,u}^\ell \tu, n_\mu )_{L^2(\R^2)}
- (-\Delta_{\bar a})^\ell (  \tu, n_\mu )_{L^2(\R^2)} 
\\&
= 
\sum_{j=0}^{\ell-1}
(-\Delta_{\bar a})^j \left( (  \calM_{q,u}^{\ell-j}  \tu, n_\mu )_{L^2(\R^2)}
-  (-\Delta_{\bar a}) ( \calM_{q,u}^{\ell - j - 1}  \tu, n_\mu )_{L^2(\R^2)}\right)
\end{align*}
and apply \eqref{cmu.ell.1} to each term in the sum (with $\tu$ replaced by $\calM_{q,u}^{\ell-j-1}\tu$) to deduce that
\begin{align*}
\|c_{q,\mu}^\ell - (-\Delta_{\bar a})^\ell c_{q,\mu}\|_{L^2(\R^{d-2})}
&\le\sum_{j=0}^{\ell-1}
C \ep^2 \| \calM_{q,u}^{\ell-j-1}\tu \|_{H^{2j+1}(\R^{d-2})}
\ \le \ C\ep^2  \| \tU \|_{H^{2\ell -1}}.
\end{align*}
This immediately implies \eqref{higherest2} if $\ep$ is sufficiently small relative to $\delta$. If not, we can argue as at the end of the proof of \eqref{higherest1} to conclude.
\end{proof}

We are now ready to present the 

\begin{proof}[Proof of Proposition \ref{prop:ellenergy}]
We will only prove the inequality 
\begin{equation}\label{ellest.restate}
\kz \| \pp_t\tU(\cdot, t)\|_{H^{2\ell}}^2 + \| \tU(\cdot, t)\|_{H^{2\ell+1}}^2 \le C \calQ(t)
\end{equation}
as the other inequality is straightforward, and we will sometimes omit the argument $(\cdot, t)$.

For $t\in [0,T/\ep)$ and we $q_\ep(t)$ to denote the map $x_{\bar a}\in \R^{d-2}\to q( \ep x_{\bar a}, \ep t)$ where $q$ is the given solution of \eqref{dAHM}. Thus we can write
\[
Q_\ell^\perp(t) = Q_{q_\ep(t), \ell}^\perp(\tU(\cdot, t)), \qquad
Q_\ell^\top(t) = Q_{q_\ep(t), \ell}^\top(\tU(\cdot, t)).
\]
It follows directly from Lemmas \ref{lem:ell=1} and \ref{lem:higherest} that
\begin{equation}\label{H2ell+1}
\| \tU(\cdot, t) \|_{H^{2\ell +1}}^2 \le
C \left( Q_\ell^\perp(t) +Q_\ell^\top(t) + Q_0^\perp(t) + Q_0^\top(t) \right)
\end{equation}

We will also write $\calM_{q_\ep(t)}^\ell = (-\Delta)^\ell + L_{q_\ep(t),\ell}$ where $L_{q_\ep(t),\ell}$ is a differential operator of order at most $2\ell -1$ with coefficients depending on $q_\ep$.
With this notation,
\begin{align*}
\pp_t(\calM_{q_\ep(t)}^\ell \tU )
&= \pp_t (-\Delta)^\ell \tU + \pp_t(L_{q_\ep(t),\ell}\tU) \\
&= \pp_t (-\Delta)^\ell \tU + (\pp_t L_{q_\ep(t),\ell})\tU + L_{q_\ep(t),\ell}\pp_t\tU.
\end{align*}
It follows that
\[
\|(-\Delta)^\ell \pp_t \tU\|_{L^2(\R^d)}^2
\le
C \left( \| \pp_t\calM^\ell \tU\|_{L^2}^2 + \| \pp_t \tU\|_{H^{2\ell -1}}^2 + \| \tU\|_{H^{2\ell -1}}^2\right). 
\]
We combine this with the standard inequalities
\[
\| \pp_t \tU\|_{H^{2\ell}}^2 \le \| (-\Delta)^\ell \pp_t\tU\|_{L^2}^2 
+ C \| \pp_t\tU\|_{L^2}^2
\]
and, for any $\delta>0$,
\[
\| \pp_t \tU\|_{H^{2\ell -1}}^2 \le \delta \| \pp_t \tU\|_{H^{2\ell}}^2 + C_\delta \| \tU\|_{L^2}^2.
\]
Choosing $\delta>0$ small enough and rearranging, we deduce that
\[
\| \pp_t \tU\|_{H^{2\ell}}^2 \le
C \left( \| \pp_t\calM^\ell \tU\|_{L^2}^2 + \| \pp_t \tU\|_{L^2}^2 + \| \tU\|_{H^{2\ell -1}}^2\right). 
\]
Thus
\[
\kz \| \pp_t\tU\|_{H^{2\ell}}^2 + \| \tU\|_{H^{2\ell+1}}^2 \le
C\kz \left( \| \pp_t\calM^\ell \tU\|_{L^2}^2 + \| \pp_t \tU\|_{L^2}^2\right)
+ C\| \tU\|_{H^{2\ell +1}}^2. 
\]
The conclusion \eqref{ellest.restate} follows by combining this with \eqref{H2ell+1} and recalling the definition \eqref{calQ.def} of $\calQ_\ell(t)$.
\end{proof}

\section{proof of Theorems \ref{thm:1} and \ref{thm:2}} \label{sec:mainproofs}

In this section we prove our main theorems. We first restate Theorem \ref{thm:1}.

\begin{thm1full}
Let $q:\R^{d-2}\times (0,T)\to M_N$ be a solution of \eqref{dwm} satisfying \eqref{q.compact0} and \eqref{q.decay} for $L=2d+11$,
and assume that $\Theta<\infty$, where $\Theta$ is defined in \eqref{Theta.def0}.

Let $\aU:(0,T_0/\ep)\times \R^d\to \C\times \R^{d+1}$ denote the approximate solution constructed in  Proposition \ref{prop:approx.sol}.

Then there exist constants $\ep_0>0$ and $0<c_0<c_10$ such that, setting $\ell = \lfloor d/4\rfloor+2$,  if  $0<\ep<\ep_0$ and $(\tU^0_0, \tU^0_1)\in H^{2l+1}\times H^{2l}(\R^d; \C\times \R^{d+1})$ satisfies
\beq\label{final.data}
\| \tU^0_0\|_{H^{2l+1}(\R^d)}  + \kz \|\tU^0_1\|_{H^{2l}(\R^d)} < c_0\ep^2
\eeq
as well as the compatibility and gauge conditions appearing in \eqref{modified.data} as well as the orthogonality condition appearing in \eqref{cmu.initialdata},
then there exists a map $\tU:(0,T_0/\ep)\times \R^d\to \C\times \R^{d+1}$ such that 
\[
\aU+\tU\mbox{ solves \eqref{dAHM}} , \quad \qquad \ (\tU, \kz\pp_0\tU)_{t=0} = (\tU^0_0, \kz\tU^0_1)
\]
and
\beq\label{final.tU}
\| \tU\|_{L^\infty_{T_0/\ep} H^{2\ell+1}} + \kz \|\pp_0 \tU \|_{L^\infty_{T_0/\ep} H^{2\ell}} < c_1\ep^2.
\eeq
The solution $U_\ep := \aU +\tU$ of \eqref{dAHM} thereby constructed satisfies
\begin{equation}\label{finalerror}
\begin{aligned}
\| \Phi_\ep - \mphi(q_\ep)\|_{L^\infty_{T_0/\ep}W^{2,\infty}(\R^d)} 
+\kz \|\pp_t( \Phi_\ep - \mphi(q_\ep))\|_{L^\infty_{T_0/\ep}W^{1,\infty}(\R^d)} 
&\le C \ep^2, \\
\| A_{\ep a} - \mA_a(q_\ep)\|_{L^\infty_{T_0/\ep}W^{2,\infty}(\R^d)} 
+ \kz \| \pp_t(A_{\ep a} - \mA_a(q_\ep))\|_{L^\infty_{T_0/\ep}W^{1,\infty}(\R^d)} 
&\le C \ep^2\\
\|  A_{\ep \alpha} - \chi_{\pp_\alpha q_\ep}(q_\ep) \|_{L^\infty_{T_0/\ep}W^{2,\infty}(\R^d)}
+\kz \| \pp_t (A_{\ep \alpha} - \chi_{\pp_\alpha q_\ep}(q_\ep)) \|_{L^\infty_{T_0/\ep}W^{1,\infty}(\R^d)}
&\le C \ep^2
\end{aligned}    
\end{equation}
for $a=1,2$ and $\alpha = 0,3,\ldots, d$.


\end{thm1full}

\begin{remark}It is easy to see that there exist initial data satisfying conditions \eqref{final.data}, \eqref{modified.data} and \eqref{cmu.initialdata}.

Indeed, if $\kz=0$ we can simply take $\tU^0_0=0$.

In the hyperbolic case $\kz>0$, we must prescribe $\tU^0_0$ and $\tU^0_1$. We can choose
\beq\label{tdata1}
\tPhi^0_0=\tPhi^0_1 =0, \qquad \tA^0_i = \pp_0\tA^0_i = 0\ \ \mbox{ for }i=1,\ldots, d. 
\eeq
Then the requirement that $S_0[\aU+\tU]=0$ forces us to choose $\tA_0$ satisfying
\beq\label{tdata2}
0 = S_0[\aU +\tU]\Big|_{t=0} = (-\Delta + |\aPhi(\cdot, 0)|^2)\tA^0_0 + S_0[\aU](\cdot, 0).
\eeq
We fix $\pp_0\tA_0|_{t=0} = \tA^1_0$ by specifying that
\beq\label{tdata3}
G[\aU](\tU)\Big|_{t=0} \ = \ 
\kz \tA^1_0 + \ep\ko\tA^0_0 = 0.
\eeq
Next, recall that condition \eqref{cmu.initialdata} states that $c_\mu = \kz \pp_t c_\mu = 0$ when $t=0$, where
\[
c_\mu(x_{\bar a},t) = ( \tu , n_\mu )_{L^2(\R^2_{(x_{\bar a},t)})}
\]
This is clear for the above choice of initial data, which implies that $\tilde u = \kz \pp_t\tilde u = 0$ at $t=0$.

Finally\eqref{tdata1}, \eqref{tdata2}, our estimate \eqref{good1} of $\|S_0[\aU]\|_{H^{2\ell+1}}$, and elliptic estimates imply that
\[
\| \tU^0_0\|_{H^{2\ell+1}}^2 = 
\| \tA_0^0\|_{H^{2\ell+1}}^2 \le C \|S[\aU](\cdot, 0)\|_{H^{2\ell}}^2 \le C\ep^9
\]
and then we see from \eqref{tdata3} that
\[
\kz \|\tU^0_1\|_{H^{2\ell}}^2 = \kz \|\tA^0_1\|_{H^{2\ell}}^2 = \frac{(\ep \ko)^2}{\kz} \|\tA^0_0 \|_{H^{2\ell}}^2   \le C \ep^{11}.
\]

\end{remark}

\begin{proof}
For $\kz\ge 0$ let $X_{\kz}$ denote the space
\[
X_{\kz} := 
\begin{cases}
    C^0([0,T_0/\ep) ; H^{2\ell+1}) &\mbox{ if }\kz=0\\
    C^0([0,T_0/\ep) ; H^{2\ell+1})
    \cap     C^1([0,T_0/\ep) ; H^{2\ell}) &\mbox{ if }\kz>0
\end{cases}
\]
where $H^s$ denotes $H^s(\R^d;\C\times \R^{d+1})$.
This is a Banach space with the norm $\vvvert \, \cdot\, \vvvert_{2\ell}$ defined in \eqref{nnnorm}:
\[
\vvvert U \vvvert_{2\ell} := \sup_{0<t<T_0/\ep} \left( \kz \| \pp_t U(\cdot, t)\|_{H^{2\ell}(\R^d)} +
\| U(\cdot, t)\|_{H^{2\ell+1}(\R^d)} \right).
\]
For $\tV\in X_{\kz}$ we define $F(\tV) := \tU$, where
$\tU$ solves the modified system \eqref{aux.system}, with $\calN(\tU)$ replaced by $\calN(\tV)$, that is:
\[
(\kz \pp_{tt} + \ep \ko \pp_t)\tU + \calM\tU + P\tU = - S[\aU] - \calN(\tV),
\]
with initial data $(\tU,  \kz\pp_t\tU)|_{t=0} = (\tU^0_0, \kz\tU^0_1)$.
It follows from Theorem \ref{thm:linearest} and our assumed bounds \eqref{final.data} on the initial data that $F$ maps $X_{\kz}$ to $X_{\kz}$, with the estimate
\begin{align*}
\vvvert F(\tV) \vvvert_{2\ell}^2 = \vvvert \tU \vvvert_{2\ell}^2
\le C c_0^2\ep^4+\frac C {\ep^4}\sup_{0<t<T_0/\ep} \| S[\aU](\cdot, t)+ \calN(\tV)(\cdot, t) \|_{H^{2\ell}}^2.    
\end{align*}
Recall from  \eqref{good1} that 
\[
\sup_{0<t<T_0/\ep} \| S[\aU](\cdot, t) \|_{H^{2\ell}}^2 \le C \ep^9.
\]
Putting these together and recalling \eqref{aux.props2}, we find that
\[
\vvvert F(\tV)\vvvert_{2\ell}^2
\le
C c_0^2\ep^4 + C \ep^{5} + C\ep^{-4} \vvvert \tV \vvvert_{2\ell}^4
\]
if $\vvvert \tV \vvvert_{2\ell}\le 1$.

It follows that there $c_0, c_1, \ep_0>0$ such that if we define
\[
\calB := \{ \tU\in X_{\kz} : \vvvert \tU\vvvert_{2\ell} \le c_1\ep^2 \}
\]
then $F$ maps $\calB$ into $\calB$ whenever $0<\ep<\ep_0$.

We claim that in addition, $F:\calB\to \calB$ is a contraction mapping with respect to the natural $\vvvert \,\cdot\,\vvvert_{2\ell}$ norm, and after possibly shrinking $c_0, c_1, \ep_0$. To verify this, we fix
$\tV_1, \tV_2\in \calB$, and let $\tW := F(\tV_1) - F(\tV_2)$.
Then $\tW$ solves
\[
(\kz \pp_{tt} + \ep \ko \pp_t)\tW + \calM\tW + P\tW = \calN(\tV_2) - \calN(\tV),
\]
with $\tW(\cdot, 0)= 0$ and (if $\kz>0$) $\pp_t\tW(\cdot, 0)=0$. So Theorem \ref{thm:linearest} implies that
\[
\vvvert \tW \vvvert_{2\ell}^2 \le C\ep^{-4}\sup_{0<t<T_0/\ep} \| \calN(\tV_2)(\cdot, t) - \calN(\tV)(\cdot, t)\|_{H^{2\ell}}^2.
\]
Appealing to \eqref{aux.props2}, we deduce that
\[
\vvvert \tW \vvvert_{2\ell} \le C \ep^{-4} ( \vvvert \tV_1\vvvert_{2\ell} + \vvvert \tV_1\vvvert_{2\ell})^2 \vvvert \tV_1 - \tV_2\vvvert_{2\ell}^2
\]
Since $\tV_1,\tV_2\in \calB$, we conclude that (after possibly shrinking $c_1$, then adjusting $c_0,\ep_0$ as necessary)
\[
\vvvert F(\tV_1) - F(\tV_2) \vvvert_{2\ell}^2 = \vvvert \tW \vvvert_{2\ell}^2 \le \frac 12\vvvert \tV_1-\tV_2 \vvvert_{2\ell}^2.
\]
Thus the Contraction Mapping Principle implies that there is a unique $\tU\in \calB$ such that $F(\tU) = \tU$. This says exactly that $\tU$ solves \eqref{aux.system} with the required initial data, and hence that $\aU + \tU$ solves  \eqref{dAHM}, with
$\vvvert \tU \vvvert_{2\ell} \le c_0\ep^2$, which is exactly \eqref{final.tU}.

Finally, to prove \eqref{finalerror}, recall that the approximate solution 
$\aU = U_{m,\ep}$ has the form $\aU=U^\ep_0 +\ep^2 U^\ep_m$, where $U^\ep_0$ is defined in \eqref{U0.form}, so the actual solution $U = \aU+ \tU$ satisfies
\beq\label{addlabel}
\| U - U^\ep_0 \|_{L^\infty_{T_0}W^{2,\infty}(\R^d)} \le  \ep^2 \| U^\ep_m\| _{L^\infty_{T_0}W^{2,\infty}(\R^d)} + \| \tU\|_{L^\infty_{T_0}W^{2,\infty}(\R^d)} .
\eeq
We have chosen $\ell$ such that $2\ell+1> \frac d2+2$, so $H^{2\ell+1}(\R^d)\hookrightarrow W^{2,\infty}(\R^d)$, and  \eqref{finalerror} follows from  \eqref{good3} and \eqref{final.tU}. The estimates of $\| \pp_t(U - U^\ep_0 )\|_{L^\infty_{T_0}W^{1,\infty}(\R^d)} $ follow in exactly the same way.
\end{proof}

Theorem \ref{thm:2} has a full version, exactly along the same lines as Theorem \ref{thm:1} above. We now present its proof.

\begin{proof}[Proof of Theorem \ref{thm:2}]

The present theorem differs from Theorem \ref{thm:1} in two ways.


 First, we consider \eqref{dwmp} on $\T^{d-2}_R\times (0,T)$, and similarly we study \eqref{dAHM} on $\R^2\times \T^{d-2}_{R/\ep}\times (0,T_0/\ep)$. This has essentialy no impact on our arguments.
At various points in the proof of Theorem \ref{thm:1} we use PDE estimates 
for functions whose spatial domain is $\R^{d-2}$ or $\R^d$. The introduction of $y_{\bar a}$-periodicity in Theorem \ref{thm:2} requires versions of these estimates for which the spatial domain is changed to $\T^{d-2}_R$ or $\R^2\times \T^{d-2}_R$. 
These hold, with the proofs essentially unchanged from the non-periodic case. (There are some places in which arguments could be simplified in the $y_{\bar a}$-periodic setting, as a consequence of the fact that $L^\infty\subset L^2$ on $\T^{d-2}_R$.)

The other new feature compared to Theorem \ref{thm:1},  the extension to the near-critical case $\lambda = 1+\kt\ep^2$,  introduces a few changes. To construct approximate solutions, we start by changing variables as in Section \ref{sec:3.1}.  Defining $\wS^\ep[\wU]$ as in \eqref{wSe.def}, it remains the case that
$\wS^\ep = \wS^1 + \ep^2\wS^1$ as in \eqref{eq:wSe1}, and $\wS^0$ does not change at all, see \eqref{eq:wSe2}. The nonzero $\kt$ appears only in $\wS^1$, which becomes 
\begin{equation}\label{eq:wSe3kt}
\wS^1[\wU] =
\begin{pmatrix}
(\kz \wD_0\wD_0 + \ko \wD_0 - \wD_{\bar a}\wD_{\bar a})\Phi + \frac {\kt}{2}(|\Phi|^2-1)\Phi \\
(\kz\wpp_0+\ko)\wF_{0a} - \wpp_{\bar a} \wF_{\bar a a} \\
(\kz\wpp_{0}+\ko) \wF_{0  \bar a}  - \wpp_{\bar b}  \wF_{\bar b \bar a} \\
-\wpp_{\bar b}  \wF_{\bar b 0 }
\end{pmatrix}.
\end{equation}
It follows that $\ell^1_\phi[\wU]$, see \eqref{eq:wSe6}, \eqref{eq:wSe7}, changes to 
\begin{equation}\label{eq:wSe7a}
\begin{aligned}
\ell^1_\phi[\wU](\wtU) &= (\kz \wD_{A_0}\wD_{A_0} + \ko \wD_{A_0} - \wD_{A_{\bar a}}\wD_{A_{\bar a}})\tPhi  + \frac \kt 2 ((|\Phi|^2-1)\tPhi +
2 (\Phi, \tPhi) \Phi)
\\
&\hspace{1em}\ \  
-
 (\kz\wpp_0 \wtA_0 + \ko \wtA_0 -  \wpp_{\bar a}\wtA_{\bar a} )i\Phi -2i(\kz\wtA_0 \wD_0\Phi  - \wtA_{\bar a}\wD_{\bar a}\Phi) .
\end{aligned}
\end{equation}
Neither $\ell^0[\wU]$, see \eqref{eq:wSe5},  nor the other components of $\ell^1[\wU]$ are affected.

Lemma \ref{lem:S1U0} is the point at which the proof of Theorem \ref{thm:1} breaks down when $\kt\ne0$. The lemma establishes estimates for  $\wS^1[\wU^0]$, including in $L^\infty_T H^{L-1}(\R^d)$. These are simply not true when $\kt \ne 0$, since $\frac \kt 2(|\Phi|^2-1)\Phi$ does not decay as $|y_{\bar a}|\to \infty$.  However, the proof carries over with no change to show that in the $y_{\bar a}$-periodic setting, the needed estimate holds:
\beq\label{S1U0.est.kt}
    \| \wS^1[\wU^0] \|_{L^\infty_T X^{L-1,j}_{\gamma}(\R^2\times \T^{d-2}_R)} +   \kz  \| \wpp_0\wS^1[\wU^0] \|_{L^\infty_T X^{L-2,j}_{\gamma}(\R^2\times \T^{d-2}_R))}  \le C(\Theta).
    \eeq
 for every $\gamma\in (0,1)$.

The necessity of adding a potential to the geometric evolution equation \eqref{dwmp} in the near-critical case arises when we revisit Lemma \ref{lem:J1}. Noting that $\wU^0_\phi = \mphi(q)$, 
it is convenient to write
\[
\wS^1_u[\wU^0] =  
\wS^1_{u}[\wU^0]\Big|_{\kt=0} + 
 \frac {\kt}{2}
 \begin{pmatrix}
(|\mphi(q)|^2-1)\mphi(q) \\
0
\end{pmatrix}
\]
Thus
\[
(\wS^1_u[\wU^0] , \tilde n_\mu)_{L^2(\R^2)} = (\wS^1_{u}[\wU^0]\Big|_{\kt=0} , \tilde n_\mu)_{L^2(\R^2)}
+
\frac {\kt}{2}
( \begin{pmatrix}
(|\mphi(q)|^2-1)\mphi(q)  \\
0
\end{pmatrix}, \tilde n_\mu )_{L^2(\R^2)}
\]
The content of Lemma \ref{lem:J1} is exactly that
\[
(\wS^1_u[\wU^0] \Big|_{\kt=0} , \tilde n_\mu)_{L^2(\R^2)} = (\kz \nabla_0 \pp_0q +\ko \pp_0 q - \nabla_{\bar a}\pp_{\bar a}q , \pp_{\mu})_g.
\]
And writing $V_0(q) := \frac 18(|\mphi(q)|^2-1)^2$, 
\begin{align*}
\left( \begin{pmatrix}
(|\mphi(q)|^2-1)\mphi(q)  \\0
\end{pmatrix}, \tilde n_\mu \right)_{L^2(\R^2)}
&= ((|\mphi(q)|^2-1)\mphi(q), \calD_\mu\mphi(q))_{L^2(\R^2)} \\
&= 2 \pp_\mu V_0(q) \ = \ 2 (\nabla_g V_0(q) , \pp_\mu)_g
\end{align*}
Thus 
\[
(\wS^1_u[\wU^0], \tilde n_\mu)_{L^2(\R^2)} = (\kz \nabla_0 \pp_0q +\ko \pp_0 q - \nabla_{\bar a}\pp_{\bar a}q + \kt \nabla_g V_0(q) , \pp_{\mu})_g.
\]
It follows that the orthogonality condition holds if and only if the modified geometric evolution equation \eqref{dwmp} is satisfied.

The first step in the construction of the approximate solution as carried out in Section \eqref{sec:3.3} is otherwise unchanged, noting that the algebraic identity in Lemma \ref{lem:identity} holds for all values of the coupling constant $\lambda$.

Similarly, the induction argument in Section \ref{sec:3.4} is essentially unchanged. In the proof of Lemma \ref{lem:Vek}, the new term in
$\ell^1[\wU]$ (compare \eqref{eq:wSe7a} and \eqref{eq:wSe7}) leads us to change \eqref{ell1phi} to
\beq\label{ell1phi.2}
\ell^1_\phi[\wU^0](c^{m+1}_\mu N_\mu) =  [(\kz \wpp_0\wpp_0 +\ko\wpp_0 - \wpp_{\bar a}\wpp_{\bar a} - \frac \kt 2)c^{m+1}_
\mu] n_{\mu,\phi} + \mbox{Tri} 
\eeq
with modifications to the trilinear term that do not affect its character or the estimate \eqref{TriZ.est}. Similarly the details of the linear operators $\Lambda^1_j$ change in a way that does not fundamentally alter the estimate \eqref{Lambda1j.est}. Thus Lemma \ref{lem:Vek}, suitably modified,  continues to hold. The rest of the construction of approximate solutions may be carried out exactly as in the final step of the proof of Proposition \ref{prop:approx.sol}

Having constructed an approximate solution, we proceed to formulate the modified system 
\[
(\kz \pp_0^2+ \ep\ko\pp_0)\tU + \calM\tU + P\tU = -\calN(\tU) - S[\aU]    ,.
\]
where we later decompose $P = P_1+P_2$.
We impose the same gauge condition \eqref{dynamic.gauge} as in the critical case. 
Since 
\[
S_\phi(U)\Big|_{\kt\ne 0} = S_\phi(U)\Big|_{\kt = 0} + \frac{\ep^2 \kt} 2 (|\Phi|^2-1)\Phi,
\]
the new terms that arise when $\kt\ne 0$ show up only in the nonlinear terms $\calN$, which however continue to satisfy \eqref{aux.props2}, and in the linear operator $P_2$, which becomes a little more complicated but continues to satisfy \eqref{P2.est}.

Finally, while the linear estimates in Theorem \ref{thm:linearest} rely strongly on the explicit form of $P_1$, see \eqref{P1.def}, the only properties they use of $P_2$ and $\calN$ are the estimates \eqref{P2.est} and \eqref{aux.props2}, which as noted remain true when $\kt\ne 0$. 

With these ingredients in place, the proof of Theorem \ref{thm:2} can be concluded by exactly the same argument, using the contraction mapping principle, as the proof of Theorem \ref{thm:1}.
\end{proof}

\section{reconnection}\label{sec:reconnection}

In this section we discuss  vortex filament collisions and reconnection in $d=3$
dimensions, leading up to the proof of Theorem \ref{thm:3}. For simplicity we will restrict our discussion to the case of $N=2$ filaments, in which our main theorem provides solutions of \eqref{dAHM} starting from a solution $q:I\times \R \to M_2$ of the geometric evolution equation \eqref{dwm}, where $I\subset \R$ is an interval.

Recall from Theorem \ref{thm1.new} that the $M_2 \cong \C^2$, with the bijections
\[
\begin{aligned}
(q_1, q_2)\in \C^2 
&\quad \longleftrightarrow\quad
\mbox{ the polynomial }p(z;q) := z^2+ q_1 z + q_2 \\
&\quad \longleftrightarrow\quad\mbox{ the roots (repeated according to multiplicity) of $p(\cdot; q)$}\\
&\quad\longleftrightarrow\quad
u(\cdot; q)=
\binom \mphi \mA\in M_2 
\mbox{ such that }|\mphi(z;q)| \approx \frac {|p(z;q)|}{\sqrt {1+|p(z;q)|^2}}.
\end{aligned}
\]
The last condition says that $\mphi(\cdot; q)$ vanishes at the precisely same points, and to the same order, as $p(\cdot;q)$.
It is customary to refer to the zeroes of $\mphi(\cdot;s)$, {\em i.e.} the roots of $p(\cdot;q)$, as the {\em vortex centers} for $u(\cdot; q)$.

With this in mind, given a map $q:\R\to \C^2 \cong M_2$ and $U^0:\R^3\to \C\times \R^3$ as appearing in the leading-order term in our approximate solution, that is,
\begin{equation}\label{Uzero.def}
U^0(y_a, y_3) =
\begin{pmatrix}
 \modu(y_a ; q(y_{3}))\\
\chi_{q'(y_{3})}(y_a) 
\end{pmatrix}, \qquad\quad\mbox{ with } \quad\modu(y_a ; q(y_{3})) = \binom{\mphi}{\mA}(y_a, q(y_3))
\end{equation}
we define 
\[
\mbox{ the {\em vortex centerlines} for $U^0$ are } \ \big\{ (z, y_3)\in \C\times \R : z\mbox{ is a root of } p(\cdot; q(y_3))  \big\}
\]
This is exactly the zero locus of the Higgs field $\mphi$ of the leading-order approximate solution $U^0$.

More generally, if $U = (\Phi, A_a, A_{3}):\R^3\to \C\times \R^3$ is a map such that $0$ is a regular value of $\Phi$, 
and for every $y_3$, 
\beq\label{centerline.need}
\Phi(\cdot, y_3)\in \calE_2,\mbox{ and $\Phi(\cdot, y_3)$ has exactly two zeros, each of degree $1$}
\eeq
then we define
\[
\mbox{ the {\em vortex centerlines} of $U$ are the set }\{ y\in \R^3 : \Phi(y)=0\}.
\]
Under our assumptions, this set consists of two curves, both graphs over the vertical $y_3$ axis.
We will not need to define vortex centerlines for more general $U$, since the solutions we have constructed above generally satisfy the above hypotheses. 

Informally, reconnection is a change in the connectivity of vortex centerlines. 
We illustrate this with a couple of examples, before presenting the proof of Theorem \ref{thm:3}.

     \begin{example}\label{example1}
 Let $q = (q_1, q_2): (0,T)\times \R \to \C^2\cong M_2$ be a solution of \eqref{dAHM} with $\kz>0$,
 with initial conditions\footnote{
 The assumption that $\kz>0$ allows us to prescribe the initial velocity in this example.}
 \begin{align*}
 q|_{t=0} 
 &= (q_1, q_2)|_{t=0} = (q^0_1, q^0_2)\in H^k(\R; \C^2), 
 \\
 \pp_{y_0}q|_{t=0} 
 &= (\pp_{y_0}q_1, \pp_{y_0}q_2)|_{t=0} = (q^1_1, q^1_2)\in H^{k-1}(\R; \C^2), 
\end{align*}
 such that
 \begin{align*}
&q^0_1(y_3) = q^1_1(y_3) = 0\mbox{ for all }y_3, 
\\
&q^0_2(y_3)=a y_3, \qquad
\qquad 
q^1_2(y_3) = ib \qquad\mbox{ with }a, b>0 \qquad\mbox{ for $y_3$ near $0$.}
 \end{align*}
One can check\footnote{This is most easily seen in new coordinates $(P,Q) := ( q_1, q_1^2-4q_2)$ on the moduli space. Then the vortex centers corresponding to $(P,Q)$ are the points $\frac 12(P\pm \sqrt{Q})$. It follows from the invariance of the Abelian Higgs action with respect to translations on $\R^2$ that the geometry of $M_2$ is invariant with respect to translations $(P,Q) \mapsto (P+v, Q)$ and moreover, see \cite{Samols}, that in these coordinates the metric takes the form
$g = (dP_1^2 + dP_2^2) + f(Q) (dQ_1^2+dQ_2^2)$.  Using this to compute the Christoffel symbols, it follows that when $q$ is written in these coordinates, the $P$ and $Q$ components decouple, and the $P$ components solve the linear equation
\[
(\kz \pp_{y_0}^2+\ko \pp_{y_0} - \pp_{y_3}^2)P = 0
\]
with initial conditions $(P, P_t)=(0,0)$.
This implies the claim.}
that the initial conditions for $q_1$ imply that $q_1(t,y_3)=0$ everywhere.
We can also solve the equation backwards in time to obtain a wave map $q:(-T,T)\times \R\to M_2$.
By a Taylor expansion around $t=0$, we find that 
this solution satisfies
\[
q_1(t,y_3)=0, \qquad q_2(t,y_3) = a y_3 + ibt + O(|t|^2+ |y_3|^2)
\]
and if $(y_3, t)$ is small then 
\beq\label{Im.change}
\Im(q_2(y_3,t))<0\mbox{ if }t<0, \qquad \qquad
\Im(q_2(y_3,t))>0\mbox{ if }t>0,
\eeq
Thus $q_2(\cdot, t)$ is qualitatively as pictured in Figure \ref{fig:two} below for small $t$, shown for concreteness with $a=b=1$. The plots show the curve in the modlui space for $-0.1<y_3<0.1$.

\begin{figure}[ht!]
\centering
\includegraphics[scale=1.5]{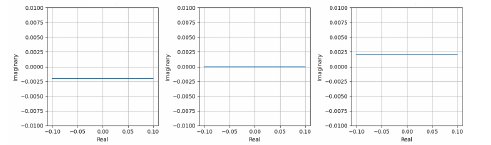}
\caption{Curves in the moduli space for different values of $t$: On left, $t=-0.002$; center, $t=0$; right, $t=0.002$.}
\label{fig:two}
\end{figure}

Since $q_1=0$, the roots of the $p(z; q(t,y_3)) = z^2+ q_2(t,y_3)$ are just $\pm \sqrt{- q_2(t, y_3)}$. Thus each curve in the moduli space yields a pair of vortex curves in physical space, as illustrated in Figure \ref{fig:three} below

\begin{figure}[ht!]
\centering
\includegraphics[scale=1.5]{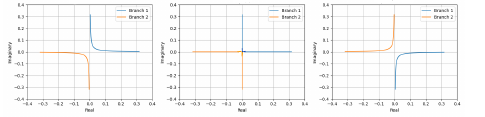}
\caption{Vortex curves for the leading-order approximate solution $U^0$, see \eqref{Uzero.def},corresponding to the moduli space curves shown above: On left, $t=-0.002$; center, $t=0$; right, $t=0.002$.}
\label{fig:three}
\end{figure}

Note that in this example, if $\delta>0$ fixed and $0< t\ll \delta$, then $q_2(t, -\delta)$ is  very near the negative real axis, and
$q_2(t,\delta )$ is very near the positive real axis. Thus, the corresponding vortex centers $\pm\sqrt{-q_2}$ are close to the real and imaginary axes respectively. The vortex curves connect the centers at $y_3 = -\delta$ to those at $y_3=\delta$, and the way this connection occurs changes at $t=0$. This reflects the change of sign \eqref{Im.change} of the imaginary part of the moduli space curve at $t=0$: if $\Im(q_2(t,y_3))<0$, then $\sqrt{-q_2(t,y_3)}$ lies in the first or third quadrant, and if $\Im(q_2(t,y_3))>0$, then $\sqrt{-q_2(t,y_3)}$ lies in the second or fourth quadrant.

The curves shown in Figure \ref{fig:three} are the projections onto the horizontal space of the 3d vortex centerlines as defined above. The corresponding vortex centertlines are pictured in Figure \ref{fig:four} below.

Informally, this change in connectivity is what is meant by ``reconnection". Although the  map $q$ will not be exactly as pictured, the occurence of reconnection in this example is a consequence only of \eqref{Im.change}, which holds for the $q$.

\begin{figure}[ht!]
\centering
\includegraphics[scale=1.5]{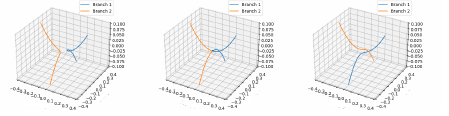}
\caption{3d vortex centerlines for $U^0$ corresponding to the moduli space curves shown above: On left, $t=-0.002$; center, $t=0$; right, $t=0.002$.}
\label{fig:four}
\end{figure}
\end{example}

In the above example, for $|t|$ small we can (arbitrarily) choose to say that $z_1(\cdot, t)$ is the vortex centerline (that is, the branch of $\sqrt{-q_2(\cdot, t)}$\,) starting with negative real part 
at $(t, -\delta)$. If $\delta$ is small enough, this is well-defined for all $0< |t| \le \delta$ and $|y_3|<\delta$, and
\beq\label{rct0}
\lim_{t\nearrow 0} z_1(t, y_3)\ne \lim_{t\searrow 0}z_1(t, y_3) \quad \mbox{ for } y_3>0.
\eeq
Indeed, it is clear from Figure \ref{fig:three} that these limits lie on the negative and positive imaginary axes, respectively. Property \eqref{rct0} is characteristic of reconnection at time $t_0$ and height $y_3$.

It is convenient to reformulate condition \eqref{rct0}. For small $\delta>0$, assume that
\begin{itemize}
\item $I = [\tau_0,\tau_1]$ is an interval, and
$\gamma: I\to \{(t, y_3): |t|\le\delta, |y_3|\le \delta\}$ is continuous
\item $\gamma(\tau) \ne (0,0)$ for $\tau\in I$, and
\item $z_1$ is a root of $p(\cdot; q(\gamma(\tau_0)))$
\end{itemize}
Then there is a unique continuous map $\zeta[\gamma, z_1]: I\to \C$ such that 
\begin{equation}\label{zeta.def}
\zeta[\gamma, z_1](\tau_0)
= z_1,\qquad \mbox{and }\quad \zeta[\gamma, z_1](\tau) \mbox{ is a root of $p(\cdot; q(\gamma(\tau)))$ for all $\tau\in I$}.
\end{equation}
In fact, $\zeta[\gamma, z_1](\tau)$ is just the continuous branch of $\sqrt{-q_2(\gamma(\tau))}$ that starts at $z_1$.

Moreover, \eqref{rct0} is equivalent to the condition that for the closed loop $\gamma$ pictured in Figure \ref{fig:five}, starting and ending at the point $(0,y_3)$ with $y_3>0$,
\beq\label{rcloop}
z_1 = \zeta[\gamma, z_1](\tau_0)\ne \zeta[\gamma, z_1](\tau_1).
\eeq
when $\ep$ (indicated in the figure) is small enough.

\begin{figure}[ht!]
\centering
\includegraphics[scale=0.4]{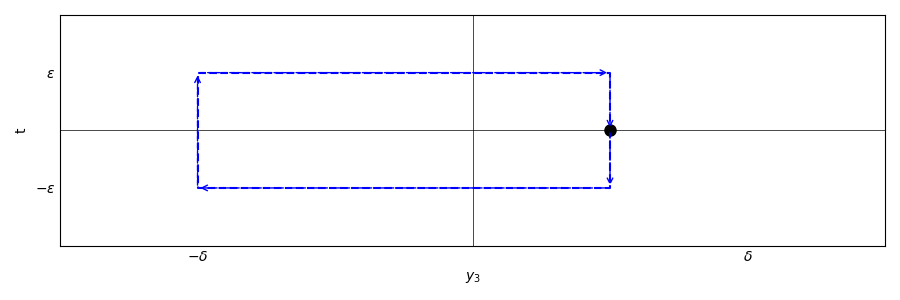}
\caption{The curve $\gamma(\tau), \tau_1\le \tau \le \tau_2$ in \eqref{rcloop}, starting at the marked spot at $\tau = \tau_1$ and ending at the same spot at $\tau = \tau_2$.}
\label{fig:five}
\end{figure}

Note also that if $\tilde \gamma:\tilde I\to M_2$ is any other path that starts and ends at $\gamma(\tau_0)$ and circles the origin once in a clockwise direction,
then
\[
\zeta[\gamma, z_1](\tau_0) - \zeta[\gamma, z_1](\tau_1) = 
\zeta[\tilde \gamma, z_1](\tilde\tau_0) - \zeta[\tilde \gamma, z_1](\tilde\tau_1)\ne 0.
\]
Thus, the existence of a loop $\gamma$ satisfying \eqref{rcloop} may be taken as a sufficient to guarantee reconnection, regardless of whether the loop has the specific form shown in Figure \ref{fig:five}.

\begin{example}\label{example2}
Let $q = (q_1, q_2): \R\times (0,T) \to \C^2\cong M_2$ solve \eqref{dwm} and define
$D:(-T,T)\times \R \to \C$ by $D = q_1^2 - 4q_2$. Thus $D(t,y_3)$ is the discriminant of $p(\cdot; q(t,y_3))$,
and 
\[
\mbox{the roots of $p(\cdot; q(t,y_3))$ are } \ \frac 12 \left(- q_1(t,y_3) \pm \sqrt {D(t, y_3)} \right)
\]
Now assume the $q$ there is some $(\bar x_3, \bar t)\in  \R\times (0,T) $ such that 
\beq\label{r.open}
D(\bar x_3, \bar t)= 0 ,\qquad \{ \pp_t D(\bar x_3, \bar t), \pp_{y_3}D(\bar x_3, \bar t)\}\mbox{ are linearly independent over $\R$}.
\eeq
Then by the implicit function theorem, $D$ is a diffeomorphism from an open neighborhood $W$ of $(\bar x_3, \bar t)\in \R\times (0,T)\times \R$ 
onto a neighborhood of $0\in \C$. We may assume that $W$ is an  open ball. Let $\gamma: I \to W$ be a closed loop that encircles $(\bar x_3, \bar t)$ once, with $\gamma(\tau_0)= \gamma(\tau_1)$. For example, we can take $\gamma(\tau) = (r \cos\tau, r\sin \tau)$ for $\tau\in [0,2\pi]$ and  $r$ sufficiently small. We can also take a loop of the form shown in Figure \ref{fig:five}.

Fixing a root $z_1$ of $p(\cdot; q(\gamma(\tau_0)))$, there is a unique map $\zeta = \zeta[\gamma,z_1]:I\to \C$ satisfying \eqref{zeta.def}, that is, a continuous branch of $\tau\mapsto -q_1(\gamma(\tau)) + \sqrt{D(\gamma(\tau)})$ such that $\zeta[\gamma, \tau_0] = z_1$. Since $D(\gamma(\tau))$ circles the origin exactly once, it is clear that
the chosen branch of $\sqrt{D(\gamma(\tau)}$ changes sign as $\tau$ increases from $\tau_0$ to $\tau_1$, and hence that \eqref{rcloop} holds.

Thus \eqref{r.open} is sufficient to guarantee reconnection of the vortex centerlines of the associated leading-order approximate solution $\wU^0$.
    
\end{example}

We now present the

\begin{proof}[proof of Theorem \ref{thm:3}]
Let $I=[\tau_0,\tau_1]$ be an interval, and $\gamma: I\to \R\times (0,T_0)$ be continuous, and let
$\gamma_\ep(\tau) = \frac 1\ep\gamma(\tau)$. We remark that the definitions imply that $q_\ep(\gamma_\ep(\tau))=q(\gamma(\tau))$ for all $\tau$.

Assume that $\zeta_{1,\ep}$ is a zero of $\Phi_\ep(\cdot, \gamma_\ep(\tau_0))$. We will then write
$\zeta_\ep[\gamma_\ep; z_{1,\ep}]:I\to \C$ to denote a continuous map, if it exists,
such that $\zeta_\ep[\gamma_\ep; z_{1,\ep}](\tau_0)  = z_{1,\ep}$ and
\[
\zeta_\ep[\gamma_\ep; z_{1,\ep}](\tau)  \mbox{ is a zero of }\Phi_\ep(\cdot ; \gamma_\ep(\tau)).
\]
Thus, $\zeta_\ep[\gamma_\ep; z_{1,\ep}]$ may be described as  a lifting of $\gamma_\ep$ to the vortex centers of $U_\ep(\cdot, \gamma_\ep)$.

We will prove that for any $\delta>0$ there is a continuous 
\[
\gamma: I = [\tau_0,\tau_0]\to \{ (x_3,t) : |x_3-\bar x_3|<\delta, |t-\bar t|<\delta \}
\]
such that $\gamma(\tau_0)=\gamma(\tau_1)$, and for all sufficiently small $\ep>0$ and each zero $z_{1,\ep}$ of $\Phi_\ep(\cdot, \gamma_\ep(\tau_0))$,
there is a {\em unique} continuous lifting $\zeta_\ep[\gamma_\ep; z_{1,\ep}]$, and moreover
\beq\label{reconn.def}
\zeta_\ep[\gamma_\ep; z_{1,\ep}](\tau_0) \ne \zeta_\ep[\gamma; z_{1,\ep}](\tau_1).  
\eeq
As discussed in the examples above, this guarantees reconnection of the vortex filaments of $\Phi_\ep$.

Our starting point is Example 2.
Given $\delta>0$, we require that the set $W$ from Example \ref{example2} be contained in 
\[
\{ (x_3, t): |x_3-\bar x_3|<\delta, \ \ |t-\bar t|<\delta, t < T_0\}.
\]
As in that example, we let $\gamma:I\to W$ be a path that encircles $(\bar x_3, \bar t)$ once, and we let 
$\zeta[\gamma, z_1]:I\to \C$ be the unique continuous map satisfying \eqref{zeta.def}.
We also define 
\[
r :=  \frac 13 \min_{\tau\in I} \sqrt{|q_2(\tau)|} >0,
\]
so that for every $\tau\in I$, the distance between the two roots of $p(\cdot ; q(\gamma(\tau)))$, equivalently the two zeros
of $\mphi(\cdot; q(\gamma(\tau)))$,  is at least $3r$. By choosing $r$ smaller if necessary and appealing to compactness and the explicit description of $\mphi(q)$ provided by Theorem \ref{thm1.new}, we can arrange that $\mphi(z; q(\gamma(\tau)))$ is uniformly nondegenerate in
\[
\{ z\in \C : \mbox{dist}(z, \mbox{roots of }p(\cdot, q(\gamma(\tau))) \} \le r
\]
 in the sense that if we view $\mphi$ as a map from $\R^2$ to $\R^2$, there is some positive constant $c$ such that $\det D\mphi(z ; q(\gamma(\tau))) \ge c$ for $z$ in the above set, for all $\tau\in I$.
 
Since we know from Theorem \ref{thm:1} that
\[
\| \Phi_\ep - \mphi(q_\ep)\|_{L^\infty_{T_0/\ep}W^{2,\infty}(\R^d)} 
+
\kz \| \pp_t(\Phi_\ep - \mphi(q_\ep))\|_{L^\infty_{T_0/\ep}W^{1,\infty}(\R^d)} \le C \ep^2,
\]
it is easy to see that for all sufficiently small $\ep$ and for every $\tau$, $\Phi_\ep(\cdot; q(\gamma_\ep(\tau)))$ has a unique zero, say $\zeta_\ep[\gamma_\ep; z_{1,\ep}](\tau)$ such that 
\beq\label{apart}
|\zeta_\ep[\gamma_\ep; z_{1,\ep}](\tau) - \zeta[\gamma; z_{1}](\tau)|<r.
\eeq
with $z_{1,\ep} = \zeta_\ep[\gamma_\ep; z_{1,\ep}]( \tau_0)$, and that the path defined in this way is continuous.
In addition, \eqref{apart} implies that
\beq\label{precon}
|\zeta_\ep[\gamma_\ep; z_{1,\ep}]( \tau_1) - \zeta_\ep[\gamma_\ep; z_{1,\ep}]( \tau_0)|
\ge
|\zeta[\gamma_\ep; z_{1}]( \tau_1) - \zeta[\gamma_\ep; z_{1}]( \tau_0)| - 2r. 
\eeq
And because $\gamma(\tau_1) = \gamma(\tau_0)$, we see that $\zeta[\gamma; z_{1}]( \tau_0), \zeta[\gamma; z_{1}]( \tau_1)$ are just the two roots of $p(\cdot, q(\gamma(\tau_0)))$, and hence are at least distance $3r$ apart. Thus \eqref{reconn.def} follows from \eqref{precon}
\end{proof}

\appendix

\section{Some linear estimates}\label{appendix}

\subsection{A scalar elliptic PDE}\label{app:A1}

In this subsection we consider the PDE
\beq\label{scalar.Rd}
(-\Delta_a + |\mphi(q)|^2)\psi = \eta \qquad\mbox{ on }\R^d\times (0,T).
\eeq
As in the proof of Proposition \ref{prop:approx.sol}, where the results we prove here are used, we will write points in $\R^d\times (0,T)$ as $y=(y_1,\ldots, y_d, y_0) = (y_a, y_{\bar a}, y_0)$
where $y_a = (y_1, y_2)$ and $y_{\bar a} = (y_3,\ldots, y_d)$.
We use the notation
\[
\Delta_a =  \pp_a\pp_a = \pp_{y_1}^2+\pp_{y_2}^2.
\]
We assume that $q:\R^{d-2}\times (0,T)\to M_N$ is a map such that 
\beq\label{qcompact}
\mbox{Image}(q)\subset K \mbox{ compact }\subset M_N.
\eeq
and
\beq\label{Thetabis}
\Theta_q :=  \| \pp_{\bar a}q( \cdot, y_0)\|_{L^\infty_T H^L(\R^{d-2})} +  \kz \| \pp_0 q(\cdot, y_0)\|_{L^\infty_T H^L(\R^{d-2})}  <\infty
\eeq
for some $L>d/2$.
We write $\mphi(q)$ to denote either
\[
\mphi(q):\R^d\times (0,T)\to \C\quad\mbox{ defined by }\mphi(q)(y) = \mphi(y_a; q(y_{\bar a}, y_0))
\]
as in \eqref{scalar.Rd}, or $\mphi(q):\R^2\to \C$ for a fixed $q\in M_N$, as
in Lemma \ref{scalar.lem1} below. What we have in mind should always be clear from the context.

Note that \eqref{scalar.Rd} may be understood as a $d-1$-parameter family of equations on $\R^2$:
\[
(-\Delta_a + |\mphi(\cdot ; q(y_{\bar a}, y_0))|^2)\psi(\cdot, y_{\bar a}, y_0) = \eta (\cdot, y_{\bar a}, y_0) \qquad \mbox{on $\R^2$ }
\]
for every $(y_{\bar a}, y_0)\in \R^{d-2}\times (0,T)$. It may also be viewed as an extremely degenerate elliptic equation on $\R^d\times (0,T)$.

We recall that the space $X^{k,j}_\gamma$ appearing in the statement below is
defined as $H^k \cap W^{k-j, \infty}_\gamma$, see \eqref{weightednorm}, \eqref{Xkj}.  This space appears naturally in our construction of an approximate solution.

\begin{proposition}\label{scalar.pde}
Fix $j,k$ such that $\frac d2  < j+1 \le k \le L$, and assume that 
$\eta: \R^d\times (0,T)\to \R$ satisfies
\beq\label{Thetaeta}
\Theta_\eta :=  \| \eta \|_{L^\infty_T X^{k,j}_\gamma(\R^{d})} +  \kz \| \pp_0 \eta \|_{L^\infty_T X^{k-1, j}_\gamma(\R^{d-2})}  <\infty
\eeq
for some $\gamma\in (0,1)$.

Then \eqref{scalar.Rd} has a unique solution $\psi$, and there exists a constant $C$, depending on $K, \Theta_q$ and $\gamma$, such that
\beq\label{psi.Hk}
 \| \psi \|_{L^\infty_T X^{k,j}_\gamma(\R^{d})} +  \kz \| \pp_0 \psi \|_{L^\infty_T X^{k-1, j}_\gamma(\R^{d})}  \le  C \Theta_\eta.
\eeq
\end{proposition}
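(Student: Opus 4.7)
The plan is to treat \eqref{scalar.Rd} as a $(d-1)$-parameter family of 2d elliptic problems on $\R^2$, obtain slice-wise estimates from the coercivity in Lemma \ref{basic.elliptic} (plus the 2d weighted elliptic machinery that is collected in Lemma \ref{scalar.lem1}), and then transfer these into the full $X^{k,j}_\gamma(\R^d)$ norms by differentiating the equation in the transverse variables and using the multiplier properties of $|\mphi(q)|^2$ established in Lemma \ref{lem:fq}.

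First I would fix $(y_{\bar a}, y_0)$ and set up the 2d problem $(-\Delta_a + |\mphi(q(y_{\bar a}, y_0))|^2)\psi(\cdot, y_{\bar a}, y_0) = \eta(\cdot, y_{\bar a}, y_0)$ on $\R^2$. Lemma \ref{basic.elliptic} guarantees coercivity with a constant uniform in $q\in K$, so Lax--Milgram yields a unique $H^1(\R^2)$ solution, and iterated application of the standard boot-strap gives $\|\psi(\cdot, y_{\bar a}, y_0)\|_{H^k(\R^2)} \le C\|\eta(\cdot, y_{\bar a}, y_0)\|_{H^{k-2}(\R^2)}$ (in fact $H^{k}$ from $H^{k-2}$ by gaining two $y_a$-derivatives), with constant independent of $(y_{\bar a}, y_0)$ since $q$ takes values in the compact set $K$. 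For the weighted piece, Lemma \ref{scalar.lem1} gives the corresponding 2d estimate in $W^{k-j,\infty}_\gamma(\R^2)$, again uniformly in $(y_{\bar a},y_0)$.

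Next, to promote these slice-wise bounds to full $X^{k,j}_\gamma(\R^d)$ estimates I would differentiate \eqref{scalar.Rd} in the transverse variables. For any multi-index $r$ involving only $\pp_{\bar a}$ and (at most one, if $\kz=1$) $\pp_0$ derivative, $\pp^r\psi$ satisfies
\[
(-\Delta_a + |\mphi(q)|^2)\,\pp^r\psi \;=\; \pp^r\eta \;-\; \sum_{\substack{r_1+r_2=r\\ r_2\neq r}} \binom{r}{r_1}\bigl(\pp^{r_1}|\mphi(q)|^2\bigr)\,\pp^{r_2}\psi.
\]
Applying the slice-wise estimate to this equation and summing over $r$ reduces the proposition to showing that each commutator term $(\pp^{r_1}|\mphi(q)|^2)\cdot \pp^{r_2}\psi$ is controlled in $X^{k,j}_\gamma(\R^d)$ by previously-bounded quantities. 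This is exactly where Lemma \ref{lem:fq} enters: every derivative $\pp_\alpha$ of $|\mphi(q)|^2$ with $\alpha\in\{0,3,\ldots,d\}$ produces a factor $\pp_\alpha q^\mu$ times a function of the form $f(q)$ with the exponential-decay properties that make it an $X^{k',j}_\gamma$-multiplier in the sense of Remark \ref{rmk:multiplier}, with norm controlled by $\Theta_q$. Induction on $|r|$ (starting from the base case $r=0$) then yields the $H^k$ and weighted estimates in \eqref{psi.Hk} simultaneously, because $X^{k,j}_\gamma$ is an algebra once $k-j>d/2$, which is guaranteed by the hypothesis $j+1\le k$ together with the embedding $H^L\hookrightarrow W^{L-j,\infty}$ on $\R^{d-2}$.

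The main obstacle I expect is keeping the bookkeeping of derivatives clean enough that the highest-order commutator estimates close: when $|r|=k$, the commutator term loses one derivative on $\psi$, and one must check that the ``gain of two derivatives'' in the $y_a$ direction from the 2d elliptic regularity (i.e., passing from $H^{k-2}$ data to $H^k$ solution on each slice) exactly compensates this loss so that the induction closes at the top level. This is the reason for the restriction $j+1\le k\le L$ in the statement. Once that balance is verified, the $\kz\pp_0\psi$ estimate follows from the same argument applied to the equation obtained by differentiating once in $y_0$, whose right-hand side is $\kz\pp_0\eta - \kz(\pp_0|\mphi(q)|^2)\psi$, both of which are controlled by $\Theta_\eta$ and the already-obtained bound on $\psi$. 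Uniqueness follows from coercivity on each slice, so the unique solution constructed slice-wise is also the unique $X^{k,j}_\gamma$ solution on $\R^d\times(0,T)$.
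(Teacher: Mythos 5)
Your overall strategy matches the paper's: slice-wise solvability and coercivity from Lemmas \ref{basic.elliptic} and \ref{scalar.lem1}, differentiation of the equation to access higher norms, and Lemma \ref{lem:fq} to control the commutator terms in which derivatives land on $|\mphi(q)|^2$. However, you diverge at one technically important point, and I think it introduces genuine friction.

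You propose differentiating the equation \emph{only} in the transverse variables $y_{\bar a}$ (and once in $y_0$), and then recovering the horizontal $y_a$-derivatives by slice-wise $H^{m}\to H^{m+2}$ elliptic regularity. The paper instead lets the multi-index $r$ in \eqref{Dalpha.eqn} range over \emph{all} spatial directions including $y_a$, and then uses only the coercive $L^2\to L^2$ bound from each slice. This closes a clean one-derivative induction: with the commutator bounded via \eqref{f(q)psi} by $C\|\psi\|_{H^{l-1}(\R^d)}$, one gets $\|\pp^r\psi\|_{L^2(\R^d)}\le \|\pp^r\eta\|_{L^2(\R^d)}+C\|\psi\|_{H^{l-1}(\R^d)}$ for every $|r|=l$, and the result follows by induction on $l$ with no gain-of-derivatives balance to check. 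Your route requires tracking anisotropic intermediate norms of the form $L^2_{y_{\bar a}}H^{m}_{y_a}(\R^d)$ (which is not $H^k(\R^d)$), verifying that the two-derivative gain on each 2d slice reassembles into the isotropic $H^k(\R^d)$ estimate, and in particular checking that, after expanding $\pp^{r_a'}$ of the transverse commutator by the product rule, the resulting terms with unbounded factors $\pp^{r_1}q$ still fall within the scope of Lemma \ref{lem:fq}. You correctly flag this as ``the main obstacle,'' but you do not carry it out; as written this is a gap, and it is exactly what the paper avoids by differentiating in all spatial directions from the start.

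Two smaller points. First, the constraint $j+1\le k$ is not really about closing the gain-2/lose-1 balance you describe; it is there so that $W^{k-j,\infty}_\gamma$ has at least one derivative and, together with $\frac d2<j+1$, guarantees $k>\frac d2$ so $H^k(\R^d)$ is an algebra. Your statement that $X^{k,j}_\gamma$ is an algebra ``once $k-j>d/2$'' is not what the hypotheses give (they give $k>d/2$, not $k-j>d/2$). Second, the $\kz\pp_0$-estimate you describe at the end is exactly what the paper does, and that part is fine.
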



We start with a 2d lemma.

\begin{lemma}\label{scalar.lem1}
For  compact $K\subset M_N$, given $q\in K$ and $\eta\in L^2(\R^2)$, the equation
\begin{equation}\label{scalar.d2}
(-\Delta + |\mphi(q)|^2)\psi = \eta 
\end{equation}
has a unique solution $\psi\in H^{2}(\R^2)$, and there exists a constant $C = C(K) <\infty$
such that
\begin{equation}\label{scalar.lem1.c}
\|\psi\|_{H^{2}(\R^2)} \le C \|\eta\|_{L^2(\R^2)}.
\end{equation}
In addition, if 
\[
\| \eta\|_{L^\infty_\gamma(\R^2)} 
:= \operatorname{ess\,sup_{y_a\in \R^2}} ( |\eta(y_a)a|e^{\gamma|y_a|}) < \infty
\]
for some $\gamma\in (0,1)$, then
$\psi\in L^\infty_\gamma(\R^2)$, and 
\beq\label{psi.weighted.basic}
\| \psi\|_{ L^\infty_\gamma} \le C \| \eta\|_{L^\infty_\gamma}
, \qquad C = C(K,\gamma).
\eeq
\end{lemma}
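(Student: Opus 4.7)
The plan is to dispatch the three assertions in sequence: solvability and an $H^1$ bound, upgrading to $H^2$, and finally the exponentially-weighted pointwise bound.

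First I would apply Lax--Milgram on $H^1(\R^2)$ to the bilinear form $B(u,v) := \int_{\R^2}(\nabla u\cdot\nabla v + |\mphi(q)|^2 uv)$. Boundedness of $B$ is immediate since $|\mphi(q)|\le 1$, and coercivity $B(u,u)\ge c(K)\|u\|_{H^1(\R^2)}^2$ is precisely Lemma \ref{basic.elliptic}. The functional $v\mapsto \int_{\R^2}\eta v$ is bounded on $H^1$ because $\eta\in L^2$, so Lax--Milgram yields a unique weak solution $\psi\in H^1(\R^2)$ with $\|\psi\|_{H^1}\le C\|\eta\|_{L^2}$. Then, rewriting \eqref{scalar.d2} as $-\Delta\psi = \eta - |\mphi(q)|^2\psi$ puts $\Delta\psi$ in $L^2$ with norm controlled by $C\|\eta\|_{L^2}$, and a standard Fourier-multiplier argument on $\R^2$ gives $\|D^2\psi\|_{L^2}\le C\|\eta\|_{L^2}$, proving \eqref{scalar.lem1.c}. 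Since $|\mphi(q)|^2$ is smooth, bootstrapping also places $\psi$ in $C^\infty_{\rm loc}$, which will be used below.

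The exponential-decay statement is the main obstacle, and I would approach it through a barrier / comparison argument. By Sobolev embedding $H^2(\R^2)\hookrightarrow C^0_0(\R^2)$ the $H^2$ bound already gives $\psi\in C^0_0$ with $\|\psi\|_{L^\infty}\le C\|\eta\|_{L^2}\le C(\gamma)\|\eta\|_{L^\infty_\gamma}$, since $e^{-\gamma|y_a|}\in L^2(\R^2)$. It remains to upgrade this uniform bound to pointwise exponential decay. Fix $\gamma\in(0,1)$. By Lemma \ref{lem:decayq} and compactness of $K$, pick $R=R(K,\gamma)$ so that $|\mphi(q)(y_a)|^2\ge 1-\delta$ on $\{|y_a|\ge R\}$ for every $q\in K$, with $\delta$ small enough that $c_0 := 1-\delta-\gamma^2 >0$. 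A direct computation in polar coordinates in $\R^2$ yields
\[
(-\Delta + |\mphi(q)|^2)\,e^{-\gamma|y_a|} \;=\; \Bigl(|\mphi(q)|^2-\gamma^2 + \tfrac{\gamma}{|y_a|}\Bigr)\,e^{-\gamma|y_a|} \;\ge\; c_0\,e^{-\gamma|y_a|}
\]
on $\{|y_a|\ge R\}$. Setting $A := \max\bigl(c_0^{-1}\|\eta\|_{L^\infty_\gamma},\ e^{\gamma R}\|\psi\|_{L^\infty}\bigr)$, the barrier $V(y_a) := A e^{-\gamma|y_a|}$ satisfies $(-\Delta+|\mphi(q)|^2)V \ge |\eta|$ on $\{|y_a|\ge R\}$ and $V\ge |\psi|$ on $\partial B_R$.

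The maximum principle is then applied to $V\pm\psi$ on the exterior domain $\{|y_a|>R\}$: the coefficient of the zeroth-order term satisfies $|\mphi(q)|^2\ge c_0>0$, and both $V$ and $\psi$ lie in $C^0_0(\R^2)$, so $V\pm\psi\ge 0$ on the boundary and at infinity forces $V\pm\psi\ge 0$ throughout. This gives $|\psi(y_a)|\le A\,e^{-\gamma|y_a|}$ for $|y_a|\ge R$, while on $B_R$ the earlier $L^\infty$ bound already yields $|\psi|\le C\|\eta\|_{L^\infty_\gamma}\le C e^{\gamma R}\|\eta\|_{L^\infty_\gamma}\,e^{-\gamma|y_a|}$. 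Combining, $\|\psi\|_{L^\infty_\gamma}\le C(K,\gamma)\|\eta\|_{L^\infty_\gamma}$, which is \eqref{psi.weighted.basic}. The crucial ingredient that makes the barrier work is the strict inequality $\gamma<1$: it leaves a positive margin $c_0$ between the desired decay rate and the asymptotic value of $|\mphi(q)|^2$, produced by Taubes' estimate $|\mphi|\to 1$ at infinity recorded in Lemma \ref{lem:decayq}.
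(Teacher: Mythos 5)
Your proposal is correct and follows essentially the same route as the paper: Lemma \ref{basic.elliptic} (via Lax--Milgram or, equivalently, Riesz representation) gives the $H^1$ weak solution and estimate, standard elliptic regularity upgrades to $H^2$, and the weighted bound is obtained by a maximum-principle comparison with the barrier $A e^{-\gamma|y_a|}$ on an exterior region $\{|y_a|\ge R(K,\gamma)\}$, using $\gamma<1$ and the exponential approach $|\mphi(q)|\to 1$ from Lemma \ref{lem:decayq} to make the supersolution inequality hold. The only difference is that you carry out explicitly the polar-coordinate computation $(-\Delta+|\mphi(q)|^2)e^{-\gamma r} = (|\mphi(q)|^2-\gamma^2+\gamma/r)e^{-\gamma r}$ that the paper leaves to the reader (and delegates to the cited thesis).
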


We sketch the rather standard proof for the reader's convenience:

\begin{proof}
Existence of a unique weak solution $\psi\in H^1(\R^2)$ follows directly from  Lemma \ref{basic.elliptic}, via standard arguments using the Riesz representation theorem. It also follows from Lemma \ref{basic.elliptic} that 
\[
\|\psi\|_{H^1} \le C \|\eta\|_{L^2(\R^2)}.
\]
The \eqref{scalar.lem1.c} follows by standard elliptic regularity.

Finally, assume that $\eta\in W^{k,\infty}_\gamma$ for some $\gamma\in (0,1)$. Then there exists some $A>0$ and $\gamma\in (0,1)$ such that
\begin{equation}\label{eta.decay}
|\eta(y_a)| \le A e^{-\gamma|y_a|}\qquad\mbox{ for all }y_a\in \R^2.
\end{equation}
The embedding  $H^2(\R^2)\hookrightarrow C_0(\R^2)$ implies that $\psi(y_a)\to 0$ as $|y_a|\to \infty$,with
\begin{equation}\label{psi.bound}
\| \psi\|_{L^\infty(\R^2)} \le C \| \psi\|_{H^2(\R^2)} \le  C \|\eta \|_{L^2(\R^2)} \overset{\eqref{eta.decay}}\le C A \end{equation}
for $C = C(K,\gamma)$.
Exponential decay of $\psi$ 
\begin{equation}\label{exp.decay1}
|\psi(y_a)| \le C(K,\gamma) A e^{-\gamma|y_a|}\qquad\mbox{ for all }y_a\in \R^2.
\end{equation}
is then proved by a comparison principle argument using the auxiliary function
\[
s(y_a) = C(K,\gamma) A e^{-\gamma|y_a|}
\]
which, one can check, for suitable choices of $C(K,\gamma)$ and $R(K,\gamma)$ satisfies
\[
(-\Delta_a + |\mphi(q)|^2)(s \pm u)(y_a )> 0 \qquad\mbox{ for }|y_a|\ge R(K,\gamma).
\]
This uses assumption \eqref{eta.decay} as well as the exponential decay of $(1-|\mphi(q)|)$, see \eqref{eq10}. A detailed presentation of this argument can be found in \cite{Masoud-thesis}, Proposition 29. Then \eqref{exp.decay1} is a consequence of this fact and \eqref{psi.bound} by the maximum principle, and \eqref{psi.weighted.basic} follows.
\end{proof}

We now present the 

\begin{proof}[proof of Proposition \ref{scalar.pde}]
Existence and uniqueness of a weak solution follows by applying Lemma \ref{scalar.lem1} on $\R^2 \times \{ ( y_0,  y_{\bar a}) \}$
for a.e. $( y_{\bar a}, y_0)\in \R^{d-2}\times (0,T)$. 

Next, we fix $y_0\in (0,T)$, apply \eqref{scalar.lem1.c} on a.e. $y_{\bar a}\in \R^{d-2}$, square both sides, and integrate with respect to $y_{\bar a}\in \R^{d-2}$. At this $y_0$ we obtain
\beq\label{L2L2Rd}
\begin{aligned}
    \| \psi \|_{L^2(\R^d)}^2 + \| \nabla_a\psi \|_{L^2(\R^d)}^2+ \|\nabla_a^2 \psi \|_{L^2(\R^d)}^2 
    &\le \int_{\R^{d-2}}\|\psi\|^2_{H^2(\R^2\times \{y_{\bar a}\})}dy_{\bar a} \\
&\le C\| \eta\|_{L^2(\R^d)}^2 \quad\mbox{ when \eqref{scalar.Rd} holds.}
\end{aligned}
\eeq

Now let $r$ be a multiindex on $\R^d$ such that $|r| = l \le k$.
We first assume that $\eta$ and $q$ are smooth,  and hence $\psi$ is smooth.
We can then differentiate \eqref{scalar.d2} to find that
\beq\label{Dalpha.eqn}
\begin{aligned}
(-\Delta_a + |\mphi(q)|^2)\pp^r\psi 
&= \pp^r\eta +  |\mphi(q)|^2 \pp^r\psi - \pp^r(|\mphi(q)|^2 \psi)
\end{aligned}
\eeq
We now estimate the $L^2$ norm of the right-hand side. We use Lemma \ref{lem:fq} below
to estimate
\[
\begin{aligned}
\|  |\mphi(q)|^2 \pp^r\psi - \pp^r(|\mphi(q)|^2 \psi) \|_{L^2(\R^d)}
&\le
C \sum_{r_1+r_2 = r, |r_1|\ge 1} \| \pp^{r_1}\mphi(q) \pp^{r_2}\psi\|_{L^2(\R^d)}
\\
&
\overset{\eqref{f(q)psi}}\le
C(q) \| \psi\|_{H^{l-1}(\R^d)}.
\end{aligned}
\]
This combined\footnote{We are not using the full strength of \eqref{L2L2Rd} because we will not need it, and to simplify notation in what follows.} with \eqref{L2L2Rd} and \eqref{Dalpha.eqn} implies that
\[
\| \pp^r \psi\|_{L^2(\R^d)}
\le \| \pp^r\eta\|_{L^2(\R^d)} + C\| \psi\|_{H^{l-1}(\R^d)}.
\]
Since this holds for all $r$ with  $|r|=l$, it follows that
\[
\|  \psi\|_{H^l(\R^d)} 
\le C\| \eta\|_{H^l(\R^d)} + C\| \psi\|_{H^{l-1}(\R^d)}.
\]
for $l\le k$. Then an easy induction argument implies that
\[
\|  \psi\|_{H^l(\R^d)} \le C \| \eta \|_{H^l(\R^d)}.
\]
Since this holds for all $l\le k$ and for all $t\in (0,T)$, 
\beq\label{LinfHk}
\|  \psi\|_{L^\infty_T H^k(\R^d)} \le C \| \eta \|_{L^\infty_T H^k(\R^d)}.
\eeq

We next claim that if $\eta\in L^\infty_T W^{k-j,\infty}_\gamma$
then $\psi$ belongs to the same space and
\beq\label{LinfWk-j}
\|  \psi\|_{L^\infty_T W^{k-j,\infty}_\gamma(\R^d)} \le C \| \eta \|_{L^\infty_T W^{k-j,\infty}_\gamma(\R^d)}.
\eeq
The proof is very similar to that of \eqref{LinfHk}:
we  apply the 2d estimate \eqref{psi.weighted.basic} to \eqref{Dalpha.eqn} on
$\R^2 \times \{(y_a, y_0)\}$ for {\em a.e.} $(y_{\bar a}, y_0)$, using an induction argument and the exponential decay of derivatives of $\eta$,  {\em i.e.} the finiteness of the right-hand side of \eqref{LinfWk-j}, to verify that the right-hand side satisfies \eqref{eta.decay}. 

Combining \eqref{LinfHk} and \eqref{LinfWk-j}, we obtain
\beq\label{LinfXkj}
\|  \psi\|_{L^\infty_T X^{k,j}_\gamma(\R^d)} \le C \| \eta \|_{L^\infty_T X^{k,j}_\gamma(\R^d)}.
\eeq

Finally, note that $\pp_0\psi$ satisfies
\[
(-\Delta_a + |\mphi(q)|^2)\pp_0\psi =
\pp_0\eta - \pp_0|\mphi(q)|^2 \psi \in L^\infty_TX^{k-1,j}_\gamma \quad\mbox{ if }\kz>0.
\]
Applying \eqref{LinfXkj} to this equation, we obtain the conclusion \eqref{psi.Hk} of the Proposition.

If $\eta$ and $q$ have only the regularity assumed in the theorem, equation \eqref{Dalpha.eqn} can be justified by an approximation argument or by difference quotients and induction, and the rest of the proof is unchanged.
\end{proof}

\subsection{An elliptic system}\label{app:system}

In this subsection we consider the system
\beq\label{system.Rd}
L[\modu(q)] \tilde u = \eta \qquad\mbox{ on }\R^d\times (0,T)
\eeq
for $\tilde u, \eta:\R^d\times (0,T)\to \C\times \R^2$, where $q:\R^{d-2}\times (0,T)\to M_N$ is a map satisfying \eqref{qcompact} and \eqref{Thetabis}.
One may understand \eqref{system.Rd} as a $d-1$-parameter family of equations on $\R^2$:
\[
L[\modu(q)] \tilde u = \eta\qquad\qquad\mbox{on $\R^2_{(y_{\bar a}, y_0)} := \R^2\times \{(y_{\bar a}, y_0)\}$}
\]
for every $(y_{\bar a}, y_0)\in \R^{d-2}\times (0,T)$.
The natural solvability condition for the equation is
\beq\label{solvability}
\eta \perp \ker(L[\modu(q)]). 
\eeq
In view of Stuart's Theorem \ref{thm:hessian}, this can be written more explicitly as 
\beq\label{svbl2}
(\eta, n_\mu(q))_{L^2(\R^2_{(y_{\bar a}, y_0)})} = 0 \quad\mbox{ for all }\mu = 1,\ldots,2N \mbox{ and  }(y_{\bar a}, y_0)\in \R^{d-2}\times (0,T).
\eeq
Below is our main result about this system.

\begin{proposition}\label{system.pde}
Fix $j,k$ such that $\frac d2<j+1\le k \le L$, and assume that $\eta:\R^d\times (0,T)\to  \C\times \R^2$
satisfies  \eqref{solvability} and \eqref{Thetaeta} for some $\gamma\in (0, 1/2)$.

Then \eqref{system.Rd} has a unique solution $\tilde u = \binom{\tPhi}{\tA}$ with the property that
\beq\label{system.gauge1}
\tilde u \perp  \ker(L[\modu(q)])
\eeq
and there exists a constant $C$, depending on $K, \Theta_q$ and $\gamma$, such that
\beq\label{eq:spc}
\| \tilde u\|_{L^\infty_T X^{k,j}_\gamma(\R^d)} +  \kz \| \pp_0 \tilde u\|_{L^\infty_T X^{k-1,j}_\gamma(\R^d)} 
\le C \Theta_\eta.
\eeq
Moreover, if $\eta(\cdot, y_{\bar a}, y_0)$ satisfies the gauge-orthogonality condition at $\modu(q(y_{\bar a}, y_0))$ for every $(y_{\bar a}, y_0)$:
\[
\pp_a \eta_a(y) -(i\mphi(y_a; q(y_{\bar a}, y_0)), \eta_\phi(y) = 0
\]
then so does $\tilde u$:
\beq\label{system.gauge2}
\pp_a \tA_a(y) - (i\mphi(y_a ;  q(y_{\bar a}, y_0)), \tPhi(y)) = 0.
\eeq
\end{proposition}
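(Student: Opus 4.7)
The strategy is to reduce \eqref{system.Rd} to a family of 2D problems slice by slice, estimate each slice, and lift the estimates to $\R^d$, in close analogy with the proof of Proposition~\ref{scalar.pde}. For each fixed $(y_{\bar a}, y_0) \in \R^{d-2}\times (0,T)$, Stuart's coercivity (Theorem~\ref{thm:hessian}) combined with a Lax--Milgram argument on $\{\tilde n_\mu\}^\perp \subset H^1(\R^2)^4$ and standard elliptic regularity yields, under the solvability condition \eqref{svbl2}, a unique solution $\tilde u(\cdot,y_{\bar a},y_0) \in H^2(\R^2)$ of $L[\modu(q)]\tilde u = \eta$ satisfying $\tilde u \perp \tilde n_\mu$ in $L^2(\R^2)$, with $\|\tilde u(\cdot, y_{\bar a}, y_0)\|_{H^2(\R^2)} \le C(K)\,\|\eta(\cdot, y_{\bar a}, y_0)\|_{L^2(\R^2)}$. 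A comparison-principle argument analogous to that of Lemma~\ref{scalar.lem1}, exploiting the fact that $L[\modu(q)]$ differs from the diagonal operator $(-\Delta_a + |\mphi(q)|^2)\,\mathrm{Id}$ by zeroth- and first-order terms whose coefficients decay exponentially in $|y_a|$ (by Lemma~\ref{lem:decayq}), additionally yields the pointwise weighted bound $\|\tilde u\|_{W^{k,\infty}_\gamma(\R^2)} \le C\|\eta\|_{W^{k-2,\infty}_\gamma(\R^2)}$ for any $\gamma \in (0,1)$. This is essentially Lemma~\ref{vsolvability1}.

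\textbf{Lifting to $\R^d\times(0,T)$.} Squaring the slice-wise $H^2$ estimate and integrating in $y_{\bar a}$ gives the base $L^2$-type bound on $\R^d$. For higher regularity I proceed by induction on the order of a multi-index $r$ in the $(y_{\bar a}, y_0)$ variables, applying $\pp^r$ to \eqref{system.Rd} to obtain
\[
L[\modu(q)]\,(\pp^r\tilde u) = \pp^r \eta - [\pp^r, L[\modu(q)]]\tilde u,
\]
whose right-hand side is automatically orthogonal to $\ker L[\modu(q)] = \operatorname{span}\{\tilde n_\mu\}$ by self-adjointness of $L$. Decomposing $\pp^r\tilde u = v_r + \sum_\mu a_{r,\mu}\tilde n_\mu$ with $v_r \perp \tilde n_\mu$, Stuart's coercivity controls $\|v_r\|_{H^1(\R^2)}$ by the $L^2(\R^2)$ norm of the displayed right-hand side; the commutator involves at most $|r|$ derivatives of $\tilde u$ multiplied by polynomials in derivatives of $q$ of order $\le |r|$, which are $X^{k,j}_\gamma$-multipliers by Lemma~\ref{lem:fq}. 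For the kernel part, differentiating the scalar identity $(\tilde u, \tilde n_\mu(\cdot; q(y_{\bar a}, y_0)))_{L^2(\R^2)} \equiv 0$ gives
\[
a_{r,\mu} = (\pp^r\tilde u, \tilde n_\mu)_{L^2(\R^2)} = -\sum_{\substack{r_1+r_2=r \\ |r_2|\ge 1}}\binom{r}{r_1}(\pp^{r_1}\tilde u, \pp^{r_2}\tilde n_\mu)_{L^2(\R^2)},
\]
which bounds $a_{r,\mu}$ by strictly lower-order norms of $\tilde u$ together with derivatives of $q$, closing the induction on the $H^k$ side. Derivatives purely in $y_a$ are handled by standard 2D elliptic regularity slice by slice, and the $W^{k-j,\infty}_\gamma$ bounds follow by applying the slice-wise pointwise estimate to commutator expressions and using the Sobolev embeddings implicit in $X^{k,j}_\gamma$. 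The $\kz\pp_0$ estimate is obtained by one further differentiation of the equation in $y_0$.

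\textbf{Gauge orthogonality.} For \eqref{system.gauge2} I will establish the algebraic identity
\[
(L_\phi[\modu(q)]\tilde u, i\mphi(q)) - \pp_a(L_a[\modu(q)]\tilde u) = (-\Delta_a + |\mphi(q)|^2)\,G(\tilde u),
\qquad G(\tilde u) := \pp_a \tA_a - (i\mphi(q), \tPhi),
\]
by linearizing the 2D analogue of Lemma~\ref{lem:identity}, namely $(S^{2d}_\phi[u], i\Phi) = \pp_a S^{2d}_a[u]$, at $u = \modu(q)$, using $S^{2d}[\modu(q)]=0$, and inserting the decomposition $L = \calL + (\text{gauge terms})$ from \eqref{Lu.def}. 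Granting the identity, the hypothesis $\pp_a\eta_a - (i\mphi(q), \eta_\phi) \equiv 0$ forces $(-\Delta_a + |\mphi(q)|^2) G(\tilde u) = 0$ on each slice; since the estimates above place $G(\tilde u)(\cdot, y_{\bar a}, y_0)$ in $H^1(\R^2)$, the uniqueness clause of Lemma~\ref{scalar.lem1} yields $G(\tilde u) \equiv 0$, which is precisely \eqref{system.gauge2}.

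\textbf{Main obstacle.} The delicate point is the induction on $\pp^r\tilde u$: the spaces $X^{k,j}_\gamma$ are relatively weak (only $k-j$ pointwise derivatives are controlled), so smoothness of $q$ must be carefully traded for smoothness of $\tilde u$ when estimating both the commutator and the kernel projections $a_{r,\mu}$. The $L$ derivatives of $q$ appearing in the worst commutator terms are exactly what dictate the hypothesis $j+1 \le k \le L$ in the statement.
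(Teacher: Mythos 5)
Your blueprint is the same as the paper's: establish slice-wise solvability via coercivity (essentially Lemma~\ref{vsolvability1}), lift $L^2$ bounds by integrating in $y_{\bar a}$, then handle higher derivatives by an induction that decomposes $\pp^r\tilde u$ into a kernel part (controlled via the differentiated orthogonality $(\tilde u, \tilde n_\mu)_{L^2(\R^2)}\equiv 0$) and an orthogonal part (controlled via coercivity plus commutator estimates feeding back Lemma~\ref{lem:fq}). Two small organizational differences: the paper inducts over multi-indices on all of $\R^d$, whereas you separate the $y_a$ derivatives (handled by 2D elliptic regularity) from the $(y_{\bar a},y_0)$ derivatives (handled by induction); the combination of both for mixed derivatives, and especially the $W^{k-j,\infty}_\gamma$ bound on $\R^d$, requires some care (the paper bundles the $\tPhi^\perp_r$ components over all $|r|=l$ into a single quantity $\Psi$ and runs a comparison argument on $|\Psi|^2$). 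One overclaim to correct: the slice-wise weighted estimate for the \emph{system} holds for $\gamma\in(0,1/2)$, not $\gamma\in(0,1)$; the barrier argument for $|\tPhi|^2$ in the vector case requires $2\gamma>4\gamma^2$, which forces $\gamma<\tfrac12$ (the scalar estimate in Lemma~\ref{scalar.lem1} does hold for $\gamma<1$, which is probably what you were remembering).

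The place where you genuinely diverge from the paper is the gauge-orthogonality step, and your route is correct and arguably cleaner. You derive the pointwise identity
$(L_\phi[\modu(q)]\tilde u, i\mphi(q)) - \pp_a(L_a[\modu(q)]\tilde u) = (-\Delta_a + |\mphi(q)|^2)\,G(\tilde u)$
by linearizing the static 2D version of Lemma~\ref{lem:identity} at $\modu(q)$, using $S^{2d}[\modu(q)]=0$ to kill the term involving $\tPhi$, and inserting $L = \calL + (\text{gauge term})$. The paper instead establishes the weak form $\int(\nabla G\cdot\nabla\xi + |\mphi|^2 G\xi)=0$ for all $\xi$ directly, via self-adjointness of $\calL$ and the fact that infinitesimal gauge transformations lie in $\ker\calL[\modu(q)]$, and then sets $\xi=G$. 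Both reach $(-\Delta_a+|\mphi|^2)G=0$ and conclude $G=0$ by coercivity (Lemma~\ref{basic.elliptic}). What your version buys is an explicit conceptual link to the algebraic identity of Lemma~\ref{lem:identity} (and hence to Lemma~\ref{lem:Z} and the gauge-fixing used elsewhere in the paper), whereas the paper's argument is self-contained but less clearly tied to that mechanism. Either is acceptable; the coercivity conclusion should be stated in the weak/$H^1$ sense, since $G(\tilde u)$ is only in $H^1(\R^2)$ a priori when $\tilde u\in H^2$.
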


We start with a 2d lemma.

\begin{lemma}\label{vsolvability1}
Let $K$ be a compact subset of $M_N$. Assume that $q\in K$, and that $\eta\in L^2(\R^2; \C\times \R^2)$ satisfies
\[
(\eta, n_\mu(q))_{L^2(\R^2)} = 0\qquad\mbox{ for }\mu = 1,\ldots, 2N
\]
Then there exists a unique $\tilde u = \binom{\tPhi}{\tA_a}\in H^2(\R^2; \C\times \R^2)$ satisfying 
\beq\label{systemR2}
L[\modu(q)] (\tilde u) = \eta, \qquad\qquad (\tilde u, n_\mu(q))_{L^2(\R^2)} = 0\ \ \mbox{ for }\mu = 1,\ldots, 2N.
\eeq
Moreover, $\tilde u$ satisfies
\begin{equation}\label{system.lem1.c}
\| \tilde u \|_{H^2(\R^2)} \le C \| \eta \|_{L^2(\R^2)}
\end{equation}
and if $\eta$ satisfies the gauge-orthogonality condition at $\modu(q)$
\[
\pp_a \eta_a -(i\mphi(q), \eta_\phi) = 0
\]
then so does $\tilde u$:
\beq\label{tilde.go}
\pp_a \tA_a - (i\mphi(q), \tPhi) = 0.
\eeq
Finally, if 
\beq\label{vector.expd1}
\| \eta\|_{L^\infty_\gamma(\R^2)}
:= \operatorname{ess\,sup_{y_a\in \R^2}} ( |\eta(y_a)|e^{\gamma|y_a|}) < \infty
\eeq
for some $\gamma\in (0,1/2)$, then
$\tilde u\in L^\infty_\gamma(\R^2)$, and 
\beq\label{vector.expd2}
\| \tilde u\|_{ L^\infty_\gamma} \le C \| \eta\|_{L^\infty_\gamma}
, \qquad C = C(K,\gamma).
\eeq
\end{lemma}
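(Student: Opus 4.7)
The scheme has four steps, organized around weakening the class of solutions and then upgrading to higher regularity and decay. First, I would establish existence and uniqueness of a weak solution $\tilde u\in H^1(\R^2;\C\times\R^2)$ orthogonal to $\ker L[\modu(q)]$: on the closed subspace $V := \{\tilde u\in H^1 : (\tilde u, \tilde n_\mu(q))_{L^2}=0\ \forall \mu\}$, the bilinear form $a(\tilde u,\tilde v):=\int_{\R^2}\tilde v\cdot L[\modu(q)]\tilde u$ is coercive by Stuart's Theorem \ref{thm:hessian} (and symmetric by inspection of \eqref{eq33.14}). The solvability hypothesis $(\eta,n_\mu)_{L^2}=0$ (which is equivalent to $(\eta,\tilde n_\mu)_{L^2}=0$ since $n_\mu$ and $\tilde n_\mu$ span the same space) lets me apply Lax--Milgram on $V$ with the functional $\tilde v\mapsto\int\tilde v\cdot\eta$, yielding a unique $\tilde u\in V$ solving $L[\modu(q)]\tilde u=\eta$ weakly, together with the $H^1$ bound $\|\tilde u\|_{H^1}\le C(K)\|\eta\|_{L^2}$.

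Second, I would upgrade to $H^2$ by standard elliptic regularity: the system \eqref{eq33.14} is elliptic with smooth, bounded coefficients depending on $\modu(q)$ (and $|\mphi(q)|^2\le 1$ with bounded derivatives by Lemma \ref{lem:decayq}), so a difference-quotient argument (or standard bootstrap) gives $\tilde u\in H^2_{loc}$ and then $\tilde u\in H^2(\R^2)$ via the coercive mass matrix at infinity, with constants depending only on $K$. This yields \eqref{system.lem1.c}.

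Third, I would verify the gauge-orthogonality statement \eqref{tilde.go}. Set $Z:=\pp_a\tilde A_a-(i\mphi(q),\tPhi)$ and $\text{Div}(\sigma):=\pp_a\sigma_a-(i\mphi(q),\sigma_\phi)$. A direct computation using the pointwise identity $(i\mphi,D_a v)=\pp_a(i\mphi,v)-(iD_a\mphi,v)$ (and its analogue applied twice) together with the explicit formula \eqref{eq33.14} for $L[\modu(q)]$ yields the key algebraic identity
\begin{equation*}
\text{Div}\big(L[\modu(q)]\tilde u\big) \;=\; (-\Delta+|\mphi(q)|^2)\,Z .
\end{equation*}
This is the 2d static analogue of Lemma \ref{lem:identity} combined with the gauge-fixing correction defining $L$ out of $\calL$. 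Granted this identity, if $\text{Div}(\eta)=0$ then $(-\Delta+|\mphi(q)|^2)Z=0$ in $\R^2$, with $Z\in L^2(\R^2)$ by the $H^2$-bound on $\tilde u$. Lemma \ref{basic.elliptic} (applied to $Z$ itself, noting $\int Z(-\Delta+|\mphi(q)|^2)Z\ge c\|Z\|_{L^2}^2$) then forces $Z\equiv 0$. The algebraic identity is the main obstacle: it requires a careful and essentially standard but tedious bookkeeping of how $(i\mphi,\cdot)$ commutes with $D_a$ and the mass terms in $L$ and $\calL$.

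Fourth, for the weighted bound \eqref{vector.expd2} I would mimic the barrier argument of Lemma \ref{scalar.lem1}. The Sobolev embedding $H^2(\R^2)\hookrightarrow C_0(\R^2)$ and the $H^2$-bound yield $\tilde u\in L^\infty\cap C_0$ with $\|\tilde u\|_{L^\infty}\le C\|\eta\|_{L^2}\le C(K,\gamma)A$ where $A:=\|\eta\|_{L^\infty_\gamma}$. Writing the system \eqref{eq33.14} schematically as
\begin{equation*}
(-\Delta + M(y_a))\tilde u = \eta + R(y_a)\tilde u,\qquad M=\mathrm{diag}\!\left(\tfrac12(3|\mphi(q)|^2-1),\,|\mphi(q)|^2,\,|\mphi(q)|^2\right),
\end{equation*}
where $R$ collects the off-diagonal first-order pieces $2i\tilde A_a D_a\mphi(q)$ and $-2(i\tPhi,D_a\mphi(q))$, the key inputs are: (i) $M(y_a)\to I$ as $|y_a|\to\infty$, and $M\ge \tfrac14 I$ off a fixed compact set $\{|y_a|\le R(K)\}$; and (ii) $|R(y_a)|\le Ce^{-|y_a|}$ by the exponential decay of $D_a\mphi(q)$ from Lemma \ref{lem:decayq}. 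Because $\gamma<1/2$, the barrier $s(y_a):=C_1 A e^{-\gamma|y_a|}$ satisfies $(-\Delta+M)s\ge |\eta|+|R(y_a)\tilde u|$ component-wise for $|y_a|$ large (one gains $1-\gamma^2>0$ from the Laplacian, and $|R\tilde u|$ is absorbed because $R$ decays faster than $e^{-\gamma|y_a|}$); a componentwise maximum principle applied to $s\pm\tilde u$ then yields $|\tilde u(y_a)|\le C_1 A e^{-\gamma|y_a|}$, which is \eqref{vector.expd2}. The hypothesis $\gamma<1/2$ is used here to ensure $s$ dominates the exponentially decaying error from $R\tilde u$ with room to spare.
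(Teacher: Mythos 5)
Your Steps 1 and 2 (Lax--Milgram/Riesz for existence, orthogonality and the $H^1$ bound; elliptic bootstrap to $H^2$) match the paper's proof.

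Step 3 is a valid but genuinely different route from the paper. You posit the pointwise identity $\text{Div}(L[\modu(q)]\tilde u)=(-\Delta+|\mphi(q)|^2)Z$ and flag its verification as tedious. It does hold, with an opposite sign: since $\text{Div}\binom{i\mphi Z}{\pp_a Z}=\Delta Z-|\mphi|^2 Z$, and since $\text{Div}(\calL[\modu(q)]\tilde u)=0$ identically (linearize the 2d static version of Lemma~\ref{lem:identity} around $\modu(q)$), one gets $\text{Div}(L[\modu(q)]\tilde u)=-(-\Delta+|\mphi|^2)Z$. The sign does not affect your conclusion. The paper instead argues entirely in weak form: it pairs the equation against infinitesimal gauge vectors $\binom{i\mphi\xi}{\pp_a\xi}$, uses self-adjointness of $\calL[\modu(q)]$ together with \eqref{bigkernel} to eliminate the $\calL$-contribution, and is left with $\int|\nabla Z|^2+|\mphi|^2Z^2=0$; this sidesteps the pointwise computation at the cost of one integration-by-parts argument. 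Both routes are fine; the weak version avoids the bookkeeping you were worried about, and also avoids having to interpret $\pp_a\eta_a$ distributionally.

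Step 4 has a genuine gap. In your schematic form $(-\Delta+M)\tilde u=\eta+R\tilde u$ you put into $R$ only the zeroth-order off-diagonal terms $2i\tilde A_a D_a\mphi(q)$ and $-2(i\tPhi,D_a\mphi(q))$, but the $\tPhi$-row of $L[\modu(q)]$ also contains the first-order drift $2i\mA_a(q)\pp_a\tPhi$ (see \eqref{Lu.rewrite}). That term is not in $R$, cannot be: it is differential, and its coefficient $\mA_a(q)$ decays only like $(1+|y_a|)^{-1}$ (Lemma~\ref{lem:decayq}), not exponentially, so it cannot be absorbed into the exponentially small remainder. Moreover the drift mixes the real and imaginary parts of $\tPhi$ (for real $\mA_a$, $2i\mA_a\pp_a\tPhi$ sends $\Re\tPhi$-gradients into the $\Im\tPhi$-equation and vice versa), so there is no componentwise scalar maximum principle for the system. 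The paper's workaround, which you should adopt, is to first get decay of $\tilde A_a$ from the scalar Lemma~\ref{scalar.lem1} (the $\tilde A_a$-equation really does have the form you wrote, with exponentially small right-hand side), and then to derive a scalar differential inequality for $|\tPhi|^2$: from $\Delta|\tPhi|^2=2(\Delta\tPhi,\tPhi)+2|\nabla\tPhi|^2$ and $|(2i\mA_a\pp_a\tPhi,\tPhi)|\le|\nabla\tPhi|^2+|\mA(q)|^2|\tPhi|^2$, the drift and the extra $|\nabla\tPhi|^2$ gain cancel, leaving $\Delta|\tPhi|^2\ge(3|\mphi|^2-1)|\tPhi|^2-Ce^{-\gamma|y_a|}|\tPhi|$, to which a one-dimensional comparison principle with barrier $Ke^{-2\gamma|y_a|}$ applies; the hypothesis $\gamma<1/2$ is what makes $3|\mphi|^2-1>4\gamma^2$ at infinity.
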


\begin{proof}
Existence of a unique weak solution $\tilde u$ satisfying the orthogonality condition $(\tilde u, n_\mu(q))_{L^2(\R^2)} = 0$
for $\mu = 1,\ldots, 2N$ follows directly from Theorem \ref{thm:hessian}
via standard arguments using the Riesz representation theorem in the Hilbert space
\[
\{ \tilde u \in H^1(\R^2; \C\times \R^2) \ : \ \ (\tilde u, n_\mu(q))_{L^2(\R^2)} = 0\mbox{ for all }\mu=1,\ldots, 2N\}
\]
equipped with the $H^1$ norm. This solution satisfies $\| \tilde u \|_{H^1(\R^2)} \le C \| \eta \|_{L^2(\R^2)}$. Basic elliptic regularity implies that $\| \tilde u \|_{H^2(\R^2)} \le C( \| \eta \|_{L^2(\R^2)} + \|\tilde u\|_{L^2(\R^2)})$, and then \eqref{system.lem1.c} follows.

Next, assume that $\eta$ satisfies the gauge-orthogonality condition, which
implies that
\[
\int_{\R^2} \eta \cdot  \binom{i\mphi(q) \xi}{\pp_a \xi} \ = 0 \qquad\mbox{ for all }\xi\in H^1(\R^2).
\]
It follows from \eqref{systemR2} that
\[
\int_{\R^2} L[\modu(q)](\tilde u) \cdot  \binom{i\mphi(q) \xi}{\pp_a \xi} \ = 0 \qquad\mbox{ for all }\xi\in H^1(\R^2).
\]
Then \eqref{Lu.def} implies that 
\[
\int_{\R^2} \calL[\modu(q)](\tilde u) \cdot  \binom{i\mphi(q) \xi}{\pp_a \xi} \ = -
\int_{\R^2} \binom{i\mphi(q)(\nabla\cdot \tilde A - (i\mphi(q),\tilde \Phi) } {\pp_a (\nabla\cdot \tilde A - (i\mphi(q),\tilde \Phi)}
 \cdot  \binom{i\mphi(q) \xi}{\pp_a \xi} .
\]
for all $\xi\in H^1(\R^2)$. But the left-hand side vanishes, by \eqref{bigkernel} and the fact that $\calL[\modu(q)]$ is self-adjoint, which follows from the fact that $\calL$ is the second variation of an energy functional.
So 
\[
\int_{\R^2} \binom{i\mphi(q)(\nabla\cdot \tilde A - (i\mphi(q),\tilde \Phi) } {\pp_a (\nabla\cdot \tilde A - (i\mphi(q),\tilde \Phi)}
 \cdot  \binom{i\mphi(q) \xi}{\pp_a \xi} =0\ .
\]
Taking $\xi = \pp_a\tilde A_a - (i\mphi(q), \tilde \Phi)$, we conclude that
\[
\int_{\R^2} |\nabla\xi|^2 +|\mphi(q)|^2\xi^2 = 0
\]
and hence (by Lemma \ref{basic.elliptic}) that $\xi = 0$, which says exactly that  $\tilde u$ is gauge-orthogonal.

Finally, assume that $\eta$ satisfies \eqref{vector.expd1}. The embedding  $H^2(\R^2)\hookrightarrow C_0(\R^2)$ implies that $\tilde u(y_a)\to 0$ as $|y_a|\to \infty$, with
\begin{equation}\label{tu.Linf}
\| \tilde u\|_{L^\infty(\R^2)} \le C \| \tilde u\|_{H^2(\R^2)} \le  C \|\eta \|_{L^2(\R^2)} \le C \| \eta \|_{L^\infty_\gamma(\R^2)}
\end{equation}
for $C = C(K,\gamma)$.
Recalling that $\pp_a\mA_a(q)=0$, we can rewrite the system $L[\modu(q)](\tilde u)$ 
in the form
\beq\label{Lu.rewrite}
\begin{aligned}
-\Delta \tPhi + 2i \mA_a(q)\pp_a\tPhi + \left( \frac 12(3|\mphi(q)|^2-1) + |\mA(q)|^2\right)\tPhi 
&= \eta_\phi - 2i(D_{\mA_a} \mphi)(q)\tA_a  \\
-\Delta \tA_a + |\mphi(q)|^2 \tA_a &= \eta_a + 2 (i\tPhi, D_{\mA_a}\mphi(q)).
\end{aligned}
\eeq
It follows from \eqref{eq10} that $| D_{\mA_a}\mphi(y_a;q)|\le Ce^{-\gamma|y_a|}$, so
the desired exponential decay of $\tilde A_a$ for $a=1,2$ is a consequences of Lemma \eqref{scalar.lem1}, with the same decay rate as $\eta$.

Next, we derive a differential inequality satisfied by $|\tilde \Phi|^2$.
To do this we note that
\[
\Delta |\tilde \Phi|^2 = 2 (\Delta \tilde \Phi, \tilde \Phi) + 2 |\nabla\tilde\Phi|^2
\]
From \eqref{Lu.rewrite}, \eqref{eq10} and \eqref{vector.expd1}, we check that
\[
(\Delta\tPhi, \tPhi) \ge  \left( \frac 12(3|\mphi(q)|^2-1) + |\mA(q)|^2\right)|\tPhi|^2  + (2i \mA(q)_a\pp_a\tPhi , \tPhi) - C e^{-\gamma|y_a}|\tPhi|.
\]
Noting that
\[
|  (2i \mA(q)_a\pp_a\tPhi , \tPhi) | \le |\nabla\tPhi|^2 + |\mA(q)|^2\, |\tPhi|^2,
\]
we deduce that 
\[
\Delta |\tPhi|^2 \ge  (3|\mphi(q)|^2-1)|\tPhi|^2  -  C e^{-\gamma|y_a|}|\tPhi| .
\]
Since $\gamma<1/2$ and $|\mphi(y_a;q)|^2\to 1$ exponentially fast as $|y_a|\to\infty$, we can find $R=R(\gamma,K)$ such that 
$3|\mphi(q)|^2-1 > 1+\gamma $ whenever $|y_a|\ge R$. In this region,
\[
\Delta |\tPhi|^2 \ge  2\gamma |\tPhi|^2  + (1-\gamma) |\tPhi|^2 -  C e^{-\gamma|y_a|}|\tPhi|  \ge 2\gamma|\tPhi|^2 - C e^{-2\gamma|y_a|}.
\]
We now define $v = |\tPhi|^2 - K e^{-2\gamma|y_a|}$ for $K$ to be chosen below. We use the above inequality to compute
\[
\Delta v \ge 2\gamma|\tPhi|^2 - \left( C + 4\gamma^2 K \right) e^{-2\gamma|y_a|} = 2\gamma v +(2\gamma K - C - 4\gamma^2 K)e^{-2\gamma|y_a|}.
\]
Since $\gamma<1/2$, and recalling \eqref{tu.Linf}, we can choose $K$ large enough that
\[
\Delta v \ge 2\gamma v,\mbox{ when }|y_a|>R, \qquad \mbox{ and } v(y_a)<0 \mbox{ when }|y_a|=R.
\]
Then recalling that $\tilde u\to 0$ as $|y_a|\to \infty$, a maximum principle argument shows that $v\le 0$, and hence that \eqref{vector.expd2} holds, for $|y_a|\ge R$. The same inequality for $|y_a|\le R$ follows from \eqref{tu.Linf}, after adjusting constants if necessary. This proves \eqref{vector.expd2}.
\end{proof}

We now present the 

\begin{proof}[proof of Proposition \ref{system.pde}]
We follow the proof of Proposition \ref{scalar.pde}, with adjustments to account for complications such as the fact that $L[\modu(q)]$ has a nontrivial kernel.

Existence and uniqueness of a weak solution follows by applying Lemma \ref{vsolvability1} on $\R^2_{(y_{\bar a}, y_0 ) }$
for a.e. $(y_{\bar a}, y_0)\in \R^{d-2}\times (0,T)$. The assertion about gauge-orthogonality follows from applying the corresponding claim from Lemma \ref{vsolvability1} on $\R^2_{(y_{\bar a}, y_0)}$ for every $y_{\bar a}$.

Next we fix $y_0\in (0,T)$, and having done so we write for example $\R^2_{y_a}$ in place of $\R^2_{(y_{\bar a}, y_0)}$, and $\R^d$ instead of $\R^d\times \{y_0\}$. This $y_0$ will remain fixed
until we consider $\pp_0 \tilde u$, near the end of the proof.

We apply \eqref{system.lem1.c} on a.e. $y_{\bar a}\in \R^{d-2}$, square both sides, and integrate with respect to $y_{\bar a}\in \R^{d-2}$.  \beq\label{L2L2Rd.sys}
\int_{\R^{d-2}}\|\tilde u\|^2_{H^2(\R^2\times \{y_{\bar a}\})}dy_{\bar a}
\le C\| \eta\|_{L^2(\R^d)}^2 \quad\mbox{ when \eqref{system.Rd} holds.}
\eeq

Now let $r$ be a multiindex on $\R^d$ such that $|r| = l \le k$.
We first assume that $\eta$ and $q$ are smooth,  and hence $\tilde u$ is smooth.
We can then differentiate \eqref{system.Rd} to find that
\beq\label{Dalpha.eqn.sys}
\begin{aligned}
L[\modu(q)](\pp^r \tilde u) &= \pp^r\eta + L[\modu(q)](\pp^r \tilde u)  - \pp^r(L[\modu(q)](\tilde u))
\end{aligned}
\eeq
We write 
\beq\label{wperp.wparallel}
\pp^r \tilde u = w^\perp_r+ w^\parallel_r, \qquad\mbox{ where }
w^\perp_r \perp \ker(L[\modu(q)]).
\eeq
It is easy to see that 
\beq\label{wpar1}
w^\parallel_r(y) = (\pp^r \tilde u, n_\mu(q))_{L^2(\R^2_{y_{\bar a}})} n_\mu(q) 
\eeq
where $\{n_\mu(\cdot; q) \}$ is the orthonormal basis fixed in Definition \ref{def:nmu}, and we sum implicitly $\mu=1,\ldots, 2N$.
By differentiating the orthogonality condition \eqref{svbl2}, we find that
\beq\label{wparallel.split}
(\pp^r \tilde u, n_\mu(q))_{L^2(\R^2(y_{\bar a}))}
= 
\sum_{r_1+r_2=r, |r_2|\ge 1}\frac{r!}{r_1! r_2!} (\pp^{r_1}\tilde u, \pp^{r_2} n_\mu(q))_{L^2(\R^2(y_{\bar a}))}
\eeq
We use Lemma \ref{lem:fq} below to estimate
\begin{align}
\| w^\parallel_r\|_{L^2(\R^d)}^2
&= \int_{\R^{d-2}} \left(
(\pp^r \tilde u, n_\mu(q))_{L^2(\R^2(y_{\bar a}))}^2\int_{\R^2} |n_\mu(y_a; q(y_{\bar a}))|^2 dy_a \right) dy_{\bar a}
\nonumber \\
&= \int_{\R^{d-2}}(\pp^r \tilde u, n_\mu(q))_{L^2(\R^2(y_{\bar a}))}^2 dy_{\bar a}
\nonumber\\
&\le
 C \int_{\R^{d-2}}\sum_{r_1+r_2=r, |r_2|\ge 1}(\pp^{r_1}\tilde u, \pp^{r_2} n_\mu(q))_{L^2(\R^2(y_{\bar a}))}^2 dy_{\bar a} \nonumber\\
 &
\overset{\eqref{f(q)psi}}\le C \| \tilde u\|_{H^{l-1}(\R^d)}^2. \label{wparallel2}
\end{align}
In fact, since $\| n_\mu(q)\|_{H^k(\R^2)} \le C(k,K)$ for $q\in K$, the same argument shows that
\beq\label{wparallel3}
\|w^\parallel_r\|_{L^2(\R^d)}^2+\|\nabla_a  w^\parallel_r\|_{L^2(\R^d)}^2+\|\nabla_a^2  w^\parallel_r\|_{L^2(\R^d)}^2 \le C \|\tilde u\|_{H^{l-1}(\R^d)}^2,
\eeq
where $\nabla_a$ denotes the gradient $(\pp_{y_1}, \pp_{y_2})$ with respect to the $y_a$ variables. 

To control $w^\perp_r$, note first that $L[\modu(q)](w^\parallel_r) = 0$ by construction, so
\eqref{Dalpha.eqn.sys} implies that
\beq\label{Dalphasys2}
L[\modu(q)](w_r^\perp) = \pp^r\eta + L[\modu(q)](\pp^r \tilde u)  - \pp^r(L[\modu(q)](\tilde u)).
\eeq
We now estimate the $L^2$ norm of the commutator term on the the right-hand side. First note from the definition \eqref{eq33.14} that for every $q$,
\[
L_1[\modu(q)]  := L[\modu(q)]  - \Delta_a
\]
is a first-order differential operator involving only $\pp_{y_a}$ derivatives whose coefficients are smooth functions of $y_a$ and $q(y_{\bar a})$.  It is clear that
\beq\label{Dalp2}
L[\modu(q)](\pp^r \tilde u)  - \pp^r(L[\modu(q)](\tilde u))
= 
L_1[\modu(q)](\pp^r \tilde u)  - \pp^r(L_1[\modu(q)](\tilde u))
\eeq
The right-hand side is a sum of terms of the form
\beq\label{Dalp3}
\pp^{r_1} f(q) \pp^{r_2}\tilde u, \qquad
\pp^{r_1} f(q) \pp^{r_2} \pp_a\tilde u,
\eeq
with $r_1+r_2=r$ and $|r_1|\ge 1$, where $f(q)$ is a function of the type considered in Lemma \ref{lem:fq}.
Indeed, every $f(q)$ is is a low-degree (at most 2) polynomial in components of $\modu(y_a; q(y_{\bar a}))$ or their first $\pp_{y_a}$ derivatives,
and all components of $\modu(y_a; q)$ are smooth with respect to both
$y_a$ and $q$, as noted in Lemma \ref{lem:decayq}. Thus Lemma \ref{lem:fq} implies that every coefficient function $f(q)$ above
satisfies
\[
\| \pp^{r_1} f(q) \pp^{r_2}\tilde u \|_{L^2(\R^d)} \le C(\Theta) \| \tilde u\|_{H^{l-1}(\R^d)}.
\]
Thus, arguing as in the proof of Proposition \ref{scalar.pde}, we deduce that
\[
\| L[\modu(q)](\pp^r \tilde u)  - \pp^r(L[\modu(q)](\tilde u)) \|_{L^2(\R^d)}
\le
C(q) \left( \| \tilde u\|_{H^{l-1}(\R^d)} + \| \nabla_a \tilde u\|_{H^{l-1}(\R^d)} \right).
\]
Recalling \eqref{Dalphasys2}, the basic estimate \eqref{L2L2Rd.sys} implies that
\[
\begin{aligned}
&\| \nabla_a^2 w^\perp_r \|_{L^2(\R^d)}+\|\nabla_a w^\perp_r\|_{L^2(\R^d)}+\| w^\perp_r\|_{L^2(\R^d)}\\
&\hspace{10em}
 \le \| \pp^r\eta\|_{L^2(\R^d)} + C\| \tilde u \|_{H^{l-1}(\R^d)} + C\| \nabla_a\tilde u\|_{H^{l-1}(\R^d)}.
\end{aligned}
\]
We combine this with \eqref{wparallel3}. Since the resulting inequality
holds for all $r$ with  $|r|=l$, it follows that
\[
\begin{aligned}
&\| \tilde u \|_{H^{l}(\R^d)} + \| \nabla_a\tilde u\|_{H^{l}(\R^d)} + \| \nabla_a^2\tilde u\|_{H^{l}(\R^d)}\\
&\hspace{10em}
 \le \| \eta\|_{H^l(\R^d)} + C\| \tilde u \|_{H^{l-1}(\R^d)} + C\| \nabla_a\tilde u\|_{H^{l-1}(\R^d)}.
\end{aligned}\]
for $l\le k$. An easy induction argument, with the $l=0$ case supplied by \eqref{L2L2Rd.sys}, shows that
\[
\| \tilde u \|_{H^{l}(\R^d)}  \le C  \| \eta\|_{H^l(\R^d)} .
\]
Since this holds at arbitrary $y_0$ and for all $l\le k$, it follows that
\beq\label{tu.Linf.Hk}
\| \tilde u \|_{L^\infty_T H^{k}(\R^d)}  \le C  \| \eta\|_{L^\infty_T H^k(\R^d)} .
\eeq

We next claim that 
\beq\label{expd.ind}
\| \tilde u \|_{L^\infty_T W^{k-j,\infty}_\gamma(\R^d)}  \le C  \| \eta\|_{L^\infty_T W^{k-j,\infty}_\gamma(\R^d)} .
\eeq
We assume by induction that 
\[
\| \tilde u \|_{L^\infty_T W^{l-1,\infty}_\gamma(\R^d)}  \le C  \| \eta\|_{L^\infty_T W^{l-1,\infty}_\gamma(\R^d)} 
\]
for $1\le l \le j-k$.
For a multiindex $r$ such that $|r|=l$, we decompose $\pp^r\tilde u = w^\perp_r + w^\parallel_r$ as in \eqref{wperp.wparallel}.
In view of \eqref{wpar1}, \eqref{wparallel.split} and properties of $n_\mu$ from Lemma \ref{lem:decayq}, to estimate
$\| w^\parallel_r \|_{L^\infty_T L^\infty_\gamma(\R^d)}$ it suffices to show that if $|r_1|\le l-1$ then
\[
\mbox{ $\psi(y_0, y_{\bar a}) := (\pp^{r_1} \tilde u  , \pp^{r_2} n_\mu)_{L^2(\R^2_{(y_0,y_{\bar a})})}$ satisfies }\ 
\| \psi \|_{L^\infty_T L^\infty(\R^{d-2})} \le C  \| \eta\|_{L^\infty_T W^{l-1,\infty}(\R^d)} .
\]
But this follows immediately from the induction hypothesis and properties of $n_\mu$.
We then conclude that
\[
\| w^\parallel_r \|_{L^\infty_T L^\infty_\gamma(\R^d)}  \le C  \| \eta\|_{L^\infty_T W^{l-1,\infty}_\gamma(\R^d)} \qquad\mbox{ if }|r|=l.
\]

Next,  we write $w^\perp_r = (\tPhi^\perp_r, \tA^\perp_{r,a})$.
We write the equation satisfied by $w^\perp_r$ as in \eqref{Dalphasys2}, \eqref{Dalp2}, \eqref{Dalp3}. From the explicit form \eqref{Lu.rewrite} of $L[\modu(q)]$, we see that the $a=1,2$ components of this system can be written
\[
(-\Delta +|\mphi(q)|^2)\tA^\perp_{r,a} = \eta_{r,a} + \mbox{ linear combination of terms of the form } \pp^{r_1}f(r) \pp^{r_2}\tilde u
\]
with $|r_2|\le l-1$. Using the induction hypothesis, we check that 
\[
\| \mbox{right-hand side }   \|_{L^\infty_T L^\infty_\gamma (\R^d)}  \le C  \| \eta\|_{L^\infty_T W^{l,\infty}_\gamma(\R^d)} \ \le C  \| \eta\|_{L^\infty_T W^{l,\infty}_\gamma(\R^d)} 
\]
and then Lemma \ref{scalar.lem1} implies that
\[
\| \tA_{r,a}^\perp \|_{L^\infty_T L^\infty_\gamma(\R^d)}  \le C  \| \eta_r \|_{L^\infty_T W^{l,\infty}_\gamma(\R^d)} .
\]
The right-hand side is finite, since we have assumed that $l\le k-j$.

By inspection of \eqref{Lu.rewrite}, we see that the $\tPhi^\perp_r$ components solve an equation of the form
\begin{align}
&-\Delta  \tPhi^\perp_r  + 2i \mA(q)_a\pp_a \tPhi^\perp_r + \sum_{|s|=l} \Xi_{s} \tPhi^\perp_{s} +  \left( \frac 12(3|\mphi(q)|^2-1) + |\mA(q)|^2\right) \tPhi^\perp_r  \nonumber \\
&\hspace{6em}= \eta_{r,\phi} -\sum_{|s|=l} \Xi_s  \tPhi^\parallel_s
\label{tPhir}\\
&\hspace{10em} 
+  \mbox{ linear combination of terms of the form } \pp^{r_1}f(r) \pp^{r_2}\tilde u  \nonumber 
\end{align}
where $\sum_{s|=l} \Xi_s \tPhi^\perp_s$ and $-\sum_{s|=l} \Xi_s \tPhi^\parallel_s$ collect all terms with $l-1$ derivatives of $\pp_a\tPhi$ and one derivative of $A_a$ that arise when expanding
$\pp^r (2i \mA_a(q) \pp_a\tPhi)$. Thus, each $\Xi_s$ is either $0$ or has the form $C 2i \pp_j \mA_a(q)$ for some  $C, a,j$ and hence, by Lemma \ref{lem:decayq}, satisfies
\beq\label{needsoon}
|\Xi_s(y)| \le C(1+|y_a|)^{-2}.
\eeq
From the induction hypothesis and our earlier discussion of $w_r^\parallel$, the right-hand side of \eqref{tPhir} is bounded by
\[
\| \mbox{right-hand side}\|_{L^\infty_T L^\infty_\gamma(\R^d)} \le  \| \eta\|_{L^\infty_T W^{l,\infty}_\gamma(\R^d)}.
\]
We define $\Psi = (\tPhi^\perp_r)_{r=l}$. Multiplying \eqref{tPhir} by $\tPhi_r$, summing over multiindices $r$ such that $|r|=l$, using \eqref{needsoon} and
arguing as in the proof of Lemma \ref{vsolvability1}, we find that
\[
\Delta |\Psi|^2 \ge  (3|\mphi(q)|^2-1 - C (1+|y_a|)^{-2} )|\Psi|^2  -  C e^{-\gamma|y_a|}|\Psi| .
\]
One can then follow the proof of Lemma \ref{vsolvability1} to show that
\[
|\Psi|^2 \le K e^{-\gamma|y_a|}, \qquad\mbox{ with }K\le  \| \eta\|_{L^\infty_T W^{l,\infty}_\gamma(\R^d)}.
\]
This completes the induction argument  and hence proves \eqref{expd.ind}.

So far we have proved that 
\beq\label{almostdone}
\| \tilde u \|_{L^\infty_T X^{k,j}_\gamma(\R^d)} \le C \| \eta \|_{L^\infty_T X^{k,j}_\gamma(\R^d)}\ .
\eeq
when $L[\modu(q)]\tilde u= \eta$. By applying \eqref{almostdone} to the equation
\[
L[\modu(q)]\pp_0 \tilde u= \pp_0 \eta + L[\modu(q)]\pp_0 \tilde u - \pp_0(L[\modu(q)] \tilde u)
\]
and using \eqref{almostdone} to estimate the right-hand side. we obtain
\[
\| \pp_0 \tilde u \|_{L^\infty_T X^{k-1,j}_\gamma(\R^d)} \le C \| \pp_0 \eta \|_{L^\infty_T X^{k-1,j}_\gamma(\R^d)} + C \| \eta \|_{L^\infty_T X^{k,j}_\gamma(\R^d)}\ .
\]
This and \eqref{almostdone} immediately imply the conclusion \eqref{eq:spc} of the Proposition,
\end{proof}

\subsection{A nonlinear evolution equation}\label{app:hyp}

In this section we recall standard facts about the system of equations
equation
\beq\label{dhs.app}
(\kz\wpp_0\wpp_0 + \ko \wpp_0 - \wpp_{\bar a }\wpp_{\bar a})f_\nu
= rhs_\nu, \qquad \nu = 1,\ldots, 2N
\eeq
where $rhs_\nu$ is a polynomial function of $f_\mu, \wpp_0 f_\mu, \wpp_{\bar a}f_\mu$, for $\mu = 1,\ldots, 2N$, with coefficients that are rather smooth functions, such that the estimate
\beq\label{dhs.app2}
\| rhs(\cdot, t)\|_{H^k(\R^{d-2})}
\le C\left( 1+ \Theta_1(t) + \Theta_1(t)^3 \right)
\eeq
holds for all $t$ when $f$ is smooth enough, where
\beq\label{Theta1.bis}
\Theta_1(t) := \| f (\cdot, t)\|_{H^{k+1}} +  \kz \| \wpp_0 f (\cdot, t)\|_{H^{k}}
\eeq
The equation is posed on $\R^{d-2}\times (0,T)$, and we assume that $k>\frac d2-1$.

The point of this discussion is to check that the regularity \eqref{dhs.app2} the nonlinearity is sufficient to control the solution in the norms appearing in \eqref{Theta1.bis} above. This is presumably standard, but we do not know a handy reference.

\begin{lemma}\label{lem:dhs}
The system of equations \eqref{dhs.app} with initial data 
$(f, \wpp_0 f)|_{t=0} = (0,0)$ has a unique solution
\beq\label{f.reg}
f\in C(0,T_0, H^{k+1}(\R^{d-2}))  \quad\mbox{  with } \ \ 
\wpp_0 f\in  C(0,T_0), H^{k}(\R^{d-2}))
\eeq
for some $T_0\in (0,T)$ that can be bounded below in terms of the constant in \eqref{dhs.app2}. This solution satisfies
\beq\label{dhs.est}
\Theta_1(t) \le \begin{cases} \sqrt{2Ct} &\mbox{ if }0\le t\le \frac 1{2C}\\
(3-4Ct)^{-1/4} &\mbox{ if }\frac 1{2C}\le t  < \min\{ 3t/4C, T\} \le T_0 .
\end{cases}
\eeq
\end{lemma}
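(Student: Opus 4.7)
The plan is to treat \eqref{dhs.app} as a standard semilinear Cauchy problem and to solve it by a contraction mapping argument, then to derive the quantitative bound \eqref{dhs.est} by combining the natural linear energy identity with a comparison to a scalar ODE. The first observation is that, since the hypothesis $k>d/2-1$ is equivalent to $k+1>(d-2)/2$, the space $H^{k+1}(\R^{d-2})$ is a Banach algebra. Because $rhs$ is a polynomial of degree at most three in its arguments with smooth coefficients, the growth estimate \eqref{dhs.app2} is in fact part of a Lipschitz-type bound on differences $rhs(g_1)-rhs(g_2)$, controlled in $H^k(\R^{d-2})$ by the quantity $\|g_1-g_2\|_{H^{k+1}}+\kz\|\wpp_0(g_1-g_2)\|_{H^{k}}$, with a prefactor polynomial in $\Theta_1(g_1)+\Theta_1(g_2)$.

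For existence and uniqueness I would set up a Picard iteration: given $f^{(n)}$, define $f^{(n+1)}$ as the solution of the linear problem
\[
(\kz\wpp_0\wpp_0+\ko\wpp_0-\wpp_{\bar a}\wpp_{\bar a})f^{(n+1)} = rhs\bigl(f^{(n)},\wpp_0 f^{(n)},\wpp_{\bar a}f^{(n)}\bigr)
\]
with zero initial data. Standard linear energy identities --- the hyperbolic energy identity obtained by multiplying $\wpp^\alpha f^{(n+1)}$ by $\wpp_0\wpp^\alpha f^{(n+1)}$ and summing over multi-indices $\alpha$ with $|\alpha|\le k$ in the case $\kz>0$, and the parabolic smoothing identity $\tfrac{d}{dt}\|f\|_{H^{k+1}}^2+c_\ko\|f\|_{H^{k+2}}^2\lesssim \|rhs\|_{H^{k}}^2$ in the case $\kz=0$ --- yield in both cases an estimate of the schematic form
\[
\Theta_1^{(n+1)}(t)^2 \ \le\ C\int_0^t \bigl(1+\Theta_1^{(n)}(s)+\Theta_1^{(n)}(s)^3\bigr)^2\,ds.
\]
Choosing $T_0>0$ small depending only on $C$ keeps the iterates inside a fixed ball in $C([0,T_0];H^{k+1}(\R^{d-2}))$ (with $C^1([0,T_0];H^{k}(\R^{d-2}))$ added when $\kz>0$), and the Lipschitz estimate applied to $f^{(n+1)}-f^{(n)}$ supplies the contraction, possibly after shrinking $T_0$ further. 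Uniqueness within the regularity class \eqref{f.reg} follows from the same energy estimate applied to the difference of two solutions.

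For the quantitative bound \eqref{dhs.est} on the true solution, the same linear identity combined with \eqref{dhs.app2} yields a scalar differential inequality of the form
\[
\frac{d}{dt}\Theta_1(t)^2 \ \le\ 2C\bigl(1+\Theta_1(t)^6\bigr).
\]
I would then split into two regimes. While $\Theta_1\le 1$, this reduces to $\tfrac{d}{dt}\Theta_1^2\le 4C$, which integrates from $\Theta_1(0)=0$ to yield the first line of \eqref{dhs.est} (up to the exact numerical constant, which can be matched by slightly more careful book-keeping in the energy step). For $\Theta_1\ge 1$, the inequality becomes $\tfrac{d}{dt}Y\le 4CY^3$ with $Y:=\Theta_1^2$, and a direct integration of $-(Y^{-2}/2)'\le 4C$ starting at the transition time $t_\star=1/(2C)$ with $Y(t_\star)=1$ yields $Y(t)^{-2}\ge 3-4Ct$, hence $\Theta_1(t)\le (3-4Ct)^{-1/4}$ on $t<3/(4C)$, which is exactly the second line of \eqref{dhs.est}.

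The hard part will be carrying the argument uniformly across the two allowed regimes $\{\kz>0\}$ (possibly with $\ko=0$) and $\{\kz=0,\ \ko>0\}$: the natural linear energy and the derivation of the differential inequality for $\Theta_1^2$ take structurally different forms in the hyperbolic and parabolic cases, and one must produce a single inequality of the stated form with a constant depending only on the $C$ appearing in \eqref{dhs.app2} and not degenerating within the allowed parameter range. Once this uniform linear estimate is in place, both the contraction argument giving existence and the ODE comparison giving \eqref{dhs.est} are routine.
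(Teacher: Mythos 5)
Your proposal follows the same overall strategy as the paper: an energy estimate obtained from the multiplier $\wpp_0\wpp^r f_\nu$, a scalar differential inequality of the form $\wpp_0(\Theta_1^2)\lesssim 1+\Theta_1^6$, an explicit integration in the two regimes $\Theta_1\le 1$ and $\Theta_1\ge 1$, and a Picard/contraction argument for existence exploiting that the nonlinearity is at most cubic in $(f,\wpp_0 f,\wpp_{\bar a}f)$ with values in $H^k$. The numerical discrepancy you flag (factors of $2$) is indeed just book-keeping.

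The one point where the proposal takes a different route, and which you rightly flag as the delicate step, is the uniformity across the regimes $\kz>0$ and $\kz=0,\ko>0$. You propose a genuinely different multiplier in the parabolic case (the smoothing identity $\tfrac d{dt}\|f\|_{H^{k+1}}^2+c_\ko\|f\|_{H^{k+2}}^2\lesssim\|rhs\|_{H^k}^2$), which does work but then requires a separate verification that the resulting differential inequality has the same form. The paper avoids this by using the \emph{same} multiplier $\wpp_0\wpp^r f_\nu$ in both cases: the energy identity produces a term $+\ko\|\wpp_0 f\|_{H^k}^2$ on the left, and the right-hand side $\|rhs\|_{H^k}\,\|\wpp_0 f\|_{H^k}$ is then split by Young's inequality with a free parameter $\delta$. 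When $\kz>0$ one takes $\delta=1$ and absorbs $\|\wpp_0 f\|_{H^k}^2$ into $\Theta_1^2$ via $\|\wpp_0 f\|_{H^k}^2\le\kz^{-1}\Theta_1^2$; when $\kz=0$ one takes $\delta=\ko$ and lets the $\ko\|\wpp_0 f\|_{H^k}^2$ term on the left swallow it. This single computation yields the unified inequality $\wpp_0(\Theta_1^2)\le C(1+\Theta_1^6)$ in both regimes, so the concern about non-uniformity dissolves. One further small ingredient you omit: the operator $-\wpp_{\bar a}\wpp_{\bar a}$ controls only $\|\nabla_{\bar a}f\|_{H^k}$, not the $L^2$-part of $\|f\|_{H^{k+1}}$, so the paper first replaces the equation by $(\kz\wpp_0^2+\ko\wpp_0-\wpp_{\bar a}\wpp_{\bar a}+1)f_\nu=rhs_\nu$ (moving $f_\nu$ to the left); this makes the energy produce the full $\Theta_1^2$ rather than a homogeneous seminorm. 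Your sketch would otherwise need a separate step to recover $\|f\|_{L^2}$, e.g.\ from $\|f(t)\|_{L^2}\le\int_0^t\|\wpp_0 f\|_{L^2}$.
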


\begin{proof}
    First we, we modify the system by adding $f_\nu$ to both sides.
    It becomes
    \beq\label{dhs.app3}
(\kz\wpp_0\wpp_0 + \ko \wpp_0 - \wpp_{\bar a }\wpp_{\bar a} +1)f_\nu
= rhs_\nu, \qquad \nu = 1,\ldots, 2N
\eeq
for a new $rhs$ that still satisfies \eqref{dhs.app2}, with the constant adjusted if necessary.

We next  prove an {\em a priori} estimate for \eqref{dhs.app3}. Let $r$ be a multiindex
    on $\R^{d-2}$ such that $|r|\le k$. We apply $\wpp^r$ to \eqref{dhs.app}, then multiply both sides of the equation by $\wpp_0\wpp^r f_\nu$. After integration by parts and summing over $\nu$, this leads to
    \[
   \begin{aligned}
&\frac 12 \wpp_0 \left( \|\wpp^r\nabla_{\bar a}f\|_{L^2}^2 +
   \kz \|\wpp^r\wpp_0f\|_{L^2}^2 + \|\wpp^r f\|_{L^2}^2
   \right)
   + \ko \| \wpp^r\wpp_0f\|_{L^2}^2\\
   &\hspace{10em}
   \le \int_{\R^{d-2}}|\wpp^r(rhs)| \ |\wpp^r\wpp_0 f|\\
   &\hspace{10em}
   \le \| rhs\|_{H^{k}} \, \|\wpp_0 f\|_{H^k}.
   \end{aligned}
   \]
    Summing over all multiindices $r$ such that $0\le |r|\le k$ and appealing to \eqref{dhs.app2}, we deduce that
    \[
    \begin{aligned}
            \frac 12\wpp_0 (\Theta_1(t)^2) + \ko \| \wpp_0 f\|_{H^{k}}^2 &
        \le C(1+\Theta_1(t) + \Theta_1(t)^3) \| \wpp_0 f\|_{H^k}\\
        &\le \frac C\delta (1+\Theta_1^6)  + \delta  \| \wpp_0 f\|_{H^k}^2\quad\mbox{ for any }\delta>0.
    \end{aligned}
    \]
    Note also that $\Theta_1(0)=0$.
    
    If $\kz>0$ then we take $\delta=1$ and estimate $\|\wpp_0 f(\cdot, t)\|_{H^k}^2$ by $C(\kz) \Theta_1(t)^2$. If $\kz=0$ then $\ko>0$ and we can take $\delta = \ko$. Either way we obtain
    \[
\wpp_0(\Theta_1^2) \le C(1+ \Theta_1^6), \qquad \Theta_1^2(0)=0.
    \]
The right-hand side is bounded by $2C\max\{ 1, 2C\Theta_1^6\}$, from which it is easy to check that \eqref{dhs.est} holds.

Existence of solutions can be proved by the applying the contraction mapping principle\
in the space
\[
Y := \{ f\in C(0,T_1; H^{k+1})  :  \kz f_t\in C(0,T_1; H^k) \}
\]
for $T_1$ to be chosen, with the natural norm
\[
\| f\|_Y = \| f\|_{L^\infty_{T_1}H^{k+1}} + \kz \|\wpp_0 f\|_{L^\infty_{T_1}H^k} .
\]
The mapping in question is given by
\[
f\in Y\mapsto g\mbox{ solving }
(\kz \wpp_0\wpp_0 +\ko \wpp_0 - \wpp_{\bar a}\wpp_{\bar a})g = rhs(f)
\]
with initial data specified.
The key point is that for $f_1, f_2\in Y$, because $rhs$ is at most cubic in $f$ and its first derivatives,
\[
\| rhs(f_1) - rhs(f_2) \|_{L^\infty_{T_1}H^k} \le
C( \| f_1\|_Y^2 + \|f_2\|_Y^2)\|f_1-f_2\|_{Y}^2.
\]
This combined with estimates like those above show that if $T_1$ is small enough, the mapping is a contraction in a suitable ball in $Y$. This argument can be iterated to extend the interval of existence to at least $T_0$.
\end{proof}

\subsection{ further lemmas}\label{App:more}

\begin{lemma}\label{lem:fq}

Assume that $q:\R^{d-2}\to M_N$ satisfies
\beq\label{qcompact.again}
\mbox{ $\exists$ compact }K\subset M_N\mbox{ such that }q(y_{\bar a})\in K\mbox{ for all }(y_{\bar a})\in \R^{d-2}.
\eeq
Assume that
$\pp_{\bar a}q\in H^{L}(\R^{d-2})$ for  some $L\ge d$, and set
\[
\Theta :=  \| \pp_{\bar a}q\|_{H^L(\R^{d-2})}.
\]
Assume that $f:\R^2\times M_N \to \R$ or $\C$ is a function such that
\begin{equation}\label{fdecay}
|\pp^r f(y_a,q)| \le C(1+|y_a|)^{-|r|} \qquad\mbox{ whenever }q\in K
\end{equation}
for any multi-index $r$ denoting differentiation with respect to the $y_a$ and $q$ variables, where $C= C(r,K)$.
Let $f(q): \R^d\to \R$ denote the function 
\beq\label{fq.def}
f(q)(y) := f(y_a; q( y_{\bar a})),
\eeq

Then $f(q)\in  W^{L+1-\lfloor d/2\rfloor,\infty}(\R^d)$, and
\beq\label{fq.Wkinfty}
\|f(q)\|_{W^{L+1-\lfloor d/2\rfloor,\infty}(\R^d)} \le C.
\eeq
Moreover if $k\le L$ then
\beq\label{fqpsi2}
\| f(q)\,\tilde u\|_{H^{k}(\R^d)} \le C \|\tilde u \|_{H^k(\R^d)} 
\eeq
and for multiindices $r,s$ such that $|r|+|s|\le k$ and $|r|\ge 1$,
\beq\label{f(q)psi}
\| \pp^r f(q) \,  \pp^s\tilde u\|_{L^2(\R^d)} \le C \| \tilde u\|_{H^{k-1}(\R^d)}. 
\eeq
Finally, if 
\[
|\pp^r f(y_a,q)| \le C(1+|y_a|)^{-|r|-1} \qquad\mbox{ whenever }q\in K,
\]
then 
\beq\label{gfqpsi}
\| \pp_{\bar a}q\, f(q)\,\tilde u\|_{H^{k}(\R^d)} \le C \|\tilde u \|_{H^k(\R^d)} \qquad\mbox{ for } k\le L.
\eeq
\end{lemma}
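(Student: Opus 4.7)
My plan is to reduce all four estimates to a single computation: the Fa\`a di Bruno expansion
\begin{equation*}
\pp^\alpha f(q)(y) = \sum_{j,\rho,\mu} c_{\alpha,j,\rho,\mu}\, (\pp^{\alpha_a}_{y_a}\pp^{j}_q f)(y_a; q(y_{\bar a})) \prod_{i=1}^j \pp^{\rho_i} q^{\mu_i}(y_{\bar a}),
\end{equation*}
valid for any multi-index $\alpha=(\alpha_a,\alpha_{\bar a})$ on $\R^d$, with the inner sum ranging over $j\ge 0$ and $\rho_1,\ldots,\rho_j$ on $\R^{d-2}$ with $|\rho_i|\ge 1$ and $|\rho_1|+\cdots+|\rho_j|=|\alpha_{\bar a}|$. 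Hypothesis \eqref{fdecay} applied with $|r|=|\alpha_a|+j$ supplies the uniform-in-$q\in K$ bound $|(\pp^{\alpha_a}_{y_a}\pp^j_q f)(y_a;q)|\le C(1+|y_a|)^{-|\alpha_a|}$; in the stronger-decay hypothesis used for \eqref{gfqpsi}, this improves to $(1+|y_a|)^{-|\alpha_a|-1}$.

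Estimate \eqref{fq.Wkinfty} will then follow immediately: a case check on parity shows $L-\lfloor(d-2)/2\rfloor-1 = L-\lfloor d/2\rfloor$, so Sobolev embedding puts $\pp_{\bar a}q$ in $W^{L-\lfloor d/2\rfloor,\infty}(\R^{d-2})$, and hence every factor $\pp^{\rho_i}q$ appearing above lies in $L^\infty$ whenever $|\alpha|\le L+1-\lfloor d/2\rfloor$. For \eqref{fqpsi2} and \eqref{f(q)psi} I will combine Leibniz
\[
\pp^\alpha(f(q)\tilde u)\;=\;\sum_{\beta\le\alpha}\binom{\alpha}{\beta}\pp^\beta f(q)\cdot\pp^{\alpha-\beta}\tilde u
\]
with the Fa\`a di Bruno expansion to reduce matters to bounds of the form $\|M_j(y_{\bar a})\cdot \pp^{\alpha-\beta}\tilde u(y)\|_{L^2(\R^d)}\le C\|\tilde u\|_{H^{|\alpha|}(\R^d)}$, where $M_j=\prod_i \pp^{\rho_i}q^{\mu_i}$ has total order $|\beta_{\bar a}|$ with each factor of order at least one. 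I will split on whether $|\beta_{\bar a}|\le L+1-\lfloor d/2\rfloor$ (in which case $M_j\in L^\infty(\R^{d-2})$ by the Sobolev step above and the bound is immediate) or $|\beta_{\bar a}|>L+1-\lfloor d/2\rfloor$. The latter is the core technical step: I will use the mixed-norm inclusion $H^s(\R^d)\hookrightarrow L^p_{y_{\bar a}}L^2_{y_a}$ (valid for $s=(d-2)(\tfrac12-\tfrac1p)$) to transfer the surplus $y_{\bar a}$-derivatives from $M_j$ onto $\tilde u$, together with standard product (Moser-type) estimates on $\R^{d-2}$, using $\pp_{\bar a}q\in H^L$, to control $\|M_j\|_{L^q_{y_{\bar a}}}$ in the dual exponent $q$. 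The requirement $k\le L$ ensures the indices close. Estimate \eqref{f(q)psi} is the same calculation, noting that the constraint $|r|\ge 1$ already spends one derivative on $f(q)$, so $|s|\le k-1$ derivatives remain on $\tilde u$ and the right-hand side may be taken as $\|\tilde u\|_{H^{k-1}}$.

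For \eqref{gfqpsi} I will apply Leibniz to $\pp_{\bar a}q\cdot f(q)\cdot\tilde u$ and again reduce to the same mixed-norm framework. The new wrinkle is that $\pp_{\bar a}q$ by itself is in $L^\infty$ only up to order $L-\lfloor d/2\rfloor$ derivatives by Sobolev, so the worst terms carry one more $y_{\bar a}$-derivative of $q$ than those encountered in \eqref{fqpsi2}. The stronger decay $(1+|y_a|)^{-|r|-1}$ of $f$ is what compensates: it promotes the $y_a$-prefactor in the Fa\`a di Bruno expansion from $L^\infty_{y_a}$-bounded to $L^p_{y_a}$-integrable for every $p>2$, which supplies exactly the integrability in $y_a$ needed to pair with the $L^2$-not-$L^\infty$ factor $\pp_{\bar a}q$ in $y_{\bar a}$ when one appeals to Hölder across the two groups of variables.

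I expect the main obstacle to be the index-bookkeeping in the ``high-$|\beta_{\bar a}|$'' case: verifying that the Hölder/Sobolev exponents chosen at each step balance so that, uniformly across all summands in the Fa\`a di Bruno expansion, at most $k$ derivatives land on $\tilde u$ and at most $L$ on $q$. This is routine but tedious; the cushion $L\ge d$ leaves plenty of slack, so no endpoint Sobolev or refined Hardy-type estimate should be needed.
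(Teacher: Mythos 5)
Your approach to \eqref{fq.Wkinfty} is essentially the paper's: expand by the chain rule, observe that each factor $\pp^{\rho_i}q$ has order $\le|\alpha|\le L+1-\lfloor d/2\rfloor$ so that, using $\pp_{\bar a}q\in H^L(\R^{d-2})\hookrightarrow W^{L-\lfloor d/2\rfloor,\infty}(\R^{d-2})$, all such factors are uniformly bounded. The parity computation $L-\lfloor(d-2)/2\rfloor-1=L-\lfloor d/2\rfloor$ is correct.

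For \eqref{fqpsi2}--\eqref{gfqpsi} you take a genuinely different route, and the difference is worth spelling out. The paper exploits the decay hypothesis \eqref{fdecay} from the outset: it rolls one power of $(1+|y_a|)^{-1}$ into each $q$-derivative factor, forming $Q_{r_m}=|\pp^{r_m}q|(y_{\bar a})\,(1+|y_a|)^{-1}$, which then lies in $L^{p_m}(\R^d)$ for suitable $p_m>2$ because the two factors depend on disjoint variable groups. This lets the paper apply one H\"older inequality over all of $\R^d$ and gain the full $d$-dimensional Sobolev exponent for $\|\pp^s\tilde u\|_{L^{p_0}(\R^d)}$, with the balance of exponents coming down to the explicit numerical check $\left(\tfrac12-\tfrac{k-k_0-1}{d}\right)+\sum_m\left(\tfrac12-\tfrac{L+1-k_m}{d-2}\right)<\tfrac12$. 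You instead treat the $y_a$-dependent prefactor merely as bounded, keep the $y_a$-integration at $L^2$, and apply H\"older only in $y_{\bar a}$, paired with the mixed-norm embedding $H^s(\R^d)\hookrightarrow L^p_{y_{\bar a}}L^2_{y_a}$ with $s=(d-2)(\tfrac12-\tfrac1p)$ (which is indeed valid, by Minkowski plus slice-wise Sobolev/Plancherel). This buys you a $(d-2)$-dimensional rather than $d$-dimensional Sobolev gain for $\tilde u$, which is weaker, but the cushion $L\ge d$ absorbs the loss: a quick check shows the required lower bound on $1/q$ coming from the $q$-factor Sobolev embeddings, namely $\tfrac{j}{2}-\tfrac{j(L+1)-|\beta_{\bar a}|}{d-2}$, is always $\le\min\left(\tfrac12,\tfrac{|\beta|}{d-2}\right)$ under $L\ge d$, $|\beta_{\bar a}|\le|\beta|\le k\le L$, so the exponents close with room to avoid endpoints. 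A small side benefit of your method is that it proves \eqref{fqpsi2} and \eqref{f(q)psi} without using the $(1+|y_a|)^{-|r|}$ decay at all, only uniform boundedness of $\pp^r f$ on $\R^2\times K$, which is a mild generalization; the paper's argument genuinely needs the decay for these two estimates. For \eqref{gfqpsi}, where the extra undifferentiated factor $\pp_{\bar a}q$ costs one more derivative, you revert to the paper's device of using the $y_a$-decay to upgrade to $L^p_{y_a}$, which is the right fix and effectively re-derives the paper's $Q_{r_m}$-trick for this case. Both proofs are correct; the paper's runs a single scheme uniformly, yours trades the decay for a mixed-norm embedding and is marginally more robust when decay is absent.
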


\begin{proof}
To prove \eqref{fq.Wkinfty}, we set $j = \lfloor \frac d2 \rfloor>\frac d2-1$, and we fix a multiindex $|r|\le L+1-j$. We write $r = r_a+r_{\bar a}$, where $\pp^{r_a}$ contains all derivatives with respect to the $y_a$ variables, and $\pp^{r_{\bar a}}$ contains all derivatives with respect to $y_{\bar a}$. If $r = r_a$, then  \eqref{fdecay}
implies that
\[
|\pp^r f(q)|(y) \le C = C(k).
\]
If not, it follows from the chain rule and \eqref{fdecay} that
\beq\label{lotsofterms}
|\pp^r  f(q)|(y) \le C \sum_{l=1}^{|r_{\bar a}|} \sum_{r_1+\cdots +r_l = r_{\bar a}}
\left( (1+|y_a|)^{-l} \prod_{m=1}^l |\pp^{r_m}q|(y_{\bar a}) \right)
.
\eeq
Since $|r_m| \le |r_{\bar a}| \le L+1-j$ and $j> \frac d2-1$,
\[
\| \pp^{r_m}q \|_{L^\infty(\R^{d-2})} \le  C\| \pp^{r_m}q \|_{H^j(\R^{d-2})} \le C\| \nabla q \|_{H^{L}(\R^{d-2})}\le C\Theta.
\]
We obtain \eqref{fq.Wkinfty} directly from the previous two inequalities.

We next prove \eqref{f(q)psi}. Fix $r,s$ such that $|r|+|s| \le k \le L$.
If $k\le d/2$ then $|r| \le d/2 \le L+1-\lfloor d/2\rfloor$, since $L\ge d$, so
\eqref{fq.Wkinfty} implies that $\pp^r f(q)\in L^\infty$, and \eqref{f(q)psi} is an
easy consequence.

We now assume that $k> \frac d2$,
and we write $r=r_a+ r_{\bar a}$ as above.
If $r = r_a$ then
\[
\| \pp^r f(q) \,  \pp^s\tilde u\|_{L^2(\R^d)} 
\le
\| \pp^{r_a} f(q) \|_{L^\infty(\R^d)} \|   \pp^s\tilde u\|_{L^2(\R^d)} 
\overset{\eqref{fdecay}} \le C \|\tilde u\|_{H^k(\R^d)} .
\]
So we may assume that $|r_{\bar a}|\ge 1$, in which case \eqref{lotsofterms} holds.
It suffices to estimate the product of $\pp^s\tilde u$ with a single term in the sum on the right-hand side, say
\[
 \pp^s\tilde u\prod_{m=1}^l Q_{r_m}, \qquad \mbox{ for }Q_{r_m}(y) := |\pp^{r_m}q|(y_{\bar a}) (1+|y_a|)^{-1},
\]
where $|r_1|+\cdots +|r_l| + |s| \le k$ and $|r_m|\ge 1$ for all $m$.

It is convenient to write $k_m := |r_m|$ and $k_0 = |s|$. 

First assume that $L +1 -k_m < \frac {d-2}2$ for all $m\ge 1$. 
Then by the Sobolev embedding theorem and interpolation, for $m=1,\ldots, l$,
\begin{multline}
\|\pp^{r_m} q\|_{L^{p_m}(\R^{d-2})} \le
C\|\pp^{r_m} q\|_{H^{L+1-k_m}(\R^{d-2})} \le
C\|\nabla q\|_{H^{L}(\R^{d-2})} \le C\Theta \\
\mbox{ for all $p_m$ such that } \frac 12 \ge p_m\ge \frac 12 - \frac{L+1-k_m}{d-2}.
\end{multline}
Since the function $y_a\mapsto (1+|y_a|)^{-1}$ belongs to $L^{p_m}(\R^2)$ for $p_m>2$, it follows that
\[
\| Q_{r_m} 
\|_{L^{p_m}(\R^d)} \le C \Theta\quad
\mbox{ if } \ \ \frac 12 > p_m\ge \frac 12 - \frac{L+1-k_m}{d-2}.
\]
Similarly, if $k - k_0-1< \frac d2$, then
\[
\| \pp^s\tilde u \|_{L^{p_0}(\R^d)} \le C \| \tilde u\|_{H^{k-1}(\R^d)} \qquad\mbox{ if }\ \  \frac 12 \ge \frac 1{p_0} \ge \frac 12 - \frac{k-k_0-1}d.
\]
We observe that %
\[
\begin{aligned}
&\left(  \frac 12 - \frac{k-k_0-1}{d} \right)
+ \sum_{m=1}^l \left( \frac 12 - \frac{L+1-k_m}{d-2}\right) \\
&\hspace{4em}\le
\left(  \frac 12 - \frac{k-k_0-1}{d} \right)
+ \sum_{m=1}^l \left( \frac 12 - \frac{L+1-k_m}{d}\right) \\
&\hspace{4em}
=
(l+1) \left( \frac 12 - \frac{k}{d}\right) + \frac 1d\sum_{m=0}^l{k_m} + \frac 1d(1 - l(L+1-k))  . 
\end{aligned}
\]
Since  $\sum_{m=0}^l k_m\le k$ and $k>\frac d2$, and because $l\ge 1$ and $k\le L$, the right-hand side is less than $1/2$.
Thus we can choose $p_0,\ldots, p_l$ such that $\frac 1{p_0}+\cdots +\frac 1{p_l} = \frac 12$.
Then H\"older's inequality implies that
\beq\label{product.est}
\|  \pp^s\tilde u\prod_{m=1}^l Q_{r_m} \|_{L^2(\R^d)}
\le \| \pp^s\tilde u\|_{L^{p_0}(\R^d)} \prod_{m=1}^l  \left\| Q_{r_m} \right\|_{L^{p_m}(\R^d)}
\le C \Theta^l \| \tilde u\|_{H^k(\R^d)}.
\eeq
If $L+1-k_m\ge (d-2)/2$ for some $m\ge 1$ or if $k-k_0-1\ge \frac d2$, then the corresponding exponents $p_m$ may be chosen to be $+\infty$ if strict inequality holds, 
and arbitrarily close to $+\infty$ otherwise, and one can again arrange that $\frac 1{p_0}+\cdots +\frac 1{p_m} = \frac 12$,
again leading to \eqref{product.est}. We deduce \eqref{f(q)psi} from  \eqref{lotsofterms} and \eqref{product.est}.

To prove \eqref{fqpsi2}, we fix $|r|\le k$ and compute $\pp^r ( f(q)\tilde u)$. The terms that arise can all be estimated by \eqref{f(q)psi}, unless $|r|=k$, in which case we obtain a term $f(q)\pp^r\tilde u$ that can be estimated easily using the fact that $f(q)\in L^\infty$.

The proof of \eqref{gfqpsi} is very similar to that of \eqref{f(q)psi} and is omitted.
\end{proof}

\begin{lemma}\label{lem:25}
    Assume that $q:\R^{d-2}\times (0,T)\to M_N$ satisfies \eqref{q.compact0} and \eqref{q.decay},
    with
\[
\Theta :=  \| \pp_{\bar a}q(\cdot, t)\|_{L^\infty_T H^L(\R^{d-2})} + \| \pp_0 q(\cdot, t)\|_{L^\infty_T H^L(\R^{d-2})} <\infty.
\]
Assume that $f:\R^2\times M_N \to \R$ or $\C$ is a function such that
\begin{equation}\label{fdecayexp}
|\pp^r f(x,q)| \le Ce^{-\gamma|y_a|} \qquad\mbox{ whenever $q\in K$ and $|r|\le L$},
\end{equation}
for some $\gamma\in (0,1)$, with $C = C(r,K,\gamma)$, and define $f(q)$ is in \eqref{fq.def}.

Then for any $\alpha,\beta\in 0,3,\ldots. d$, and for $j = \lfloor \frac d2 \rfloor>\frac d2-1$,
\beq\label{pqfq}
\| \pp_{\alpha}q\, f(q) \|_{L^\infty_T W_\gamma^{L-j, \infty}(\R^d)} 
 \ + \ \| \pp_{\alpha}q\, f(q) \|_{L^\infty_T H^{L}(\R^d)} \le C
\eeq
and 
\beq\label{pqfq2}
\| \pp_{\alpha}q\,  \pp_{\beta}q\, f(q) \|_{L^\infty_T W^{L-j, \infty}_\gamma(\R^d)} 
 \ + \ \|  \pp_{\alpha}q\,  \pp_{\beta}q\, f(q) \|_{L^\infty_T H^{L}(\R^d)} \le C
\eeq
for a constant depending on $C(r,K,\gamma)$ above as well as $\Theta, L,j$.
\end{lemma}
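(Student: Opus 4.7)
The plan is to adapt the argument for Lemma \ref{lem:fq} to the present setting, where the main new feature is exponential (rather than polynomial) decay of $f$ and its derivatives in $y_a$, and the main new complication is the extra factor $\pp_\alpha q$ (or $\pp_\alpha q\,\pp_\beta q$) depending only on $(y_{\bar a},y_0)$. First I would fix a multi-index $r = r_a + r_{\bar a}$ on $\R^d$ (with $r_a$ collecting $y_a$-derivatives) and, using that any $y_a$-derivative annihilates $\pp_\alpha q$, apply Leibniz to obtain
\[
\pp^r\bigl(\pp_\alpha q \cdot f(q)\bigr)
= \sum_{r'\le r_{\bar a}} \binom{r_{\bar a}}{r'}\, \pp^{r'}_{\bar a}\pp_\alpha q(y_{\bar a},y_0) \cdot \pp^{r_a}_{y_a}\pp^{r_{\bar a}-r'}_{\bar a} f(q).
\]
Applying the higher-order chain rule to $\pp^{r_a}_{y_a}\pp^{r_{\bar a}-r'}_{\bar a} f(q)$, one rewrites it as a sum of terms, each consisting of a mixed $(y_a,q)$-derivative of $f$ of total order at most $|r|$ -- pointwise bounded by $Ce^{-\gamma|y_a|}$ in view of \eqref{fdecayexp} as long as $|r|\le L$ -- times a product $\prod_{m=1}^{l} \pp^{t_m}_{\bar a}q$ with $\sum_m t_m = |r_{\bar a}|-|r'|$ and each $t_m \ge 1$.

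For the weighted $L^\infty$ bound in \eqref{pqfq} with $|r|\le L-j$, I would place every derivative factor of $q$ in $L^\infty(\R^{d-2})$ via the Sobolev embedding $H^s(\R^{d-2})\hookrightarrow L^\infty$ for $s > (d-2)/2$: since $\pp q \in L^\infty_T H^L(\R^{d-2})$, every $\pp^s q$ with $s\le L-j+1$ lies in $H^{L-s+1}$, and $L - s + 1 \ge j = \lfloor d/2\rfloor > (d-2)/2$. The pointwise factor $e^{-\gamma|y_a|}$ from the $f$-derivative then supplies the weight, and taking the supremum over $t\in (0,T)$ yields the $L^\infty_T W^{L-j,\infty}_\gamma$ estimate.

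For the $H^L(\R^d)$ part of \eqref{pqfq} with $|r|\le L$, I would split the $L^2$-integral over $\R^d = \R^2\times \R^{d-2}$: the $y_a$-integration of the squared $f$-derivative is uniformly bounded in $y_{\bar a}$ by the exponential decay in \eqref{fdecayexp}, leaving an $L^2(\R^{d-2})$ estimate for a product $\pp^{r'}_{\bar a}\pp_\alpha q \cdot \prod_m \pp^{t_m}_{\bar a} q$ whose total derivative order in $q$ is at most $L+1$. A standard Moser/Gagliardo--Nirenberg product estimate in $H^L(\R^{d-2})$, which is an algebra since $L\ge d > (d-2)/2$, bounds this by $C\Theta^{l+1}$ uniformly in $t$; summing the finitely many terms gives \eqref{pqfq}. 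The extension to \eqref{pqfq2} is cosmetic: expanding $\pp^r(\pp_\alpha q\,\pp_\beta q\, f(q))$ produces one additional $q$-derivative factor of the same type, absorbed by the same Sobolev and Moser arguments. The only mildly delicate point is the combinatorics of the Moser step in the $H^L$ estimate -- one must distribute up to $L$ derivatives of $q$ among several factors and choose for each term which factor to put in $L^2$ and which in $L^\infty$ -- but this is routine, because at least one factor can always be placed in $L^2$ via $H^{L-s+1}\subset L^2$, while the remaining factors retain enough regularity for Sobolev embedding into $L^\infty$ (or an intermediate $L^p$ via interpolation) thanks to $L\ge d$.
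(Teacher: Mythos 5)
Your proposal is correct and follows essentially the same line of argument as the paper's proof: expand via Leibniz and the chain rule, exploit the exponential decay of $f$ and its $(y_a,q)$-derivatives to integrate out the $y_a$ variables, and control the resulting products of $q$-derivatives on $\R^{d-2}$ by Sobolev embedding and Hölder (following the pattern of Lemma~\ref{lem:fq}). The only superficial difference is bookkeeping: the paper bundles $\pp_\alpha q\,\pp_\beta q$ into a single $H^L(\R^{d-2})$-algebra factor before applying the Hölder scheme from the proof of \eqref{f(q)psi}, whereas you invoke a Moser/Kato--Ponce product estimate directly; both deliver the same bound $C\Theta^{l+1}$.
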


\begin{proof}
    Since $j>\frac d2-1$, a Sobolev embedding implies that $\pp_{\alpha}q(\cdot, t) \in H^L \hookrightarrow W^{L-j, \infty}(\R^{d-2})$ for every $t\in (0,T)$, with the estimate 
    $ \| \pp_{\alpha}q\,(\cdot, t) \|_{L^\infty_T W^{L-j,\infty}(\R^{d-2})} \le C \Theta$. This and the previous lemma easily imply the $W^{L-j,\infty}$ estimates in \eqref{pqfq}, \eqref{pqfq2}.
    
    To prove the $H^L$ estimates, we consider only \eqref{pqfq2}, which is the slightly harder case. We fix $t\in (0,T)$ and suppress the $t$ variable. We next fix $|r|\le L$ and expand as in \eqref{lotsofterms}.
    A typical term on the right-hand side has the form
    \beq\label{typicalterm}
    C|\pp^{r_1}(\pp_\alpha q\, \pp_\beta q)| \prod_{m=2
}^l |\pp^{r_m}q|(y_{\bar a}) e^{-\gamma|y_a|}
    \eeq
    where $ |r_1|+ \cdots +|r_m| \le L$. We write $|r_j|=k_j$ for $j=1,\ldots, l$.
    Since $L>\frac d2-1$, the space $H^L(\R^{d-2})$ is an algebra, and 
    \[
    \| \pp^{r_1}(\pp_\alpha q\, \pp_\beta q) \|_{L^{p_1}} 
    \le C \|\pp_\alpha q\, \pp_\beta q \|_{H^L} \le C
     \|\pp_\alpha q \|_{H^L}  \| \pp_\beta q \|_{H^L} \le C\Theta^2
    \]
    as long as 
    \[
    \frac 12 \ge \frac 1{p_1} \ge \frac 12 - \frac {L-k_1}{d-2}
    \]
    and $L-k_1 < \frac d2-1$, with straightforward adjustments if $L-k_1\ge \frac d2-1$. With this observation, the verification that \eqref{typicalterm} belongs to $L^2$ is very similar to the proof of \eqref{f(q)psi} and is omitted.
\end{proof}

\begin{lemma}\label{lem:last}
    Assume that $L+1>\frac d2$ and that $q:\R^{d-2}\to M_N$ satisfies \eqref{qcompact.again}, with
    $\pp_{\bar a} q \in H^L(\R^{d-2})$.
    
    Assume that $F\in H^k(\R^d ; \C\times \R^2)$ for some $k\le L$, and 
    for $\mu\in \{1,\ldots, 2N\}$ 
    define $f_\mu:\R^{d-2}\to \R$ by
    \[
    f_\mu(y_{\bar a}) = (F , n_\mu(q))_{L^2(\R^2_{y_{\bar a}})} = \int_{\R^2 } F(y_a, y_{\bar a})\cdot n_\mu(y_a; q(y_{\bar a})) \, dy_a.
    \]
    Then $f_\mu\in H^k(\R^{d-2})$, and 
    \beq\label{eq:last}
    \| f_\mu\|_{H^k(\R^{d-2)}} \le C \| F\|_{H^k(\R^d)}.
    \eeq
    for $C$ depending on $\|\pp_{\bar a}q\|_{H^L}$.
\end{lemma}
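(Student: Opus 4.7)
My plan is to reduce the estimate to the type of Hölder-Sobolev analysis already carried out in Lemma \ref{lem:fq}, using Cauchy--Schwarz in the $y_a$ variables to exploit the exponential decay of $n_\mu$.

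First, the $k=0$ case is essentially immediate: since $\{n_\mu(\cdot;q(y_{\bar a}))\}$ is orthonormal in $L^2(\R^2)$, Cauchy--Schwarz gives $|f_\mu(y_{\bar a})| \le \|F(\cdot,y_{\bar a})\|_{L^2(\R^2)}$, and integrating in $y_{\bar a}$ yields $\|f_\mu\|_{L^2(\R^{d-2})} \le \|F\|_{L^2(\R^d)}$. For higher derivatives, let $r$ be a multi-index on $\R^{d-2}$ with $|r|\le k$. Since $n_\mu$ depends smoothly on $q$ and has uniformly exponentially decaying $y_a$ and $q$ derivatives for $q\in K$ (Lemma \ref{lem:decayq}), differentiation under the integral and Faà di Bruno give
\begin{equation*}
\pp_{\bar a}^r f_\mu(y_{\bar a}) = \sum_{r_1+r_2=r} c_{r_1,r_2} \int_{\R^2} \pp_{\bar a}^{r_1} F \cdot \pp_{\bar a}^{r_2} n_\mu(q)\, dy_a,
\end{equation*}
where each $\pp_{\bar a}^{r_2}n_\mu(q)$ is a finite sum of terms of the form $(\pp_q^\ell n_\mu)(y_a;q(y_{\bar a}))\prod_{j=1}^{\ell}\pp_{\bar a}^{s_j}q(y_{\bar a})$ with $\sum_j|s_j|=|r_2|$ and $|s_j|\ge 1$. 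Applying Cauchy--Schwarz in $y_a$, using that $\|\pp_q^\ell n_\mu(\cdot;q)\|_{L^2(\R^2)}$ is bounded uniformly on $K$, the $L^2(\R^{d-2})$ norm of a typical term reduces to
\begin{equation*}
\Bigl\|\,\bigl\|\pp_{\bar a}^{r_1}F(\cdot,y_{\bar a})\bigr\|_{L^2(\R^2)} \prod_{j=1}^\ell |\pp_{\bar a}^{s_j}q(y_{\bar a})| \,\Bigr\|_{L^2(\R^{d-2})}.
\end{equation*}

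The main technical step is then to bound this mixed norm, and I would do it by the exact same Hölder-plus-Sobolev budget argument used in the proof of \eqref{f(q)psi}. Pick exponents $p_0,p_1,\ldots,p_\ell$ with $\frac{1}{p_0}+\sum_j\frac{1}{p_j}=\frac{1}{2}$; by Sobolev embedding on $\R^{d-2}$ the $q$-factors obey $\|\pp_{\bar a}^{s_j}q\|_{L^{p_j}(\R^{d-2})}\le C\Theta$ whenever $\frac{1}{p_j}\ge \frac{1}{2}-\frac{L+1-|s_j|}{d-2}$ (with $p_j=\infty$ permitted when that lower bound is nonpositive); for the $F$-factor, Minkowski's integral inequality followed by Sobolev embedding in $y_{\bar a}$ gives
\begin{equation*}
\bigl\|\,\|\pp_{\bar a}^{r_1}F(\cdot,y_{\bar a})\|_{L^2(\R^2)}\,\bigr\|_{L^{p_0}(\R^{d-2})} \le C\,\|F\|_{H^{|r_1|+\sigma}(\R^d)}, \qquad \sigma = (d-2)\Bigl(\tfrac12-\tfrac{1}{p_0}\Bigr),
\end{equation*}
provided $p_0\ge 2$. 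The required inequality $|r_1|+\sigma\le k$ becomes $\frac{1}{p_0}\ge\frac12-\frac{|r_2|}{d-2}$, and summing the $\ell+1$ lower bounds for $\frac{1}{p_0},\ldots,\frac{1}{p_\ell}$ collapses (since $\sum|s_j|=|r_2|$) to the single inequality $\frac{\ell+1}{2}-\frac{\ell(L+1)}{d-2}\le \frac12$, i.e.\ $(L+1)\ge \frac{d-2}{2}$, which is strictly implied by the hypothesis $L+1>\frac d2$.

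The main obstacle will be handling the case in which one or more of the lower bounds for $1/p_j$ is negative, i.e.\ when $L+1-|s_j|\ge \frac{d-2}{2}$; in that case one simply chooses $p_j=\infty$ (or nearly so, via a small perturbation argument as in Lemma \ref{lem:fq}) and absorbs the corresponding factor into an $L^\infty$ estimate using the Sobolev embedding $H^{(d-2)/2+\eta}\hookrightarrow L^\infty$. Summing over all terms in the Faà di Bruno expansion and over all $|r|\le k$ yields the claimed bound $\|f_\mu\|_{H^k(\R^{d-2})}\le C\|F\|_{H^k(\R^d)}$ with $C$ depending on $\|\pp_{\bar a}q\|_{H^L}$ and $K$.
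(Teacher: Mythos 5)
Your proof is correct and follows essentially the same skeleton as the paper's: Fa\`a di Bruno expansion of $\pp_{\bar a}^{r_2}n_\mu(q)$, H\"older in $y_{\bar a}$ with a Sobolev-exponent budget, and the observation that the budget closes under $L+1>\tfrac d2$. The one genuine technical difference is how the $F$-factor is isolated from the exponentially decaying $n_\mu$-factor. You apply Cauchy--Schwarz in $y_a$ (using $\|\pp_q^\ell n_\mu(\cdot;q)\|_{L^2(\R^2)}\le C$ on $K$) and then Minkowski plus Sobolev in the $y_{\bar a}$ variables only, which forces you to pay $\sigma=(d-2)(\tfrac12-\tfrac1{p_0})$ derivatives of $F$. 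The paper instead applies H\"older with conjugate exponents $(p_0,q_0)$ directly in the $y_a$ integral, absorbing the exponential weight into $e^{-\gamma q_0|y_a|}\in L^1(\R^2)$, so that $\|G\|_{L^{p_0}(\R^{d-2})}\le C\|\pp^{r_0}F\|_{L^{p_0}(\R^d)}$; then Sobolev embedding is applied on all of $\R^d$. The paper's route gives a slightly more generous budget (the derivative loss for $F$ is divided by $d$ rather than $d-2$), which makes matching the proof of Lemma \ref{lem:fq} more literal; your route is marginally more elementary (Cauchy--Schwarz plus Minkowski instead of an extra H\"older) and still closes because the hypothesis $L+1>\tfrac d2$ is strictly stronger than the $L+1\ge\tfrac{d-2}2$ your budget actually needs. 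Both are fine; the edge cases (negative lower bounds for $1/p_j$, handled by taking $p_j$ near $\infty$) are addressed the same way in both arguments.
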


\begin{proof}
We may assume that $F$ is smooth, since the general case then follows by an approximation argument.    Let $r$ be a multiindex on $\R^{d-2}$, so that $\pp^r$ involves only derivatives with respect to the $y_{\bar a}$ directions. Assume that $|r|\le k$. Since $F$ is smooth, it is straightforward to justify the identity
\[
\pp^r f(y_{\bar a}) = \sum_{r_0+s_0=r}\frac{r!}{r_0! s_0!}(\pp^{r_0} F , \pp^{s_0} n_\mu)_{L^2(\R^2_{y_a})}.
\]
In view of Lemma \ref{lem:decayq} and the chain rule, there exists $\gamma>0$ such that 
\[
|\pp^{r_0} F  \cdot  \pp^{s_0} n_\mu|(y)
\le C|\pp^{r_0}F(y)|\,
\left(\sum_{r_1+\cdots+r_l = s_0} \prod_{j=1}^l |\pp^{r_j} q(y_{\bar a})|\right) e^{-\gamma|y_a|}.
\]
Writing
\[
G(y_{\bar a}) := \int_{\R^2_{y_{\bar a}}} |\pp^{r_0} F(y)| \, e^{-\gamma|y_a|} 
dy_a,
\]
we deduce that
\[
\int_{\R^{d-2}}|(\pp^{r_0}F , \pp^{s_0}n_\mu)_{L^2_{y_a}(\R^2)}|^2 dy_{\bar a}
\le
C\sum_{r_1+\cdots+r_l=s_0} \int_{\R^{d-2}} G(y_{\bar a})^2\prod_{j=1}^l|\pp^{r_j}q(y_{\bar a})|^2 \ dy_{\bar a}.
\]
Arguing exactly as in the proof of Lemma \ref{lem:fq}, and using the assumptions that $L+1>\frac d2$ and $k\le L$, we can choose
$p_j\in [1,\infty]$ for $j=0,\ldots,l$ such that 
\[
\| \pp^{r_0}F\|_{L^{p_0}(\R^d)} \le C \| F\|_{H^k(\R^d)},
\qquad
\| \pp^{r_j}q\|_{L^{p_j}(\R^{d-1})} \le C \| \pp_{\bar a} q \|_{H^L(\R^{d-2})},
\]
and 
\[
\frac 2{p_0}+\cdots+\frac 2{p_l}= 1.
\]
We use H\"older's inequality  with exponents $\frac{p_j}2$ for $j=0,\ldots, l$ to estimate
\begin{align*}
&\int_{\R^{d-2}}|(\pp^{r_0}F , \pp^{s_0}n_\mu)_{L^2_{y_a}(\R^2)}|^2 dy_{\bar a}
\\
&\hspace{5em}
\le
C\sum_{r_1+\cdots+r_l=s_0}
\| G \|_{L^{p_0}(\R^{d-2})}^2\prod_{j=1^l}
\| \pp^{r_j} q\|_{L^{p_j}(\R^{d-2})}^2.
\end{align*}
Note, we define $q_0$ by requiring that $\frac 1{p_0}+\frac 1{q_0}=1$, and we again use H\"older's inequality to deduce that
\begin{align*}
    \| G \|_{L^{p_0}(\R^{d-2})}
    &= 
    \left(
    \int_{\R^{d-2}}\left(\int_{\R^2} |\pp^{r_0}F(y)|\  e^{-\gamma|y_a|} dy_a
    \right)^{p_0} dy_{\bar a}
    \right)^{\frac 1{p_0}} \\
    &\le
    \left[\int_{\R^{d-2}}\left(\int_{\R^2}|\pp^{r_0}F(y)|^{p_0}dy_{a}\right)
    \left(\int_{\R^2}e^{-\gamma q_0|y_a|} dy_a \right)^{p_0/q_0}dy_{\bar a}\right]^{1/p_0}\\
    &=
    C\| \pp^{r_0}F\|_{L^{p_0}(\R^d)}.
\end{align*}
Putting these together, we conclude that
\[
\| \pp^r f\|_{L^2(\R^{d-2})}^2 \le C(\| \pp_{\bar a}q\|_{H^L(\R^{d-2})}) \|F\|_{H^k(\R^d)}^2.
\]
Since this holds for every $r$ such that $|r|\le k$, the conclusion follows.

\end{proof}


\bigskip

{\large \bf Acknowledgments} This paper is based on the PhD thesis of the first author A. Geevechi. This was partially supported by the Connaught international scholarship at the University of Toronto. He is also grateful to his mentor Prof. Shing-Tung Yau for his encouragement and support. R. Jerrard's contributions to this work were partially supported by the Natural Sciences and Engineering Research Council of Canada under Operating Grant 261955. He would like to thank Tze-lan Sang for inspiring exchanges related to reconnection.

\bibliography{AHM.bib}

@article {CzubakJerrard,
    AUTHOR = {Czubak, Magdalena and Jerrard, Robert L.},
     TITLE = {Topological defects in the abelian {H}iggs model},
   JOURNAL = {Discrete Contin. Dyn. Syst.},
  FJOURNAL = {Discrete and Continuous Dynamical Systems. Series A},
    VOLUME = {35},
      YEAR = {2015},
    NUMBER = {5},
     PAGES = {1933--1968},
      ISSN = {1078-0947,1553-5231},
   MRCLASS = {81V35 (35B25 35L71)},
  MRNUMBER = {3294233},
       DOI = {10.3934/dcds.2015.35.1933},
       URL = {https://doi.org/10.3934/dcds.2015.35.1933},
}

@article {DemouliniStuart,
    AUTHOR = {Demoulini, Sophia and Stuart, David},
     TITLE = {Gradient flow of the superconducting {G}inzburg-{L}andau
              functional on the plane},
   JOURNAL = {Comm. Anal. Geom.},
  FJOURNAL = {Communications in Analysis and Geometry},
    VOLUME = {5},
      YEAR = {1997},
    NUMBER = {1},
     PAGES = {121--198},
      ISSN = {1019-8385,1944-9992},
   MRCLASS = {58E15 (35Q55 82D55)},
  MRNUMBER = {1456310},
       DOI = {10.4310/CAG.1997.v5.n1.a3},
       URL = {https://doi.org/10.4310/CAG.1997.v5.n1.a3},
}

@article{bou2001vortex,
  title={Vortex collisions: crossing or recombination?},
  author={Bou-Diab, Malek and Dodgson, Matthew JW and Blatter, Gianni},
  journal={Physical review letters},
  volume={86},
  number={22},
  pages={5132},
  year={2001},
  publisher={APS}
}

@article{KidaTakaoka,
  title={Vortex reconnection},
  author={Kida, Shigeo and Takaoka, M},
  journal={Annual Review of Fluid Mechanics},
  volume={26},
  number={1},
  pages={169--177},
  year={1994},
  publisher={Annual Reviews 4139 El Camino Way, PO Box 10139, Palo Alto, CA 94303-0139, USA}
}

@misc{Masoud-thesis,
      title={A Gluing Problem for a Gauged Hyperbolic PDE}, 
      author={Amirmasoud Geevechi},
      year={2024},
      eprint={2405.16092},
      archivePrefix={arXiv},
      primaryClass={math.AP},
      url={https://arxiv.org/abs/2405.16092}, 
}

@article{oshima1977interaction,
  title={Interaction of two vortex rings along parallel axes in air},
  author={Oshima, Yuko and Asaka, Saburo},
  journal={Journal of the Physical Society of Japan},
  volume={42},
  number={2},
  pages={708--713},
  year={1977},
  publisher={The Physical Society of Japan}
}

@article {EncisoPeraltaSalas1,
    AUTHOR = {Enciso, Alberto and Luc\`a, Renato and Peralta-Salas, Daniel},
     TITLE = {Vortex reconnection in the three dimensional {N}avier-{S}tokes
              equations},
   JOURNAL = {Adv. Math.},
  FJOURNAL = {Advances in Mathematics},
    VOLUME = {309},
      YEAR = {2017},
     PAGES = {452--486},
      ISSN = {0001-8708,1090-2082},
   MRCLASS = {35Q30 (35Q31 76D05 76D17)},
  MRNUMBER = {3607283},
MRREVIEWER = {Isabelle\ Gruais},
       DOI = {10.1016/j.aim.2017.01.025},
       URL = {https://doi-org.myaccess.library.utoronto.ca/10.1016/j.aim.2017.01.025},
}

@article {EncisoPeraltaSalas2,
    AUTHOR = {Enciso, Alberto and Peralta-Salas, Daniel},
     TITLE = {Approximation theorems for the {S}chr\"odinger equation and
              quantum vortex reconnection},
   JOURNAL = {Comm. Math. Phys.},
  FJOURNAL = {Communications in Mathematical Physics},
    VOLUME = {387},
      YEAR = {2021},
    NUMBER = {2},
     PAGES = {1111--1149},
      ISSN = {0010-3616,1432-0916},
   MRCLASS = {35Q55 (81Q80)},
  MRNUMBER = {4315668},
       DOI = {10.1007/s00220-021-04177-w},
       URL = {https://doi-org.myaccess.library.utoronto.ca/10.1007/s00220-021-04177-w},
}

@article{kleckner2013creation,
  title={Creation and dynamics of knotted vortices},
  author={Kleckner, Dustin and Irvine, William TM},
  journal={Nature physics},
  volume={9},
  number={4},
  pages={253--258},
  year={2013},
  publisher={Nature Publishing Group UK London}
}

@article{saffman1990model,
  title={A model of vortex reconnection},
  author={Saffman, Philip G},
  journal={Journal of Fluid Mechanics},
  volume={212},
  pages={395--402},
  year={1990},
  publisher={Cambridge University Press}
}

@article{KoplikLevine,
  title = {Vortex reconnection in superfluid helium},
  author = {Koplik, Joel and Levine, Herbert},
  journal = {Phys. Rev. Lett.},
  volume = {71},
  issue = {9},
  pages = {1375--1378},
  numpages = {0},
  year = {1993},
  month = {Aug},
  publisher = {American Physical Society},
  doi = {10.1103/PhysRevLett.71.1375},
  url = {https://link.aps.org/doi/10.1103/PhysRevLett.71.1375}
}

@article{lathropetal,
author = {Gregory P. Bewley  and Matthew S. Paoletti  and Katepalli R. Sreenivasan  and Daniel P. Lathrop },
title = {Characterization of reconnecting vortices in superfluid helium},
journal = {Proceedings of the National Academy of Sciences},
volume = {105},
number = {37},
pages = {13707-13710},
year = {2008},
doi = {10.1073/pnas.0806002105},
URL = {https://www.pnas.org/doi/abs/10.1073/pnas.0806002105},
eprint = {https://www.pnas.org/doi/pdf/10.1073/pnas.0806002105},
}

@article {MerleZaag,
    AUTHOR = {Merle, Frank and Zaag, Hatem},
     TITLE = {Reconnection of vortex with the boundary and finite time
              quenching},
   JOURNAL = {Nonlinearity},
  FJOURNAL = {Nonlinearity},
    VOLUME = {10},
      YEAR = {1997},
    NUMBER = {6},
     PAGES = {1497--1550},
      ISSN = {0951-7715,1361-6544},
   MRCLASS = {35Q99 (35B40 35K99 82D55)},
  MRNUMBER = {1483553},
MRREVIEWER = {Peter\ L.\ Christiansen},
       DOI = {10.1088/0951-7715/10/6/006},
       URL = {https://doi-org.myaccess.library.utoronto.ca/10.1088/0951-7715/10/6/006},
}

@article{shellard1987cosmic,
  title={Cosmic string interactions},
  author={Shellard, EPS},
  journal={Nuclear Physics B},
  volume={283},
  pages={624--656},
  year={1987},
  publisher={Elsevier}
}

@article {CJO,
    AUTHOR = {Chapman, S. J. and Hunton, B. J. and Ockendon, J. R.},
     TITLE = {Vortices and boundaries},
   JOURNAL = {Quart. Appl. Math.},
  FJOURNAL = {Quarterly of Applied Mathematics},
    VOLUME = {56},
      YEAR = {1998},
    NUMBER = {3},
     PAGES = {507--519},
      ISSN = {0033-569X,1552-4485},
   MRCLASS = {76B47 (76A25 76D17 82D55)},
  MRNUMBER = {1637052},
       DOI = {10.1090/qam/1637052},
       URL = {https://doi-org.myaccess.library.utoronto.ca/10.1090/qam/1637052},
}

@article{Matzner,
    author = {Matzner, Richard A.},
    title = {Interaction of U(1) cosmic strings: Numerical intercommutation},
    journal = {Computer in Physics},
    volume = {2},
    number = {5},
    pages = {51-64},
    year = {1988},
    month = {09},
    issn = {0894-1866},
    doi = {10.1063/1.168306},
    url = {https://doi.org/10.1063/1.168306},
    eprint = {https://pubs.aip.org/aip/cip/article-pdf/2/5/51/11380574/51\_1\_online.pdf},
}

@article{Superconductors,
  title = {Vortex cutting in superconductors},
  author = {Glatz, A. and Vlasko-Vlasov, V. K. and Kwok, W. K. and Crabtree, G. W.},
  journal = {Phys. Rev. B},
  volume = {94},
  issue = {6},
  pages = {064505},
  numpages = {11},
  year = {2016},
  month = {Aug},
  publisher = {American Physical Society},
  doi = {10.1103/PhysRevB.94.064505},
  url = {https://link.aps.org/doi/10.1103/PhysRevB.94.064505}
}

@article{HananyHashimoto,
doi = {10.1088/1126-6708/2005/06/021},
url = {https://dx.doi.org/10.1088/1126-6708/2005/06/021},
year = {2005},
month = {jun},
publisher = {},
volume = {2005},
number = {06},
pages = {021},
author = {Amihay Hanany and Koji Hashimoto},
title = {Reconnection of colliding cosmic strings},
journal = {Journal of High Energy Physics},
}

@book {JaffeTaubes,
    AUTHOR = {Jaffe, Arthur and Taubes, Clifford},
     TITLE = {Vortices and monopoles},
    SERIES = {Progress in Physics},
    VOLUME = {2},
      NOTE = {Structure of static gauge theories},
 PUBLISHER = {Birkh\"{a}user, Boston, MA},
      YEAR = {1980},
     PAGES = {v+287},
      ISBN = {3-7643-3025-2},
   MRCLASS = {81E10 (53C80 81-02)},
  MRNUMBER = {614447},
MRREVIEWER = {Masatsugu\ Minami},
}

@article {delPino-J-Musso,
    AUTHOR = {del Pino, Manuel and Jerrard, Robert L. and Musso, Monica},
     TITLE = {Interface dynamics in semilinear wave equations},
   JOURNAL = {Comm. Math. Phys.},
  FJOURNAL = {Communications in Mathematical Physics},
    VOLUME = {373},
      YEAR = {2020},
    NUMBER = {3},
     PAGES = {971--1009},
      ISSN = {0010-3616,1432-0916},
   MRCLASS = {35L71 (35B25 35L15)},
  MRNUMBER = {4061403},
       DOI = {10.1007/s00220-019-03632-z},
       URL = {https://doi.org/10.1007/s00220-019-03632-z},
}

@article {Manton,
    AUTHOR = {Manton, N. S.},
     TITLE = {A remark on the scattering of {BPS} monopoles},
   JOURNAL = {Phys. Lett. B},
  FJOURNAL = {Physics Letters. B. Particle Physics, Nuclear Physics and
              Cosmology},
    VOLUME = {110},
      YEAR = {1982},
    NUMBER = {1},
     PAGES = {54--56},
      ISSN = {0370-2693,1873-2445},
   MRCLASS = {81E10 (58F17)},
  MRNUMBER = {647883},
       DOI = {10.1016/0370-2693(82)90950-9},
       URL = {https://doi.org/10.1016/0370-2693(82)90950-9},
}

@article {Palvelev1,
    AUTHOR = {Palvelev, R. V.},
     TITLE = {Justification of the adiabatic principle in the {A}belian
              {H}iggs model},
   JOURNAL = {Trans. Moscow Math. Soc.},
  FJOURNAL = {Transactions of the Moscow Mathematical Society},
      YEAR = {2011},
     PAGES = {219--244},
      ISSN = {0077-1554,1547-738X},
   MRCLASS = {35Q60 (58E15 81T13 81T40)},
  MRNUMBER = {3184819},
       DOI = {10.1090/s0077-1554-2012-00189-7},
       URL = {https://doi.org/10.1090/s0077-1554-2012-00189-7},
}

@article {Palvelev2,
    AUTHOR = {Palvelev, Roman},
     TITLE = {Scattering of vortices in the abelian {H}iggs model},
   JOURNAL = {J. Geom. Symmetry Phys.},
  FJOURNAL = {Journal of Geometry and Symmetry in Physics},
    VOLUME = {10},
      YEAR = {2007},
     PAGES = {73--81},
      ISSN = {1312-5192,1314-5673},
   MRCLASS = {81T13 (58E50)},
  MRNUMBER = {2380051},
MRREVIEWER = {Pierpaolo\ Esposito},
}

@article {PalvelevSergeev,
    AUTHOR = {Palvelev, R. V. and Sergeev, A. G.},
     TITLE = {Justification of the adiabatic principle for hyperbolic
              {G}inzburg-{L}andau equations},
   JOURNAL = {Tr. Mat. Inst. Steklova},
  FJOURNAL = {Trudy Matematicheskogo Instituta Imeni V. A. Steklova},
    VOLUME = {277},
      YEAR = {2012},
     PAGES = {199--214},
      ISSN = {0371-9685,3034-1809},
      ISBN = {5-7846-0124-5; 978-5-7846-0124-7},
   MRCLASS = {35Q75 (35B25 35J20 35Q56)},
  MRNUMBER = {3052273},
MRREVIEWER = {Boris\ A.\ Malomed},
       DOI = {10.1134/s0081543812040141},
       URL = {https://doi.org/10.1134/s0081543812040141},}

@article {ParisePigatiStern,
    AUTHOR = {Parise, Davide and Pigati, Alessandro and Stern, Daniel},
     TITLE = {The parabolic {$U(1)$}-{H}iggs equations and codimension-two
              mean curvature flows},
   JOURNAL = {Geom. Funct. Anal.},
  FJOURNAL = {Geometric and Functional Analysis},
    VOLUME = {34},
      YEAR = {2024},
    NUMBER = {4},
     PAGES = {1171--1225},
      ISSN = {1016-443X,1420-8970},
   MRCLASS = {53E10},
  MRNUMBER = {4768584},
MRREVIEWER = {Kin\ Ming\ Hui},
       DOI = {10.1007/s00039-024-00684-9},
       URL = {https://doi.org/10.1007/s00039-024-00684-9},
}

@article {Samols,
    AUTHOR = {Samols, T. M.},
     TITLE = {Vortex scattering},
   JOURNAL = {Comm. Math. Phys.},
  FJOURNAL = {Communications in Mathematical Physics},
    VOLUME = {145},
      YEAR = {1992},
    NUMBER = {1},
     PAGES = {149--179},
      ISSN = {0010-3616,1432-0916},
   MRCLASS = {58E15 (58D27 81T10)},
  MRNUMBER = {1155287},
MRREVIEWER = {Richard\ W.\ Montgomery},
       URL = {http://projecteuclid.org/euclid.cmp/1104249538},
}

@article {Stuart,
    AUTHOR = {Stuart, D.},
     TITLE = {Dynamics of abelian {H}iggs vortices in the near {B}ogomolny
              regime},
   JOURNAL = {Comm. Math. Phys.},
  FJOURNAL = {Communications in Mathematical Physics},
    VOLUME = {159},
      YEAR = {1994},
    NUMBER = {1},
     PAGES = {51--91},
      ISSN = {0010-3616,1432-0916},
   MRCLASS = {58E15 (53C80 58F17 81T13)},
  MRNUMBER = {1257242},
MRREVIEWER = {Jan\ Segert},
       URL = {http://projecteuclid.org/euclid.cmp/1104254491},
}
\bibliographystyle{acm}

\end{document}